\numberwithin{equation}{section}
\DeclareFontFamily{OT1}{pzc}{}
\DeclareFontShape{OT1}{pzc}{m}{it}{<-> s * [1.2] pzcmi7t}{}
\DeclareMathAlphabet{\mathpzc}{OT1}{pzc}{m}{it}
\newcommand{\<}{\langle}
\renewcommand{\>}{\rangle}
\def\id{\operatorname{id}}
\def\sup{\operatorname{sup}}
\def\id{\operatorname{id}}
\def\C{\mathbb{C}}
\def\R{\mathbb{R}}
\def\N{\mathbb{N}}
\def\Z{\mathbb{Z}}
\def\LL{\mathcal{L}}
\newcommand{\tn}{\textnormal}
\newcommand{\ot}{\otimes}
\newcommand{\otss}{\ot_B}
\newcommand{\cor}{\operatorname{Corr}}
\newcommand{\gae}{\lower 2pt \hbox{$\, \buildrel {\scriptstyle >}\over {\scriptstyle
\sim}\,$}}
\newcommand{\lae}{\lower 2pt \hbox{$\, \buildrel {\scriptstyle <}\over {\scriptstyle
\sim}\,$}}
\newcommand{\MU}[1]{
\setbox0\hbox{$#1$}
\setbox1\hbox{$W$}
\ifdim\wd0>\wd1 #1^{\sim} \else \widetilde{#1} \fi
}
\newcommand{\pre}[1]{{}_{#1}}
\newcommand{\Per}{{\bf P}}
\newcommand{\xn}{{\bf x}}
\newcommand{\yn}{{\bf y}}
\newcommand{\zn}{{\bf z}}
\newtheorem{thm*}{Theorem}
\newtheorem{thm}{Theorem}[section]
\newtheorem{corollary}[thm]{Corollary}
\newtheorem{lemma}[thm]{Lemma}
\newtheorem{prop}[thm]{Proposition}
\newtheorem{proposition}[thm]{Proposition}
\theoremstyle{definition}
\newtheorem{definition}[thm]{Definition}
\theoremstyle{remark}
\newtheorem{remark}[thm]{Remark}
\newtheorem{example}[thm]{Example}
\newcommand{\righttext}[1]{\qquad\text{#1 }}
\newcommand{\midtext}[1]{\qquad\text{#1 }\qquad}
\DeclarePairedDelimiterX{\norm}[1]{\lVert}{\rVert}{#1}
\begin{document}

\date{\today}
\title[Wieler solenoids]{Wieler solenoids: non-Hausdorff expansiveness, Cuntz-Pimsner models, and functorial properties}

\author{Robin J. Deeley}
\address{Robin J. Deeley,   Department of Mathematics,
University of Colorado Boulder
Campus Box 395,
Boulder, CO 80309-0395, USA }
\email{robin.deeley@colorado.edu}

\author{Menev\c se Ery\"uzl\"u}
\address{Menev\c se Ery\"uzl\"u,   Department of Mathematics,
University of Colorado Boulder
Campus Box 395,
Boulder, CO 80309-0395, USA }
\email{Menevse.Eryuzlu@colorado.edu}

\author{Magnus Goffeng}
\address{Magnus Goffeng,
Centre for Mathematical Sciences,
Lund University,
Box 118, 221 00 LUND, Sweden
}
\email{magnus.goffeng@math.lth.se}
\author{Allan Yashinski}
\address{Allan Yashinski,  Department of Mathematics, University of Maryland, College Park, MD 20742-4015, USA }
\email{ayashins@umd.edu}
\thanks{RJD was partially supported by NSF Grants DMS 2000057 and DMS 2247424. MG was supported by the Swedish Research Council Grant VR 2018-0350.}

\begin{abstract}
Building on work of Williams, Wieler proved that every irreducible Smale space with totally disconnected stable sets can be realized via a stationary inverse limit. Using this result, the first and fourth listed authors of the present paper showed that the stable $C^*$-algebra associated to such a Smale space can be obtained from a stationary inductive limit of a Fell algebra. Its spectrum is typically non-Hausdorff and admits a self-map related to the stationary inverse limit. With the goal of understanding the fine structure of the stable algebra and the stable Ruelle algebra, we study said self-map on the spectrum of the Fell algebra as a dynamical system in its own right. Our results can be summarized into the statement that this dynamical system is an expansive, surjective, local homeomorphism of a compact, locally Hausdorff space and from its $K$-theory we can compute $K$-theoretical invariants of the stable and unstable Ruelle algebra of a Smale space with totally disconnected stable sets. 
\end{abstract}

\maketitle

\section*{Introduction}
In \cite{Wil} Williams proved that every expanding attractor can be realized as a solenoid (i.e., a stationary inverse limit). The space in Williams' construction is a branched manifold and the map satisfies natural axioms. The expanding attractors studied by Williams are examples of Smale spaces that have totally disconnected stable sets. Building on Williams' axioms, Wieler \cite{Wie} proved that every Smale space with totally disconnected stable sets can be realized as a solenoid. Roughly speaking, Wieler replaces the differential geometric axioms of Williams with axioms that only rely on topological/metric space properties. This is necessary to include examples such as shifts of finite type where the solenoid is constructed from a Cantor set. In terms of notation, we let $(Y, g)$ denote the Wieler pre-solenoid and $(X, \varphi)$ denote the associated Smale space; as mentioned $(X, \varphi)$ is the stationary inverse limit of $(Y, g)$.

The stable $C^*$-algebra associated to a Smale space is introduced in \cite{Put} and \cite{PutSpi}. Building on work of Gon\c{c}alves \cite{Gon1, Gon2} (also see \cite{Min} and \cite{GonRamSol}) the first and fourth listed authors \cite{DeeYas} showed that the stable algebra is isomorphic to a stationary inductive limit when the Smale space is obtained from a Wieler pre-solenoid. The relevant $C^*$-algebra in the inductive limit is particularly nice; it is the Fell algebra associated to a local homeomorphism and furthermore the spectrum of this algebra is related to but typically not equal to the space in Wieler's construction. This paper aims at clarifying this relation.

More to the point, the spectrum of the Fell algebra is typically non-Hausdorff. However, it is compact and locally Hausdorff. In addition, the original map on the pre-solenoid lifts to a map on this compact, locally Hausdorff space. We denote this space and map as a pair $(X^u(\Per)/{\sim_0}, \tilde{g})$ where the notation is introduced in more detail in Section \ref{SecCstarWieler}. Importantly for us, the map $\tilde{g}$ is a local homeomorphism and (in a non-Hausdorff sense that is introduced in the present paper) expansive. In this sense, the dynamical system $(X^u(\Per)/{\sim_0}, \tilde{g})$ is a ``non-Hausdorff resolution'' of the failure of $(Y,g)$ to be an expansive local homeomorphism.

Thus, at the purely dynamical level, there is a trade-off between the Hausdorffness of the space of the pre-solenoid and the map being a local homeomorphism and expansive. For $C^*$-algebraic applications, the map being a local homeomorphism and expansive is often more important than the space being Hausdorff. Using $(X^u(\Per)/{\sim_0}, \tilde{g})$ as our primary example, we study the following three areas:\\

\emph{The notion of forward orbit expansiveness in the context of non-Hausdorff spaces.} This builds on the work of Achigar, Artigue, and Monteverde \cite{MR3501269} who study expansiveness for homeomorphisms on non-Hausdorff spaces.
We obtain natural conditions on the pre-solenoid (which is non-Hausdoff) that ensures that the associated solenoid is Hausdorff. In our prototypical example, we show that the solenoid associated with $(X^u(\Per)/{\sim_0}, \tilde{g})$ is the original Smale space. Thus any Smale space with totally disconnected stable sets can be realized as solenoid where the map of the pre-solenoid is an expansive, local homeomorphism; of course, the space cannot always be taken to be Hausdorff, see \cite[Example 10.3]{DeeYas}. This area is discussed in Section \ref{ExpDSNonHausSec}.\\

\emph{The inductive limit and Cuntz-Pimsner models for the stable and stable Ruelle algebra respectively.} Building on previous work in \cite{DGMW, DeeYas}, we relate stationary inductive limits to Cuntz-Pimsner algebras in Section \ref{cpsectionone}.
Our main result is a very general construction of a Cuntz-Pimsner model of a stationary inductive limit. Furthermore, we discuss the construction of unital Cuntz-Pimsner models under the assumption of the existence of a full projection. In our prototypical example $(X^u(\Per)/{\sim_0}, \tilde{g})$, the relevant Fell algebra always has a full projection as we show in Section \ref{fullprojectioninfell} and the stable algebra of the original Smale space is a stationary inductive limit of this Fell algebra and hence is the core of our Cuntz-Pimsner model of the stable Ruelle algebra.\\

\emph{The functorial properties of the $K$-theory of a compact, locally Hausdorff space.} The $K$-theory of a compact, locally Hausdorff space is defined using Fell algebras so functorial properties are subtle. In particular, a continuous map between compact, locally Hausdorff spaces need not induce a $*$-homomorphism at the $C^*$-algebra level. 
Nevertheless, we show that there are well-behaved right way and wrong way functors despite the failure of functoriality at the Fell algebra level. In our prototypical example, the map induced by $\tilde{g}$ plays a key role in computing the $K$-theory of the stable algebra and the stable Ruelle algebra of the original Smale space. These computations use the stationary inductive limit in the case of the stable algebra and the Cuntz-Pimsner models in the case of the stable Ruelle algebra; an explicit description of the wrong way map associated to $\tilde{g}$ is the key to such computations. This area is discussed in Section \ref{SecKthFun} where a number of illustrative examples are discussed at the end of the section.\\

For each of the three areas, we work at a quite general level. However, everything we consider is rooted and motivated by the dynamical system $(X^u(\Per)/{\sim_0}, \tilde{g})$ associated with a Wieler pre-solenoid. Before entering the above mentioned areas, we present some preliminary material in Section \ref{prelsection}, on Smale spaces, Fell algebras, groupoids and correspondences, as well as in Section \ref{SecCstarWieler}, on $C^*$-algebras associated with Wieler solenoids. The succeeding four sections of the paper treat the areas listed above and can be read independently of one another. Even if these last four sections of the paper are logically independent of each other, we believe they form a coherent picture of Wieler-Smale spaces. Such a description is missing in the literature except in the case of a Wieler-Smale space defined from a local homeomorphism \cite{DGMW}, see also Subsection \ref{SubSecWielerBasic} below. 

To avoid confusion about Hausdorffness, we indicate spaces that are possibly non-Hausdorff with a tilde. For instance, $X$ is Hausdorff while $\tilde{X}$ is possibly non-Hausdorff. The exception to this notation is $X^u(\Per)/{\sim_0}$ which in general can be non-Hausdorff.  \\

{\bf Acknowledgements}
The authors wish to thank Jamie Gabe for the examples leading up to Remark \ref{expaneeded}. The first listed author thanks the University of Hawaii and the Fields Institute for visits during which time the paper was completed.

\section{Preliminaries}
\label{prelsection}

In this section, we will present some preliminary material from the literature. As the paper aims for a rather broad view on Wieler-Smale spaces, we carefully review the known results. 

\subsection{Smale spaces} \label{Section-SmaleSpaces}
Although we are only interested in Wieler solenoids some definitions and basic properties of general Smale spaces are required. The reader can find more on Smale spaces in \cite{Kil, Put, PutSpi, Rue}.
\begin{definition} \label{SmaSpaDef}
A Smale space is a metric space $(X, d)$ along with a homeomorphism $\varphi: X\rightarrow X$ with the following additional structure: there exists global constants $\epsilon_X>0$ and $0< \lambda < 1$ and a continuous map, called the bracket map, 
\[
[ \ \cdot \  , \ \cdot \ ] :\{(x,y) \in X \times X : d(x,y) \leq \epsilon_X\}\to X
\]
such that the following axioms hold
\begin{itemize}
\item[B1] $\left[ x, x \right] = x$;
\item[B2] $\left[x,[y, z] \right] = [x, z]$ assuming both sides are defined;
\item[B3] $\left[[x, y], z \right] = [x,z]$ assuming both sides are defined;
\item[B4] $\varphi[x, y] = [ \varphi(x), \varphi(y)]$ assuming both sides are defined;
\item[C1] For $x,y \in X$ such that $[x,y]=y$, $d(\varphi(x),\varphi(y)) \leq \lambda d(x,y)$;
\item[C2] For $x,y \in X$ such that $[x,y]=x$, $d(\varphi^{-1}(x),\varphi^{-1}(y)) \leq \lambda d(x,y)$.
\end{itemize}
\end{definition}

A Smale space is denoted simply by $(X,\varphi)$ and to avoid certain trivialities, throughout the paper we assume that $X$ is infinite. 

\begin{definition}
Suppose $(X, \varphi)$ is a Smale space and $x$, $y$ are in $X$. We write 
\[ x \sim_s y \hbox{ if }\lim_{n \rightarrow \infty} d(\varphi^n(x), \varphi^n(y)) =0 \] 
and we write 
\[ x\sim_u y \hbox{ if } \lim_{n\rightarrow \infty}d(\varphi^{-n}(x) , \varphi^{-n}(y))=0. \] 
The $s$ and $u$ stand for stable and unstable  respectively.
\end{definition}

The global stable and unstable set of a point $x \in X$ are defined as follows:
\[
X^s(x) = \{ y \in X \: | \: y \sim_s x \} \hbox{ and } X^u(x)=\{ y \in X \: | \: y \sim_u x \}
\]
Furthermore, the local stable and unstable sets of $x$ are defined as follows: Given, $0< \epsilon \le \epsilon_X$, we have
\begin{align}
X^s(x, \epsilon) & = \{ y \in X \: | \: [x, y ]= y \hbox{ and }d(x,y)< \epsilon \} \hbox{ and } \\
X^u(x, \epsilon) & = \{ y \in X \: | \: [y, x]= y \hbox{ and } d(x,y)< \epsilon \}.
\end{align} 
The following result is a standard, see for example \cite{Put, Rue}.
\begin{thm} \label{wellKnownSmaleSpace} Suppose $(X, \varphi)$ is a Smale space and $x$, $y$ are in $X$ with $d(x,y)< \epsilon_X$. Then the following hold: for any $0< \epsilon \le \epsilon_X$
\begin{enumerate}
\item $X^s(x, \epsilon) \cap X^u(y, \epsilon)=\{[x, y]\}$ or is empty;
\item $\displaystyle X^s(x) = \bigcup_{n\in \N} \varphi^{-n}(X^s(\varphi^n(x), \epsilon))$;
\item $\displaystyle X^u(x) = \bigcup_{n\in \N} \varphi^n(X^u(\varphi^{-n}(x), \epsilon))$.
\end{enumerate}
\end{thm}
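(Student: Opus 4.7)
The plan is to derive part (1) directly from the bracket axioms B1--B4, and parts (2)--(3) from the contraction axioms C1--C2 together with the local product structure implicit in B1--B4. Because swapping $\varphi$ and $\varphi^{-1}$ interchanges the roles of stable and unstable sets, part (3) is the dual of part (2), so I will focus on (1) and (2).

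For part (1), suppose $z\in X^s(x,\epsilon)\cap X^u(y,\epsilon)$. Then by definition $[x,z]=z$ and $[z,y]=z$, so applying axiom B3 to the triple $(x,z,y)$ gives
\[
z \;=\; [z,y] \;=\; [[x,z],y] \;=\; [x,y].
\]
Conversely, when $d(x,y)<\epsilon_X$ the point $w:=[x,y]$ is defined, and B2 yields $[x,w]=[x,[x,y]]=[x,y]=w$ while B3 yields $[w,y]=[[x,y],y]=[x,y]=w$. Continuity of the bracket together with B1 (giving $[x,x]=x$) ensures that $d(x,w)$ and $d(w,y)$ can be made smaller than $\epsilon$ by choosing $d(x,y)$ small enough, placing $w$ in both $X^s(x,\epsilon)$ and $X^u(y,\epsilon)$.

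For part (2), the inclusion $\supseteq$ is immediate: if $\varphi^n(y)\in X^s(\varphi^n(x),\epsilon)$ then $[\varphi^n(x),\varphi^n(y)]=\varphi^n(y)$, and B4 propagates this identity under each further application of $\varphi$ (C1 keeps the relevant points inside the $\epsilon_X$-domain of the bracket at every step). Hence C1 applies iteratively and yields
\[
d(\varphi^{n+k}(x),\varphi^{n+k}(y)) \;\leq\; \lambda^{k} d(\varphi^n(x),\varphi^n(y)) \;\longrightarrow\; 0,
\]
so $y\sim_s x$. The reverse inclusion is the substantive direction: given $y\sim_s x$, one must exhibit $n$ so large that $\varphi^n(y)$ lies \emph{on} the local stable leaf through $\varphi^n(x)$, not merely in an $\epsilon$-neighborhood of it. Choosing $n$ with $d(\varphi^n(x),\varphi^n(y))$ sufficiently small and setting $z:=[\varphi^n(x),\varphi^n(y)]$, the computations of part (1) place $z$ in both $X^s(\varphi^n(x),\epsilon_X)$ and $X^u(\varphi^n(y),\epsilon_X)$.

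The main obstacle is to conclude $z=\varphi^n(y)$. One cannot invoke the false principle ``stably and unstably equivalent implies equal'' (which can fail at homoclinic points), so the argument must be carried out locally. The idea is to exploit the local product structure guaranteed by the bracket axioms: in a small enough neighborhood $U$ of $\varphi^n(x)$, the map $(a,b)\mapsto[a,b]$ provides a homeomorphism $X^s(\varphi^n(x),\eta)\times X^u(\varphi^n(x),\eta)\to U$, giving unique local stable and unstable coordinates. The hypothesis $y\sim_s x$ forces the unstable coordinate of $\varphi^n(y)$ relative to $\varphi^n(x)$ to collapse as $n$ grows, while $z$ already has trivial unstable coordinate by construction; matching coordinates then gives $z=\varphi^n(y)$, hence $\varphi^n(y)\in X^s(\varphi^n(x),\epsilon)$. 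The verification of the local product structure and the final matching of coordinates is standard Smale space theory and can be extracted from \cite{Put, Rue}.
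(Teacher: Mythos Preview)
The paper does not supply its own proof of this theorem: it is stated as a standard fact with a reference to \cite{Put, Rue}. So there is no ``paper's proof'' to compare against; you have written a proof where the authors chose to cite.

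Your argument for (1) is correct for what the statement actually asserts: if $z$ lies in the intersection then $z=[x,y]$, hence the intersection is either $\{[x,y]\}$ or empty. Your ``conversely'' paragraph is unnecessary---the statement does not claim the intersection is always nonempty, and your remark that $d(x,w),d(w,y)$ become small \emph{when $d(x,y)$ is small} does not match the fixed hypothesis $d(x,y)<\epsilon_X$; this is handled by the ``or is empty'' clause, so just drop that paragraph.

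For (2), the $\supseteq$ inclusion is fine. For $\subseteq$ your setup is right and you correctly isolate the real issue: showing $z:=[\varphi^n(x),\varphi^n(y)]$ equals $\varphi^n(y)$ for large $n$. Rather than invoking a local product chart, a cleaner route is to note that $z\in X^u(\varphi^n(y),\epsilon_X)$, so by C2 (applied contrapositively) the distance $d(\varphi^k(z),\varphi^{n+k}(y))$ would grow by a factor $\lambda^{-1}$ with each forward iterate while it remains in the local unstable set; on the other hand continuity of the bracket and $d(\varphi^m(x),\varphi^m(y))\to 0$ force $d([\varphi^m(x),\varphi^m(y)],\varphi^m(y))\to 0$, and B4 gives $\varphi^k(z)=[\varphi^{n+k}(x),\varphi^{n+k}(y)]$. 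These two facts are compatible only if $d(z,\varphi^n(y))=0$. This avoids appealing to the product chart and finishes the argument with just the axioms. Either way, since the paper itself defers to \cite{Put, Rue}, your level of detail already exceeds what the paper provides.
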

 A Smale space is mixing if for each pair of non-empty open sets $U$, $V$, there exists $N$ such that $\varphi^n(U)\cap V \neq \emptyset$ for all $n\geq N$. When $(X, \varphi)$ is mixing, $X^u(x)$ and $X^s(x)$ are each dense as subsets of $X$. However, one can use the previous theorem to give $X^u(x)$ and $X^s(x)$ locally compact, Hausdorff topologies, see for example \cite[Theorem 2.10]{Kil}.

\subsection{Background on Wieler solenoids} 
\label{SubSecWielerBasic}

Inspired by work of Williams \cite{Wil}, Wieler \cite{Wie} proved that every Smale space with totally disconnected stable sets $X^s(x)$ can be realized as a solenoid. Such Smale spaces will be referred to as Wieler solenoids or Wieler-Smale spaces. More precisely, she characterized Wieler-Smale spaces in terms of the following axioms on the pre-solenoid.

\begin{definition}[Wieler's Axioms] 
\label{WielerAxioms}
Let $(Y, \mathrm{d}_Y)$ be a compact metric space, and $g: Y \rightarrow Y$ be a continuous surjective map. Then, the triple $(Y, \mathrm{d}_Y, g)$ satisfies Wieler's axioms if there exists global constants $\beta>0$, $K\in \N^+$, and $0< \gamma < 1$ such that the following hold:
\begin{description}
\item[Axiom 1] If $x,y\in Y$ satisfy $\mathrm{d}_Y(x,y)\le \beta$, then 
\[
\mathrm{d}_Y(g^K(x),g^K(y))\leq\gamma^K \mathrm{d}_Y(g^{2K}(x),g^{2K}(y)).
\]
\item[Axiom 2] For all $x\in V$ and $0<\epsilon\le \beta$
\[
g^K(B(g^K(x),\epsilon))\subseteq g^{2K}(B(x,\gamma\epsilon)).
\] 
\end{description}
\end{definition}

\begin{definition} 
\label{WieSolenoid}
Suppose $(Y,\mathrm{d}_Y, g)$ satisfies Wieler's axioms and form the inverse limit space 
\[
X:= \varprojlim (Y, g) = \{ (y_n)_{n\in \N} = (y_0, y_1, y_2, \ldots ) \: | \: g(y_{i+1})=y_i \hbox{ for each }i\ge0 \}.
\]
Consider the map $\varphi: X \rightarrow X$ defined via
\[
\varphi(x_0, x_1, x_2, \ldots ) = (g(x_0), g(x_1), g(x_2), \ldots) = (g(x_0), x_0, x_1, \ldots ).
\] 
Following Wieler, we take a metric on $X$, $\mathrm{d}_X$, given by
\[
\mathrm{d}_X((x_n)_{n\in \N}, (y_n)_{n\in \N} ) = \sum_{i=0}^K \gamma^{i} \mathrm{d}^{\prime}_X( \varphi^{i}(x_n)_{n\in \N}, \varphi^{i}(y_n)_{n\in \N}),
\]
where $\mathrm{d}^{\prime}_X ( (x_n)_{n\in \N}, (y_n)_{n\in \N} )= \sup_{n\in \N} (\gamma^n \mathrm{d}_Y(x_n, y_n))$. We note that the topology induced by $\mathrm{d}_X$ is the product topology. The triple $(X, \mathrm{d}_X, \varphi)$ is called a Wieler solenoid. 
\end{definition}

\begin{remark}
We will assume that $Y$ is infinite. In particular, this ensures that $X$ is infinite and that $g$ is not a homeomorphism, see \cite{DeeYas}. The pair $(Y, g)$ will be called a presolenoid and $(X,\varphi)$ the associated solenoid or Smale space.
\end{remark}

\begin{thm}\cite[Theorems A and B on page 4]{Wie} 
\label{WielerTheorem}
Suppose that $(Y, \mathrm{d}_Y, g)$ satisfies Wieler's axioms. Then the associated Wieler solenoid $(X, \mathrm{d}_X, \varphi)$ is a Smale space with totally disconnected stable sets. The constants in Wieler's definition give Smale space constants: $\epsilon_X=\frac{\beta}{2}$ and $\lambda=\gamma$. Moreover, if $\xn=(x_n)_{n\in\N} \in X$ and $0< \epsilon \le \frac{\beta}{2}$, the locally stable and unstable sets of $(X, \mathrm{d}_X, \varphi)$ are given by
\[
X^s( \xn , \epsilon)= \{ \yn=(y_n)_{n\in \N} \: | \: y_m= x_m \hbox{ for }0\le m \le K-1 \hbox{ and } \mathrm{d}_X(\xn,\yn) \le \epsilon \}
\]
and
\[
X^u(\xn, \epsilon) =\{  \yn=(y_n)_{n\in \N} \: | \: \mathrm{d}_Y(x_n,y_n)< \epsilon \: \forall n \hbox{ and } \mathrm{d}_X(\xn,\yn) \le \epsilon \}
\]
respectively. 

Conversely, if $(X, \varphi)$ is an irreducible Smale space with totally disconnected stable sets, then there exists a triple $(Y, \mathrm{d}_Y, g)$ satisfying Wieler's axioms such that $(X, \varphi)$ is conjugate to the Wieler solenoid associated to $(Y, \mathrm{d}_Y, g)$.
\end{thm}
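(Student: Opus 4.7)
My plan is to handle the two directions separately.

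\emph{Forward direction (pre-solenoid $\Rightarrow$ Smale space).} The map $\varphi$ is automatically a homeomorphism as the canonical shift on $X=\varprojlim(Y,g)$. To build the bracket, for $\xn,\yn$ with $\mathrm{d}_X(\xn,\yn)\le \beta/2$, I would set $[\xn,\yn]_m = x_m$ for $0\le m\le K-1$ and then use Wieler's Axiom 2 to inductively select tail coordinates $[\xn,\yn]_m$ for $m\ge K$ as the unique preimage of $y_{m-K}$ under $g^K$ lying within distance $\gamma\,\mathrm{d}_Y(x_{m-K},y_{m-K})$ of $x_m$. The containment in Axiom 2 guarantees that such preimages exist; uniqueness at small scale, together with the $\gamma$-contraction, makes the whole construction consistent. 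Axioms (B1)--(B3) then reduce to uniqueness of small preimages, and (B4) is immediate since $\varphi$ is the shift.

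The contraction estimates (C1) and (C2) are the heart of the argument. If $[\xn,\yn]=\yn$, then $y_m=x_m$ for $0\le m\le K-1$, and applying $\varphi$ shifts the agreement one coordinate further; iterating Axiom 1 along the tail then yields that $\mathrm{d}_X(\varphi\xn,\varphi\yn)\le \gamma\,\mathrm{d}_X(\xn,\yn)$, which is (C1). For (C2), the case $[\xn,\yn]=\xn$ means $\mathrm{d}_Y(x_n,y_n)$ is small for all $n$, and Axiom 2 translates into the required contraction under $\varphi^{-1}$ (the backward shift). Total disconnectedness of the local stable sets is then read off from the explicit description $X^s(\xn,\epsilon)=\{\yn : y_m=x_m,\ 0\le m\le K-1,\ \mathrm{d}_X(\xn,\yn)\le\epsilon\}$: this set depends only on the tail $(y_m)_{m\ge K}$, and the Axiom 1 contraction forces its connected components to be singletons, as a standard telescoping argument shows.

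\emph{Converse direction (Smale space $\Rightarrow$ pre-solenoid).} Given an irreducible $(X,\varphi)$ with totally disconnected stable sets, I would construct $Y$ as a quotient $X/\!\sim$ where $\sim$ is generated by small stable equivalence: $x\sim y$ whenever $y\in X^s(x,\delta)$ for a suitable $\delta<\epsilon_X$. Because $\varphi$ sends stable sets into stable sets (strictly contracting them by $\lambda$), it descends to a continuous surjection $g\colon Y\to Y$, and the quotient metric $\mathrm{d}_Y$ can be arranged so that Wieler's two axioms hold with $\gamma=\lambda$ and a suitably chosen $K$ depending on $\delta$ and $\epsilon_X$. The conjugacy with the original Smale space is implemented by $x\mapsto(\pi(\varphi^{-n}(x)))_{n\in\N}$, where $\pi\colon X\to Y$ is the quotient map; injectivity uses that $\bigcap_n \varphi^n(X^s(\varphi^{-n}(x),\delta))=\{x\}$ (which follows from the stable contraction), and surjectivity uses irreducibility together with a standard shadowing argument.

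\emph{Expected main obstacle.} I expect the real work to lie in the converse, specifically in verifying Axioms 1 and 2 for the quotient $g$. One must calibrate $\delta$ and $K$ so that the local product structure of the Smale space descends compatibly to $Y$, and push the Smale-space contraction estimates through the quotient without losing uniformity. The total disconnectedness of stable sets is used essentially here, since it provides the clopen separations needed to show that $\pi$ restricts to a local homeomorphism on local unstable sets; without this, $g$ would fail to be continuous in a controlled way and Axiom 2 would be out of reach.
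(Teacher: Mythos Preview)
The paper does not prove this theorem at all: it is stated with the citation ``\cite[Theorems~A and~B on page~4]{Wie}'' and no argument is given, so there is no proof in the paper to compare your proposal against. The result is quoted as background from Wieler's original work, and the present paper takes it as a black box.

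That said, a brief comment on your sketch. The overall architecture is reasonable and in the spirit of Wieler's actual argument, but your description of the bracket has a slip: you write that for $m\ge K$ one should choose $[\xn,\yn]_m$ as ``the unique preimage of $y_{m-K}$ under $g^K$,'' but this is not compatible with the inverse-limit constraint $g([\xn,\yn]_m)=[\xn,\yn]_{m-1}$. What one actually needs is a point whose $g$-image is the previously constructed coordinate $[\xn,\yn]_{m-1}$ and which lies close to $y_m$; Axiom~2 is used to produce such a point, and Axiom~1 (eventual expansion) is what gives uniqueness at small scale. Your converse sketch (quotient by local stable equivalence, descend $\varphi$ to $g$, conjugacy via $x\mapsto(\pi(\varphi^{-n}x))_n$) is the right idea and is indeed how Wieler proceeds, though the verification of Axioms~1 and~2 for the quotient and the construction of a genuine metric $\mathrm{d}_Y$ making everything work is substantially more delicate than your outline suggests; if you want the details you will have to consult \cite{Wie} directly.
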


\begin{remark}
Wieler's axioms and the previous theorem should be compared with work of Williams \cite{Wil}. As mentioned in the introduction, an important difference between the two is that Wieler's are purely metric space theoretic. If a triple $(Y, \mathrm{d}_Y, g)$ satisfies Williams' axioms the inverse limit space $X$ with $\varphi$ as in definition \ref{WieSolenoid} is also Smale space and we will refer to such Smale spaces as Williams solenoids. However, we are most concerned with the more general case of a Wieler solenoid so we will not review Williams' axioms but rather direct the reader to \cite{Wil} for more details.
\end{remark}

An important special case occurs when $g$ is a local homeomorphism that satisfies Wieler's axioms. This special case was studied in detail in \cite{DGMW} (also see \cite[Section 4.5]{ThoAMS}). We recall a salient characterization of how refinements of Wieler's axioms (Definition \ref{WielerAxioms}) are equivalent to $g$ being a local homeomorphism, more detailed statements can be found in \cite[Section 3]{DGMW}.

\begin{thm}[Lemma 3.7 and 3.8 of \cite{DGMW}]
\label{equiforlocalhom}
Let $(Y, \mathrm{d}_Y)$ be a compact metric space, and $g: Y \rightarrow Y$ be a continuous surjective map. The following are equivalent:
\begin{itemize}
\item $(Y,\mathrm{d}_Y, g)$ satisfies Wieler's axioms and $g$ is a local homeomorphism.
\item $(Y,\mathrm{d}_Y, g)$ satisfies Wieler's axiom 1 and $g$ is open.
\item $(Y,\mathrm{d}_Y, g)$ satisfies Wieler's axiom $2$ and $g^K$ is locally expanding (for the $K$ in Wieler's axiom 2).
\end{itemize}
\end{thm}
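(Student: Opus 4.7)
The plan is to establish the three-way equivalence by proving the cycle $(1) \Rightarrow (2) \Rightarrow (3) \Rightarrow (1)$, throughout using compactness of $Y$ and uniform continuity of iterates of $g$. The implication $(1) \Rightarrow (2)$ is immediate because every local homeomorphism is an open map, and Axiom 1 is common to both statements.

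For $(2) \Rightarrow (3)$, my plan is to first upgrade openness to local injectivity. Iterating Axiom 1 shows that whenever the forward orbits of a pair $x,y$ remain within distance $\beta$ of each other under every iterate of $g^K$, one has
\[
\mathrm{d}_Y(g^K(x), g^K(y)) \le \gamma^{nK} \operatorname{diam}(Y) \xrightarrow{n\to\infty} 0,
\]
so such pairs are collapsed by $g^K$. Openness of $g$ (hence of $g^K$) together with compactness of $Y$ should then prevent distinct nearby preimages of a common point from staying in a common orbit-tube, producing a uniform scale below which $g^K$ is injective, hence a local homeomorphism; since $g$ is continuous and open with $g^K$ a local homeomorphism, $g$ itself is a local homeomorphism. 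The locally expanding property of $g^K$ then reads directly off Axiom 1 by reinterpreting it as a contraction of the local inverse, and Axiom 2 follows by combining local sections of $g^{2K}$ with openness of $g$ to push small balls forward in a controlled way.

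For $(3) \Rightarrow (1)$, the local expansion of $g^K$ immediately yields local injectivity of $g^K$, and hence of $g$; continuity, surjectivity, and compactness of $Y$ then promote $g$ to a local homeomorphism. Axiom 1 is recovered by taking $x,y$ close, setting $u = g^K(x)$, $v = g^K(y)$, and invoking Axiom 2: it supplies preimages under $g^{2K}$ of $g^K(u)$ and $g^K(v)$ lying within distance $\gamma \cdot \mathrm{d}_Y(u,v)$ of $x$ and $y$ respectively, which local injectivity identifies with $x$ and $y$ themselves, directly producing $\mathrm{d}_Y(u,v) \le \gamma^K \mathrm{d}_Y(g^K(u), g^K(v))$ after a $K$-fold iteration of this extraction of preimages.

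The main obstacle I expect lies in $(2) \Rightarrow (3)$: extracting local injectivity from openness plus Axiom 1 is delicate because Axiom 1 becomes vacuous exactly on the fibers of $g^K$, so on its own it cannot rule out a collapse. The crux is therefore to combine openness with the compactness of $Y$ to produce a uniform lower bound on distances between distinct points in a fiber of $g^K$. This step requires a careful compactness and Lebesgue-number argument and is, I suspect, the technical heart of the original proof in \cite{DGMW}.
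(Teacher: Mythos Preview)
The paper does not contain a proof of this theorem at all: it is stated with attribution to Lemmas~3.7 and~3.8 of \cite{DGMW} and then immediately followed by a remark pointing the reader to the non-Hausdorff analogue in Theorem~\ref{WielerNonHausOrbExp}. There is therefore no in-paper argument to compare your proposal against; any genuine comparison would have to be made with the original proofs in \cite{DGMW}.

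On the mathematics of your sketch itself: the cyclic scheme $(1)\Rightarrow(2)\Rightarrow(3)\Rightarrow(1)$ is reasonable, and you have correctly located the real difficulty in $(2)\Rightarrow(3)$. However, your $(3)\Rightarrow(1)$ step contains a gap that is not merely cosmetic. You write that local injectivity of $g$ together with continuity, surjectivity, and compactness of $Y$ ``promote $g$ to a local homeomorphism.'' This is false on general compact metric spaces: local injectivity plus continuity gives that $g$ restricted to a small closed neighbourhood is a homeomorphism onto its image, but the image need not be open in $Y$. Invariance of domain supplies this for manifolds, but Wieler's setting explicitly allows spaces such as Cantor sets where no such theorem is available. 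Openness must be extracted from Axiom~2 (combined with the local expansion), not from local injectivity alone; in effect you need Axiom~2 twice in $(3)\Rightarrow(1)$, once for openness and once for Axiom~1, and your sketch only uses it for the latter. Your derivation of Axiom~1 from Axiom~2 plus local injectivity also needs tightening: as written you invoke Axiom~2 to produce preimages and then claim a ``$K$-fold iteration'' yields the factor $\gamma^K$, but a single application of Axiom~2 already relates $g^K$ to $g^{2K}$ with factor $\gamma$, so the bookkeeping of constants does not obviously close up to give $\gamma^K$.
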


\begin{remark}
The reader is encouraged to compare Theorem \ref{equiforlocalhom} to the more satisfying situation arising from going to a non-Hausdorff setting in Theorem \ref{WielerNonHausOrbExp}.
\end{remark}

A list of examples of Wieler solenoids can be found in \cite{DeeYas}. Three explicit examples that are relevant and illustrative of the results in this paper are the following:

\begin{example}[$n$-solenoid] \label{nSolEx}
Let $S^1 \subseteq \C$ be the unit circle.  Take $n > 1$ and define 
$g: S^1 \rightarrow S^1$ via $z \mapsto z^n$.
Since $g$ is open and expansive (notice that $|g'(z)|=n$), one readily verifies Wieler's axioms for $(S^1, g)$ (see Theorem \ref{equiforlocalhom}). Hence the associated inverse limit is a Smale space. It is worth emphasizing that in this case $g$ is a local homeomorphism.
\end{example}

\begin{example}[$ab/ab$-solenoid] 
\label{ababSolEx}
Let $Y = S^1 \vee S^1$ be the wedge sum of two circles as in Figure \ref{Figure-ab/ab-PreSolenoid}.

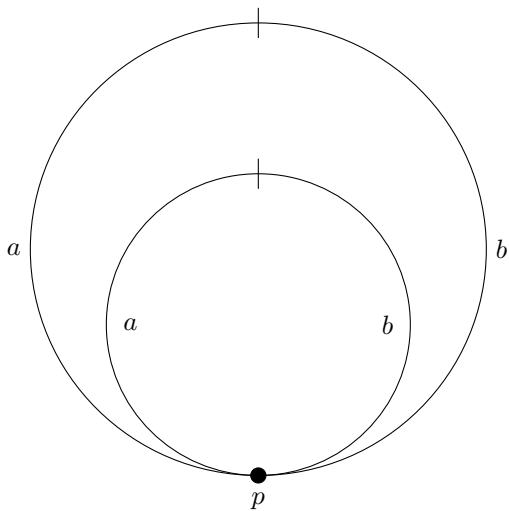
\begin{figure}[h]
\begin{tikzpicture}
\draw (0,0) circle [radius=3];
\draw (0,-1) circle [radius=2];
\draw[fill] (0,-3) circle [radius=0.1];
\draw (0,0.8) -- (0,1.2);
\draw (0,2.8) -- (0,3.2);
\node [below] at (0,-3.1) {$p$};
\node [left] at (-3.0,0) {$a$};
\node [right] at (3.0,0) {$b$};
\node [right] at (-1.9,-1) {$a$};
\node [left] at (1.9,-1) {$b$};
\end{tikzpicture}
\caption{$ab/ab$ pre-solenoid}
\label{Figure-ab/ab-PreSolenoid}
\end{figure}

The map $g: Y \rightarrow Y$ is defined using Figure \ref{Figure-ab/ab-PreSolenoid}. In Figure \ref{Figure-ab/ab-PreSolenoid}, we consider the outer circle to be the $a$-circle and the inner circle to be the $b$-circle. Each line segment labelled with $a$ in Figure \ref{Figure-ab/ab-PreSolenoid} is mapped onto the $a$-circle (i.e., the outer circle); while, each line segment labelled with $b$ in Figure \ref{Figure-ab/ab-PreSolenoid} is mapped onto the $b$-circle (i.e., the inner circle). The mapping is done in an orientation-preserving way, provided we have oriented both circles the same way, say clockwise. Note that $g$ is not a local homeomorphism in this example. For more details on this specific example and one-solenoids in general, see \cite{ThoSol, WilOneDim, Yi}. The next example is also of this form.
\end{example}

\begin{example}[$aab/ab$-solenoid] \label{aababSolEx}
Again, we take $Y = S^1 \vee S^1$ to be the wedge sum of two circles but with labels as in Figure \ref{Figure-aab/ab-PreSolenoid}. This example has been studied in \cite{DeeGofYasFellAlgPaper,DeeYas}.

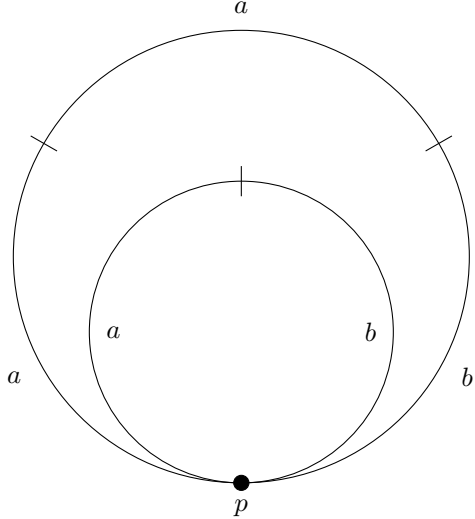
\begin{figure}[h]
\begin{tikzpicture}
\draw (0,0) circle [radius=3];
\draw (0,-1) circle [radius=2];
\draw[fill] (0,-3) circle [radius=0.1];
\draw (0,.8) -- (0,1.2);
\draw (-2.42487,1.4) -- (-2.77128,1.6);
\draw (2.42487,1.4) -- (2.77128,1.6);
\node [below] at (0,-3.1) {$p$};
\node [above] at (0,3.1) {$a$};
\node [left] at (-2.77128,-1.6) {$a$};
\node [right] at (2.77128,-1.6) {$b$};
\node [right] at (-1.9,-1) {$a$};
\node [left] at (1.9,-1) {$b$};
\end{tikzpicture}
\caption{$aab/ab$ pre-solenoid}
\label{Figure-aab/ab-PreSolenoid}
\end{figure}

The map $g: Y \rightarrow Y$ is defined from Figure \ref{Figure-aab/ab-PreSolenoid} via the same process as in Example \ref{ababSolEx}. The map $g$ is of course different than the one in Example \ref{ababSolEx} because the labels are different. Again, the resulting map $g$ is not a local homeomorphism and is an example of a one-solenoid, again see \cite{ThoSol, WilOneDim, Yi} for more on this class of examples.
\end{example}

\subsection{Fell algebras and their spectrum}
\label{fellalgsubsec}

Let us recall the basic facts about Fell algebras that we make use of. The spectrum of a separable $C^*$-algebra $A$ is defined as the set $\hat{A}$ of equivalence classes of irreducible representations (see \cite[Chapter 3]{Dix:C*}). The spectrum $\hat{A}$ can be topologized in several different ways, for instance in the Fell topology or the Jacobson topology. For a Fell algebra, the topologies coincide. The spectrum $\hat{A}$ is locally quasi-compact by  \cite[Corollary 3.3.8]{Dix:C*}.

A $C^*$-algebra $A$ is a Fell algebra if every $[\pi_0]\in \hat{A}$ admits a neighborhood $U$ and an element $b\in A$ such that $\pi(b)$ is a rank one projection for all $[\pi]\in U$. An equivalent definition of a Fell algebra is that $A$ is generated by its abelian elements. For details, see \cite[Chapter 3]{HKS}. A Fell algebra has locally Hausdorff spectrum (i.e. any $[\pi]\in \hat{A}$ has a Hausdorff neighborhood) by \cite[Corollary 3.4]{archsom}. The spectrum of a $C^*$-algebra is always locally quasi-compact. The properties of the spectrum of a Fell algebra can be summarized as being locally Hausdorff and locally locally compact (see \cite[Chapter 3]{CHR}). 

\begin{definition}
\label{hausres}
Let $\tilde{Y}$ be a topological space. A Hausdorff resolution of $\tilde{Y}$ is a surjective local homeomorphism $\psi:X\to \tilde{Y}$ from a locally compact, Hausdorff space $X$.
\end{definition}

\begin{example}
\label{fellex}
The main example of a Fell algebra that we will concern ourselves with arises in a rather explicit way from a Hausdorff resolution. The construction can be found in \cite[Corollary 5.4]{CHR}. Suppose that $\psi:X\to \tilde{Y}$ is a Hausdorff resolution of a topological space $\tilde{Y}$. It follows that $\tilde{Y}$ is locally Hausdorff and locally locally compact, and second countable if $X$ is. We define the equivalence groupoid
$$R(\psi):=X\times_\psi X:=\{(y_1,y_2)\in X\times X: \psi(y_1)=\psi(y_2)\}.$$
By declaring the domain and range mappings $d(y_1,y_2):=y_2$ and $r(y_1,y_2):=y_1$ to be local homeomorphisms, $R(\psi)$ becomes an etale groupoid over $X$. By \cite[Corollary 5.4]{CHR}, $C^*(R(\psi))$ is a Fell algebra with vanishing Dixmier-Douady invariant and spectrum $\tilde{Y}$. We also note that $R(\psi)$ is amenable so $C^*(R(\psi))=C^*_r(R(\psi))$.
\end{example}

The theory of Dixmier-Douady invariants of Fell algebras was introduced and developed in \cite{HKS}, also see \cite{CHR}. We only need to consider Fell algebras with vanishing Dixmier-Douady class in which case the following theorem (see \cite{HKS, CHR}) reduces the problem to a more manageable situation.

\begin{thm} 
\label{fellchar}
We have the following relationship between Fell algebras and non-Hausdorff spaces.
\begin{enumerate}
\item[a.] Let $A$ be a separable Fell algebra with vanishing Dixmier-Douady invariant. Then the locally Hausdorff and locally locally compact space $\hat{A}$ determines $A$ up to stable isomorphism in the sense that whenever $A'$ is a separable Fell algebra with vanishing Dixmier-Douady invariant then a homeomorphism $h:\hat{A}\to \widehat{A'}$ can be lifted to a stable isomorphism $A\otimes \mathbb{K}\cong A'\otimes \mathbb{K}$.
\item[b.] A topological space $\tilde{Y}$ is locally Hausdorff and locally locally compact if and only if it admits a Hausdorff resolution $\psi:X\to \tilde{Y}$. If $\tilde{Y}$ is second countable then $X$ can also be choosen second countable. In particular, any second countable, locally Hausdorff, locally locally compact topological space is the spectrum of a separable Fell algebra with vanishing Dixmier-Douady invariant.
\end{enumerate}
\end{thm}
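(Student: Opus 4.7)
The plan is to handle (b) first since it provides the concrete model appearing in (a), and to ultimately appeal to the Dixmier--Douady classification of Fell algebras from \cite{HKS, CHR} for (a).

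For the easy direction of (b), suppose $\psi:X\to \tilde{Y}$ is a Hausdorff resolution. Fix $y\in \tilde{Y}$ and pick $x\in \psi^{-1}(y)$ together with an open neighborhood $U\subseteq X$ of $x$ on which $\psi|_U$ is a homeomorphism onto an open subset $V\subseteq \tilde{Y}$. Since $U$ inherits local compactness and the Hausdorff property from $X$, so does $V$, and thus $\tilde{Y}$ is locally Hausdorff and locally locally compact (second countability transfers analogously if $X$ is second countable).

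For the converse, I would construct $X$ as a disjoint union over a chart cover. Using the hypothesis, choose a cover $\{U_i\}_{i\in I}$ of $\tilde{Y}$ by open sets such that each $U_i$, equipped with the subspace topology, is Hausdorff and locally compact; in the second countable case, a countable subcover can be extracted because the topology has a countable basis and each basis element can be shrunk within a Hausdorff, locally compact neighborhood. Set
\[
X := \bigsqcup_{i\in I} U_i,
\]
with the disjoint union topology, and define $\psi:X\to \tilde{Y}$ to be the inclusion on each $U_i$. Then $X$ is locally compact Hausdorff (and second countable in the countable case), $\psi$ is surjective because $\{U_i\}$ covers $\tilde{Y}$, and $\psi$ is a local homeomorphism since its restriction to each clopen piece $U_i\subseteq X$ is an open embedding.

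For (a), the strategy is to combine the construction in (b) with the classification theory of \cite{HKS}. Given a separable Fell algebra $A$ with vanishing Dixmier--Douady invariant and $\hat{A}=\tilde{Y}$, apply part (b) to produce a Hausdorff resolution $\psi:X\to \tilde{Y}$ with $X$ second countable. By Example \ref{fellex} (which invokes \cite[Corollary 5.4]{CHR}), the groupoid $C^*$-algebra $C^*(R(\psi))$ is a separable Fell algebra whose spectrum is $\tilde{Y}$ and whose Dixmier--Douady invariant vanishes. The classification theorem of \cite{HKS} asserts that the pair $(\hat{A}, \mathrm{DD}(A))$ is a complete stable isomorphism invariant for separable Fell algebras, so the homeomorphism $\hat{A}\cong \widehat{C^*(R(\psi))}$ together with the equality of vanishing Dixmier--Douady classes yields $A\otimes \mathbb{K}\cong C^*(R(\psi))\otimes \mathbb{K}$, and similarly for $A'$, giving $A\otimes \mathbb{K}\cong A'\otimes \mathbb{K}$ via a lift of any homeomorphism $h:\hat{A}\to \widehat{A'}$.

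The main obstacle is (a): while (b) is essentially a chart construction, (a) rests on the full Dixmier--Douady machinery for Fell algebras and its functorial compatibility with homeomorphisms of the spectrum, for which I would cite \cite{HKS} rather than reproduce. Care is needed to ensure that the stable isomorphism can be arranged to \emph{lift} a prescribed homeomorphism $h$, rather than merely exist; this follows from the naturality of the twist classification in \cite{HKS, CHR}, and would be the step requiring the most attention to detail in a fully self-contained treatment.
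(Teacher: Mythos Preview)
Your proposal is correct and aligns with the paper's treatment: the paper does not prove this theorem but simply refers to \cite[Theorem 7.13]{HKS} for part (a) and \cite[Corollary 5.5]{CHR} for part (b). Your sketch of the chart construction for (b) is exactly the standard argument underlying \cite{CHR}, and your reduction of (a) to the Dixmier--Douady classification of \cite{HKS} is the intended route; the additional care you flag about lifting a prescribed homeomorphism $h$ is a valid point that the paper leaves implicit in the citation.
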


\begin{remark}
\label{remarkonfellchar}
One can view the previous theorem as follows. Taking the spectra defines an equivalence between the category of stable isomorphism classes of separable Fell algebras with vanishing Dixmier-Douady invariants and that of second countable, locally Hausdorff and locally locally compact spaces. However, it should be emphasized that the morphisms in the category in the previous sentence are homeomorphisms; one cannot generalize to the case of continuous map, even in the case of locally Hausdorff and compact spaces. 
\end{remark}

For a proof of the first statement on stable uniqueness, see \cite[Theorem 7.13]{HKS}. The existence result in the second statement can be found in \cite[Corollary 5.5]{CHR}. 

\subsection{Correspondences}
Results from this section and the next will not be needed until Section \ref{cpsectionone}. Furthermore, the reader only interested in the purely dynamical results of the present paper can skip to Section \ref{SecCstarWieler} without issue.

A $C^*$-correspondence $\pre AE_B$ is a right Hilbert $B$-module equipped with a left action given by a homomorphism $\varphi_E: A\rightarrow \LL(E)$, where $\LL(E)$ denotes the $C^*$-algebra of adjointable operators on $E$. In the literature one also finds the term $A-B$-Hilbert $C^*$-module for a $C^*$-correspondence from $A$ to $B$. A $C^*$-correspondence homomorphism $\pre AE_B  \rightarrow \pre AF_B$ is a $B$-linear map $\Phi: E\rightarrow F$  satisfying
 \[ \Phi(a\cdot \xi)=a\cdot \Phi(\xi) \hbox{ and } \<\xi, \nu\>_C  = \<\Phi(\xi), \Phi(\nu)\>_C, \]
for all $a\in A$, and $\xi, \nu\in E.$

\begin{definition}\label{precor} An $A-B$ bimodule $E_0$ is called a \emph{pre-correspondence} if it has a $B$-valued semi-inner product satisfying 
\[ \<\xi, \nu\cdot b\> = \<\xi,\nu\> b ,   \hspace{.5cm} \<\xi,\nu\>^*=\<\nu ,\xi\> \]
and $\<a\cdot \xi, a\cdot \xi\> \leq \norm{a}^2\<\xi,\xi\>$ for all $a\in A, b\in B$ and $\xi,\nu \in E_0$. Modding out by the elements of length $0$ and completing gives a $C^*$-correspondence $\pre AE_B$ . We call $\pre AE_B$\ the \emph{completion} of the pre-correspondence $E_0$. 
\end{definition}

\begin{proposition}\label{preProp}\tn{\cite[Lemma 1.23]{enchilada}}
Let $E_0$ be an $A-B$ pre-correspondence given with the completion $\pre AE_B$ , and  let  $F$ be an $A-B$ correspondence. If there is a map $\Phi : E_0 \rightarrow F$ satisfying 
\[ \Phi (a\cdot \xi) = \varphi_Z (a) \Phi(\xi)  \midtext{and} \<\Phi(\xi), \Phi(\nu) \>_B = \<\xi, \nu\>_{B} , \]
for all $a\in A$ and $\xi,\nu\in E_0$, then $\Phi$ extends uniquely to an injective $A-B$ correspondence homomorphism $\tilde{\Phi}: E \rightarrow F. $
\end{proposition}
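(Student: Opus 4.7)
The plan is to extend $\Phi$ by continuity after showing it is an isometry for the semi-norm on $E_0$, and then to verify all the correspondence axioms pass to the completion.

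First I would record the key isometry identity
\[
\norm{\Phi(\xi)}_F^2 \;=\; \norm{\<\Phi(\xi),\Phi(\xi)\>_B} \;=\; \norm{\<\xi,\xi\>_B} \;=\; \norm{\xi}_{E_0}^2,
\]
which uses only the given inner-product preservation. This has two immediate consequences: $\Phi$ vanishes on the null space $N=\{\xi\in E_0:\<\xi,\xi\>=0\}$, so it descends to a well-defined map on $E_0/N$; and it is isometric with respect to the Hilbert module norms, hence uniformly continuous, and therefore admits a unique continuous extension $\tilde\Phi:E\to F$ to the completion $E$.

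The second step is to check that $\tilde\Phi$ is an $A$--$B$ correspondence homomorphism. Preservation of the $B$-valued inner product on $E_0$ is given by hypothesis, and persists on $E$ by continuity of both inner products in both arguments. Left $A$-linearity of $\tilde\Phi$ likewise follows by continuity from the hypothesis $\Phi(a\cdot\xi)=\varphi_E(a)\Phi(\xi)$ on $E_0$. The only point that requires a short calculation is right $B$-linearity, which is \emph{not} assumed: expanding
\[
\<\Phi(\xi\cdot b)-\Phi(\xi)\cdot b,\;\Phi(\xi\cdot b)-\Phi(\xi)\cdot b\>_B
\]
into four inner products and using the hypothesis together with $\<\xi\cdot b,\nu\>=b^*\<\xi,\nu\>$ and $\<\nu,\xi\cdot b\>=\<\nu,\xi\>b$ on $E_0$ collapses all four terms to $b^*\<\xi,\xi\>b$, so the difference has zero norm in $F$ and hence vanishes. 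Thus $\Phi(\xi\cdot b)=\Phi(\xi)\cdot b$ on $E_0$, and again continuity promotes this to $\tilde\Phi$.

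Finally, uniqueness of $\tilde\Phi$ is immediate because $E_0/N$ is dense in $E$ and any two continuous extensions of the same map from a dense subset agree. Injectivity of $\tilde\Phi$ is also immediate from the isometry identity above, extended by continuity: if $\tilde\Phi(x)=0$ then $\<x,x\>_B = \<\tilde\Phi(x),\tilde\Phi(x)\>_B=0$, so $x=0$ in $E$.

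There is no real obstacle here; the only mildly subtle point is the automatic right $B$-linearity, which one should double-check is genuinely forced by the isometry and left $A$-linearity hypotheses rather than assumed separately, and the rest of the argument is a standard density/continuity extension along the pattern of constructing Hilbert $C^*$-module completions as in Definition \ref{precor}.
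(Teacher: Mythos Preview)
Your argument is correct and complete. The paper does not give its own proof of this proposition; it simply cites \cite[Lemma 1.23]{enchilada}, so there is nothing to compare against beyond noting that your density-and-continuity extension is exactly the standard route one would expect from the cited reference. The one point worth flagging is notational: the statement writes $\varphi_Z$ where the left action on $F$ is presumably meant (the paper's convention is $\varphi_F$), and you silently corrected this in spirit; otherwise your verification of automatic right $B$-linearity from inner-product preservation is the only nontrivial step and you handled it correctly.
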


The \emph{balanced tensor product} $E\otimes_BF$ of
an $A-B$ correspondence $E$ and a $B-C$ correspondence $F$ is
formed as follows:
the algebraic tensor product $E\odot F$
is a pre-correspondence with the $A-C$ bimodule structure satisfying
\[
a(\xi\otimes \nu)c=a\xi\otimes \nu c
\righttext{for}a\in A,\xi\in E,\nu\in F,c\in C,
\]
and the unique $C$-valued semi-inner product whose values on elementary tensors are given by
\[
\<\xi_1\otimes \nu_1 ,\xi_2 \otimes \nu_2\>_C=\<\nu_1,\<\xi_1,\xi_2\>_B\cdot \nu_2\>_C
\righttext{for}\xi_1, \xi_2 \in E,\nu_1,\nu_2\in F.
\]

This semi-inner product defines a $C$-valued inner product on the quotient $E{\odot}_BF$ of $E\odot F$ by the subspace generated by elements of form 
\[ \xi\cdot b \otimes \nu - \xi\otimes \varphi_Y(b)\nu \righttext{($\xi\in E$, $\nu\in F$, $b\in B$)}.\]
The completion $E\otimes_B F$ of  $E{\odot}_BF$ with respect to the norm coming from the $C$-valued inner product is an $A-B$ correspondence, where the left action is given by 
 \[A\rightarrow \LL(E\otss F),  \righttext{$a\mapsto \varphi_E(a)\ot 1_F,$}\]
for $a\in A.$ In other words, the $A-C$ correspondence $E\otss F$ is the completion of the pre-correspondence $E\odot F$ (as in Definition \ref{precor}). 

We denote the canonical image of $\xi\otimes \nu$ in $E\otss F$ by $\xi\otss \nu$. The term \emph{balanced} refers to the property
\[
\xi\cdot b\otss \nu=\xi\otss b\cdot \nu
\righttext{for}\xi\in E,b\in B,\nu\in F,
\]
which is a consequence of the construction.

\begin{definition}\cite[Definition 5.7]{Sk}\label{mor} 
Hilbert modules $E_A$ and $F_B$ are Morita equivalent if there exists an imprimitivity bimodule $\pre BM_C$ such that $E\otss M \cong F$ as Hilbert C-modules. \end{definition}

We now put Definition~\ref{mor} in the setting of $C^*$-correspondences:

\begin{definition}
$C^*$-correspondences $\pre AE_B$\ and $\pre AF_C$ are Morita equivalent if there exists an imprimitivity bimodule $\pre BM_C$ such that $E\otss M \cong F$ as $A-C$ correspondences. 
\end{definition}

\subsection{Groupoid Actions and Equivalence }

The correspondences reviewed in the last section will in this paper arise from groupoids and their actions. Again, the reader only interested in the purely dynamical results can skip to Section \ref{SecCstarWieler} without issue.

\begin{definition}Suppose $G$ is a groupoid and that $X$ is a set together with a map $\rho: X\rightarrow G^{(0)}$ called the \emph{moment map}. Then a left action of $G$ on $X$ is a map $(g,x)\mapsto g\cdot x$ from $G*X=\{ (g,x)\in G\times X: s(g)=\rho(x)\}$ to $X$ such that 
\begin{itemize}
\item $\rho(x)\cdot x = x$, for all $x\in X$ and 
\item if $(g,g')\in G^{(2)}$ and $(g',x)\in G*X$, then $(g, g'\cdot x)\in G*X$ and $gg'\cdot x= g\cdot(g'\cdot x). $
\end{itemize}
\emph{Right actions are defined analogously, and we denote by $\sigma$ the moment map for a right action.} 
\end{definition}

\begin{definition}\cite[Definition 1.2]{M} 
\label{groupoidcorr}
Let $G_1$ and $G_2$ be second countable locally compact Hausdorff groupoids and $Z$ a second countable locally compact Hausdorff space. The space $Z$ is a groupoid correspondence from $G_1$ to $G_2$ if it satisfies the following properties:
\begin{enumerate}
\item there exists a left proper action of $G_1$ on $Z$ such that $\rho$ is an open map;
\item there exists a right proper action  of $G_2$ on Z;
\item the $G_1$ and $G_2$ actions commute;
\item the map $\rho$ induces a bijection of $Z/G_2$ onto $G_1^{(0)}$. 
\end{enumerate}
\end{definition}

\begin{thm}\tn{\cite[Theorem 1.4]{M}}\label{corGroupoid} Let $G_1$, $G_2$ be second countable locally compact Hausdorff etale groupoids; and let $Z$ be a groupoid correspondence from $G_1$ to $G_2$. Then the pre-correspondence $\pre {C_c(G_2)}C_c(Z)_{C_c(G_1)}$ extends to a correspondence from $C^*(G_2)$ to $C^*(G_1)$ with the actions
\begin{align*}
(\xi\cdot a)(z) &= \sum_{\substack{g\in G_1 with \\s(g)=\rho(z)}} \xi(g\cdot z)a(g)\\
(b\cdot \xi)(z)&=\sum_{\substack{g\in G_2 with \\ \sigma(z)=r(g^{-1})}} b(g^{-1})\xi(z\cdot g^{-1})
\end{align*}
for $\xi\in C_c(Z), a\in C_c(G_1), b\in C_c(G_2), z\in Z.$ The inner product is defined by  
\[\<\xi_1, \xi_2\>(g) = \sum_{\substack{h^{-1}\in G_2 with \\  r(h^{-1})=\sigma(z)}} \overline{\xi_1(z\cdot h^{-1})}\xi_2 (g^{-1}\cdot z\cdot h^{-1}),\]
where $g\in G_1, \xi_1, \xi_2\in C_c(Z),$ and  $z\in Z$ such that $r(g)=\rho(z)$.
\end{thm}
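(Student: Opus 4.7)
The plan is to construct the correspondence in three stages: verify that $C_c(Z)$ carries the structure of a $C_c(G_2)$–$C_c(G_1)$ pre-correspondence; complete with respect to the inner-product norm to obtain a Hilbert $C^*(G_1)$-module; and finally extend the left $C_c(G_2)$-action to $C^*(G_2)$ by bounded adjointable operators. Throughout I would exploit that $G_1$ and $G_2$ are etale, so the fibers of the moment maps in $Z$ are discrete, and that both groupoid actions are proper.

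First I would verify that all of the defining formulas produce compactly supported functions on $Z$ or $G_1$. For the right action $(\xi\cdot a)(z)=\sum_{s(g)=\rho(z)}\xi(g\cdot z)a(g)$, finiteness of the sum for each $z$ follows from the etaleness of $G_1$ together with compactness of $\supp(a)$ and $\supp(\xi)$ (via properness of the $G_1$-action applied to the compact set $\supp(\xi)\cdot\supp(a)^{-1}$). The same reasoning applies to $(b\cdot\xi)$ and to $\langle\xi_1,\xi_2\rangle$. Continuity then follows because etale groupoids have open range and source maps. Next I would check the purely algebraic bimodule identities: $(\xi\cdot a)\cdot a'=\xi\cdot(a*a')$, $b\cdot(b'\cdot\xi)=(b*b')\cdot\xi$, commutation of the left and right actions (which uses the commutation hypothesis in Definition~\ref{groupoidcorr}(3)), $\langle\xi_1,\xi_2\rangle^*=\langle\xi_2,\xi_1\rangle$, and the right-sesquilinearity $\langle\xi_1,\xi_2\cdot a\rangle=\langle\xi_1,\xi_2\rangle * a$. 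These are all change-of-variable computations using the groupoid identities $s(g)=r(g^{-1})$ and the action axioms; the bijection $Z/G_2\xrightarrow{\sim} G_1^{(0)}$ from Definition~\ref{groupoidcorr}(4) guarantees consistency of how the moment maps interact.

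The heart of the proof is verifying positivity of the $C_c(G_1)$-valued inner product and boundedness of the left action, and here I would follow the standard disintegration strategy used in the proof of the analogous statement for equivalence bimodules (Muhly–Renault–Williams). The idea is to use that, for each unit $u\in G_1^{(0)}$, the restricted fiber $\rho^{-1}(u)\subset Z$ carries a measure pulled back from counting measure on $G_2$-orbits via the identification in Definition~\ref{groupoidcorr}(4); integrating against $\pi_u$ for an irreducible representation of $C^*(G_1)$ and invoking the disintegration of a regular representation of $C^*(G_1)$ through its unit fibers shows $\pi(\langle\xi,\xi\rangle)=T_\xi^*T_\xi\geq 0$ for an explicit operator $T_\xi$. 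Once positivity is established, one completes $C_c(Z)$ in $\|\xi\|:=\|\langle\xi,\xi\rangle\|_{C^*(G_1)}^{1/2}$ to form a right Hilbert $C^*(G_1)$-module $E$, and Proposition~\ref{preProp} identifies $E$ as the completion in the sense of Definition~\ref{precor}.

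For the left action, the same disintegration argument produces an operator bound $\|b\cdot\xi\|_E\le \|b\|_{C^*(G_2)}\|\xi\|_E$ for $b\in C_c(G_2)$: one expresses $b\cdot\xi$ as the action of a convolution operator on sections over the fiber $\rho^{-1}(u)$ and compares with the regular representation of $C^*(G_2)$. Adjointability of the resulting operator $\pi_E(b)\in\LL(E)$ follows from a direct computation $\langle b\cdot\xi_1,\xi_2\rangle=\langle\xi_1,b^*\cdot\xi_2\rangle$, which, combined with the bound, extends $\pi_E$ to a $*$-homomorphism $C^*(G_2)\to\LL(E)$. The main obstacle I expect is the positivity/boundedness step: writing down the disintegration carefully so that the sums inherent in the actions are correctly matched to the Haar system on the etale groupoids. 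Once that is in hand, all the remaining verifications are routine manipulations.
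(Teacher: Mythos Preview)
The paper does not prove this statement at all: Theorem~\ref{corGroupoid} is quoted verbatim from \cite[Theorem~1.4]{M} (Macho Stadler--O'uchi) and is used in the paper only as a black box, for instance to deduce Proposition~\ref{lem}. So there is nothing in the paper to compare your argument against.

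That said, your outline is a reasonable sketch of how the cited theorem is actually proved in the literature, following the Muhly--Renault--Williams template for groupoid equivalence bimodules adapted to the one-sided (correspondence) setting. The algebraic verifications you list are indeed routine change-of-variable computations, and you have correctly identified positivity of the $C_c(G_1)$-valued inner product and boundedness of the left action as the substantive analytic steps. One point to be careful about: in the general correspondence setting (as opposed to a genuine equivalence), the left action need not be by compacts and the bimodule need not be full on the left, so the disintegration argument must be run asymmetrically; your sketch implicitly acknowledges this by treating the right and left actions separately. If you were to write this out fully you would also need to be precise about which Haar systems are in play in the non-\'etale formulation of \cite{M}, though in the \'etale case everything reduces to counting measures as you indicate.
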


\section{$C^*$-algebras associated to a Wieler solenoid}
\label{SecCstarWieler}

The fine structure of the stable algebra of a Wieler solenoid was studied in \cite{DeeYas}, extending results from \cite{DGMW} in the case of the pre-solenoid being defined from a local homeomorphism. We here review and refine the relevant points of \cite{DeeYas}, leading up to the stable algebra of a Wieler solenoid being a stationary inductive limit of a Fell algebra defined from the dynamics. This Fell algebra plays an important role in the paper, as its spectrum will be the non-Hausdorff dynamical system $(X^u(\Per))/{\sim_0},\tilde{g})$ of main interest in this paper. 

\subsection{The stable algebra of a Smale space}
Following \cite{PutSpi}, we construct the stable groupoid of $(X, \varphi)$. Let $\Per$ denote a finite $\varphi$-invariant set of periodic points of $\varphi$ and define
\[
X^u(\Per)=\{ x \in X \: | \: x \sim_u p \hbox{ for some }p \in \Per \}
\]
and
\[
G^s(\Per) := \{ (x, y) \in X^u(\Per) \times X^u(\Per) \: | \: x \sim_s y \}.
\] 
Still following \cite{PutSpi}, a topology is defined on $G^s(\Per)$ by constructing a neighborhood base. Suppose $(x,y)\in G^s(\Per)$. Then there exists $k\in \N$ such that 
\[
\varphi^k(x) \in X^s\left(\varphi^k(y), \frac{\epsilon_X}{2}\right).
\]
Since $\varphi$ is continuous there exists $\delta>0$ such that 
\[
\varphi^k( X^u(y, \delta)) \subseteq X^u\left(\varphi^k(y), \frac{\epsilon_X}{2}\right).
\]
Using this data, we define a function $h_{(x,y,\delta)} : X^u(y, \delta) \rightarrow X^u(x, \epsilon_X)$ via
\[ 
z \mapsto \varphi^{-k} ( [ \varphi^k(z) , \varphi^k(x) ]) 
\]
and have the following result from \cite{PutSpi}:
\begin{thm} \label{etaleTopThm}
The function $h=h_{(x,y,\delta)}$ is a homeomorphism onto its image and (by letting $x$, $y$, and $\delta$ vary) the sets
\[
V(x,y,h, \delta) := \{ ( h(z), z ) \: | \: z \in X^u(y, \delta) \} 
\]
forms a neighborhood base for an \'{e}tale topology on the groupoid $G^s(\Per)$. Moreover, the groupoid $G^s(\Per)$ is amenable, second countable, locally compact, and Hausdorff.
\end{thm}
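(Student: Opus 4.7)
This theorem is essentially the content of \cite{PutSpi}, and the plan is to outline how each of its four assertions---$h$ is a homeomorphism onto its image, the $V(x,y,h,\delta)$ form a neighborhood base, the resulting étale groupoid is Hausdorff and second countable and locally compact, and it is amenable---follows from the Smale space axioms together with Theorem \ref{wellKnownSmaleSpace}. I would handle the first two by direct calculation with the bracket map, and the last two by standard structural arguments.

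For the claim that $h = h_{(x,y,\delta)}$ is a homeomorphism onto its image, the key calculation uses axioms B2--B4 to show that $h(z) \sim_s z$ and $h(z) \sim_u x$ for every $z \in X^u(y,\delta)$, so that $(h(z),z)$ indeed lies in $G^s(\Per)$. Continuity of $h$ is immediate from continuity of $\varphi^{\pm k}$ and the bracket, after shrinking $\delta$ so that $[\varphi^k(z),\varphi^k(x)]$ remains within the domain of the bracket. Injectivity is the only delicate point: if $h(z_1)=h(z_2)$, then transitivity gives $z_1 \sim_s z_2$, while $z_1, z_2 \in X^u(y,\delta)$ are also locally unstably equivalent, and Theorem \ref{wellKnownSmaleSpace}(1) then forces $z_1=z_2$ once $\delta < \epsilon_X$. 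A continuous inverse is given by the symmetric formula
\[
h^{-1}(w) = \varphi^{-k}\bigl([\varphi^k(w),\varphi^k(y)]\bigr).
\]

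Next, one must verify the base axioms for the topology: if $(w_0, z_0)$ lies in $V(x_1,y_1,h_1,\delta_1) \cap V(x_2,y_2,h_2,\delta_2)$, then continuity of the $h_i$ and of the bracket produce a smaller $\delta'$ and an associated map $h'$ such that $V(w_0,z_0,h',\delta')$ is contained in both. Once the base is established, the source and range maps restrict on each $V(x,y,h,\delta)$ to $(h(z),z) \mapsto z$ and $(h(z),z) \mapsto h(z)$; both are homeomorphisms onto open subsets of the locally Hausdorff spaces $X^u(y,\delta)$ and $X^u(x,\epsilon_X)$, yielding the étale structure. Hausdorffness of $G^s(\Per)$ is inherited from the continuous injection $G^s(\Per) \hookrightarrow X \times X$ into the Hausdorff space $X\times X$, since the étale topology is finer than the subspace topology. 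Second countability and local compactness are inherited from those of $X^u(\Per)$, which is a countable union of local unstable sets $X^u(\varphi^n(p),\epsilon)$ over $p \in \Per$ and $n \in \N$ by Theorem \ref{wellKnownSmaleSpace}(3).

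The amenability claim I would handle by following the argument of Putnam--Spielberg: exhaust $G^s(\Per)$ by an increasing family of relatively compact open subgroupoids corresponding to finite equivalence sub-relations (obtained by bounding the level $k$ at which stable equivalence manifests), and invoke that inductive limits of proper (hence amenable) étale groupoids are amenable. The main obstacle I anticipate is the careful bookkeeping in verifying the base axiom: one needs to show that the family of charts $V(x,y,h,\delta)$, indexed by many independent parameters, glues consistently on overlaps, which amounts to a repeated application of the local product structure and axiom B3 to rewrite brackets based at one point in terms of brackets based at a nearby point.
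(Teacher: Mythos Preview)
Your proposal is appropriate: the paper does not actually supply a proof of this theorem but simply records it as a result from \cite{PutSpi}, and you correctly identify this at the outset. Your outline of how the various assertions follow from the Smale space axioms is in line with the Putnam--Spielberg approach, so there is nothing substantive to compare.

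One small caution on your Hausdorffness argument: the \'etale topology on $G^s(\Per)$ is not the subspace topology from $X\times X$, and the inclusion $G^s(\Per)\hookrightarrow X\times X$ need not be continuous for the respective topologies (the unit space $X^u(\Per)$ carries a strictly finer topology than the one inherited from $X$). The cleaner route is to separate two points $(x_1,y_1)\neq(x_2,y_2)$ directly using basic sets $V(x_i,y_i,h_i,\delta_i)$ with disjoint images under $r$ or $s$, which works because $X^u(\Per)$ itself is Hausdorff in its locally compact topology.
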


\begin{definition}
The stable Ruelle groupoid is the groupoid $G^s(\Per) \rtimes \Z$ where the $\Z$-actions is the one induced from $\varphi|_{X^u(P)}$; the associated $C^*$-algebra is call the stable Ruelle algebra. It is worth noting that this definition requires that $\Per$ is $\varphi$-invariant (as was assumed above).
\end{definition}

\subsection{The open subrelation, Fell algebra and its spectrum} \label{tildeZeroSection}

We review the construction of $\sim_0$ and the associated groupoid $C^*$-algebra studied in \cite{DeeYas}.

Let $\epsilon_Y>0$ be the global constant defined in \cite{DeeYas} and $\pi_0 : X^u(\Per) \rightarrow Y$ denote the map defined via
\[ 
\xn = (x_n)_{n\in \N} \mapsto x_0.
\]
\begin{definition} \label{tildeZeroDef}
Suppose $\xn$ and $\yn$ are in $X^u(\Per)$. Then $\xn \sim_0 \yn$ if 
\begin{enumerate}
\item $\pi_0(\xn)=\pi_0(\yn)$ (i.e., $x_0 = y_0$);
\item there exists $0< \delta_{\xn} < \epsilon_Y$ and open set $U \subseteq X^u(\yn, \epsilon_Y)$ such that
\[ 
\pi_0 ( X^u(\xn, \delta_{\xn})) =\pi_0 (U). 
\]
\end{enumerate}
Let $G_0(\Per) = \{ (\xn, \yn) \: | \: \xn \sim_0 \yn \}$.
\end{definition}

To parse the requirements of Definition \ref{tildeZeroDef}, the reader can return to Theorem \ref{WielerTheorem} for a description of the local unstable sets. Results in \cite{DeeYas} imply that $G_0(\Per)$ is an open subgroupoid of $G^s(\Per)$ and hence that $C^*(G_0(\Per))$ is a subalgebra of $C^*(G^s(\Per))$. Furthermore, we can define 
\[ G_k(\Per) = \{ (\xn, \yn) \: | \: \varphi^k(\xn) \sim_0 \varphi^k(\yn) \} \] 
and one of the main results of \cite{DeeYas} is the following:

\begin{thm} 
\label{MainDeeYas}
Using the notation above, there is a nested sequence of \'{e}tale subgroupoids
\[ G_0(\Per) \subset G_1(\Per) \subset G_2(\Per) \subset \ldots \]
of $G^s(\Per)$ such that $G^s(\Per) = \bigcup_{k=0}^\infty G_k(\Per)$ and each $G_k(\Per)$ is isomorphic to $G_0(\Per)$ in the natural way $(\xn, \yn) \mapsto (\varphi^{k}(\xn),\varphi^{k}(\yn))$.
\end{thm}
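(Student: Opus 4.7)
\medskip

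The plan is to establish three claims in turn: (i) the map $\Phi_k:(\xn,\yn)\mapsto(\varphi^k(\xn),\varphi^k(\yn))$ is an étale groupoid isomorphism $G_k(\Per)\to G_0(\Per)$; (ii) the nesting $G_k(\Per)\subset G_{k+1}(\Per)$ holds, i.e.\ $\sim_0$ is preserved under forward iteration of $\varphi$; (iii) the union exhausts $G^s(\Per)$. The first claim is essentially definitional. Indeed, by the definition $G_k(\Per)=(\varphi^k\times\varphi^k)^{-1}(G_0(\Per))$, so $\Phi_k$ is a bijection with inverse $(\varphi^{-k}\times\varphi^{-k})|_{G_0(\Per)}$, and because $\varphi$ is a Smale space homeomorphism preserving stable equivalence, the pair $(\varphi^k\times\varphi^k)$ is a homeomorphism of $G^s(\Per)$ intertwining the groupoid operations. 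What requires a small check is that $\Phi_k$ carries the étale topology on $G_k(\Per)$ to that on $G_0(\Per)$; this follows from Theorem~\ref{etaleTopThm} after observing that the holonomy maps $h_{(x,y,\delta)}$ defining the basic neighbourhoods are $\varphi$-equivariant in the natural sense $\varphi\circ h_{(\xn,\yn,\delta)} = h_{(\varphi(\xn),\varphi(\yn),\delta')}\circ\varphi$ for an appropriate $\delta'$, because $\varphi$ commutes with the bracket map.

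For claim (ii) I would reduce, by conjugating with $\Phi_k$, to showing that $\un\sim_0\vn$ implies $\varphi(\un)\sim_0\varphi(\vn)$. The equality of zeroth coordinates is immediate from $\pi_0\circ\varphi=g\circ\pi_0$. The openness condition requires producing, from data $\delta_{\un}$ and $U\subseteq X^u(\vn,\epsilon_Y)$ with $\pi_0(X^u(\un,\delta_{\un}))=\pi_0(U)$, analogous data for $\varphi(\un),\varphi(\vn)$. The key fact is that $\varphi$ maps local unstable neighbourhoods onto local unstable neighbourhoods: by the Smale space contraction axiom C2 and the explicit description in Theorem~\ref{WielerTheorem}, $\varphi(X^u(\un,\delta))$ agrees with $X^u(\varphi(\un),\lambda^{-1}\delta)$ on a sufficiently small scale, and similarly for $U$. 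Applying $\varphi$ to the equation $\pi_0(X^u(\un,\delta_{\un}))=\pi_0(U)$ and using $\pi_0\circ\varphi=g\circ\pi_0$ produces the required equality at the level of $\varphi(\un),\varphi(\vn)$.

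For claim (iii) let $(\xn,\yn)\in G^s(\Per)$. By Theorem~\ref{wellKnownSmaleSpace}(2) we may fix $k$ large enough that $\varphi^k(\yn)\in X^s(\varphi^k(\xn),\epsilon)$ with $\epsilon\leq \epsilon_Y/2$. By the explicit description of local stable sets in Theorem~\ref{WielerTheorem}, this means $\varphi^k(\xn)$ and $\varphi^k(\yn)$ agree in their first $K$ coordinates, so in particular $\pi_0(\varphi^k(\xn))=\pi_0(\varphi^k(\yn))$, giving the first condition of $\sim_0$. For the openness condition, one iterates $k$ further (using that the union is monotone in $k$ by (ii)) until the local unstable sets of $\varphi^k(\xn)$ and $\varphi^k(\yn)$ are small enough that $\pi_0$ restricts to a homeomorphism onto its image on each of them; the fact that these two sets then have the same image in $Y$ follows because the local unstable sets are parameterised by their zeroth coordinate and the two points lie in a common local stable set. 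This then exhibits $(\xn,\yn)\in G_k(\Per)$ for this value of $k$.

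The main obstacle I expect is the openness condition in claim (ii): everything else reduces to bookkeeping with the Smale space structure and Wieler's explicit description of local stable and unstable sets, but the preservation of an open-set-matching property under a map $g$ that is not assumed to be a local homeomorphism is precisely the subtle point and is the place where Wieler's Axiom~2 has to be invoked in the form controlling how $g^K$-images of balls behave. I would isolate this as an auxiliary lemma, proving first that for $\un\sim_0\vn$ and $\delta$ small, $g(\pi_0(X^u(\un,\delta)))$ is an open subset of $Y$ containing $g(\pi_0(\un))$ and coincides with $\pi_0$ applied to a local unstable neighbourhood of $\varphi(\vn)$; then the forward-invariance of $\sim_0$ drops out by direct verification.
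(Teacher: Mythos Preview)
The paper does not supply a proof of this theorem: it is quoted as one of the main results of \cite{DeeYas} and used as a black box thereafter, so there is no in-paper argument to compare against. Your three-part decomposition (isomorphism via $\Phi_k$, nesting, exhaustion) is the natural one and matches the logical structure of the original argument in \cite{DeeYas}.

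That said, two remarks on your outline. For claim (iii) you are working harder than necessary: the paper restates \cite[Lemma~4.4]{DeeYas} as Lemma~\ref{Lemma44inDeeYas}, which says that agreement of the first $K_0$ coordinates already forces $\sim_0$. Since $\xn\sim_s\yn$ and Theorem~\ref{WielerTheorem} describes local stable sets as coordinate-matching sets, for $k$ large enough the points $\varphi^k(\xn)$ and $\varphi^k(\yn)$ agree in their first $K_0$ coordinates, and the lemma finishes the job directly; there is no need to arrange that $\pi_0$ be a homeomorphism on small unstable neighbourhoods or to verify the openness clause of Definition~\ref{tildeZeroDef} by hand. Your claim that ``the two sets then have the same image in $Y$'' because the points lie in a common local stable set is in fact the nontrivial content that Lemma~\ref{Lemma44inDeeYas} packages, and it is not obvious without that lemma.

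For claim (ii) you have correctly located the real work: forward-invariance of $\sim_0$ under $\varphi$ is exactly where the non-open case bites, and Wieler's Axiom~2 is indeed the ingredient that controls images of balls under $g^K$. One minor correction: $\varphi$ expands local unstable sets, so $\varphi(X^u(\un,\delta))$ is not $X^u(\varphi(\un),\lambda^{-1}\delta)$ on the nose but rather contains $X^u(\varphi(\un),\delta)$ and sits inside a larger unstable neighbourhood; what you need is only that $\varphi$ restricts to an open map (in fact a homeomorphism onto its image) between local unstable sets, which follows from the Smale space axioms. With that adjustment your sketch for (ii) is along the right lines, though the full argument in \cite{DeeYas} involves several auxiliary lemmas making the interplay of $\pi_0$, local unstable sets, and Axiom~2 precise.
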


In practical terms, this theorem reduces the study of $G^s(\Per)$ to $G_0(\Per)$ and likewise the study of the $C^*$-algebra $C^*(G^s(\Per))$ to $C^*(G_0(\Per))$. Since $C^*(G_0(\Per))$ is a type I $C^*$-algebra many of its properties can be determined from its spectrum, $X^u(\Per))/{\sim_0}$. 

Furthermore, again following \cite{DeeYas}, we define $\tilde{g} : X^u(\Per))/{\sim_0} \rightarrow X^u(\Per))/{\sim_0}$ via
\[
[\xn] \mapsto [(g(x_0), g(x_1), \ldots )]
\]
and $r: X^u(\Per))/{\sim_0} \rightarrow Y$ via
\[
[\xn] \mapsto x_0.
\]
Proofs that $\tilde{g}$ and $r$ are well-defined can be found in \cite{DeeYas}. Moreover, there is a commutative diagram
\[ \begin{CD}
X^u(\Per) @>\varphi >> X^u(\Per) \\
@Vq VV @Vq VV \\
X^u(\Per)/{\sim_0} @>\tilde{g} >> X^u(\Per)/{\sim_0} \\
@Vr VV @Vr VV \\
Y @>g>> Y
\end{CD}\]

\begin{prop}
The maps $q: X^u(\Per) \rightarrow X^u(\Per)/{\sim_0}$ and $\tilde{g} : X^u(\Per))/{\sim_0} \rightarrow X^u(\Per))/{\sim_0}$ are each a local homeomorphisms (but not in general covering maps). In particular, $q: X^u(\Per) \rightarrow X^u(\Per)/{\sim_0}$ is a Hausdorff resolution of $X^u(\Per)/{\sim_0}$.
\end{prop}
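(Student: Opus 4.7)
The plan is to deduce the proposition from the étale groupoid structure of $G_0(\Per) \subseteq G^s(\Per)$ established in \cite{DeeYas} (recalled in Theorem \ref{MainDeeYas}), together with the commutative diagram relating $\varphi$ and $\tilde{g}$ displayed just before the statement.

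For $q$ being a local homeomorphism, I treat openness and local injectivity separately. \emph{Openness}: if $U \subseteq X^u(\Per)$ is open, then $s^{-1}(U)$ is open in $G_0(\Per)$ by continuity of the source $s$, and so is $r(s^{-1}(U))$ since the range $r$ is a local homeomorphism by étaleness. But $r(s^{-1}(U))$ is precisely the $\sim_0$-saturation $q^{-1}(q(U))$, so $q(U)$ is open in the quotient topology. \emph{Local injectivity}: for each $\xn \in X^u(\Per)$ I would invoke the explicit neighborhood basis from Theorem \ref{etaleTopThm} at the identity arrow $(\xn,\xn) \in G_0(\Per)$; this produces an open neighborhood $W$ of $\xn$ such that the only bisection of $G_0(\Per)$ meeting $W \times W$ is the diagonal $\{(z,z): z \in W\}$, so no two distinct points of $W$ are $\sim_0$-related. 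Combining the two steps shows $q$ is a local homeomorphism.

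For $\tilde{g}$, the idea is to transport the local homeomorphism property through the commutative square $q \circ \varphi = \tilde{g}\circ q$. Given $[\xn] \in X^u(\Per)/{\sim_0}$, shrink a neighborhood $V$ of $\xn$ so that both $q|_V$ and $q|_{\varphi(V)}$ are homeomorphisms onto open sets; this is possible by the previous paragraph applied at $\xn$ and at $\varphi(\xn)$, together with continuity of $\varphi$. The formula
\[
\tilde{g}|_{q(V)} \;=\; q|_{\varphi(V)} \,\circ\, \varphi|_V \,\circ\, (q|_V)^{-1}
\]
then exhibits $\tilde{g}$ as a composition of three homeomorphisms onto open sets, so $\tilde{g}$ is a local homeomorphism at $[\xn]$. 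Finally, the Hausdorff resolution claim is read off directly from Definition \ref{hausres}: $X^u(\Per)$ is locally compact Hausdorff by the standard topology on global unstable sets of a Smale space (cf.\ the discussion following Theorem \ref{wellKnownSmaleSpace}), and $q$ is the required surjective local homeomorphism.

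The main obstacle I expect is the local injectivity step. It requires carefully unpacking the neighborhood basis of $G_0(\Per)$ inherited from $G^s(\Per)$, verifying that any non-identity bisections of $G_0(\Per)$ passing near $(\xn,\xn)$ separate from the diagonal upon shrinking the base, so that for small enough $W$ no nontrivial $\sim_0$-relation survives within $W$. The fact that $q$ and $\tilde{g}$ need not be covering maps (e.g.\ in Example \ref{ababSolEx}) is not an obstacle but rather reflects that the sizes of $\sim_0$-equivalence classes vary across $X^u(\Per)$, which prevents uniform local triviality even as local injectivity holds at each single point.
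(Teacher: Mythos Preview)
The paper does not supply its own proof of this proposition; it is stated as a record of facts established in \cite{DeeYas}. So there is no ``paper's proof'' to compare against, and your outline must stand on its own.

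Your arguments for the openness of $q$, for $\tilde{g}$ being a local homeomorphism via the square $q\circ\varphi=\tilde{g}\circ q$, and for the Hausdorff resolution claim are all sound. The gap is in the local injectivity of $q$. You propose to extract from the \'etale structure of $G_0(\Per)$ a neighborhood $W$ of $\xn$ such that the only arrows of $G_0(\Per)$ with both source and range in $W$ are units. But \'etaleness alone cannot deliver this: consider the orbit equivalence relation of an irrational rotation on $S^1$, realized as the principal \'etale groupoid $S^1\rtimes\Z$. Every open $W\subseteq S^1$ contains two distinct points in the same orbit, so the quotient map is open but nowhere locally injective. Your proposed fix (non-identity bisections ``separating from the diagonal upon shrinking the base'') also fails here: each individual bisection $\{(z,n)\}$ for fixed $n\neq 0$ is already disjoint from the unit space in the groupoid topology, yet collectively they connect arbitrarily close points.

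What makes $G_0(\Per)$ different is the specific content of Definition~\ref{tildeZeroDef}, not abstract \'etaleness. Two ingredients suffice: first, $\xn\sim_0\yn$ forces $x_0=y_0$ by definition; second, for sufficiently small $\delta$ the zeroth-coordinate map $\pi_0$ is injective on each local unstable set $X^u(\zn,\delta)$ (this is a consequence of Wieler's description of $X^u(\zn,\epsilon)$ in Theorem~\ref{WielerTheorem}, and is used explicitly later in the proof of Theorem~\ref{WielerNonHausOrbExp}). Combining these, if $\xn,\yn\in X^u(\zn,\delta)$ and $\xn\sim_0\yn$, then $x_0=y_0$, hence $\xn=\yn$; so $q$ is injective on $X^u(\zn,\delta)$. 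Replace your bisection argument with this two-line observation and the proof goes through.
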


Next, we discuss $G_0(\Per)$ and $X^u(\Per)/{\sim_0}$ for the three examples considered in Section \ref{SubSecWielerBasic}.

\begin{example}
Recall the setup of Example \ref{nSolEx}. The space $Y$ is the unit circle, $S^1 \subseteq \C$ and (with fixed $n > 1$)  
$g: S^1 \rightarrow S^1$ is defined via $z \mapsto z^n$. In this case, we take $\Per$ to be the set containing the single point $(1,1,1, \ldots)$. Then $X^u(\Per)$ is homeomorphic to $\R$, $\sim_0$ can be identified with the equivalence relation $x \sim y$ when $x-y\in \Z$ and $(X^u(\Per)/{\sim_0}, \tilde{g})$ can be identified with the original system $(S^1, g)$. In fact the results in this example generalize to the case when $g: Y \rightarrow Y$ is a local homeomorphism, in which case $(X^u(\Per)/{\sim_0}, \tilde{g})=(Y, g)$, see \cite{DeeYas} for details. 
\end{example}

\begin{example}
The relations $\sim_0$ and $\sim_1$ associated to the $aab/ab$-solenoid defined in Example \ref{aababSolEx} can be viewed as in Figures \ref{Figure-aab/ab-Tilde_0} and \ref{Figure-aab/ab-Tilde_1}. 

The details for $G_0(\Per)$ (i.e., $\sim_0$), which is illustrated in Figure \ref{Figure-aab/ab-Tilde_0}, are as follows. We take $\Per$ to be the set containing the fixed point associated to the wedge point (that is, $(p, p, p, \ldots)$ see Figure \ref{Figure-aab/ab-PreSolenoid}) and then have that $X^u(\Per)$ is homeomorphic to the real line. Intervals labelled with $a$ (resp. $b$) are mapped by $\pi_0=g\circ q$ to the outer (resp. inner) circle in $Y$. Identifying the endpoints of these intervals as $\Z$, we have that two integer points are equivalent if and only if the intervals to the left and right are labelled the same. While non-integer points are equivalent if and only if they are in intervals with the same label and their difference is in $\Z$. 

Figure \ref{Figure-aab/ab-Tilde_1} illustrates $G_1(\Per)$ in a similar way. The reader can find more details in both cases in \cite{DeeYas}.

\begin{figure}[h]
\begin{tikzpicture}
\draw [thick] (-4.5,0) -- (8.5,0);
\draw (-4,-.2) -- (-4, .2);
\draw (-2,-.2) -- (-2, .2);
\draw [ultra thick] (0,-.2) -- (0, .2);
\draw (2,-.2) -- (2, .2);
\draw (4,-.2) -- (4, .2);
\draw (6,-.2) -- (6, .2);
\draw (8,-.2) -- (8, .2);
\node at (-4.5,.25){$\ldots$};
\node at (-3,.25){$a$};
\node at (-1,.25){$b$};
\node at (0,-.5){$\mathbf{p}$};
\node at (1,.25){$a$};
\node at (2,-.5){$\mathbf{q}$};
\node at (3,.25){$a$};
\node at (5,.25){$b$};
\node at (7,.25){$a$};
\node at (8.5,.25){$\ldots$};
\end{tikzpicture}
\caption{$\sim_0$ for $aab/ab$ solenoid}
\label{Figure-aab/ab-Tilde_0}
\end{figure}
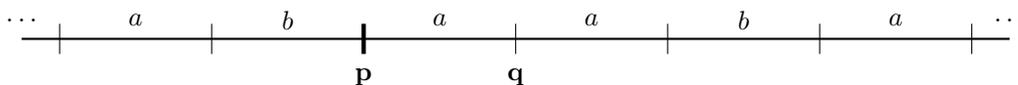

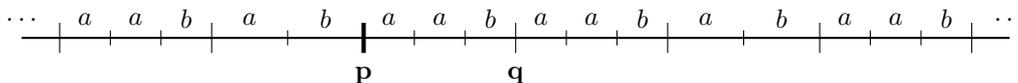
\begin{figure}[h]
\begin{tikzpicture}
\draw [thick] (-4.5,0) -- (8.5,0);
\draw (-4,-.2) -- (-4, .2);
\draw (-10/3,-.1) -- (-10/3, .1);
\draw (-8/3,-.1) -- (-8/3, .1);
\draw (-2,-.2) -- (-2, .2);
\draw (-1,-.1) -- (-1, .1);
\draw [ultra thick] (0,-.2) -- (0, .2);
\draw (2/3,-.1) -- (2/3, .1);
\draw (4/3,-.1) -- (4/3, .1);
\draw (2,-.2) -- (2, .2);
\draw (8/3,-.1) -- (8/3, .1);
\draw (10/3,-.1) -- (10/3, .1);
\draw (4,-.2) -- (4, .2);
\draw (5,-.1) -- (5, .1);
\draw (6,-.2) -- (6, .2);
\draw (20/3,-.1) -- (20/3, .1);
\draw (22/3,-.1) -- (22/3, .1);
\draw (8,-.2) -- (8, .2);
\node at (-4.5,.25){$\ldots$};
\node at (-11/3,.25){$a$};
\node at (-3,.25){$a$};
\node at (-7/3,.25){$b$};
\node at (-1.5,.25){$a$};
\node at (-.5,.25){$b$};
\node at (0,-.5){$\mathbf{p}$};
\node at (1/3,.25){$a$};
\node at (1,.25){$a$};
\node at (5/3,.25){$b$};
\node at (2,-.5){$\mathbf{q}$};
\node at (7/3,.25){$a$};
\node at (3,.25){$a$};
\node at (11/3,.25){$b$};
\node at (4.5,.25){$a$};
\node at (5.5,.25){$b$};
\node at (19/3,.25){$a$};
\node at (7,.25){$a$};
\node at (23/3,.25){$b$};
\node at (8.5,.25){$\ldots$};
\end{tikzpicture}
\caption{$\sim_1$ for $aab/ab$ solenoid}
\label{Figure-aab/ab-Tilde_1}
\end{figure}

We will show that in general $G_0(\Per)$ is not closed in $G_1(\Per)$ using this example. Notice that the points $p$ and $q$ are equivalent with respect to $\sim_1$ but not with respect to $\sim_0$. However there is a sequence of points (namely $p+\frac{1}{n} \sim_0 q+\frac{1}{n}$ where $(p+\frac{1}{n}, q+\frac{1}{n})$ converge to $(p, q)$). It follows that $\sim_0$ is not closed in $\sim_1$. 

Continuing with this example, we discuss $(X^u(\Per)/{\sim_0}, \tilde{g})$. Notice that the map $g: Y \rightarrow Y$ is not a local homeomorphism. Furthermore, the discussion of $G_0(\Per)$ above shows that $X^u(\Per)/{\sim_0}$ is given as follows. The point $p \in Y$ splits into three non-Hausdorff points, denoted $ab, ba, aa$. This space is illustrated in Figure \ref{Figure-aab/ab-QuotientSpace}.  These points correspond to the three different $\sim_0$-equivalence classes for ``integer" points, as seen in Figure \ref{Figure-aab/ab-Tilde_0}. 	Open neighborhoods of the three points (that is, $ab, ba, aa$) are pictured in Figure \ref{Figure-aab/ab-OpenNeighborhoods}.

	\begin{figure}[h]
		\begin{tikzpicture}
		\draw (0,0) circle [radius=3];
		\draw (0,-1) circle [radius=2];
		\draw[fill] (0,-2.6) circle [radius=0.1];
		\draw[fill] (0,-3) circle [radius=0.1];
		\draw[fill] (0,-3.4) circle [radius=0.1];
		\draw (0,.8) -- (0,1.2);
		\draw (-2.42487,1.4) -- (-2.77128,1.6);
		\draw (2.42487,1.4) -- (2.77128,1.6);
		\node [right] at (0.1,-2.7) {$ab$};
		\node [right] at (0.1,-3.2) {$ba$};
		\node [right] at (0.1,-3.6) {$aa$};
		\node [above] at ( -2.7, 1.6) {$q_1$};
		\node [above] at (0,1.1) {$q_2$};
		\node [above] at (2.7,1.6) {$q_3$};
	
		\end{tikzpicture}
		\caption{$X^u(\Per)/{\sim_0}$ for $aab/ab$ solenoid.}
		\label{Figure-aab/ab-QuotientSpace}
	\end{figure}

	\begin{figure}[h]
		\begin{tikzpicture}
		\draw [dashed] ($(0,0) + (225:3)$) arc (225:240:3);
		\draw [ultra thick] ($(0,0) + (240:3)$) arc (240:270:3);
		\draw [dashed] ($(0,0) + (270:3)$) arc (270:315:3);
		\draw [dashed] ($(0,-1) + (210:2)$) arc (210:270:2);
		\draw [ultra thick] ($(0,-1) + (270:2)$) arc (270:315:2);
		\draw [dashed] ($(0,-1) + (315:2)$) arc (315:330:2);
		\node [rotate=-30] at (-1.5,-2.61) {$($};
		\node [rotate=45] at (1.414,-2.414) {$)$};
		\draw (0,-2.6) circle [radius=0.1];
		\draw[fill] (0,-3) circle [radius=0.1];
		\draw (0,-3.4) circle [radius=0.1];
		\node [right] at (0.1,-2.7) {$ab$};
		\node [right] at (0.1,-3.2) {$ba$};
		\node [right] at (0.1,-3.6) {$aa$};
		\end{tikzpicture} \qquad
		\begin{tikzpicture}
		\draw [dashed] ($(0,0) + (225:3)$) arc (225:240:3);
		\draw [dashed] ($(0,0) + (300:3)$) arc (300:315:3);
		\draw [ultra thick] ($(0,0) + (240:3)$) arc (240:270:3);
		\draw [ultra thick] ($(0,0) + (270:3)$) arc (270:300:3);
		\draw [dashed] ($(0,-1) + (210:2)$) arc (210:270:2);
		\draw [dashed] ($(0,-1) + (270:2)$) arc (270:330:2);
		\node [rotate=-30] at (-1.5,-2.61) {$($};
		\node [rotate=30] at (1.5,-2.61) {$)$};
		\draw (0,-2.6) circle [radius=0.1];
		\draw[fill=white] (0,-3) circle [radius=0.1];
		\draw[fill] (0,-3.4) circle [radius=0.1];
		
		\node [right] at (0.1,-3.6) {$aa$};
		\node [right] at (0.1,-2.7) {$ab$};
		\node [right] at (0.1,-3.2) {$ba$};
	
		\end{tikzpicture} \qquad
		\begin{tikzpicture}
		\draw [dashed] ($(0,-1) + (210:2)$) arc (210:225:2);
		\draw [dashed] ($(0,0) + (300:3)$) arc (300:315:3);
		\draw [dashed] ($(0,0) + (225:3)$) arc (225:270:3);
		\draw [ultra thick] ($(0,0) + (270:3)$) arc (270:300:3);
		\draw [ultra thick] ($(0,-1) + (225:2)$) arc (225:270:2);
		\draw [dashed] ($(0,-1) + (270:2)$) arc (270:330:2);
		\node [rotate=-45] at (-1.414,-2.414) {$($};
		\node [rotate=30] at (1.5,-2.61) {$)$};
		\draw[fill] (0,-2.6) circle [radius=0.1];
		\draw[fill=white] (0,-3) circle [radius=0.1];
		\draw (0,-3.4) circle [radius=0.1];
		
		\node [right] at (0.1,-3.6) {$aa$};
		\node [right] at (0.1,-2.7) {$ab$};
		\node [right] at (0.1,-3.2) {$ba$};

		\end{tikzpicture}
		\caption{Open neighborhoods of the three non-Hausdorff points in $X^u(\Per)/{\sim_0}$ for the $aab/ab$ solenoid.}
		\label{Figure-aab/ab-OpenNeighborhoods}
	\end{figure}
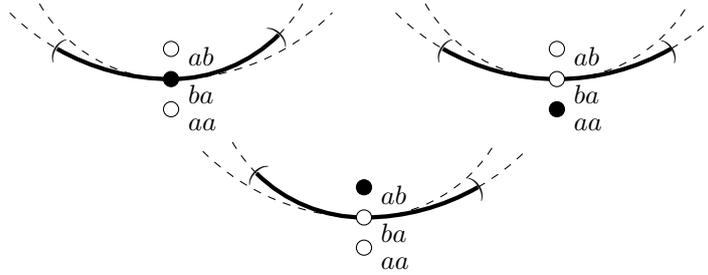

Next, we discuss the map $\tilde{g} : X^u(\Per)/{\sim_0} \rightarrow X^u(\Per)/{\sim_0}$. The map $\tilde{g}$ takes points labelled $ab$, $ba$, and $aa$ to $ba$. It takes the point labelled $q_1$ to $aa$, the point labelled $q_2$ to $ab$, and the point labelled $q_3$ to $ab$. The other points (i.e., the ones without labels) are maps in the same way as $g: Y \rightarrow Y$.

Some important properties to notice about $\tilde{g} : X^u(\Per)/{\sim_0} \rightarrow X^u(\Per)/{\sim_0}$ are the following:
\begin{enumerate}
\item $\tilde{g}$ is a local homeomorphisms, in particular it is open;
\item $\tilde{g}(r^{-1}(p))=\{ ba \}$ where $r : X^u(\Per)/{\sim_0} \rightarrow Y$ and $p$ is the wedge point of $Y$;
\item informally, $\tilde{g}$ is locally expanding.
\end{enumerate}
The importance of the second and third properties will be seen in Section \ref{ExpDSNonHausSec}. In particular, $\tilde{g}$ will be shown to be ``forward orbit expansive" in Section \ref{ExpDSNonHausSec}, which makes precise the third item in the list above.
\end{example}

\begin{example}
When $(Y, g)$ is the ab/ab-solenoid the map $g$ is not a local homeomorphism. We take the set $\Per$ to be the set containing the single element $(p,p, \ldots)$ where $p$ is the wedge point. Then, using a similar method to the one in the previous example, one can show that $X^u(\Per)$ is homeomorphic to $\R$, $X^u(\Per)/{\sim_0}$ is homeomorphic to the circle and $\tilde{g}$ is the two-fold cover of the circle. The main point of this example is that $X^u(\Per)/{\sim_0}$ can be Hausdorff even when $g$ is not a local homeomorphism. 
\end{example}

We restate \cite[Lemma 4.4]{DeeYas} here for ease of the reader; it will be used a number of times below.
\begin{lemma} \label{Lemma44inDeeYas}
There exists constant $K_0>0$ such that if $\xn$ and $\yn$ are in $X^u(P)$ and $x_i = y_i$ for $0\le i \le K_0$, then $\xn \sim_0 \yn$.
\end{lemma}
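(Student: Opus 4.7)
The plan is to verify the two conditions of Definition \ref{tildeZeroDef} separately. The first condition, $\pi_0(\xn) = \pi_0(\yn)$, is immediate since $x_0 = y_0$ once the first coordinate agrees. All work goes into the second condition: finding $\delta > 0$ and an open $U \subseteq X^u(\yn, \epsilon_Y)$ with $\pi_0(X^u(\xn, \delta)) = \pi_0(U)$.

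The natural candidate is $U = X^u(\yn, \delta)$ for a small $\delta > 0$, so the task reduces to showing that $\pi_0(X^u(\xn, \delta)) = \pi_0(X^u(\yn, \delta))$ whenever $\xn$ and $\yn$ agree in their first $K_0+1$ coordinates, with $K_0$ chosen uniformly. The strategy is to describe the set $\pi_0(X^u(\xn, \delta))$ purely in terms of the dynamics on $Y$. Using the explicit form of local unstable sets in Theorem \ref{WielerTheorem}, combined with an iterative application of Wieler's Axiom 2, one should establish the following: there exist uniform constants $K_0 \in \N$ and $\delta_0 > 0$, depending only on $\beta$, $\gamma$, $K$ and $\epsilon_Y$, such that for every $\xn \in X^u(\Per)$ and every $\delta \in (0,\delta_0)$,
\[
\pi_0(X^u(\xn, \delta)) \;=\; g^{K_0}\!\bigl(B(x_{K_0}, r(\delta))\bigr),
\]
where $r(\delta)$ depends only on $\delta$ and the Wieler constants. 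The forward containment is straightforward: $\pi_0(\yn) = y_0 = g^{K_0}(y_{K_0})$ with $y_{K_0}$ close to $x_{K_0}$ by the characterization in Theorem \ref{WielerTheorem}. The reverse containment requires lifting an element $z \in B(x_{K_0}, r(\delta))$ to a coherent sequence of preimages $\yn$ with $y_n$ close to $x_n$ for every $n$, using the contractive control on preimage-branches provided by Axiom 2 applied iteratively.

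Granted this description, the lemma follows at once: if $x_{K_0} = y_{K_0}$, then both $\pi_0(X^u(\xn, \delta))$ and $\pi_0(X^u(\yn, \delta))$ equal $g^{K_0}(B(x_{K_0}, r(\delta)))$, yielding the desired equality with $U = X^u(\yn, \delta)$. The main obstacle is the careful bookkeeping in the iterated application of Axiom 2: at each step one trades an additional power of $K$ iterations for a factor of $\gamma$ in the radius, and $K_0$ must be chosen large enough (but uniformly) so that after $\lfloor K_0/K\rfloor$ iterations the resulting ball $B(x_{K_0}, r(\delta))$ sits inside the range of validity of Wieler's axioms (i.e.\ of radius at most $\beta$). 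Uniformity of $K_0$ in $\xn$ is automatic since $\beta$, $\gamma$, $K$ and $\epsilon_Y$ are global constants of the Wieler solenoid.
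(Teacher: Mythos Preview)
The paper does not prove this lemma: it is explicitly a restatement of \cite[Lemma 4.4]{DeeYas}, included here without proof for the reader's convenience. There is therefore no in-paper argument to compare your proposal against.

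As for your plan on its own merits: the overall strategy---reduce condition (2) of Definition~\ref{tildeZeroDef} to the statement that $\pi_0(X^u(\xn,\delta))$ depends only on $x_0,\ldots,x_{K_0}$, and obtain this via the description of local unstable sets in Theorem~\ref{WielerTheorem} together with iterated use of Wieler's Axiom~2---is the natural one and matches the approach in \cite{DeeYas}. One caution: the clean exact identity $\pi_0(X^u(\xn,\delta)) = g^{K_0}(B(x_{K_0}, r(\delta)))$ for a single radius $r(\delta)$ is stronger than what Axiom~2 immediately yields. What falls out of the iteration is a two-sided sandwich
\[
g^{K_0}\bigl(B(x_{K_0}, c_1\delta)\bigr)\;\subseteq\;\pi_0\bigl(X^u(\xn,\delta)\bigr)\;\subseteq\;g^{K_0}\bigl(B(x_{K_0}, c_2\delta)\bigr)
\]
for uniform constants $c_1<c_2$. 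This is still enough for the lemma, but you must then allow $U$ to have a different radius than $X^u(\xn,\delta_{\xn})$ rather than taking $U=X^u(\yn,\delta)$ with the \emph{same} $\delta$; Definition~\ref{tildeZeroDef} permits this asymmetry. Alternatively, one can bypass the description in $Y$ entirely and directly build a local homeomorphism $X^u(\xn,\delta)\to X^u(\yn,\epsilon_Y)$ intertwining $\pi_0$ by splicing the tail of $\yn$ onto points of $X^u(\xn,\delta)$ at level $K_0$, which is closer in spirit to how \cite{DeeYas} handles such arguments.
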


\begin{thm}
Suppose that $K_0$ is as in the previous lemma, $[\yn]_0\in X^u(\Per)/{\sim_0}$ and there exists $V$ open in $Y$ such that
\begin{enumerate}
\item $\pi_0(\yn)=y_0\in V$ and
\item $g^{K_0}|_V$ is injective.
\end{enumerate}
Then $r^{-1}(y_0)=\{ [\yn]_0\}$.
\end{thm}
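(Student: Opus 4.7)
The plan is to combine Lemma \ref{Lemma44inDeeYas} with the injectivity of $g^{K_0}|_V$, using the intertwining $\pi_0 \circ \varphi^{K_0} = g^{K_0} \circ \pi_0$ to transfer the $\sim_0$ relation across $\varphi^{K_0}$. I fix an arbitrary $\zn \in X^u(\Per)$ with $z_0 = y_0$ and aim to show $\zn \sim_0 \yn$. First, I observe that for $0 \le j \le K_0$ one has $\varphi^{K_0}(\yn)_j = g^{K_0-j}(y_0) = \varphi^{K_0}(\zn)_j$, since $z_0 = y_0$. Lemma \ref{Lemma44inDeeYas} therefore yields $\varphi^{K_0}(\yn) \sim_0 \varphi^{K_0}(\zn)$, so by the definition of $\sim_0$ there exist $\delta_1 > 0$ and an open set $U_1 \subseteq X^u(\varphi^{K_0}(\yn), \epsilon_Y)$ with $\pi_0(X^u(\varphi^{K_0}(\zn), \delta_1)) = \pi_0(U_1)$.

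Next I pull this back through $\varphi^{-K_0}$. Because $\varphi^{K_0}$ is a homeomorphism of $X$ that respects the local unstable sets via the Smale contraction constant $\lambda$, I choose $\delta > 0$ with $\varphi^{K_0}(X^u(\zn, \delta)) = X^u(\varphi^{K_0}(\zn), \delta_1)$ and set $U := \varphi^{-K_0}(U_1)$, which is open and sits inside $X^u(\yn, \lambda^{K_0}\epsilon_Y) \subseteq X^u(\yn, \epsilon_Y)$. Applying $\pi_0 \circ \varphi^{K_0} = g^{K_0} \circ \pi_0$ to both sides of the witness identity gives
\[
g^{K_0}(\pi_0(X^u(\zn, \delta))) = g^{K_0}(\pi_0(U)).
\]
By taking $\delta_1$ (and correspondingly $\delta$ and $U$) small enough, I can arrange both $\pi_0(X^u(\zn, \delta)) \subseteq V$ (by continuity of $\pi_0$, since $\pi_0(\zn) = y_0 \in V$) and $\pi_0(U) \subseteq V$ (using that $\pi_0(\varphi^{-K_0}(u_1)) = u_{1, K_0}$, the $K_0$-th coordinate of $u_1$, which is close to $\varphi^{K_0}(\yn)_{K_0} = y_0$ whenever $u_1$ lies close to $\varphi^{K_0}(\yn)$). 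The injectivity of $g^{K_0}|_V$ then lets me cancel $g^{K_0}$ from the displayed equation to conclude $\pi_0(X^u(\zn, \delta)) = \pi_0(U)$. Together with $z_0 = y_0$, this exhibits $(\delta, U)$ as a witness for $\zn \sim_0 \yn$, so $[\zn]_0 = [\yn]_0$.

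The main obstacle I anticipate is the simultaneous smallness in the previous paragraph: naive shrinking of $\delta_1$ or $U_1$ can destroy the matching $\pi_0(X^u(\varphi^{K_0}(\zn), \delta_1)) = \pi_0(U_1)$, so one must verify that a witness at arbitrarily small scale continues to exist. This scale flexibility should follow from $G_0(\Per)$ being an open subgroupoid of the \'etale groupoid $G^s(\Per)$, which supplies bisection neighborhoods of $(\varphi^{K_0}(\yn), \varphi^{K_0}(\zn)) \in G_0(\Per)$ at every sufficiently small scale, or alternatively from a direct inspection of the proof of Lemma \ref{Lemma44inDeeYas} (where $U_1$ is expected to be built from $X^u(\varphi^{K_0}(\yn), \delta_1)$ via the bracket map in a way that is manifestly compatible with small rescalings).
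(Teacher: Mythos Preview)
Your proof follows essentially the same route as the paper's: apply Lemma \ref{Lemma44inDeeYas} to $\varphi^{K_0}(\yn)$ and $\varphi^{K_0}(\zn)$, pull the $\sim_0$-witness back through $\varphi^{-K_0}$, and use the injectivity of $g^{K_0}|_V$ to conclude that the pulled-back $\pi_0$-images agree. The one imprecision---your claimed equality $\varphi^{K_0}(X^u(\zn,\delta)) = X^u(\varphi^{K_0}(\zn),\delta_1)$ is in general only a containment---is exactly the rescaling issue you correctly identify and address in your final paragraph via the open \'etale structure of $G_0(\Per)$, which is also what the paper implicitly uses when it takes its witness sets $U,\hat{U}$ at the pre-chosen scale $\delta$.
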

\begin{proof}
Take $\delta>0$ such that $B(y_0, \delta) \subseteq V$.

Suppose that $\hat{\yn}=(y_0, \hat{y_1}, \hat{y_2}, \ldots) \in X^u(\Per)$. We must show $\yn \sim_0 \hat{\yn}$. By the previous lemma, 
\[
(g^{K_0}(y), \ldots, g(y), y, y_1, \ldots) \sim_0 (g^{K_0}(y), \ldots, g(y), y, \hat{y}_1, \ldots).
\]
By the definition of $\sim_0$, there exists open sets in $X^u(P)$, $U\subseteq X^u(\varphi^{K_0}(\yn), \delta)$ and $\hat{U}\subseteq X^u(\varphi^{K_0}(\hat{\yn}), \delta)$, such that 
\begin{enumerate}
\item $\varphi^{K_0}(\yn)=(g^{K_0}(y), \ldots, g(y), y, y_1, \ldots) \in U$;
\item $\varphi^{K_0}(\hat{\yn})=(g^{K_0}(y), \ldots, g(y), y, \hat{y}_1, \ldots) \in \hat{U}$;
\item $\pi_0(U)=\pi(\hat{U})$. 
\end{enumerate}
Since $\varphi|_{X^u(\Per)}$ is a homeomorphism, both $\varphi^{-K_0}(U)$ and $\varphi^{-K_0}(\hat{U})$ are open in $X^u(\Per)$. Furthermore, $\yn \in \varphi^{-K_0}(U)$ and $\hat{\yn} \in \varphi^{-K_0}(\hat{U})$. Notice that $\varphi^{-K_0}(U) \subseteq X^u(\yn, \delta)$ and $\varphi^{-K_0}(\hat{U}) \subseteq X^u(\hat{\yn}, \delta)$.

We show that $\pi_0(\varphi^{-K_0}(U)) = \pi_0(\varphi^{-K_0}(\hat{U}))$. Take $\zn \in \varphi^{-K_0}(U)$. Then 
\[
(g^{K_0}(z_0), \ldots, g(z_0), z_0, \ldots) \in U
\]
and by the third item in the list above, there exists $(\bar{z}_0, \bar{z}_1, \ldots) \in \hat{U} \subseteq X^u(\hat{\yn}, \delta)$ with $\bar{z}_0=g^{K_0}(z_0)$. Since $(\bar{z}_0, \bar{z}_1, \ldots) \in X^u(\varphi^{K_0}(\hat{\yn}), \delta)$, $\mathrm{d}_Y(z_{K_0}, y_0)< \delta$. However, $g^{K_0}$ is injective on $B(y_0, \delta) \subseteq V$ hence $z_{K_0}=z_0$ (i.e., $z_0$ is the unique preimage of $g^{K_0}(z_0)$ inside $V$). Hence $\pi_0(\zn)=\pi_0(z_{K_0}, z_{K_0+1}, \ldots)$ and the result follows.
\end{proof}

\begin{corollary}
If $(Y,g)$ is a Williams presolenoid, then $r$ is one-to-one on a dense open set of $X^u(\Per)/{\sim_0}$.
\end{corollary}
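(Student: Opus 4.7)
The plan is to produce an open dense subset $W\subseteq Y$ on which $g^{K_0}$ is locally injective, then apply the previous theorem to conclude that $r$ is one-to-one on $r^{-1}(W)\subseteq X^u(\Per)/{\sim_0}$, and finally verify that $r^{-1}(W)$ is open and dense.

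For the first step, I will use that a Williams presolenoid has $Y$ a branched manifold, $g$ open, and $g$ a local homeomorphism off a closed nowhere dense branch locus $B\subseteq Y$. Since $g$ is open and continuous, preimages of nowhere dense sets remain nowhere dense, so with $K_0$ as in Lemma \ref{Lemma44inDeeYas} the set
\[
W\ :=\ Y\setminus\bigcup_{i=0}^{K_0-1}g^{-i}(B)
\]
will be open and dense in $Y$, and $g^{K_0}$ will be a local homeomorphism at every point of $W$ by the chain rule. Consequently each $y\in W$ will admit an open neighborhood $V\subseteq Y$ on which $g^{K_0}|_V$ is injective, so the preceding theorem yields $r^{-1}(y_0)=\{[\yn]_0\}$ whenever $y_0\in W$. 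In particular $r$ will be one-to-one on $U:=r^{-1}(W)$, which is open by continuity of $r$.

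For density of $U$, I will pass through the Hausdorff resolution $q\colon X^u(\Per)\to X^u(\Per)/{\sim_0}$, which is a surjective local homeomorphism; it therefore suffices to show that $q^{-1}(U)=\pi_0^{-1}(W)\cap X^u(\Per)$ is dense in $X^u(\Per)$. Given $\xn\in X^u(\Per)$ and $\epsilon>0$, Theorem \ref{WielerTheorem} characterizes $X^u(\xn,\epsilon)$ via coordinatewise closeness to $\xn$ in $Y$, and for Williams solenoids $\pi_0$ sends this local unstable set onto an open neighborhood of $x_0$ in $Y$. Since $W$ is dense and open, this neighborhood meets $W$, yielding a point $\yn\in X^u(\xn,\epsilon)\subseteq X^u(\Per)$ with $y_0\in W$.

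The principal obstacle is bookkeeping rather than any single hard step: one must carefully extract from Williams' axioms (as opposed to the strictly weaker Wieler axioms) both that $g$ is open with a nowhere dense branch locus, and that $\pi_0$ maps each small local unstable set to an open neighborhood in $Y$. These two ingredients are standard features of branched manifold solenoids but have to be verified from Williams' setup, and they are precisely the structural features that need not be available in the more general Wieler setting.
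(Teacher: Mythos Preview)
Your proposal is correct and follows essentially the same route as the paper's proof: apply the preceding theorem at points of $Y$ lying in a dense open set where $g^{K_0}$ is locally injective, and conclude that $r$ is one-to-one on the preimage.

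The only notable difference is in the choice of dense open set. The paper simply takes the complement $Y\setminus B$ of the branch set and asserts that the hypothesis of the previous theorem holds at every non-branched point; this relies directly on Williams' axiom that $g$ is an immersion, so that $g^{K_0}$ is an immersion and hence locally injective wherever $Y$ is locally Euclidean. Your set $W=Y\setminus\bigcup_{i=0}^{K_0-1}g^{-i}(B)$ is smaller but has the advantage that local injectivity of $g^{K_0}$ follows from the more elementary chain-rule argument you describe, without invoking the immersion property at branch images. Both choices are open and dense (yours because $g$ is open, as you note), and both feed into the previous theorem in the same way. Your density argument for $r^{-1}(W)$ via the Hausdorff resolution $q$ is also more detailed than the paper, which simply says ``the result follows''; the openness of $\pi_0$ that you use is recorded elsewhere in the paper (e.g.\ in the proof of Theorem~\ref{WielerNonHausOrbExp}).
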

\begin{proof}
The conditions of the previous theorem hold for any non-branched point in $Y$. Since the set of branched points is closed and nowhere dense the result follows. (The reader can find the precise definition of branched point in \cite{Wil}. This is the only proof in the present paper that uses this term.)
\end{proof}

To summarize, we believe the results of this section along with the three examples gives sufficient motivation to consider the properties of the dynamical systems $(X^u(\Per))/{\sim_0}, \tilde{g})$ and its connection to $(X, \varphi)$ and $(Y, g)$. 

\section{Expansive dynamics in the non-Hausdorff setting} 
\label{ExpDSNonHausSec}
Since $X^u(\Per)/{\sim_0}$ is a non-Hausdorff space and we are interested in the dynamics on it, we discuss the general situation for expansive maps in the non-Hausdorff setting. Much of the general theory is based on work of Achigar, Artigue, and Monteverde \cite{MR3501269}, who consider the case of expansive homeomorphisms in the non-Hausdorff setting. The work in this section differs from \cite{MR3501269} in that we relax the homeomorphism assumption and consider a pair $(\tilde{Y}, \tilde{g})$ of a compact topological space $\tilde{Y}$ and a continuous surjective map $\tilde{g}: \tilde{Y} \rightarrow \tilde{Y}$; additional conditions on $\tilde{g}$ (such as expansiveness and being a local homeomorphism) will be explicitly stated as required. We emphasize that $\tilde{Y}$ need not be Hausdorff. It might be useful for the reader to have a copy of \cite{MR3501269} while reading the present section, so they compare results here to those in \cite{MR3501269}.

\subsection{Expansive maps on non-Hausdorff spaces}
\label{exponnonhaus}

\begin{definition} 
Suppose that $\mathcal{U}=\{ U_i \}_{i\in I}$ is an open cover of a topological space and $(x_n)_{n\in \N}$ and $(y_n)_{n\in \N}$ are sequences in the space. Then we write
\[
\{ x_n, y_n\}_{n\in \N} \prec \mathcal{U}
\]
if for each $n\in \N$, there exists $i_n\in I$ such that 
\[
x_n \hbox{ and }y_n \hbox{ are elements of }U_{i_n}.
\]
We use similar notation for two-sided sequences (that is, for $(x_n)_{n\in \Z}$ and $(y_n)_{n\in \Z}$)
\end{definition}

In \cite[Definition 2.1]{MR3501269}, the reader can find a notion of expansiveness for homeomorphisms of compact, non-Hausdorff spaces. Upon replacing the conditions of \cite{MR3501269} involving the $\Z$-action associated to a homeomorphism, with an $\N$-action we arrive at a completely analogous definition of orbit expansive maps that need not be invertible.

\begin{definition}
We say that $(\tilde{Y}, \tilde{g})$ is forward orbit expansive if there exists a finite open cover of $\tilde{Y}$, $\mathcal{U}=\{ U_i \}_{i=1}^l$, such that if $x$, $y$ are in $Y$ with
\[ \{ \tilde{g}^n(x), \tilde{g}^n(y) \}_{n\in \N} \prec \mathcal{U}, \]
then $x=y$.
\end{definition}

\begin{prop} 
Suppose that $(\tilde{Y}, \tilde{g})$ is forward orbit expansive and $(\tilde{X}, \tilde{\varphi})$ is the associated solenoid. Then $(\tilde{X}, \tilde{\varphi})$ is orbit expansive in the sense of \cite[Definition 2.1]{MR3501269}.
\end{prop}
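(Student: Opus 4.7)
The plan is to transport a finite open cover witnessing forward orbit expansiveness on $\tilde{Y}$ up to $\tilde{X}$ via the projection to the zeroth coordinate, and then use the fact that inspecting the two-sided $\tilde{\varphi}$-orbit of a pair recovers the full one-sided $\tilde{g}$-orbit of every coordinate.

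Concretely, let $\mathcal{U}=\{U_1,\dots,U_\ell\}$ be a finite open cover of $\tilde{Y}$ witnessing forward orbit expansiveness of $(\tilde{Y},\tilde{g})$, and let $\pi_0:\tilde{X}\to \tilde{Y}$ be the projection $(x_n)_{n\in\N}\mapsto x_0$. Since $\pi_0$ is continuous,
\[
\mathcal{V}:=\{\pi_0^{-1}(U_i) : 1\le i\le \ell\}
\]
is a finite open cover of $\tilde{X}$. I claim it witnesses orbit expansiveness of $(\tilde{X},\tilde{\varphi})$ in the sense of \cite[Definition 2.1]{MR3501269}. Suppose $x=(x_n)_{n\in\N}$ and $y=(y_n)_{n\in\N}$ in $\tilde{X}$ satisfy $\{\tilde{\varphi}^n(x),\tilde{\varphi}^n(y)\}_{n\in\Z}\prec \mathcal{V}$; then for each $n\in\Z$ we may pick $i_n\in\{1,\dots,\ell\}$ with $\pi_0(\tilde{\varphi}^n(x)),\pi_0(\tilde{\varphi}^n(y))\in U_{i_n}$.

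The next step is a bookkeeping computation using the definition of $\tilde{\varphi}$ on the inverse limit. For $n\ge 0$ one has $\pi_0(\tilde{\varphi}^n(x))=\tilde{g}^n(x_0)$, while for $n=-m$ with $m>0$ one has $\pi_0(\tilde{\varphi}^{-m}(x))=x_m$, and similarly for $y$. Fix $m\ge 0$ and consider the pair $(x_m,y_m)\in\tilde{Y}\times\tilde{Y}$. For $0\le n\le m$ the compatibility $\tilde{g}(x_{k+1})=x_k$ gives $\tilde{g}^n(x_m)=x_{m-n}$, which together with the hypothesis at index $-(m-n)$ places $\tilde{g}^n(x_m)$ and $\tilde{g}^n(y_m)$ in the common open set $U_{i_{-(m-n)}}$. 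For $n>m$, $\tilde{g}^n(x_m)=\tilde{g}^{n-m}(x_0)$ and likewise for $y_m$, and these lie in $U_{i_{n-m}}$. Thus $\{\tilde{g}^n(x_m),\tilde{g}^n(y_m)\}_{n\in\N}\prec\mathcal{U}$, and forward orbit expansiveness of $(\tilde{Y},\tilde{g})$ forces $x_m=y_m$. Since $m$ was arbitrary, $x=y$.

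There is no serious obstacle here; the proposition is essentially a compatibility statement between the one-sided dynamics on $\tilde{Y}$ and the two-sided dynamics on its solenoid. The only point to be careful with is the index gymnastics separating the ``future" $n\ge 0$ part of the $\tilde{\varphi}$-orbit (which sees iterates of $\tilde{g}$ applied to $x_0$) from the ``past" $n<0$ part (which simply reads off the later coordinates $x_m$ of the inverse limit sequence). Once one observes that both regimes feed into a single forward $\tilde{g}$-orbit of $(x_m,y_m)$, the forward orbit expansiveness hypothesis on $(\tilde{Y},\tilde{g})$ can be applied level by level to conclude.
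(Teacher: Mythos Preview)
Your proof is correct and follows essentially the same approach as the paper: both lift the cover $\mathcal{U}$ via $\pi_0^{-1}$ and then use forward orbit expansiveness coordinate by coordinate, the only cosmetic difference being that the paper first concludes $x_0=y_0$ from the forward orbit and then shifts to repeat, whereas you handle each $x_m$ directly with explicit index bookkeeping.
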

\begin{proof}
Take an open cover of $\tilde{Y}$, $\mathcal{U}_{\tilde{Y}}=\{ U_i \}_{i=1}^l$, as in the definition of forward orbit expansive. Form 
\[ \mathcal{U}_X=\{ U_i\times \tilde{Y} \times \tilde{Y}\times \ldots \}_{i=1}^l, \]
which is an open cover of $\tilde{X}$. 

Suppose ${\bf x}=(x_0, x_1, \ldots)$ and ${\bf y}=(y_0, y_1, \ldots )$ are in $\tilde{X}$ with 
\[ \{ \varphi^n({\bf x}), \varphi^n({\bf y}) \}_{n\in \Z} \prec \mathcal{U}_X. \]
Then there exists $i_0 \in \{ 1, \ldots l \}$ such that ${\bf x}$ and ${\bf y}$ are in $U_{i_0}\times \tilde{Y} \times \tilde{Y}\times \ldots$. Hence, $x_0$ and $y_0$ are in $U_{i_0}$. Using the fact that 
$\tilde{\varphi}({\bf x})=( \tilde{g}(x_0), \tilde{g}(x_1), \ldots)$ and an induction argument give us 
\[ \{ \tilde{g}^n(x_0), \tilde{g}^n(y_0) \} \prec \mathcal{U}_Y.\]
The definition of forward orbit expansive then implies that $x_0=y_0$. Using the fact that $\tilde{\varphi}^{-1}({\bf x})=( x_1, x_2, \ldots)$ allows us to repeat the above argument to get $x_1=y_1$. The proof is then completed by showing that $x_n=y_n$ for each $n\in \N$ in a similar way; this implies that ${\bf x}={\bf y}$.
\end{proof}

\begin{prop} (compare with \cite[Proposition 2.5]{MR3501269})
Suppose that $(\tilde{Y}, \tilde{g})$ is forward orbit expansive, then $\tilde{Y}$ is $T_1$.
\end{prop}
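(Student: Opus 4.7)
The plan is to run the contrapositive of forward orbit expansiveness and pull separating open sets back under continuity of $\tilde{g}^n$, mirroring the argument for Proposition 2.5 of \cite{MR3501269} with $\Z$ replaced by $\N$. Recall that $\tilde Y$ is $T_1$ if and only if for every pair of distinct points $x\neq y$ in $\tilde{Y}$ there is an open set containing $x$ but not $y$ (equivalently, singletons are closed). So the problem reduces to producing such a separating open set.

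First I would fix a finite open cover $\mathcal{U}=\{U_1,\dots,U_l\}$ witnessing forward orbit expansiveness, and take two distinct points $x,y\in\tilde{Y}$. The contrapositive of the forward orbit expansive condition yields some $n\in\N$ with the property that no single $U_i$ contains both $\tilde{g}^n(x)$ and $\tilde{g}^n(y)$. Since $\mathcal{U}$ is a cover, I can choose an index $i$ with $\tilde{g}^n(x)\in U_i$; by the choice of $n$ this forces $\tilde{g}^n(y)\notin U_i$.

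Next I would push this separation back to the level of $x$ and $y$: the preimage $V:=\tilde{g}^{-n}(U_i)$ is open because $\tilde{g}^n$ is continuous, contains $x$ by construction, and excludes $y$ since $\tilde{g}^n(y)\notin U_i$. Swapping the roles of $x$ and $y$ (the symmetric condition ``no $U_j$ contains both $\tilde{g}^n(y)$ and $\tilde{g}^n(x)$'' holds with the same $n$, so one picks instead an index $j$ with $\tilde{g}^n(y)\in U_j$) produces an open set containing $y$ but not $x$, completing the verification of the $T_1$ axiom.

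The main obstacle is essentially cosmetic: the argument is a direct transcription of the homeomorphism case, with the one simplification that one only needs forward iterates, so no appeal to $\tilde{g}^{-1}$ on open sets is needed. There is no subtlety in the pullback step because continuity of $\tilde{g}^n$ is automatic, and the finiteness of the cover $\mathcal{U}$ is not actually required for this particular consequence — only that $\mathcal{U}$ covers $\tilde{Y}$, which lets us guarantee that at every time step $n$ each of $\tilde{g}^n(x)$ and $\tilde{g}^n(y)$ lies in at least one element of $\mathcal{U}$.
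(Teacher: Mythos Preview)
Your argument is correct and matches the paper's proof essentially line for line: both use the contrapositive of forward orbit expansiveness to find an $n$ at which no member of $\mathcal{U}$ contains both $\tilde{g}^n(x)$ and $\tilde{g}^n(y)$, pick cover elements separating the two images, and pull back along $\tilde{g}^{-n}$ by continuity. The only cosmetic difference is that the paper names the two separating indices $i_x,i_y$ simultaneously rather than treating the two directions in turn.
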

\begin{proof}
Take an open cover of $\tilde{Y}$, $\mathcal{U}=\{ U_i \}_{i=1}^l$, as in the definition of forward orbit expansive. Suppose $x\neq y$ are in $\tilde{Y}$. Then, using the definition of forward orbit expansive, there exists $n\in \N$ and $i_x \neq i_y$ such that $\tilde{g}^n(x) \in U_{i_x}$ but $\tilde{g}^n(x)\not\in U_{i_y}$ and $\tilde{g}^n(y)\in U_{i_y}$ but $\tilde{g}^n(y)\not\in U_{i_x}$. Since $\tilde{g}$ is continuous, the sets $\tilde{g}^{-n}(U_{i_x})$ and $\tilde{g}^{-n}(U_{i_y})$ are open and the properties in the previous sentence imply that these sets are a $T_1$-separation of $x$ and $y$.
\end{proof}

\begin{prop} (compare with \cite[Theorem 2.7]{MR3501269})
Suppose that $Y$ is compact and Hausdorff, $(Y, g)$ is forward orbit expansive, and $g$ is an open map. Then $Y$ is metrizable and $(Y, g)$ is forward expansive.
\end{prop}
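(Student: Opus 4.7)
The strategy is to show $Y$ is second countable, invoke Urysohn's metrization theorem (valid since $Y$ is compact Hausdorff) to obtain a compatible metric, and then use a Lebesgue number argument to convert forward orbit expansiveness into forward expansiveness. Interestingly, openness of $g$ will not actually be used in this argument; the compactness and Hausdorffness of $Y$ are the only hypotheses that play a crucial role.

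First, use normality of the compact Hausdorff space $Y$ to shrink the finite witnessing cover $\mathcal{U} = \{U_i\}_{i=1}^l$ to an open cover $\mathcal{V} = \{V_i\}_{i=1}^l$ with $\overline{V_i} \subseteq U_i$ for each $i$. Consider the countable family $\mathcal{B}'$ of finite intersections of sets of the form $g^{-n}(V_i)$, $n \in \N$, $i = 1, \ldots, l$. To show $\mathcal{B}'$ is a base for the topology, fix $x \in Y$ and an open neighborhood $V$ of $x$; for each $n$ choose $i_n$ with $g^n(x) \in V_{i_n}$, and set $W_N := \bigcap_{n=0}^{N} g^{-n}(V_{i_n}) \in \mathcal{B}'$. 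If no $W_N$ were contained in $V$, pick $z_N \in W_N \setminus V$ and extract a cluster point $z$ of $(z_N)$ by compactness. Since $Y \setminus V$ is closed, $z \in Y \setminus V$, so $z \neq x$. For each fixed $n$, we have $g^n(z_N) \in V_{i_n}$ eventually, so every neighborhood of $g^n(z)$ meets $V_{i_n}$, forcing $g^n(z) \in \overline{V_{i_n}} \subseteq U_{i_n}$. Combined with $g^n(x) \in V_{i_n} \subseteq U_{i_n}$, this yields $\{g^n(x), g^n(z)\}_{n\in\N} \prec \mathcal{U}$, and forward orbit expansiveness forces $z = x$, a contradiction.

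With $\mathcal{B}'$ established as a countable base, Urysohn's theorem provides a compatible metric $d$ on $Y$. Take any Lebesgue number $\epsilon > 0$ of $\mathcal{U}$ with respect to $d$: if $d(g^n(x), g^n(y)) < \epsilon$ for all $n$, then each such pair lies in a common $U_{i_n}$, giving $\{g^n(x), g^n(y)\}_{n\in\N} \prec \mathcal{U}$, and forward orbit expansiveness again yields $x = y$. The main subtlety is the cluster-point step in the base argument: we need $\overline{V_{i_n}} \subseteq U_{i_n}$ in order for the limit $g^n(z)$ to remain in the open set $U_{i_n}$, which is precisely why the normality-based shrinking of $\mathcal{U}$ is essential. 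Without this step, cluster points could escape the members of the witnessing cover and the application of forward orbit expansiveness would collapse.
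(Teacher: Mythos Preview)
Your proof is correct and takes a genuinely different route from the paper's. The paper builds its countable base out of \emph{forward images}: it forms finite intersections $\bigcap_{i=0}^N g^i(V_{k_i})$, and this is precisely where the openness of $g$ enters (so that these sets are open) and where the closedness of $g$ is invoked (for the finite-intersection-property argument that singles out a finite $N$). You instead use \emph{preimages} $g^{-n}(V_{i_n})$, which are automatically open by continuity, and replace the finite-intersection-property step with a direct cluster-point argument. Both arguments perform the same normality-based shrinking $\overline{V_i}\subseteq U_i$ and finish identically via a Lebesgue number, but your version actually proves a slightly stronger statement: the hypothesis that $g$ is open is never used, so the conclusion holds for any continuous surjection on a compact Hausdorff space that is forward orbit expansive. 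The paper's approach, by contrast, mirrors the structure of \cite[Theorem 2.7]{MR3501269} more closely and makes explicit how the dynamics (via forward images) generates the topology.
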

\begin{proof}
To begin, we note that $g$, in addition to being open, is also a closed map. Let $\mathcal{U}$ be an open cover of $Y$ as in the definition of forward orbit expansive. Given $U\in \mathcal{U}$ and $y\in U$, there exists $V_y$ open such that 
\[
y \in V_y \subseteq \overline{V}_y \subseteq U.
\]
Using this fact and the compactness of $Y$, there exists an open $\mathcal{V}=\{V_1, \ldots , V_m\}$ such that for each $V_i$ there exists $U \in \mathcal{U}$ with $\overline{V}_i \subseteq U$. This property and forward orbit expansiveness imply that 
\[
{\rm card}\left( \bigcap_{i\ge0} g^i(\overline{V}_{k_i}) \right) \le 1 
\]
for any $(k_i)_{i\ge0} \in \{ 1, \ldots m\}^{\N}$.

Now suppose that $y \in Y$ and $W$ is an open set with $y\in W$. Since $\mathcal{V}$ is a cover and $g$ is onto, there exists $(k_i)_{i\ge0} \in \{ 1, \ldots m\}^{\N}$ such 
\[
y \in  \bigcap_{i\ge0} g^i(V_{k_i}).
\]
It follows that $\bigcap_{i\ge0} g^i(\overline{V}_{k_i})=\{y\}$ and hence that
\[
W^c \cap \left( \bigcap_{i\ge0} g^i(\overline{V}_{k_i}) \right) = \emptyset.
\]
By the reformulation of compactness via the finite intersection property (this is also where we use the fact that $g$ is a closed map) there exists $N\in \N$ such that 
\[
W^c \cap \left( \bigcap_{i\ge0}^N g^i(\overline{V}_{k_i}) \right) = \emptyset
\]
and hence $\bigcap_{i\ge0}^N g^i(V_{k_i}) \subseteq W$.

It now follows that the collection 
\[
\left\{ \bigcap_{i\ge0}^N g^i(V_{k_i}) \mid N\in \N \hbox{ and }k_i \in \{ 1, \ldots, m\} \right\}
\]
is a basis for the topology on $Y$. Since this collection is countable (and $Y$ is compact and Hausdorff), the topology on $Y$ is metrizable.

For the second part of the theorem, again let $\mathcal{U}$ be an open cover of $Y$ as in the definition of forward orbit expansive. Fix a metric on $Y$ that induces the given topology. Let $\delta>0$ be the Lebesgue number of $\mathcal{U}$ with respect to $d$. One can show that $\delta$ is an expansive constant for $(Y, g)$; the details are omitted.
\end{proof}

\begin{prop}(compare with \cite[Proposition 2.14]{MR3501269}) \label{propPowerIsExpansive}
Suppose that $(\tilde{Y}, \tilde{g})$ is forward orbit expansive, and $\tilde{g}$ is an open map. Then, for each $n\in \N$, $\tilde{g}^n$ is forward orbit expansive.
\end{prop}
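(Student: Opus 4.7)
The plan is to construct, from the finite open cover $\mathcal{U}=\{U_1,\ldots,U_l\}$ witnessing forward orbit expansiveness of $\tilde{g}$, a refined finite open cover $\mathcal{V}$ that ``remembers'' $n$ consecutive $\tilde{g}$-iterates in a single element. Control of the $\tilde{g}^n$-orbit of a pair in $\mathcal{V}$ will then force control of the full $\tilde{g}$-orbit in $\mathcal{U}$, after which forward orbit expansiveness of $\tilde{g}$ finishes the argument.

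First I would define
\[
\mathcal{V}:=\left\{\bigcap_{j=0}^{n-1}\tilde{g}^{-j}(U_{i_j})\,:\,(i_0,\ldots,i_{n-1})\in\{1,\ldots,l\}^n\right\}.
\]
This is a finite family (at most $l^n$ elements), and its members are open because $\tilde{g}$ is continuous. To see that $\mathcal{V}$ covers $\tilde{Y}$, given $y\in \tilde{Y}$ choose for each $j\in\{0,\ldots,n-1\}$ an index $i_j$ with $\tilde{g}^j(y)\in U_{i_j}$; then $y\in\bigcap_{j=0}^{n-1}\tilde{g}^{-j}(U_{i_j})\in \mathcal{V}$.

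Next, I would suppose $x,y\in \tilde{Y}$ satisfy $\{(\tilde{g}^n)^k(x),(\tilde{g}^n)^k(y)\}_{k\in\N}\prec \mathcal{V}$. For each $k$ this produces indices $i_0^k,\ldots,i_{n-1}^k$ with
\[
\tilde{g}^{nk}(x),\tilde{g}^{nk}(y)\in\bigcap_{j=0}^{n-1}\tilde{g}^{-j}(U_{i_j^k}).
\]
Applying $\tilde{g}^j$ for $j=0,\ldots,n-1$ gives $\tilde{g}^{nk+j}(x),\tilde{g}^{nk+j}(y)\in U_{i_j^k}$. Writing an arbitrary $m\in\N$ as $m=nk+j$ with $0\le j<n$, we obtain $\{\tilde{g}^m(x),\tilde{g}^m(y)\}_{m\in\N}\prec \mathcal{U}$, and forward orbit expansiveness of $\tilde{g}$ then yields $x=y$.

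There is no real obstacle here; the main point to be careful about is confirming that $\mathcal{V}$ is a cover, which is a coordinate-wise check. I should note that the argument above uses only continuity of $\tilde{g}$ together with the fact that $\mathcal{U}$ is a finite cover; openness of $\tilde{g}$ does not appear to enter. Openness would instead be required if one built $\mathcal{V}$ from forward iterates $\tilde{g}^j(U_i)$, paralleling the homeomorphism proof in \cite[Proposition 2.14]{MR3501269}, but the preimage formulation sidesteps this and works in the stated generality.
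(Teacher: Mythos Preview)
Your proof is correct and takes a different construction from the paper's. The paper builds the refined cover from \emph{forward} images,
\[
\mathcal{U}_n=\{U_{i_1}\cap \tilde{g}(U_{i_2})\cap\cdots\cap \tilde{g}^{n-1}(U_{i_{n-1}})\},
\]
which is exactly where the hypothesis that $\tilde{g}$ is open enters (to guarantee these sets are open); the verification that $\mathcal{U}_n$ witnesses forward orbit expansiveness of $\tilde{g}^n$ is then left to the reader. You instead use \emph{preimages} $\bigcap_{j=0}^{n-1}\tilde{g}^{-j}(U_{i_j})$, for which continuity alone suffices, and your decoding of a $\mathcal{V}$-controlled $\tilde{g}^n$-orbit into a $\mathcal{U}$-controlled $\tilde{g}$-orbit is direct and transparent. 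As you correctly note, this shows that openness of $\tilde{g}$ is not actually needed for the conclusion, so your argument is marginally more general; the paper's version stays closer to the template of \cite[Proposition~2.14]{MR3501269}, where for a homeomorphism forward and backward images are interchangeable.
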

\begin{proof}
Take an open cover of $\tilde{Y}$, $\mathcal{U}=\{ U_i \}_{i=1}^l$, as in the definition of forward orbit expansive for $\tilde{g}$. Using the openness of the map $\tilde{g}$ we have that 
\[
\mathcal{U}_n= \{ U_{i_1} \cap \tilde{g}(U_{i_2}) \cap \ldots \cap \tilde{g}^{n-1}(U_{i_{n-1}}) \mid i_1, i_2, \ldots , i_{n-1} \in \{ 1, \ldots, l\} \}
\]
is an open cover of $\tilde{Y}$. Moreover, the fact that $\mathcal{U}$ is forward orbit expansive for $\tilde{g}$ implies that $\mathcal{U}_n$ is forward orbit expansive for $\tilde{g}^{n}$; the details are omitted.
\end{proof}

\begin{prop}
Suppose that $(\tilde{Y}, \tilde{g})$ is forward orbit expansive with $\tilde{g}$ open, $(\tilde{Y}_{{\rm Haus}}, \tilde{g}_{{\rm Haus}})$ denotes its Hausdorffization (see \cite{munsterthe}), and $\tilde{r}: \tilde{Y} \rightarrow \tilde{Y}_{{\rm Haus}}$ denotes the natural map. Furthermore suppose that there exists $L\in \N$ such that for each $y\in  \tilde{Y}_{{\rm Haus}}$, $\tilde{g}^L(\tilde{r}^{-1}(y))$ is a singleton. Then $\tilde{Y}$ is locally Hausdorff.
\end{prop}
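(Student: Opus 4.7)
My plan is to show that every point $x \in \tilde{Y}$ admits an open neighborhood $V_x$ on which $\tilde{r}$ is injective. Granted such a $V_x$, its Hausdorffness is immediate: any two distinct $y_1,y_2\in V_x$ have distinct images $\tilde{r}(y_1)\neq\tilde{r}(y_2)$ in the Hausdorff space $\tilde{Y}_{\mathrm{Haus}}$, and pulling back disjoint open separating neighborhoods of these images through the continuous map $\tilde{r}$ gives disjoint open separating subsets of $V_x$. Therefore the whole task reduces to producing, for each $x$, an open set $V_x\ni x$ on which $\tilde{r}$ is one-to-one.

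To construct $V_x$, I would fix a forward orbit expansive open cover $\mathcal{U}=\{U_1,\dots,U_l\}$ for $(\tilde{Y},\tilde{g})$ and, for each $n\in\{0,1,\dots,L-1\}$, choose $i_n$ with $\tilde{g}^n(x)\in U_{i_n}$. Then set
\[
V_x \;:=\; \bigcap_{n=0}^{L-1}\tilde{g}^{-n}(U_{i_n}).
\]
This is open by continuity of $\tilde{g}$ and contains $x$ by construction. The set $V_x$ is the natural finite-horizon device for leveraging forward orbit expansiveness over the first $L$ iterates.

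The crux of the argument is to show that $\tilde{g}^L|_{V_x}$ is injective. Suppose $y_1,y_2\in V_x$ satisfy $\tilde{g}^L(y_1)=\tilde{g}^L(y_2)$. Then $\tilde{g}^n(y_1)=\tilde{g}^n(y_2)$ for every $n\ge L$, so any single $U_i\in\mathcal{U}$ containing their common value contains both; for $n<L$ the definition of $V_x$ gives $\tilde{g}^n(y_1),\tilde{g}^n(y_2)\in U_{i_n}$. Hence $\{\tilde{g}^n(y_1),\tilde{g}^n(y_2)\}_{n\in\N}\prec\mathcal{U}$, and forward orbit expansiveness forces $y_1=y_2$. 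Injectivity of $\tilde{r}|_{V_x}$ now follows from the hypothesis: if $\tilde{r}(y_1)=\tilde{r}(y_2)$ for $y_1,y_2\in V_x$, then $\tilde{g}^L(y_1)$ and $\tilde{g}^L(y_2)$ both lie in the singleton $\tilde{g}^L(\tilde{r}^{-1}(\tilde{r}(y_1)))$, so they agree, and injectivity of $\tilde{g}^L|_{V_x}$ yields $y_1=y_2$.

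I do not foresee a serious obstacle here; the content concentrates in the orbit-tracking step that establishes injectivity of $\tilde{g}^L|_{V_x}$, after which everything else is bookkeeping. As an incidental remark, the openness assumption on $\tilde{g}$ is not actually invoked in this plan, although the assumption is natural in the surrounding setting.
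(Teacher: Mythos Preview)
Your argument is correct and follows a somewhat different route from the paper's. The paper first invokes the earlier proposition that powers of a forward orbit expansive open map remain forward orbit expansive to reduce to the case $L=1$; this is precisely where the openness hypothesis enters. After the reduction, a single member $U_{i_0}$ of the expansive cover containing $x$ serves as the Hausdorff neighborhood: if $y_1,y_2\in U_{i_0}$ cannot be separated, then they lie in a common fiber of $\tilde r$, so $\tilde g(y_1)=\tilde g(y_2)$ by the $L=1$ hypothesis, and expansiveness then forces $y_1=y_2$.

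Your approach bypasses the reduction entirely by taking the finite intersection $V_x=\bigcap_{n=0}^{L-1}\tilde g^{-n}(U_{i_n})$ and proving directly that $\tilde g^L$ is injective on $V_x$, which combined with the singleton hypothesis yields injectivity of $\tilde r$ on $V_x$. This is arguably cleaner: it avoids appealing to the auxiliary proposition and, as you correctly observe, does not use the openness of $\tilde g$ at all. The paper's route buys a slightly shorter endgame (a single cover element suffices) at the cost of invoking an extra lemma and an extra hypothesis; your route trades that for a short explicit construction.
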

\begin{proof}
By Proposition \ref{propPowerIsExpansive} and replacing $\tilde{g}$ with $\tilde{g}^L$, we can assume that $L=1$. Take $\mathcal{U}=\{ U_i \}_{i=1}^l$, as in the definition of forward orbit expansive and $x \in \tilde{Y}$. Let $U=U_{i_0}$ where $x\in U_{i_0}$. 

We will show that $U$ with the subspace topology is Hausdorff. Suppose $y_1$ and $y_2$ in $U$ cannot be separated. We will show that $y_1=y_2$. Since $\tilde{g}$ is continuous, $\tilde{g}(y_1)$ and $\tilde{g}(y_2)$ cannot be separated, but then $\tilde{g}(y_1)=\tilde{g}(y_2)$ (since $L=1$). Hence $\{ \tilde{g}^n(x), \tilde{g}^n(y) \} \prec \mathcal{U}$ and by the definition of forward expansive, $y_1=y_2$ as required.
\end{proof}

The next example shows that an additional condition (e.g., the assumption that $\tilde{g}^L(\tilde{r}^{-1}(y))$ is a singleton in the previous result) is required to ensure the solenoid is Hausdorff.

\begin{example}
Let $\tilde{Y}$ be the unit circle in $\C$ with two ``1"s, see Figure \ref{CircleTwoOnes}.

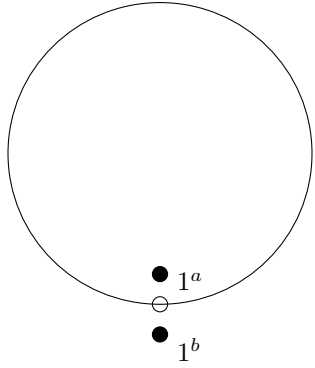
\begin{figure}[h]
		\begin{tikzpicture}
		\draw (0,-1) circle [radius=2];
		\draw[fill] (0,-2.6) circle [radius=0.1];
		\draw (0,-3) circle [radius=0.1];
		\draw[fill] (0,-3.4) circle [radius=0.1];
		\node [right] at (0.1,-2.7) {$1^a$};
		\node [right] at (0.1,-3.6) {$1^b$};
		\end{tikzpicture}
\caption{Circle with two 1s pre-solenoid}
\label{CircleTwoOnes}
\end{figure}
We define $\tilde{g}: \tilde{Y} \rightarrow \tilde{Y}$ to be the two fold covering map for points other than $-1$, $1^a$ and $1^b$; for those points we define
\[
-1 \mapsto 1^a \hbox{ and }1^a\mapsto 1^b \hbox{ and }1^b \mapsto 1^a.
\]
One can check that $\tilde{g}$ is continuous, onto, and open. Furthermore, the solenoid associated to $(\tilde{Y}, \tilde{g})$ is not Hausdorff since the points 
\[
(1^a, 1^b, 1^a, \ldots ) \hbox{ and }(1^b, 1^a, 1^b, \ldots)
\]
cannot be separated. 
\end{example}

\begin{thm} \label{PreNonHausSolHaus}
Suppose that $(\tilde{Y}, \tilde{g})$ is forward orbit expansive with $\tilde{g}$ open (and continuous and onto as usual in this section) and there exists $(Y, g)$ and $r: \tilde{Y} \rightarrow Y$ with 
\begin{enumerate}
\item $Y$ compact and Hausdorff;
\item $g$ is continuous and onto;
\item $r$ continuous, onto, and $r\circ \tilde{g}=g \circ r$;
\item there exists $L\in \N$ such that for each $y\in  Y$, $g^L(r^{-1}(y))$ is a singleton.
\end{enumerate}
Then the solenoids associated to $(\tilde{Y}, \tilde{g})$ and $(Y, g)$ are conjugate. In particular, the solenoid associated to $(\tilde{Y}, \tilde{g})$ is Hausdorff.
\end{thm}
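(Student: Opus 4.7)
The plan is to define the obvious map $R: \tilde{X}\to X$ componentwise by $R(x_0,x_1,\ldots) := (r(x_0),r(x_1),\ldots)$ and show it is a shift-equivariant homeomorphism; the ``in particular'' part then follows because $X$ is Hausdorff as an inverse limit of Hausdorff spaces. Since $r\circ\tilde g = g\circ r$, the sequence $R(\xn)$ satisfies the compatibility required for $X$, and $R\circ\tilde\varphi = \varphi\circ R$. Continuity of $R$ is immediate from the product topology. Throughout I read hypothesis (4) as saying that $\tilde g^L(r^{-1}(y))$ is a singleton for every $y\in Y$ (the ``$g^L$'' in the statement appears to be a typo, in light of the preceding proposition).

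For injectivity, if $R(\xn)=R(\yn)$ then $x_i,y_i\in r^{-1}(r(x_i))$ for every $i\geq 0$, so by (4) one has $\tilde g^L(x_i)=\tilde g^L(y_i)$. Since $\xn,\yn$ are compatible sequences, $\tilde g^L(x_i)=x_{i-L}$ for $i\geq L$, so $x_j=y_j$ for all $j\geq 0$.

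For surjectivity, given $(y_n)\in X$ set $x_n := \tilde g^L(\hat x_{n+L})$ for any $\hat x_{n+L}\in r^{-1}(y_{n+L})$ (which exists since $r$ is onto); by (4) this is independent of the choice. Then $r(x_n) = g^L(y_{n+L}) = y_n$, and $\tilde g(x_{n+1}) = \tilde g^L\bigl(\tilde g(\hat x_{n+L+1})\bigr)$; since $\tilde g(\hat x_{n+L+1})\in r^{-1}(y_{n+L})$, a second application of (4) gives $\tilde g(x_{n+1}) = \tilde g^L(\hat x_{n+L}) = x_n$, so $(x_n)\in \tilde X$ and $R(x_n)=(y_n)$.

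The main step is the continuity of $R^{-1}$. Define $s: Y\to \tilde Y$ by $s(y):=\tilde g^L(a)$ for any $a\in r^{-1}(y)$, which is well defined by (4). Then $(R^{-1}(\yn))_k = s(y_{k+L})$, so continuity of $R^{-1}$ reduces to continuity of $s$. Here the key observation is that $\tilde Y$ is compact and $Y$ is Hausdorff, so the continuous surjection $r$ is a closed map. Given $y\in Y$ and an open set $V\subseteq\tilde Y$ containing $s(y)$, the continuity of $\tilde g$ implies $r^{-1}(y)\subseteq \tilde g^{-L}(V)$, and the set $W := Y\setminus r(\tilde Y\setminus\tilde g^{-L}(V))$ is then an open neighborhood of $y$ on which $s(W)\subseteq V$. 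This closed-map step is where I expect the subtlety; note that forward orbit expansiveness of $(\tilde Y,\tilde g)$ and openness of $\tilde g$ in the hypotheses appear to be used only upstream, to secure condition (4) via the preceding proposition, and do not enter the argument for Theorem \ref{PreNonHausSolHaus} itself.
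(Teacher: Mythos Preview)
Your proof is correct and follows essentially the same route as the paper's: both define the map $s:Y\to\tilde Y$, $s(y)=\tilde g^L(r^{-1}(y))$, and use it to build the conjugacy between the two solenoids (the paper packages this as a shift equivalence $(r,s)$ and cites Williams, whereas you verify directly that $R$ is a bijection with continuous inverse given componentwise by a shift of $s$). Your reading of hypothesis~(4) as ``$\tilde g^L(r^{-1}(y))$ is a singleton'' is the intended one, and your closed-map argument for the continuity of $s$ is exactly the point the paper suppresses with ``one can also show that $s$ is continuous''; it goes through because closed subsets of the compact (possibly non-Hausdorff) space $\tilde Y$ are compact, and compact subsets of the Hausdorff space $Y$ are closed. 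Your observation that forward orbit expansiveness and openness of $\tilde g$ play no role in the argument is also correct; the paper's proof does not use them either.
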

\begin{proof}
Following \cite{Wil}, we will define a shift equivalence between $(\tilde{Y}, \tilde{g})$ and $(Y, g)$. In addition to the map $r: \tilde{Y} \rightarrow Y$ in the statement of the theorem, we have the map $s: Y \rightarrow \tilde{Y}$ defined via
\[ 
y \mapsto \tilde{g}^L(r^{-1}(y))
\]
where we have abused notation in the sense that $\tilde{g}^L(r^{-1}(y))$ is really a set that contains a single element. The map $s$ is onto since $r$ and $\tilde{g}$ are onto. One can also show that $s$ is continuous.

To show that $r$ and $s$ define a shift equivalence, we must show that 
\begin{enumerate}
\item $r \circ \tilde{g} = g\circ r$;
\item $s\circ g = \tilde{g} \circ s$;
\item $s\circ r=\tilde{g}^L$;
\item $r\circ s=g^L$.
\end{enumerate}
The first item is an assumption of the theorem. The fourth is similar to the third so only the proofs of the second and third will be considered in detail. 

To see that $s\circ g = \tilde{g} \circ s$, we have that
\[ (s\circ g)(y)=\tilde{g}^L(r^{-1}(g(y))) \hbox{ and }(\tilde{g}\circ s)(y)=\tilde{g}^{L+1}(r^{-1}(y)). \]
Using $r\circ \tilde{g}=g \circ r$, it follows that $\tilde{g}(r^{-1}(y)) \subseteq r^{-1}(g(y))$. Apply $\tilde{g}^L$ to both sides, we find that $\tilde{g}^L(r^{-1}(g(y)))$ is a singleton by assumption. Hence $\tilde{g}^{L+1}(r^{-1}(y))$ is the same singleton because it is non-empty and contained in the singleton set $\tilde{g}^L(r^{-1}(g(y)))$. 

To see that $s\circ r=\tilde{g}^L$, we have
\[ (s\circ r)(\tilde{y})=\tilde{g}^L(r^{-1}(r(\tilde{y}))). \]
Now, $\tilde{y}\in r^{-1}(r(y))$, so $\tilde{g}^L(\tilde{y})\in \tilde{g}^L(r^{-1}(r(\tilde{y})))$. The set $\tilde{g}^L(r^{-1}(r(\tilde{y})))$ is a singleton, so it must be equal to $\{ \tilde{g}^L(\tilde{y}) \}$ as required.

Using work in \cite{Wil}, it follows that the map $S: X \rightarrow \tilde{X}$ defined via
\[
(y_0, y_1, y_2, \ldots) \mapsto (s(y_0), s(y_1), s(y_2), \ldots )
\]
is a conjugacy from $(X, \varphi)$ (the solenoid associated to $(Y, g)$) to $(\tilde{X}, \tilde{\varphi})$ (the solenoid associated to $(\tilde{Y}, \tilde{g})$). Its inverse is given by $R : \tilde{X} \rightarrow X$ defined via
\[
(\tilde{y}_0, \tilde{y}_1, \tilde{y}_2, \ldots) \mapsto (r(\tilde{y}_0), r(\tilde{y}_1), r(\tilde{y}_2), \ldots ).
\] 
For the final part of the theorem, $X$ is Hausdorff because $Y$ is Hausdorff. Hence, the solenoid associated to $\tilde{X}$ is Hausdorff as well because $X$ and $\tilde{X}$ are homeomorphic.
\end{proof}

\subsection{The dynamics on the locally Hausdorff quotient space}

We now come to the main application of Subsection \ref{exponnonhaus} on Wieler-Smale spaces.

\begin{thm} 
\label{WielerNonHausOrbExp}
The dynamical system $(X^u(P)/{\sim_0}, \tilde{g})$ is forward orbit expansive. Moreover, $\tilde{g}$ is a local homeomorphism and there exists $L\in \N$ such that for each $y\in  \tilde{Y}_{{\rm Haus}}$, $\tilde{g}^L(r^{-1}(y))$ is a singleton.
\end{thm}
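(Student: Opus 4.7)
That $\tilde{g}$ is a local homeomorphism is already recorded in the proposition following the commutative diagram in Section \ref{SecCstarWieler}. For the singleton claim I would take $L = K_0$ from Lemma \ref{Lemma44inDeeYas}. Given any $y \in Y$ and two classes $[\xn]_0, [\yn]_0 \in r^{-1}(y)$ (so $x_0 = y_0 = y$), a direct computation gives
\[
\varphi^{K_0}(\xn) = (g^{K_0}(y), g^{K_0-1}(y), \ldots, g(y), y, x_1, x_2, \ldots),
\]
and the analogous expression for $\yn$; the two sequences agree in positions $0$ through $K_0$. Lemma \ref{Lemma44inDeeYas} then yields $\varphi^{K_0}(\xn) \sim_0 \varphi^{K_0}(\yn)$, so $\tilde{g}^{K_0}([\xn]_0) = \tilde{g}^{K_0}([\yn]_0)$ and the fiber collapses to a single point; the same conclusion then holds for the fibers of the Hausdorffization map, which are contained in the fibers of $r$.

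The substantive part is forward orbit expansiveness. The plan is to build a finite open cover $\mathcal{U} = \{U_1, \ldots, U_l\}$ of the compact space $X^u(\Per)/{\sim_0}$ in which each $U_i = q(X^u(\xn_i, \epsilon_0))$ for some $\xn_i \in X^u(\Per)$ and a common sufficiently small radius $\epsilon_0 > 0$ chosen to simultaneously guarantee: (a) $q$ is injective on each $X^u(\xn_i, \epsilon_0)$; (b) the projection $\pi_0$ is injective on both $X^u(\xn_i, \epsilon_0)$ and on its image under $\varphi^K$, where $K$ is Wieler's constant; (c) $\tilde{g}^{K_0}|_{U_i}$ is injective; and (d) $2\epsilon_0 < \beta$. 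The existence of such a cover follows from compactness of $X^u(\Per)/{\sim_0}$ together with the local homeomorphism properties of $q$, $\pi_0$, and $\tilde{g}^{K_0}$; for (b), uniform injectivity on $\varphi^K(X^u(\xn_i, \epsilon_0))$ uses that $\varphi^K$ expands unstable sets by at most a factor of $\gamma^{-K}$, so this image is again a local unstable set of controlled and still small size.

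Now suppose $[\xn]_0, [\yn]_0 \in X^u(\Per)/{\sim_0}$ satisfy $\{\tilde{g}^n([\xn]_0), \tilde{g}^n([\yn]_0)\}_{n \in \N} \prec \mathcal{U}$. Applying $r$ places both $g^n(x_0)$ and $g^n(y_0)$ inside some $\pi_0(X^u(\xn_{i_n}, \epsilon_0))$ of diameter less than $\beta$, and iterating Wieler's axiom 1 forces $g^j(x_0) = g^j(y_0)$ for all $j \geq K$. In particular $g^K(x_0) = g^K(y_0)$; the identity $g^K \circ \pi_0 = \pi_0 \circ \varphi^K$ combined with condition (b) makes $g^K$ injective on $\pi_0(X^u(\xn_{i_0}, \epsilon_0))$, so $x_0 = y_0$. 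The singleton claim of the first paragraph then gives $\tilde{g}^{K_0}([\xn]_0) = \tilde{g}^{K_0}([\yn]_0)$, and condition (c) applied in the common chart $U_{i_0}$ finally forces $[\xn]_0 = [\yn]_0$. The hardest step will be simultaneously arranging (a)--(d) with a single uniform radius $\epsilon_0$, which ultimately depends on $\pi_0$ genuinely behaving as a local homeomorphism on small local unstable sets in the Wieler setting.
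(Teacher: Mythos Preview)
Your treatment of the local homeomorphism property and the singleton claim is correct and matches the paper's (which simply cites Lemma~\ref{Lemma44inDeeYas} without spelling out the computation). For forward orbit expansiveness you argue directly (assume $\prec\mathcal{U}$, conclude equality) whereas the paper argues the contrapositive; both strategies are fine, but your justification for condition (b) contains an error and conditions (b) second part and (c) are in fact unnecessary.

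The error: axiom C2 says $\varphi^{-1}$ contracts local unstable sets by $\lambda=\gamma$, so $\varphi$ expands by \emph{at least} $\gamma^{-1}$---this is a lower bound, not the upper bound you claim. Condition (b) second part is still achievable, but the correct reason is uniform continuity of $\varphi^K$ on the compact space $X$: there is $\epsilon_0>0$ with $\varphi^K(X^u(\xn_i,\epsilon_0))\subseteq X^u(\varphi^K(\xn_i),\delta)$ for the $\delta$ on which $\pi_0$ is injective.

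More to the point, you never need (b) second part or (c). Once you have representatives $\xn,\yn$ in the common set $X^u(\xn_{i_0},\epsilon_0)$, Wieler's description of local unstable sets (Theorem~\ref{WielerTheorem}) gives $\mathrm{d}_Y(x_n,y_n)<2\epsilon_0\le\beta$ for \emph{all} $n\ge 0$, not just along the forward orbit. Having established $g^K(x_0)=g^K(y_0)$, apply Axiom~1 to the pair $(x_K,y_K)$: since $g^K(x_K)=x_0$ and $g^{2K}(x_K)=g^K(x_0)$, you get $\mathrm{d}_Y(x_0,y_0)\le\gamma^K\mathrm{d}_Y(g^K(x_0),g^K(y_0))=0$. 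Inducting with $(x_{n+K},y_{n+K})$ gives $x_n=y_n$ for every $n$, so $\xn=\yn$ and hence $[\xn]_0=[\yn]_0$ directly---no injectivity of $\tilde{g}^{K_0}$ or of $\pi_0$ on $\varphi^K$-images required. The paper's proof is essentially this observation in contrapositive form: its cover elements are $q(X^u(\zn_i,\delta))\cap r^{-1}(B(w_j,\beta/2))$, and from $[\xn]_0\neq[\yn]_0$ and common membership in $q(X^u(\zn_{i_0},\delta))$ it obtains representatives with $x_0\neq y_0$, then iterates Axiom~1 forward until $\mathrm{d}_Y(g^N(x_0),g^N(y_0))>\beta$, so the $r^{-1}(B(w_j,\beta/2))$ factor separates the two classes at time $N$.
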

\begin{proof}
Recall the constants $\beta>0$ and $0<\lambda<1$ in the definition of a Wieler presolenoid, see Definition \ref{WielerAxioms}. Using the nature of the local stable and unstable sets for a Wieler solenoid (see Definition \ref{WieSolenoid}) there exists $0<\delta< \frac{\beta}{2}$ such that if $\xn \neq \yn$ are in $X^u({\bf z}, \delta)$ (for some $\zn$) then $x_0\neq y_0$. Using the compactness of $X^u(P)/{\sim_0}$, we have an open cover of the form
\[
\mathcal{U}= \{ \pi_0(X^u(\zn_i, \delta)) \cap r^{-1}(B(w_j, \frac{\beta}{2})) \}_{i\in I, j\in J}.
\]
where $\zn_i \in X^u(P)$, $w_j\in Y$, $I$ and $J$ are finite index sets, and the fact that $\pi_0$ is an open map and $r$ is continuous ensures that the given sets are open.

Let $[\xn]_0 \neq [\yn]_0$ be in $X^u(P)/{\sim_0}$. We will find $N\in \N$ such that for each $i$, the two element set $\{ \tilde{g}^N([\xn]_0), \tilde{g}^N([\yn]_0) \}$ is not a subset of $\pi_0(X^u({\bf z}_i, \delta))\cap r^{-1}(B(w_j, \frac{\beta}{2}))$. 

We are done unless there exists $i_0$ such that 
\[ [\xn]_0 \hbox{ and } [\yn]_0 \hbox{ are both in }\pi_0(X^u({\bf z}_{i_0}, \delta)).
\]
Let $\xn$ and $\yn$ be points in $X^u({\bf z}_{i_0}, \delta)$ representing $[\xn]_0$ and $[\yn]_0$ respectively. By assumption, $x_0 \neq y_0$ and 
\[
\mathrm{d}_Y(x_K, y_K)< 2\delta < \beta
\]
where $K$ is the constant in Wieler's Axioms. Using the first of Wieler's Axioms, 
\[ 0<\mathrm{d}_Y(x_0, y_0) \le \gamma^K \mathrm{d}_Y(g^K(x_0), g^K(y_0)). \]
Using this inequality and the fact that $x_0 \neq y_0$, we have that $g^K(x_0) \neq g^K(y_0)$. If $\mathrm{d}_Y(g^K(x_0), g^K(y_0))> \beta$, then we stop. Otherwise, since 
\[
\mathrm{d}_Y(x_0, y_0)< 2\delta < \beta,
\]
we can again use the first of Wieler's Axioms. We get that
\[ \mathrm{d}_Y(g^K(x_0), g^K(y_0)) \le \gamma^{K} \mathrm{d}_Y(g^{2K}(x_0), g^{2K}(y_0)). \]
Using this inequality and the previous one, we obtain 
\[ 0<\mathrm{d}_Y(x_0, y_0) \le \gamma^{2K} \mathrm{d}_Y(g^{2K}(x_0), g^{2K}(y_0)).
\]
Using the fact that $0<\gamma<1$ and possibly repeating this process, there exists $N\in \N$ such that $d(g^N(x_0), g^N(y_0))> \beta$. Now,
\[
r(\tilde{g}^N([\xn]_0))=g^N(x_0) \hbox{ and }r(\tilde{g}^N([\yn]_0))=g^N(y_0).
\]
Therefore, $\tilde{g}^N([\xn]_0)$ and $\tilde{g}^N([\yn]_0)$ cannot both be in $r^{-1}(B(w_j, \frac{\beta}{2}))$ for any $j$. Thus, $\tilde{g}$ is forward orbit expansive.

For the second part of the theorem, it was already noted that $\tilde{g}$ is a local homeomorphism and the existence of the required $L$ follows from Lemma \ref{Lemma44inDeeYas}.
\end{proof}

\subsection{Inverse limit space associated to the spectrum}

The main goal of this section is to show that the inverse limit formed from $(X^u(P)/{\sim_0}, \tilde{g})$ also gives the Wieler solenoid associated to $(Y, g)$. This is perhaps somewhat surprising in light of the fact that $X^u(P)/{\sim_0}$ is often non-Hausdorff. 

To fix notation, $(Y, g)$ is assumed to satisfy Wieler's axioms, $(X, \varphi)$ is the associated solenoid and $(X^u(P)/{\sim_0}, \tilde{g})$ is as in the previous section. We let
\[
\tilde{X}:= \varprojlim (X^u(P)/{\sim_0}, \tilde{g}) = \{ (\tilde{y}_n)_{n\in \N} = (\tilde{y}_0, \tilde{y}_1, \tilde{y}_2, \ldots ) \: | \: \tilde{g}(\tilde{y}_{i+1})=\tilde{y}_i \hbox{ for each }i\ge0 \}
\]
with the map $\tilde{\varphi}: \tilde{X} \rightarrow \tilde{X}$ be defined via
\[
\tilde{\varphi}(\tilde{y}_0, \tilde{y}_1, \tilde{y}_2, \ldots ) = (\tilde{g}(\tilde{y}_0), \tilde{g}(\tilde{y}_1), \tilde{g}(\tilde{y}_2), \ldots) = (\tilde{g}(\tilde{y}_0), \tilde{y}_0, \tilde{y}_1, \ldots ).
\] 
Again we will make use of Lemma 4.4 in \cite{DeeYas}, which we restate here for the reader.

We have the two maps. The first one was defined in \cite{DeeYas}; it is 
\[
r: X^u(P)/{\sim_0} \rightarrow Y \hbox{ defined via } [\xn]_0 \mapsto x_0
\]
where we note that the definition of $\sim_0$ implies that $r$ is well-defined. Furthermore, it is continuous and surjective. 

The second map is 
\[
s: Y \rightarrow X^u(P)/{\sim_0} \hbox{ defined via }y \mapsto [g^{K_0}(y), g^{K_0-1}(y), \ldots , g(y), y,  \ldots ]_0
\]
where we note that the Lemma 4.4 in \cite{DeeYas} implies that $s$ is well-defined.

\begin{thm} \label{InverseLimitXUP}
Using the notation in the previous paragraphs, the maps $r$ and $s$ define a shift equivalence. That is, they satisfy 
\[
r \circ \tilde{g}=g\circ r, s \circ g =g \circ s,  r \circ s = g^{K_0} \hbox{ and } s \circ r = \tilde{g}^{K_0}.
\]
Moreover, the map $S: X \rightarrow \tilde{X}$ defined via
\[
(y_0, y_1, y_2, \ldots) \mapsto (s(y_0), s(y_1), s(y_2), \ldots )
\]
is a conjugacy from $(X, \varphi)$ to $(\tilde{X}, \tilde{\varphi})$.
\end{thm}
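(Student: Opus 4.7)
I would deduce the conjugacy directly from the four shift-equivalence identities, each of which reduces either to the definitions or to a single application of Lemma~\ref{Lemma44inDeeYas}. First I would verify that $s$ is well-defined on $\sim_0$-classes: the representative sequence displayed for $s(y)$ is specified only in its first $K_0+1$ coordinates, so by Lemma~\ref{Lemma44inDeeYas} the class $[g^{K_0}(y),\ldots,g(y),y,\ldots]_0$ is independent of the admissible extension in $X^u(\Per)$, provided such an extension exists (which reduces to the surjectivity of $\pi_0$ on the unstable orbit through $\Per$ established in \cite{DeeYas}).

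Next I would verify the four identities on representatives. The identities $r\circ\tilde g=g\circ r$ and $r\circ s=g^{K_0}$ are immediate from the formulas $r([\xn]_0)=x_0$, $\tilde g([\xn]_0)=[(g(x_0),x_0,x_1,\ldots)]_0$, and the definition of $s$. For $s\circ g=\tilde g\circ s$, I would compute that $\tilde g(s(y))$ is represented by $(g^{K_0+1}(y),g^{K_0}(y),\ldots,g(y),y,\ldots)$ while $s(g(y))$ is represented by a sequence agreeing with it in the first $K_0+1$ coordinates; Lemma~\ref{Lemma44inDeeYas} then identifies the two $\sim_0$-classes. The same strategy yields $s\circ r=\tilde g^{K_0}$: iterating gives $\tilde g^{K_0}([\xn]_0)=[(g^{K_0}(x_0),\ldots,g(x_0),x_0,x_1,\ldots)]_0$, which matches $s(r([\xn]_0))=s(x_0)$ in its first $K_0+1$ coordinates.

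With the shift equivalence in hand I would turn to the second claim. The relation $\tilde g\circ s=s\circ g$ ensures that $S(\yn):=(s(y_n))_n$ lies in $\tilde X$, and $g\circ r=r\circ\tilde g$ ensures that $R(\tilde\yn):=(r(\tilde y_n))_n$ lies in $X$. A direct coordinatewise computation, using the inverse-limit relation $g(y_{n+1})=y_n$ together with $r\circ s=g^{K_0}$, yields $R\circ S=\varphi^{K_0}$; symmetrically $s\circ r=\tilde g^{K_0}$ gives $S\circ R=\tilde\varphi^{K_0}$. Since $\varphi$ and $\tilde\varphi$ are homeomorphisms of the respective inverse limits, these two equations force $S$ to be a bijection with inverse $\varphi^{-K_0}\circ R$. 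The intertwining $S\circ\varphi=\tilde\varphi\circ S$ follows coordinatewise from $s\circ g=\tilde g\circ s$, and continuity of $S$ and $S^{-1}$ reduces to the continuity of $s$ and $r$ together with continuity of $\varphi^{-1}$.

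\textbf{Main obstacle.} The only genuine subtlety I foresee is in the well-definedness and continuity of $s$: one must know that the formula truncated at position $K_0$ admits an extension inside $X^u(\Per)$ and that different extensions lie in the same $\sim_0$-class. Once this is in place via Lemma~\ref{Lemma44inDeeYas}, the rest of the proof is a formal manipulation of sequences, and the standard Williams argument \cite{Wil} packages a shift equivalence into a conjugacy of the associated solenoids.
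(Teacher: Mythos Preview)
Your proposal is correct and follows essentially the same strategy as the paper. The paper factors the argument through the general framework of Theorem~\ref{PreNonHausSolHaus} (verifying its hypotheses via Theorem~\ref{WielerNonHausOrbExp}) and then notes that the abstract map $s(y)=\tilde g^{K_0}(r^{-1}(y))$ defined there coincides with the explicit coordinate formula; the shift-equivalence identities are proved in that general setting using the singleton property of $\tilde g^{K_0}(r^{-1}(y))$, which is exactly the content of Lemma~\ref{Lemma44inDeeYas}. Your direct coordinatewise verification using Lemma~\ref{Lemma44inDeeYas} is the same argument with the abstraction unwound, and the passage from shift equivalence to conjugacy of solenoids is the Williams argument in both versions.
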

\begin{proof}
This follows from the statement and proof of Theorem \ref{PreNonHausSolHaus}. We note that the assumptions of Theorem \ref{PreNonHausSolHaus} hold by Theorem \ref{WielerNonHausOrbExp}. Also, the reader can check that the map $s$ defined just before the statement of the current theorem is equal to the map $s$ consider in Theorem \ref{PreNonHausSolHaus}.
\end{proof}

\begin{corollary}
If $(X, \varphi)$ is an irreducible Smale space with totally disconnected stable sets, then there exists $(\tilde{Y}, \tilde{g})$ such that $\tilde{g}$ is a surjective local homeomorphism that is forward orbit expansive and $(X, \varphi)$ is conjugate the solenoid associated to $(\tilde{Y}, \tilde{g})$.
\end{corollary}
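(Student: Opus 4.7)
The plan is to simply assemble the corollary from Wieler's theorem together with Theorems \ref{WielerNonHausOrbExp} and \ref{InverseLimitXUP}, which have already done all the heavy lifting. Given an irreducible Smale space $(X,\varphi)$ with totally disconnected stable sets, I would first invoke Theorem \ref{WielerTheorem} to produce a triple $(Y,\mathrm{d}_Y,g)$ satisfying Wieler's axioms whose associated Wieler solenoid is conjugate to $(X,\varphi)$. After this reduction, it suffices to build $(\tilde{Y},\tilde{g})$ from the pre-solenoid $(Y,g)$.

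Next, I would fix a finite $\varphi$-invariant set of periodic points $\Per$ (which exists by standard Smale space theory) and form the pair
\[
(\tilde{Y},\tilde{g}) := (X^u(\Per)/{\sim_0}, \tilde{g})
\]
as in Section \ref{SecCstarWieler}. The required dynamical properties of $\tilde{g}$ are then immediate from Theorem \ref{WielerNonHausOrbExp}, which gives that $\tilde{g}$ is a local homeomorphism and forward orbit expansive. Surjectivity of $\tilde{g}$ follows because $\tilde g$ intertwines the homeomorphism $\varphi|_{X^u(\Per)}$ with itself along the surjective quotient map $q:X^u(\Per)\to X^u(\Per)/{\sim_0}$.

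To conclude, I would apply Theorem \ref{InverseLimitXUP}, which states that the inverse limit of $(X^u(\Per)/{\sim_0},\tilde{g})$ together with the shift map $\tilde{\varphi}$ is conjugate, via the map $S$ built from the shift-equivalence pair $(r,s)$, to the Wieler solenoid $(X,\varphi)$ associated to $(Y,g)$. Composing this conjugacy with the one given by Wieler's theorem yields the desired conjugacy between $(X,\varphi)$ and the solenoid associated to $(\tilde{Y},\tilde{g})$.

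There is essentially no obstacle here since each ingredient has already been established; the only thing to be mildly careful about is that the word ``solenoid'' in the statement should be interpreted in the generalized sense (the stationary inverse limit of $(\tilde{Y},\tilde{g})$ equipped with the induced shift), rather than as the Smale-space solenoid of Definition \ref{WieSolenoid}, because $\tilde{Y}$ need not be Hausdorff or metric. Once this interpretation is fixed, the proof is a one-line combination: ``Apply Theorem \ref{WielerTheorem}, then Theorems \ref{WielerNonHausOrbExp} and \ref{InverseLimitXUP} to $(Y,g)$.''
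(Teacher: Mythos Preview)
Your proposal is correct and follows essentially the same approach as the paper, which simply writes ``This follows from the previous result and Wieler's theorem.'' You have just made explicit the ingredients (Theorem~\ref{WielerNonHausOrbExp} for the properties of $\tilde{g}$, Theorem~\ref{InverseLimitXUP} for the conjugacy, and Theorem~\ref{WielerTheorem} for the reduction to a Wieler pre-solenoid) that the paper leaves implicit.
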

\begin{proof}
This follows from the previous result and Wieler's theorem.
\end{proof}

\subsection{The relationship between $(X, \varphi)$, $(X^u(\Per)/{\sim_0}, \tilde{g})$ and $(Y, g)$}

It follows from Theorem \ref{InverseLimitXUP} that there is a continuous surjection from $X$ to $X^u(P)/{\sim_0}$. However, we can write it is explicitly without reference to the conjugacy in Theorem \ref{InverseLimitXUP}:

\begin{thm}
Define $p : X \rightarrow X^u(P)/{\sim_0}$ via 
\[
x \mapsto [y]_0
\]
where $y\in X^u(P) \cap X^s(x, \frac{\epsilon_X}{2})$. Then $p$ is a continuous surjection and for each $z\in X^u(P)/{\sim_0}$, $p^{-1}(z)$ is a Cantor set.
\end{thm}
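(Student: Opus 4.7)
My plan is to identify $p$ with a coordinate projection through the conjugacy $S \colon X \to \tilde{X}$ from Theorem \ref{InverseLimitXUP}. Specifically, I expect that $p(x)$ agrees with the projection to the $K_0$-th coordinate of $S(x) \in \tilde{X}$, where $K_0$ is the constant from Lemma \ref{Lemma44inDeeYas}. This is because, by the definition of $s$ used in Theorem \ref{InverseLimitXUP}, the $K_0$-th coordinate $s(x_{K_0})$ has representatives whose first $K_0+1$ entries are exactly $x_0, x_1, \ldots, x_{K_0}$, matching the initial coordinates of any $y \in X^u(\Per) \cap X^s(x, \epsilon_X/2)$ (at least after a mild adjustment of constants using $\varphi$). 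Once this identification is in hand, continuity and surjectivity reduce to the corresponding properties of a coordinate projection on an inverse limit, and the Cantor structure of fibres reduces to analysing pre-image trees under $\tilde{g}$.

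For well-definedness, I first check that $X^u(\Per) \cap X^s(x, \epsilon_X/2)$ is non-empty. Since the Smale space is irreducible, $X^u(p)$ is dense in $X$ for each $p \in \Per$, so I choose $z \in X^u(\Per)$ with $d_X(x, z)$ sufficiently small and set $[x, z]$; axioms B1--B4 together with Theorem \ref{wellKnownSmaleSpace} place this in $X^u(z) \subseteq X^u(\Per)$ as well as in $X^s(x, \epsilon_X/2)$. For the $\sim_0$-uniqueness, given $y_1, y_2 \in X^u(\Per) \cap X^s(x, \epsilon_X/2)$, Theorem \ref{WielerTheorem} forces their first $K$ coordinates to agree with those of $x$ and hence with each other. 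Applying $\varphi$ sufficiently many times extends the agreement to the first $K_0 + 1$ coordinates, at which point Lemma \ref{Lemma44inDeeYas} yields $\varphi^k(y_1) \sim_0 \varphi^k(y_2)$ for some $k$; by $\varphi$-equivariance of the construction of $\sim_0$ (in the form encoded by Theorem \ref{MainDeeYas}, after shrinking $\epsilon_X$ so that $k=0$ suffices) this descends to $y_1 \sim_0 y_2$.

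Surjectivity is immediate: for $[y]_0 \in X^u(\Per)/{\sim_0}$ the representative $y \in X^u(\Per) \subseteq X$ trivially satisfies $y \in X^u(\Per) \cap X^s(y, \epsilon_X/2)$, so $p(y) = [y]_0$. For continuity at $x$, let $y$ realise $p(x)$; for $x'$ close to $x$, the element $y' := [x', y]$ lies in $X^u(y) \subseteq X^u(\Per)$ by axiom B2/B3, and in $X^s(x', \epsilon_X/2)$ after shrinking the neighbourhood of $x$. Continuity of the bracket gives $y' \to y$, and since the quotient map $q \colon X^u(\Per) \to X^u(\Per)/{\sim_0}$ is a local homeomorphism, $p(x') = [y']_0 \to [y]_0 = p(x)$.

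For the Cantor property of $p^{-1}([y]_0)$, I pass through the conjugacy $S$ of Theorem \ref{InverseLimitXUP} so that the fibre corresponds to sequences $(\tilde{y}_n)_{n \in \N} \in \tilde{X}$ whose $K_0$-th coordinate is $[y]_0$; then $\tilde{y}_0, \ldots, \tilde{y}_{K_0-1}$ are determined by iterating $\tilde g$, while $\tilde{y}_{K_0+n}$ for $n \geq 1$ ranges over a $\tilde g$-preimage tree. By Theorem \ref{WielerNonHausOrbExp}, $\tilde g$ is a surjective local homeomorphism; since $Y$ is infinite and $g$ is not a homeomorphism, this pre-image tree has fibres of size $\geq 2$ along infinitely many branches, producing a compact, metrizable, totally disconnected set without isolated points, which is a Cantor set. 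The main obstacle is the first step: reconciling Wieler's constant $K$ with $K_0$ so that Lemma \ref{Lemma44inDeeYas} applies directly; this is resolved by the $\varphi$-equivariance argument above or, alternatively, by a direct verification working from Definition \ref{tildeZeroDef} and exhibiting open sets in $X^u(\Per)$ whose $\pi_0$-images match.
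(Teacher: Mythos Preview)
Your overall strategy is sound, and in fact you carry out more than the paper does: the paper's proof only addresses well-definedness and records an equivalent bracket-based description of $p$, leaving continuity, surjectivity, and the Cantor-fibre claim implicit. Your surjectivity argument and your continuity argument via $y' = [x',y]$ are correct, and the Cantor-fibre argument via the conjugacy $S$ is a reasonable route (though the branching claim deserves a line of justification from irreducibility).

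The genuine gap is in your primary argument for well-definedness. You propose to show $y_1 \sim_0 y_2$ by applying $\varphi$ enough times so that $\varphi^k(y_1)$ and $\varphi^k(y_2)$ agree in their first $K_0+1$ coordinates, invoking Lemma~\ref{Lemma44inDeeYas} to get $\varphi^k(y_1) \sim_0 \varphi^k(y_2)$, and then ``descending'' via $\varphi$-equivariance. But $\varphi^k(y_1) \sim_0 \varphi^k(y_2)$ says precisely that $(y_1,y_2) \in G_k(\Per)$, and the inclusion $G_0(\Per) \subset G_k(\Per)$ is strict in general (the $aab/ab$ example in the paper exhibits a pair $(p,q)$ in $G_1(\Per)\setminus G_0(\Per)$). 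So the implication you need goes the wrong way. Your proposed fix of ``shrinking $\epsilon_X$ so that $k=0$ suffices'' does not help either: Wieler's description of $X^s(\mathbf{x},\epsilon)$ pins down exactly the first $K$ coordinates regardless of how small $\epsilon$ is, so you cannot force agreement in $K_0+1$ coordinates just by taking $\epsilon$ small.

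The clean route is the one you flag as your alternative, and it is what the paper actually does: verify Definition~\ref{tildeZeroDef} directly using the bracket. For $y, y' \in X^u(\Per)\cap X^s(x,\epsilon_X/2)$ one has $\pi_0(y)=\pi_0(x)=\pi_0(y')$, and the local homeomorphism $h\colon X^u(y,\epsilon_X)\to X^u(y',\epsilon_X)$, $z\mapsto [z,y']$, satisfies $\pi_0(h(z))=\pi_0(z)$ (again from the local-stable description), which produces the matching $\pi_0$-images of open sets required by Definition~\ref{tildeZeroDef}. I would replace your $K_0$-based argument with this bracket argument and keep the rest of your proof as written.
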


\begin{proof}
To begin, we must show that $p$ is well-defined. Suppose that $y'$ is another element in $X^u(P) \cap X^s(x, \frac{\epsilon_X}{2})$. Firstly, since both $y$ and $y'$ are in $X^s(x, \frac{\epsilon_X}{2})$ we have that $\pi_0(x)=\pi_0(y)=\pi_0(y')$. Moreover, properties of the bracket implies that the map $h : X^u(y, \epsilon_X) \rightarrow X^u(y', \epsilon_X)$ defined via
\[ z \mapsto [z, y'] \]
is well defined and that $\pi_0(z)=\pi_0(h(z))$ for each $z \in X^u(y, \epsilon_X)$. This implies that $[y]_0=[y']_0$.

An equivalent definition of $p$ is the following: Let $U(x, \frac{\epsilon_X}{2})$ denote the image of the bracket of the set $X^s(x, \frac{\epsilon_X}{2}) \times X^u(x, \frac{\epsilon_X}{2})$. Given $x \in X$ and $w \in X^u(P) \cap U(x, \frac{\epsilon_X}{2})$ then $p(x)=[x, w]$. To see this is the same as the previous definition, we need only check that $[x, w]$ is an element of $X^u(P) \cap X^s(x, \frac{\epsilon_X}{2})$ but this follows from the definitions of the bracket and the set $U(x, \frac{\epsilon_X}{2})$.

\end{proof}

\begin{remark}
If $g$ is a local homeomorphism satisfying Wieler's axioms, then $X^u(P)/{\sim_0}=Y$ and the map $p:X\to Y$ is a locally trivial bundle of Cantor sets. This fact follows from \cite[Theorem 3.12]{DGMW}. Already for Williams solenoids, local triviality of $p$ fails to hold in general \cite{FJ2}.

\end{remark}

Moving forward we will use an abuse of notation to identify sets of the form $X^s(x, \delta_1) \times X^u(x, \delta_2)$ with their image in $X$ under the bracket map. For example, using this convention, there would be no reference to the set $U(x, \frac{\epsilon_X}{2})$ in the proof of the previous theorem.
\begin{lemma} \label{stableSetNotImportant}
Suppose $x\in X$ and $0< \delta<\delta'< \epsilon'$. Then 
\[
p(X^s(x, \delta)\times X^u(x, \epsilon'))=p(X^s(x, \delta')\times X^u(x, \epsilon')).
\]
\end{lemma}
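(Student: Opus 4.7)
The plan is to use axiom B3 to show that $p$ depends only on the ``unstable part'' of a point in the bracket rectangle, so shrinking the stable extent from $\delta'$ to $\delta$ does not affect the image under $p$.

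The inclusion $\subseteq$ is immediate from $X^s(x,\delta)\subseteq X^s(x,\delta')$. For the reverse inclusion I would take $z\in X^s(x,\delta')\times X^u(x,\epsilon')$; by the notational convention introduced just before the lemma, $z=[y^s,y^u]$ for some $y^s\in X^s(x,\delta')$ and $y^u\in X^u(x,\epsilon')$. The candidate element in the smaller rectangle is $z':=[x,y^u]$, which lies in $X^s(x,\delta)\times X^u(x,\epsilon')$ because $x\in X^s(x,\delta)$ (trivially, for any $\delta>0$, since $[x,x]=x$).

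To establish $p(z)=p(z')$, first pick $w\in X^u(\Per)$ lying in the intersection of the bracket neighborhoods $U(z,\epsilon_X/2)\cap U(z',\epsilon_X/2)$; such a $w$ exists because $z$ and $z'$ both lie close to $x$, the intersection is a non-empty open neighborhood of a point near $x$, and $X^u(\Per)$ is dense in $X$ by mixing of the Wieler solenoid. Using the equivalent description $p(\cdot)=[\cdot,w]$ mod $\sim_0$ given in the theorem preceding the lemma,
\[
p(z)=[[z,w]]_0=[[[y^s,y^u],w]]_0\stackrel{\mathrm{B3}}{=}[[y^s,w]]_0.
\]
Since $y^s\in X^s(x,\delta')$ means by definition that $[x,y^s]=y^s$, a second application of B3 gives
\[
[y^s,w]=[[x,y^s],w]=[x,w].
\]
A third application of B3, read in the opposite direction, yields $[x,w]=[[x,y^u],w]=[z',w]$, and hence $p(z)=[[x,w]]_0=[[z',w]]_0=p(z')$.

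The main obstacle is just the bookkeeping of simultaneously choosing a single witness $w\in X^u(\Per)$ for which each of the brackets $[z,w]$, $[z',w]$, $[y^s,w]$, and $[x,w]$ is defined; this reduces to checking that a non-empty open subset of a small Hausdorff bracket neighborhood of $x$ meets $X^u(\Per)$, which follows from the definition/well-definedness of $p$ proved earlier. Once $w$ is fixed, the argument is purely algebraic, relying only on iterated applications of B3 and the defining identity $[x,y^s]=y^s$ for local stable points.
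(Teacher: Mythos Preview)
Your approach is the same idea as the paper's---project $z$ onto $X^u(x)$ and use axiom B3 to see that $p$ is unchanged---but your decomposition of $z$ has the bracket arguments in the wrong order, which makes the written argument collapse. With the paper's convention $[a,b]\in X^s(a,\epsilon)\cap X^u(b,\epsilon)$, if $y^s\in X^s(x,\delta')$ and $y^u\in X^u(x,\epsilon')$ then one checks $x\in X^s(y^s)\cap X^u(y^u)$ (since $[x,y^s]=y^s$ gives $[y^s,x]=x$ via B3, and $[y^u,x]=y^u$ gives $[x,y^u]=x$ via B2), so $[y^s,y^u]=x$. Thus your parametrization $z=[y^s,y^u]$ only produces $z=x$, and your candidate $z'=[x,y^u]$ is likewise just $x$; the chain of B3 identities you wrote is formally valid but is proving the tautology $p(x)=p(x)$. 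The correct decomposition of a general point in the rectangle is $z=[y^u,y^s]$, and the natural candidate in the smaller rectangle is then $[y^u,x]=y^u$.

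The paper avoids this pitfall by not decomposing $z$ at all: it takes the candidate to be $[z,x]\in X^u(x,\epsilon')$, and then a \emph{single} use of B3 gives $[[z,x],w]=[z,w]$, hence $p([z,x])=p(z)$. Your three applications of B3 are an artifact of the order error; once corrected they collapse to this one step. Your worry about picking a common witness $w$ is also unnecessary: any fixed $w\in X^u(\Per)\cap U(x,\epsilon_X/2)$ works simultaneously for every point in both rectangles, exactly as in the paper's proof that $p$ is well defined.
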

\begin{proof}
Let $z\in X^s(x, \delta')\times X^u(x, \epsilon')$. Then $[z, x] \in X^u(x, \epsilon') \subseteq X^s(x, \delta)\times X^u(x, \epsilon')$. Furthermore, taking $w\in X^u(P) \cap U(x, \frac{\epsilon_X}{2})$, we have that
\[ p(z)=[z, w] = [ [z, x], w] = p([z, x]) \]
as required.
\end{proof}

\begin{thm}
Using the notation in the past few paragraphs, we have that the following diagram commutes: 
\begin{center}
		$\begin{CD}
		X @>\varphi >> X \\
		@Vp VV @Vp VV \\
		X^u(P)/{\sim_0} @>\tilde{g} >> X^u(P)/{\sim_0} \\
		@Vr VV @Vr VV \\
		Y @>g>> Y
		\end{CD}$
	\end{center}
where $r$ is defined above (see Section \ref{tildeZeroSection}) and $r \circ p$ is equal to the projection map $ \pi_0 : X \rightarrow Y$.
\end{thm}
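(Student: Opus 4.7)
The plan is to establish the two squares separately. The bottom square $r\circ \tilde g = g\circ r$ is already recorded in the commutative diagram of Section \ref{tildeZeroSection}, so the genuinely new content is the top square together with the factorization identity $r\circ p = \pi_0$.

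For $r\circ p = \pi_0$, fix $\xn=(x_0,x_1,\ldots)\in X$ and choose any representative $\yn=(y_0,y_1,\ldots)\in X^u(\Per)\cap X^s(\xn,\tfrac{\epsilon_X}{2})$ as in the definition of $p(\xn)$. By the explicit description of local stable sets for a Wieler solenoid in Theorem \ref{WielerTheorem}, membership of $\yn$ in $X^s(\xn,\tfrac{\epsilon_X}{2})$ forces $y_m=x_m$ for $0\le m\le K-1$; in particular $y_0=x_0$, and therefore $r(p(\xn))=r([\yn]_0)=y_0=x_0=\pi_0(\xn)$.

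For the top square, I would prove the sharper statement that the very same $\yn$ also serves as a representative at the point $\varphi(\xn)$, i.e.\ that $\varphi(\yn)\in X^u(\Per)\cap X^s(\varphi(\xn),\tfrac{\epsilon_X}{2})$. Invariance of $X^u(\Per)$ under $\varphi$ follows from $\varphi$-invariance of $\Per$. Bracket axiom B4 gives $\varphi[\xn,\yn]=[\varphi(\xn),\varphi(\yn)]$, so $[\xn,\yn]=\yn$ implies $[\varphi(\xn),\varphi(\yn)]=\varphi(\yn)$; and the stable contraction C1 gives $d(\varphi(\xn),\varphi(\yn))\le\lambda d(\xn,\yn)<\lambda\tfrac{\epsilon_X}{2}<\tfrac{\epsilon_X}{2}$. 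Hence $p(\varphi(\xn))=[\varphi(\yn)]_0$. On the other hand, by the definition of $\tilde g$ on representatives, $\tilde g([\yn]_0)=[(g(y_0),g(y_1),\ldots)]_0$, and since $\yn$ lies in the inverse limit we have $g(y_{n+1})=y_n$, so $(g(y_0),g(y_1),g(y_2),\ldots)=(g(y_0),y_0,y_1,\ldots)=\varphi(\yn)$. Thus $\tilde g(p(\xn))=[\varphi(\yn)]_0=p(\varphi(\xn))$, as desired.

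I do not anticipate a serious obstacle here: the argument reduces to the well-definedness of $p$ (supplied by the preceding theorem), bracket compatibility B4, and stable contraction C1. The only point that warrants a line of care is the identification of the two descriptions of $\tilde g$ — one as the map induced by $g$ componentwise on the quotient space, the other as the map $[\yn]_0\mapsto[\varphi(\yn)]_0$ on representatives from $X^u(\Per)\subseteq X$ — but this is precisely what the inverse-limit relation $g(y_{n+1})=y_n$ provides.
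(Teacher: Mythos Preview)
Your proof is correct and is precisely the routine verification from definitions that the paper alludes to; the paper itself simply writes that the result follows directly from the definitions of the relevant maps and omits the details. Your use of the local stable set description from Theorem \ref{WielerTheorem} for $r\circ p=\pi_0$, and of axioms B4 and C1 together with $\varphi$-invariance of $\Per$ for the top square, is exactly the expected unpacking.
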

\begin{proof}
The result follows directly from the definitions of the relevant maps; the details are omitted.
\end{proof}

\begin{thm} \label{perPt}
The map $p$ induces a bijection between periodic points of $\varphi$ and periodic points of $\tilde{g}$ and hence $\varphi$ and $\tilde{g}$ have the same zeta function. Furthermore, if the set of periodic points with respect to $\varphi$ is dense in $X$, then the set of periodic points with respect to $\tilde{g}$ is dense in $X^u(P)/{\sim_0}$.
\end{thm}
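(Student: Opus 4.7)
The plan is to use the commutative diagram $p\circ\varphi=\tilde{g}\circ p$ together with the conjugacy from Theorem~\ref{InverseLimitXUP} to transfer the standard bijection between periodic points of a dynamical system and those of its stationary inverse limit.

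For injectivity of $p$ on periodic points, I would first record the identity $r\circ p=\pi_0$, where $\pi_0:X\to Y$ is the projection $\xn=(x_n)_{n\in\N}\mapsto x_0$: indeed, if $p(\xn)=[\yn]_0$ with $\yn\in X^u(\Per)\cap X^s(\xn,\epsilon_X/2)$, then $y_0=x_0$ by the description of local stable sets in Theorem~\ref{WielerTheorem}, hence $r([\yn]_0)=y_0=x_0$. Suppose now that $x,x'$ are $\varphi$-periodic of common period $n$ and $p(x)=p(x')$. Using $p\circ\varphi=\tilde{g}\circ p$ together with the fact that $\varphi^{-k}$ equals $\varphi^{n-k}$ on $n$-periodic points, one obtains $p(\varphi^{-k}x)=p(\varphi^{-k}x')$ for every $k\ge 0$; applying $r$ and using $\pi_0(\varphi^{-k}\xn)=x_k$ then forces $x_k=x'_k$ for all $k\ge 0$, so $x=x'$.

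For surjectivity I would leverage the conjugacy $S:X\to\tilde{X}$ from Theorem~\ref{InverseLimitXUP}, where $\tilde{X}:=\varprojlim(X^u(\Per)/{\sim_0},\tilde{g})$, together with the first-coordinate projection $\pi_0^{\tilde{X}}:\tilde{X}\to X^u(\Per)/{\sim_0}$. The key computation is the identity $\tilde{g}^{K_0}\circ p=\pi_0^{\tilde{X}}\circ S$, which holds because $\varphi^{K_0}(\yn)$ and any representative of $s(x_0)$ share the same first $K_0+1$ coordinates $g^{K_0}(x_0),g^{K_0-1}(x_0),\ldots,g(x_0),x_0$, so Lemma~\ref{Lemma44inDeeYas} implies they represent the same $\sim_0$-class. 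The map $\pi_0^{\tilde{X}}$ restricts to a bijection on periodic points since an $n$-periodic $\tilde{z}$ extends uniquely to the $n$-periodic sequence $(\tilde{z},\tilde{g}^{n-1}(\tilde{z}),\tilde{g}^{n-2}(\tilde{z}),\ldots)$ in $\tilde{X}$, and $\tilde{g}$ itself is a bijection on $n$-periodic points (its restriction there is inverted by $\tilde{g}^{n-1}$), so the same holds for $\tilde{g}^{K_0}$. Combining these facts with the observation that the conjugacy $S$ is a bijection on periodic points, the identity $\tilde{g}^{K_0}\circ p=\pi_0^{\tilde{X}}\circ S$ forces $p$ to restrict to a bijection between the periodic points of $\varphi$ and those of $\tilde{g}$.

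The injectivity argument additionally shows that $p$ preserves exact periods: if $p(x)$ had period $m<n$ dividing the period $n$ of $x$, then $p(\varphi^m x)=p(x)$ combined with injectivity on periodic points would give $\varphi^m x=x$, contradicting minimality of $n$. Hence $\varphi$ and $\tilde{g}$ have the same number of points of period $n$ for every $n\in\N$, and their zeta functions coincide. The final density claim then follows since $p$ is continuous and surjective by the previous theorem, so the image of any dense subset of $X$ is dense in $X^u(\Per)/{\sim_0}$; applying this to the dense set of $\varphi$-periodic points, whose image is contained in the $\tilde{g}$-periodic points, yields the required density. The main technical obstacle is the verification of the identity $\tilde{g}^{K_0}\circ p=\pi_0^{\tilde{X}}\circ S$, which requires carefully tracking how many initial coordinates of the representatives in $X^u(\Per)$ are determined by $\xn$ before invoking Lemma~\ref{Lemma44inDeeYas}.
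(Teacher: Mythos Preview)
Your argument is correct and follows essentially the same route as the paper's proof, which simply cites Theorem~\ref{InverseLimitXUP} together with the proof of \cite[Lemma~5.3]{Wil} for the bijection, and surjectivity of $p$ for the density claim. You have effectively unpacked those citations: the identity $\tilde{g}^{K_0}\circ p=\pi_0^{\tilde{X}}\circ S$ is precisely what is needed to transport Williams' inverse-limit periodic-point correspondence through the conjugacy $S$, and your density paragraph matches the paper's verbatim. Your separate injectivity argument via $r\circ p=\pi_0$ is a pleasant direct check but is in fact redundant, since the factorisation $\tilde{g}^{K_0}\circ p=\pi_0^{\tilde{X}}\circ S$ already exhibits $p$ on periodic points as a composition of bijections.
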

\begin{proof}
The first statement using Theorem \ref{InverseLimitXUP} and applying the proof of \cite[Lemma 5.3]{Wil} to our situation. The second follows since $p$ is onto so the image of a dense set in $X$ under $p$ is dense in $X^u(P)/{\sim_0}$. 
\end{proof}

\begin{thm}
The map $r: X^u(P)/{\sim_0} \rightarrow Y$ induces a bijection between the periodic points of $\tilde{g}$ and the periodic points of $g$. Hence $\tilde{g}$ and $g$ have the same zeta function.
\end{thm}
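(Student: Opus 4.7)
The plan is to exploit the commutative diagram established in the previous theorem, specifically the identity $r\circ p=\pi_0$, where $\pi_0:X\to Y$ is the $0$-th coordinate projection $\xn=(x_n)_{n\in\N}\mapsto x_0$.

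First, I would record the standard fact that $\pi_0$ restricts to a period-preserving bijection between $\Per(\varphi)$ and $\Per(g)$. Iterating $\varphi(x_0,x_1,\ldots)=(g(x_0),x_0,x_1,\ldots)$ gives
\[
\varphi^n(\xn)=(g^n(x_0),g^{n-1}(x_0),\ldots,g(x_0),x_0,x_1,\ldots),
\]
so $\varphi^n(\xn)=\xn$ forces $g^n(x_0)=x_0$ together with $x_k=g^{n-k}(x_0)$ for $1\le k\le n$, after which the inverse-limit condition determines the rest of the sequence. Conversely, every $y\in Y$ with $g^n(y)=y$ has a unique $\varphi$-periodic lift obtained by spreading its $g$-orbit cyclically through the sequence. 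This shows that $\pi_0$ restricts to a bijection $\Per(\varphi)\to\Per(g)$ that preserves the least period.

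By Theorem \ref{perPt}, $p$ restricts to a period-preserving bijection $\Per(\varphi)\to\Per(\tilde g)$. Since $r\circ\tilde g=g\circ r$, the map $r$ sends $\Per(\tilde g)$ into $\Per(g)$, and combining this with $r\circ p=\pi_0$ yields
\[
r|_{\Per(\tilde g)}=\pi_0\circ (p|_{\Per(\varphi)})^{-1},
\]
a composition of period-preserving bijections, hence itself a period-preserving bijection between $\Per(\tilde g)$ and $\Per(g)$. Equality of the zeta functions of $\tilde g$ and $g$ follows immediately, since the zeta function is determined by the cardinality of the fixed-point set of each iterate, which is preserved under this bijection.

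I do not foresee a substantive obstacle: the two ingredients (the folklore lift to the inverse limit, and Theorem \ref{perPt}) are already in place, so the proof reduces to a diagram chase on the restriction of the commutative diagram to periodic points. The only potentially delicate point is verifying uniqueness of the periodic lift under $\pi_0$, which is the routine calculation above.
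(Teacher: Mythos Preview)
Your argument is correct. The paper takes a slightly different route: it invokes Theorem~\ref{InverseLimitXUP} directly, which says that $r$ and $s$ form a shift equivalence between $(X^u(\Per)/{\sim_0},\tilde g)$ and $(Y,g)$, and then cites Williams \cite[Theorem~5.2(3)]{Wil} for the general fact that shift-equivalent maps have the same periodic-point count (and hence the same zeta function). You instead factor through the Smale space $X$ via the identity $r\circ p=\pi_0$, combine the elementary inverse-limit bijection $\pi_0:\Per(\varphi)\to\Per(g)$ with the bijection $p:\Per(\varphi)\to\Per(\tilde g)$ from Theorem~\ref{perPt}, and read off $r|_{\Per(\tilde g)}$ as a composite of bijections. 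Your approach is more self-contained on the $\pi_0$ side, though note that Theorem~\ref{perPt} is itself proved in the paper by appeal to \cite[Lemma~5.3]{Wil}, so the dependence on Williams is not entirely removed. One small point worth making explicit: Theorem~\ref{perPt} as stated only asserts a bijection and equality of zeta functions, not literally that $p$ preserves least periods; but since $p\circ\varphi=\tilde g\circ p$ and $p$ is injective on periodic points, the restriction of $p$ to $\operatorname{Fix}(\varphi^n)$ is a bijection onto $\operatorname{Fix}(\tilde g^n)$ for each $n$, which is what your zeta-function conclusion actually uses.
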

\begin{proof}
This follows from Theorem \ref{InverseLimitXUP} and \cite[Theorem 5.2 Part (3)]{Wil}, see in particular the argument just before Lemma 5.3 on page 189 of \cite{Wil}.
\end{proof}

\begin{thm}
The following are equivalent 
\begin{enumerate}
\item $(X, \varphi)$ is mixing, 
\item $(X^u(P)/{\sim_0}, \tilde{g})$ is mixing,
\item $(Y, g)$ is mixing.
\end{enumerate}
\end{thm}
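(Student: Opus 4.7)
The plan is to prove the three equivalences cyclically as $(1) \Rightarrow (2) \Rightarrow (3) \Rightarrow (1)$. The first two implications will follow from a general principle, while the last uses the explicit inverse limit description of $X$ over $Y$.

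First, I would establish the following soft lemma: if $q \colon Z_1 \to Z_2$ is a continuous surjection between topological spaces intertwining continuous self-maps $f_1, f_2$ (that is, $q \circ f_1 = f_2 \circ q$), then mixing of $(Z_1, f_1)$ implies mixing of $(Z_2, f_2)$. Given non-empty open $U_2, V_2 \subseteq Z_2$, continuity and surjectivity of $q$ make $q^{-1}(U_2), q^{-1}(V_2)$ non-empty and open. Mixing of $f_1$ yields $N$ such that for $n \geq N$ there exists $z' \in q^{-1}(U_2)$ with $f_1^n(z') \in q^{-1}(V_2)$; then $q(z') \in U_2$ and $f_2^n(q(z')) = q(f_1^n(z')) \in V_2$, so $f_2^n(U_2) \cap V_2 \neq \emptyset$. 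Applying this lemma to the two commutative squares established earlier with the continuous surjections $p \colon X \to X^u(P)/{\sim_0}$ and $r \colon X^u(P)/{\sim_0} \to Y$ satisfying $p \circ \varphi = \tilde{g} \circ p$ and $r \circ \tilde{g} = g \circ r$ immediately gives $(1) \Rightarrow (2) \Rightarrow (3)$.

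For the remaining implication $(3) \Rightarrow (1)$, I would use the description $X = \varprojlim(Y, g)$ directly. The product topology on $X$ has a basis consisting of cylinder sets $\pi_k^{-1}(W)$ for $k \in \N$ and non-empty open $W \subseteq Y$, where $\pi_k \colon X \to Y$ is the $k$-th coordinate projection. Using the identity $\pi_j = g^{k-j} \circ \pi_k$ for $j \leq k$, a finite intersection of cylinders at different levels may be rewritten as a single cylinder at the maximal level, so it suffices to verify the mixing condition for pairs $U = \pi_k^{-1}(W_1)$ and $V = \pi_k^{-1}(W_2)$ sharing the same level $k$. Mixing of $(Y, g)$ supplies $N$ such that for all $n \geq N$ there is $w_1 \in W_1$ with $g^n(w_1) \in W_2$. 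Since $g$ is surjective, $\pi_k$ is surjective, so we may choose $\xn \in X$ with $x_k = w_1$; the definition of $\varphi$ in Definition \ref{WieSolenoid} gives that $\varphi^n(\xn)$ has $k$-th coordinate $g^n(w_1) \in W_2$, so $\varphi^n(\xn) \in \varphi^n(U) \cap V$.

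The main obstacle is purely notational, namely the verification that non-empty cylinders form a basis and the check that basic cylinders may be aligned at a common level using the compatibility $g \circ \pi_{k+1} = \pi_k$; once these reductions are in place, the surjectivity of the individual maps $g$ (and hence of $\pi_k$) is precisely what lets us lift a witness of mixing from $Y$ back to $X$. Note that the conjugacy $(X,\varphi) \cong (\tilde{X}, \tilde{\varphi})$ of Theorem \ref{InverseLimitXUP} provides an alternative route for $(3) \Rightarrow (2)$ via the inverse limit of $(X^u(P)/{\sim_0}, \tilde{g})$, but running the argument through the original presolenoid $(Y, g)$ is more direct and closes the cycle without appeal to the conjugacy.
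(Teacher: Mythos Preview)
Your argument for $(1)\Rightarrow(2)\Rightarrow(3)$ is exactly the paper's: a continuous surjection intertwining the dynamics pushes mixing forward, applied first to $p$ and then to $r$. The paper actually stops there and does not spell out any implication closing the cycle; it leaves $(3)\Rightarrow(1)$ implicit. Your inverse-limit argument via cylinder sets fills this gap correctly: the reductions (single-level cylinders form a basis via $\pi_j=g^{k-j}\circ\pi_k$, it suffices to test mixing on basic opens, and $\pi_k$ is surjective because $g$ is) are all sound, and the lift of a witness $w_1\in W_1$ with $g^n(w_1)\in W_2$ to a point of $X$ with $k$-th coordinate $w_1$ gives $\varphi^n(U)\cap V\neq\emptyset$ as required. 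So your proposal is correct and, where comparable, identical in approach to the paper, while also supplying the standard piece the paper omits.
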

\begin{proof}
We only prove the case $(X, \varphi)$ is mixing implies $(X^u(P)/{\sim_0}, \tilde{g})$ is mixing in detail. Let $U$ and $V$ be non-empty open sets in $X^u(P)/{\sim_0}$. Then $p^{-1}(U)$ and $p^{-1}(V)$ are non-empty open sets in $X$ and since $(X, \varphi)$ is mixing, there exists $N\in \N$ such that
\[
\varphi^n(p^{-1}(U)) \cap p^{-1}(V) \neq \emptyset \hbox{ for each }n\ge N
\]
Then, using the fact that $p$ is onto,
\begin{align*}
p(\varphi^n(p^{-1}(U)) \cap p^{-1}(V)) & \subseteq (p(\varphi^n(p^{-1}(U)))) \cap p(p^{-1}(V)) \\
& \subseteq (\tilde{g}^n( p(p^{-1}(U)))) \cap p(p^{-1}(V)) \\
& = \tilde{g}^n(U) \cap V
\end{align*}
This implies the result.

Notice that we have only used the following properties: $p$ is onto, continuous and $\tilde{g} \circ p = p \circ \varphi$. Thus to see that $(X^u(P)/{\sim_0}, \tilde{g})$ is mixing implies that $(Y, g)$ is mixing, one replaces $p$ with $r$ in the previous argument.
\end{proof}

\section{Full projections}

\label{fullprojectioninfell}
The main goal of this section is to prove that $C^*(G_0(\Per))$ contains a full projection. This result is used in Section \ref{cpsectionone} to construct unital Cuntz-Pimsner models. In light of \cite{DeeGofYasFellAlgPaper}, the existence of a full projection in $C^*(G_0(\Per))$ puts restrictions on the type of Fell algebras that appear as $C^*(G_0(\Per))$.

\subsection{Dynamical results}

\begin{lemma} 
\label{gInvOpenDense}
Suppose $(X^u(P)/{\sim_0}, \tilde{g})$ is mixing and $U$ is a non-empty open set in $X^u(P)/{\sim_0}$ such that there exists $L\in \N$ that satisfies $\tilde{g}^L(U) \subseteq U$. Then $U$ is dense in $X^u(P)/{\sim_0}$.
\end{lemma}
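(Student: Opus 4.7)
The plan is to exploit the mixing property together with the forward invariance $\tilde{g}^L(U)\subseteq U$ in the most direct way possible: show that $U$ meets every non-empty open set $V$ by producing a point of $V$ that lies in some forward image $\tilde{g}^{kL}(U)$, which by the forward invariance already sits inside $U$.

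First I would upgrade the hypothesis by a one-line induction on $k$: from $\tilde{g}^L(U)\subseteq U$ one obtains $\tilde{g}^{kL}(U)\subseteq U$ for every $k\in\N$. This is the entire content beyond the mixing property itself, and it is where the assumption $\tilde{g}^L(U)\subseteq U$ gets used.

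Next, fix an arbitrary non-empty open set $V\subseteq X^u(P)/{\sim_0}$; proving $U\cap V\neq\emptyset$ will establish density. By the assumed mixing of $(X^u(P)/{\sim_0},\tilde{g})$, there exists $N\in\N$ such that $\tilde{g}^n(U)\cap V\neq\emptyset$ for every $n\geq N$. Choose any integer $k$ with $kL\geq N$; then in particular $\tilde{g}^{kL}(U)\cap V\neq\emptyset$, and combining with the forward invariance from the previous step yields
\[
\emptyset\neq \tilde{g}^{kL}(U)\cap V\subseteq U\cap V.
\]
Since $V$ was arbitrary, $U$ is dense.

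There is no real obstacle here: everything follows from the mixing definition (which for $\tilde{g}$ reads exactly as for $\varphi$, cf.\ the preceding theorem equating mixing of $(X,\varphi)$, $(X^u(P)/{\sim_0},\tilde{g})$ and $(Y,g)$) together with a trivial iteration of the containment hypothesis. No appeal to the local homeomorphism property of $\tilde{g}$ or to the non-Hausdorff structure of $X^u(P)/{\sim_0}$ is needed, which is perhaps worth noting in the write-up.
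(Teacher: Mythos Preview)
Your proof is correct and is essentially the same as the paper's: both fix an arbitrary non-empty open $V$, invoke mixing to find $N$ with $\tilde{g}^n(U)\cap V\neq\emptyset$ for all $n\geq N$, and then observe that $\tilde{g}^{kL}(U)\subseteq U$ for a suitable multiple $kL\geq N$ (the paper simply takes $k=N$). Your write-up is slightly more explicit about the induction giving $\tilde{g}^{kL}(U)\subseteq U$, which the paper leaves implicit.
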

\begin{proof}
The result will follow by showing that if $V$ is a non-empty open set in $X^u(P)/{\sim_0}$ then $U \cap V \neq \emptyset$. Since $\tilde{g}$ is mixing, there exists $N \in \N$ such that for each $n\ge N$, 
\[
\tilde{g}^n(U) \cap V \neq \emptyset
\]
Thus $ \emptyset \neq \tilde{g}^{N\cdot L}(U) \cap V  \subseteq U \cap V$.
\end{proof}

\begin{lemma} 
\label{locallyExpansiveOfgmap}
Suppose that $(X, \varphi)$ is mixing and $\emptyset \neq U \subseteq X^u(P)/{\sim_0}$ is open. Then there exists $\emptyset \neq V \subseteq U$ open and $N \in \N$ such that
$V \subseteq \tilde{g}^N(V)$.
\end{lemma}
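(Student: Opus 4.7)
The strategy is to produce a periodic point $\tilde p\in U$ of $\tilde g$ and then exploit the expansion of $\varphi$ on local unstable sets around a lift of $\tilde p$. Since $(X,\varphi)$ is a mixing Smale space, its periodic points are dense in $X$ by a classical result (see e.g.\ \cite{Put, Rue}); Theorem \ref{perPt} then forces the periodic points of $\tilde g$ to be dense in $X^u(\Per)/{\sim_0}$, so we may choose $\tilde p\in U$ periodic of some period $N$ for $\tilde g$ (with $N$ to be enlarged by multiplication if necessary). Lifting to $w\in X^u(\Per)$ with $q(w)=\tilde p$, where $q:X^u(\Per)\to X^u(\Per)/{\sim_0}$ is the quotient map, the identity $\tilde g\circ q=q\circ\varphi$ gives $q(\varphi^N(w))=\tilde p=q(w)$, so $\varphi^N(w)\sim_0 w$ even though in general $w$ need not be a periodic point of $\varphi$.

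Since $q$ is a local homeomorphism and $\varphi^N(w)\sim_0 w$, there is a continuous local identification $h$ from a neighborhood of $w$ in $X^u(\Per)$ to a neighborhood of $\varphi^N(w)$, satisfying $h(w)=\varphi^N(w)$ and $q\circ h=q$. Smale space axiom C2 says $\varphi^{-1}$ is a $\lambda$-contraction on local unstable sets, so $\varphi^N$ expands them by $\lambda^{-N}>1$: for $\epsilon$ small enough,
\[
\varphi^N(X^u(w,\epsilon))\supseteq X^u(\varphi^N(w),\epsilon/\lambda^N).
\]
Setting $V:=q(X^u(w,\epsilon))$ yields an open subset of $X^u(\Per)/{\sim_0}$ containing $\tilde p$ (as $q$ is open), and $V\subseteq U$ once $\epsilon$ is small enough by continuity of $q$ at $w$. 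Using $q\circ h=q$ and $\tilde g^N\circ q=q\circ\varphi^N$, the desired inclusion $V\subseteq\tilde g^N(V)$ reduces to the compatibility estimate $h(X^u(w,\epsilon))\subseteq X^u(\varphi^N(w),\epsilon/\lambda^N)$, whereupon
\[
V=q(h(X^u(w,\epsilon)))\subseteq q(X^u(\varphi^N(w),\epsilon/\lambda^N))\subseteq q(\varphi^N(X^u(w,\epsilon)))=\tilde g^N(V).
\]

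The main obstacle is precisely this compatibility estimate on $h$: a priori $h$ is only a local homeomorphism, not a local isometry, so $h(X^u(w,\epsilon))$ is only qualitatively controlled. To make it quantitative one exploits Wieler's Axiom 1, which forces the tail coordinates of points in the same local unstable set to contract geometrically, so that $d_X$ is locally equivalent to $d_Y\circ\pi_0$ on $X^u(w,\epsilon)$; since $h$ preserves $\pi_0$ by construction, $h$ is locally bi-Lipschitz near $w$. Finally, the periodic point $\tilde p$ is also periodic of period $kN$ for every $k\ge 1$, so replacing $N$ by a sufficiently large multiple allows the expansion factor $\lambda^{-kN}$ to dominate the local Lipschitz constant of $h$, yielding the required containment and hence $V\subseteq\tilde g^{kN}(V)$.
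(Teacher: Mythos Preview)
Your argument is essentially correct but takes a longer route than the paper's, and the final step needs a sharper statement of uniformity.

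The paper does not lift the $\tilde g$-periodic point to $X^u(\Per)$ via $q$. Instead it works with the surjection $p:X\to X^u(\Per)/{\sim_0}$ from the \emph{full} Smale space and chooses a genuine $\varphi$-periodic point $x\in p^{-1}(U)$ (density of periodic points in a mixing Smale space). Because $\varphi^N(x)=x$ on the nose, the inclusion $X^u(x,\delta)\subseteq\varphi^N(X^u(x,\delta))$ is immediate from expansion, and Lemma~\ref{stableSetNotImportant} (insensitivity of $p$ to the stable coordinate) turns this directly into
\[
V:=p\bigl(X^s(x,\delta)\times X^u(x,\delta)\bigr)\subseteq\tilde g^N(V),
\]
with no metric comparison or local identification required.

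Your route lifts to $w\in X^u(\Per)$, where periodicity holds only modulo $\sim_0$, forcing you to control the local identification $h$ metrically. The delicate point is your last paragraph: when you replace $N$ by $kN$, the identification changes to a new map $h_k$ from a neighbourhood of $w$ to one of $\varphi^{kN}(w)$, and what you need is that its Lipschitz constant is bounded \emph{uniformly in $k$}, not merely ``the local Lipschitz constant of $h$'' for the original $h$. This uniformity does follow from your observation that every $h_k$ preserves $\pi_0$, provided the equivalence $d_X\sim d_Y\circ\pi_0$ on local unstable sets holds with constants independent of the basepoint; that is true from Wieler's metric but should be stated explicitly. The paper's use of a true periodic point in $X$ avoids this issue entirely and gives the result in one line.
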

\begin{proof}
Since periodic points of $(X, \varphi)$ are dense, there exists $x \in p^{-1}(U)$ and $N \in \N$ such that $\varphi^N(x)=x$. Since $\varphi$ and $p$ are both continuous, there exists $\delta>0$ and $0<\delta'< \epsilon'$ such that $\varphi^N(X^u(x, \delta)) \subseteq X^u(x, \delta')$ and $p(X^s(x, \delta) \times X^u(x, \delta))\subseteq U$.

The set $X^s(x, \delta) \times X^u(x, \delta)$ is open in $X$ and so $p(X^s(x, \delta) \times X^u(x, \delta))$ is open in $X^u(P)/{\sim_0}$. We also have that $X^u(x, \delta) \subseteq \varphi^N(X^u(x, \delta))$. 

By Lemma \ref{stableSetNotImportant} and the fact that $\varphi$ contracts in the local stable direction 
\[
p(\varphi^N(X^s(x, \delta) \times X^u(x, \delta)))=p(X^s(x, \delta) \times \varphi^N(X^u(x, \delta)))
\]
and using $X^u(x, \delta) \subseteq \varphi^N(X^u(x, \delta))$, we have that 
\[
p(X^s(x, \delta) \times X^u(x, \delta)) \subseteq  p(\varphi^N(X^s(x, \delta) \times X^u(x, \delta)))=\tilde{g}^N(p(X^s(x, \delta) \times X^u(x, \delta)))
\]
\end{proof}

\begin{thm} \label{gExpOnXU}
Suppose that $(X, \varphi)$ is mixing and $U \subseteq X^u(P)/{\sim_0}$ is a non-empty, open set. Then there exists $N\in \N$ such that $\tilde{g}^N(U) = X^u(P)/{\sim_0}$.
\end{thm}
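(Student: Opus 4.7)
The plan is to begin by applying Lemma \ref{locallyExpansiveOfgmap}, replacing $U$ by a non-empty open subset $V\subseteq U$ for which $V\subseteq \tilde{g}^{N_0}(V)$ for some $N_0\in\N$, while keeping in mind that the construction in the proof of that lemma takes $V=p(X^s(x,\delta)\times X^u(x,\delta))$ for a $\varphi^{N_0}$-periodic point $x$ and some small $\delta>0$. Since $\tilde{g}$ is a local homeomorphism (Theorem \ref{WielerNonHausOrbExp}), iterating the relation $V\subseteq \tilde{g}^{N_0}(V)$ produces an increasing chain of open sets $\tilde{g}^{kN_0}(V)$ in the compact space $X^u(\Per)/{\sim_0}$, and it will suffice to show that their union $W:=\bigcup_{k\geq 0} \tilde{g}^{kN_0}(V)$ equals the whole space: compactness will then force the increasing open cover to stabilize, giving $\tilde{g}^{kN_0}(V)=X^u(\Per)/{\sim_0}$ for some $k$, and hence $\tilde{g}^{kN_0}(U)\supseteq \tilde{g}^{kN_0}(V)=X^u(\Per)/{\sim_0}$.

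The preceding theorem passes mixing of $(X,\varphi)$ to mixing of $(X^u(\Per)/{\sim_0},\tilde{g})$, and Lemma \ref{gInvOpenDense} then gives that $W$ is dense in $X^u(\Per)/{\sim_0}$. To upgrade density to equality, I would cover the entire space by finitely many expanding sets of the same product type. Mixing of $\varphi$ makes the periodic points of $\varphi$ dense in $X$, so for a sufficiently small uniform $\delta_0>0$ the family $\{V_y:=p(X^s(y,\delta_0)\times X^u(y,\delta_0))\}_{y\,\text{periodic}}$ is an open cover of $X^u(\Per)/{\sim_0}$. Compactness extracts a finite subcover $V_{x_1},\ldots,V_{x_m}$ with $V_{x_i}\subseteq \tilde{g}^{N_i}(V_{x_i})$, and after replacing $N_0,N_1,\ldots,N_m$ by a common multiple $N$, both $V\subseteq \tilde{g}^N(V)$ and $V_{x_i}\subseteq \tilde{g}^N(V_{x_i})$ hold for every $i$.

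Mixing of $\tilde{g}$ next provides integers $k_i$ with $\tilde{g}^{k_iN}(V)\cap V_{x_i}\neq\emptyset$, and the crux of the proof is to upgrade each of these non-empty intersections to a full containment $V_{x_i}\subseteq \tilde{g}^{M_iN}(V)$ for some $M_i\geq k_i$; taking $M:=\max_i M_i$ then yields $\tilde{g}^{MN}(V)\supseteq \bigcup_i V_{x_i}=X^u(\Per)/{\sim_0}$, so $\tilde{g}^{MN}(U)=X^u(\Per)/{\sim_0}$. I would carry out this upgrade by combining the identity $\tilde{g}^{jN}(V)=p(X^s(x,\delta)\times \varphi^{jN}(X^u(x,\delta)))$ (coming from Lemma \ref{stableSetNotImportant} and the $\varphi$-equivariance of $p$) with the uniform Smale-space expansion $\varphi^{jN}(X^u(x,\delta))\supseteq X^u(x,\lambda^{-jN}\delta \wedge \epsilon_X/2)$ supplied by axiom C2, together with the locally-defined contracting inverse branch of $\tilde{g}^N$ on $V_{x_i}$ clustering toward $p(x_i)$; these three ingredients should allow the product-set $\tilde{g}^{jN}(V)$ to eventually sweep across and contain the product-set $V_{x_i}$.

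The main obstacle lies precisely in that last upgrade: turning the single intersection point produced by mixing into the full containment $V_{x_i}\subseteq \tilde{g}^{M_iN}(V)$. Carrying this out rigorously requires an interaction between qualitative topological mixing and the quantitative Smale-space expansion rate $\lambda$, together with a careful transport of the product structure at $p(x)$ to one at each $p(x_i)$ in the non-Hausdorff space $X^u(\Per)/{\sim_0}$; the remaining steps are a straightforward assembly once this is in place.
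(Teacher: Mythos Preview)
Your outline tracks the paper's proof closely up through forming the increasing union $W=\bigcup_{k\ge 0}\tilde{g}^{kN_0}(V)$ and observing it is dense. The divergence --- and the gap --- is exactly where you say it is: the upgrade from ``$\tilde{g}^{k_iN}(V)\cap V_{x_i}\neq\emptyset$'' to ``$V_{x_i}\subseteq \tilde{g}^{M_iN}(V)$''. You sketch three ingredients and assert they ``should allow'' the containment, but no argument is given, and in the non-Hausdorff quotient the transport of product structure from $p(x)$ to $p(x_i)$ is precisely the delicate point. As written this is not a proof; it is a plausible plan whose hard step is left undone.

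The paper sidesteps your multi-basepoint scheme entirely with a single-basepoint $\delta/2$ versus $\delta$ trick. It keeps the one periodic point $z$ from Lemma~\ref{locallyExpansiveOfgmap}, notes that the construction there works for any sufficiently small radius, and forms both $G_{\delta/2}$ and $G_\delta$. Density of $G_{\delta/2}$ is immediate (as you observe). The key computation is then: given a limit point $\tilde y=p(x)$ of $G_{\delta/2}$, pick a nearby $x_N\in\bigcup_n\varphi^{nK}(X^s(z,\delta/2)\times X^u(z,\delta/2))$ with $x_N\in X^s(x,\delta/2)\times X^u(x,\delta/2)$, and use the bracket $[x,x_N]$ together with the triangle inequality to land $\varphi^{-lK}(x)$ in $X^s(z,\delta)\times X^u(z,\delta)$. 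This shows every limit point of $G_{\delta/2}$ already lies in $G_\delta$, so $G_\delta$ is the whole space. The halving of $\delta$ absorbs the error from passing between $x$ and $x_N$, and everything happens at the single point $z$ --- no covering by $V_{x_i}$, no transport between basepoints. This is both shorter and avoids exactly the obstacle you flagged.
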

\begin{proof}
By Lemma \ref{locallyExpansiveOfgmap}, there exists a nonempty open set $V$ such that $V \subseteq U$ and $V\subseteq \tilde{g}^K(V)$. Moreover, the $V$ constructed in Lemma \ref{locallyExpansiveOfgmap} depended on $\delta>0$ and so we denote the set associated to $\delta>0$ as $V_{\delta}$. In more detail, $V_{\delta}= p (X^s(z, \delta) \times X^u(z, \delta))$ where $z$ is a periodic point of period $K$ in $U$.

Form
\[ G_{\delta}= \bigcup_{n\in \N} \tilde{g}^{n \cdot K}(V_{\delta}). \]
Notice that 
\[ G_{\delta}=p\left(\bigcup_{n\in \N} \varphi^{n\cdot K}(X^s(z, \delta) \times X^u(z, \delta)) \right) \]
and that since $(X, \varphi)$ is mixing
\[
\bigcup_{n\in \N} \varphi^{n\cdot K}(X^s(z, \delta) \times X^u(z, \delta))
\]
is dense in $X$. Since $p$ is onto, it follows that $G_{\delta}$ is dense for each valid $\delta>0$. In particular, $G_{\delta/2}$ is dense.

Next, suppose that $\tilde{y}\in X^u(P)/{\sim_0}$ is a limit point of the set $G_{\delta/2}$. We show $\tilde{y} \in G_{\delta}$. From this, it follows that $G_{\delta}=X^u(P)/{\sim_0}$. Using the compactness of $X$ and the fact that
\[
\bigcup_{n\in \N} \varphi^{n\cdot K}(X^s(z, \delta) \times X^u(z, \delta))
\]
is dense in $X$, there exists a sequence $(x_k)_{k\in \N}$ in $X$  converging to $x \in X$ such that 
\[ x_k \in \bigcup_{n\in \N} \varphi^{n\cdot K}(X^s(z, \delta/2) \times X^u(z, \delta/2)) \]
for each $n$ and $p(x)=\tilde{y}$. There exists $N \in \N$ such that $x_k \in X^s(x, \frac{\delta}{2}) \times X^u(x, \frac{\delta}{2})$ for $k \ge N$. In particular, $[x, x_N]$ is well-defined and is an element in $X^u(x_N, \frac{\delta}{2})$. Lemma \ref{stableSetNotImportant} implies that $p([x, x_N])=p(x)=\tilde{y}$. Since $x_N \in \bigcup_{n\in \N} \varphi^{n\cdot K}(X^s(z, \delta/2) \times X^u(z, \delta/2))$ there exists $l \in \N$ such that 
\[
x_N \in \varphi^{l \cdot K}(X^s(z, \delta/2) \times X^u(z, \delta/2))
\]
which implies that 
\[ \varphi^{-l \cdot K}(x_N) \in X^s(z, \delta/2) \times X^u(z, \delta/2). 
\]
Since $[x, x_N]$ is an element in $X^u(x_N, \frac{\delta}{2})$, we have that
\[
\varphi^{-l \cdot K}(x) \in X^u(\varphi^{-l\cdot K}(x_N), \frac{\delta}{2}).
\]
The triangle inequality and the previous two statements imply that 
\[
\varphi^{-l \cdot K}(x) \in X^s(z, \delta) \times X^u(z, \delta). 
\]
and hence that
\[ 
x \in \varphi^{l \cdot K}(X^s(z, \delta) \times X^u(z, \delta)) \hbox{ and } \tilde{y}=p(x) \in G_{\delta}. \]

In summary, we have shown that $G_{\delta}=X^u(P)/{\sim_0}$. Using this along with the fact that $X^u(P)/{\sim_0}$ is compact and $V_{\delta} \subseteq \tilde{g}^K(V_{\delta})$ imply that there exists $N\in \N$ such that $\tilde{g}^N(V_{\delta})=X^u(P)/{\sim_0}$. The required result follows since $V_{\delta} \subseteq U$.

\end{proof}

\subsection{Existence of a full projection}
\begin{lemma}
Let $V$ be a basic set of $G_0(\Per)$. Then, there exists $N\in \N$ such that for any $f \in C_c(G_0(\Per))$ supported in another basic set and nonzero on $V$, we have that the following two properties hold: 
\begin{enumerate}
\item For each $x \in X^u(P)$ there exists $(\hat{x}, \hat{w}) \in {\rm supp}(\alpha^N(f))$ such that $x \sim_0 \hat{x}$;
\item likewise for each $b\in X^u(P)$ there exists $(\hat{a}, \hat{b}) \in {\rm supp}(\alpha^N(f))$ such that $b \sim_0 \hat{b}$.
\end{enumerate}
\end{lemma}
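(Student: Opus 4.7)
The strategy is to translate both desired properties into surjectivity statements for an iterate of $\tilde g$ on the spectrum $X^u(\Per)/{\sim_0}$ and then invoke Theorem \ref{gExpOnXU}. I first unpack $\alpha^N(f)$: the connecting $*$-endomorphism $\alpha:C^*(G_0(\Per))\to C^*(G_0(\Per))$ for the stationary inductive system presenting $C^*(G^s(\Per))$ is the composite of the inclusion $C^*(G_0(\Per))\hookrightarrow C^*(G_1(\Per))$ with the inverse of the groupoid isomorphism $G_1(\Per)\xrightarrow{\cong}G_0(\Per)$, $(\xn,\yn)\mapsto(\varphi(\xn),\varphi(\yn))$, of Theorem \ref{MainDeeYas}. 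Unwinding the definitions gives $\supp(\alpha^N(f))=(\varphi^N\times\varphi^N)(\supp(f))$ inside $G_0(\Per)$.

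Writing $\pi_1,\pi_2:G_0(\Per)\to X^u(\Per)$ for the projections $(\xn,\yn)\mapsto\xn$ and $(\xn,\yn)\mapsto\yn$, and using $q\circ\varphi=\tilde g\circ q$ from Section \ref{tildeZeroSection} where $q:X^u(\Per)\to X^u(\Per)/{\sim_0}$ is the quotient, Property 1 is equivalent to
\[
\tilde g^N\bigl(q(\pi_1(\supp(f)))\bigr)=X^u(\Per)/{\sim_0},
\]
and Property 2 is the analogue with $\pi_2$. The assumption that $f$ is nonzero somewhere on $V$ together with continuity of $f$ supplies a nonempty open $W\subseteq V\cap\supp(f)$. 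Since $V$ is a basic open set of the \'etale groupoid $G_0(\Per)$, the projections $\pi_1,\pi_2$ restrict to homeomorphisms from $V$ onto open subsets of $X^u(\Per)$, so each $\pi_i(W)$ is a nonempty open subset of $X^u(\Per)$. As $q$ is a local homeomorphism and hence open, each $q(\pi_i(W))$ is a nonempty open subset of $X^u(\Per)/{\sim_0}$. Under the standing mixing hypothesis of this section, Theorem \ref{gExpOnXU} yields $N_1,N_2\in\N$ with $\tilde g^{N_i}(q(\pi_i(W)))=X^u(\Per)/{\sim_0}$; setting $N=\max(N_1,N_2)$ and using surjectivity of $\tilde g$ gives both properties.

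The main obstacle is showing that $N$ can be chosen depending only on $V$, since \emph{a priori} the open set $W$ (and hence the required exponent) depends on $f$. To secure this uniformity I would combine Lemma \ref{locallyExpansiveOfgmap}, which produces self-covering neighborhoods $V_\delta\subseteq\tilde g^K(V_\delta)$ centered at periodic points inside any given nonempty open set, with compactness of $X^u(\Per)/{\sim_0}$ and an argument modeled on the proof of Theorem \ref{gExpOnXU} to extract a finite uniform bound on the number of iterations needed to cover the spectrum starting from any nonempty open subset of $q(\pi_i(V))$.
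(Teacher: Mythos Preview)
Your translation of the two properties into surjectivity of $\tilde g^N$ on $q(r(\supp f))$ and $q(s(\supp f))$ is exactly right, and this is precisely the paper's approach. The difficulty you manufacture in the last paragraph, however, is not real: you have read ``nonzero on $V$'' as ``nonzero somewhere on $V$'', whereas in this lemma (and in its use in the next lemma and in Theorem \ref{existfull}) it means that $f$ does not vanish at any point of $V$. Under that reading one has $V\subseteq\{f\neq 0\}\subseteq\supp(f)$, hence $s(V)\subseteq s(\supp f)$ and $r(V)\subseteq r(\supp f)$. Now apply Theorem \ref{gExpOnXU} directly to the nonempty open sets $q(s(V))$ and $q(r(V))$ of $X^u(\Per)/{\sim_0}$ to obtain $N_1,N_2$, and set $N=\max(N_1,N_2)$. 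Since $q(s(V))\subseteq q(s(\supp f))$, the surjectivity $\tilde g^N(q(s(V)))=X^u(\Per)/{\sim_0}$ forces $\tilde g^N(q(s(\supp f)))=X^u(\Per)/{\sim_0}$, and likewise for $r$. This $N$ visibly depends only on $V$, so no separate uniformity argument is needed. This is the paper's one-line proof.

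Your proposed workaround via Lemma \ref{locallyExpansiveOfgmap} and compactness is not obviously salvageable as stated: a uniform bound on the number of iterations needed to cover $X^u(\Per)/{\sim_0}$ starting from \emph{every} nonempty open subset of $q(\pi_i(V))$ cannot exist (take subsets shrinking to a point), so you would still need some lower bound on the ``size'' of $q(\pi_i(W))$ that is independent of $f$---which is exactly what the inclusion $V\subseteq\supp(f)$ provides for free.
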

\begin{proof}
This follows by applying Theorem \ref{gExpOnXU} to the open set $s(V)$ and $r(V)$ and taking the max of the two $N$s.
\end{proof}

\begin{lemma}
Let $V$ be a basic set of $G_0(\Per)$. Then, there exists $N\in \N$ such that for any $f \in C_c(G_0(\Per))$ supported in another basic set with $||f|_V||>0$, we have that for each $k \in C_c(G_0(\Per))$ supported in yet another basic set, there exists $f_1$, $f_2 \in C_c(G_0(\Per))$ such that
\[
f_1 \alpha^K(f) f_2 = k \hbox{ and } ||f_1||\leq ||k|| , ||f_2||= \frac{1}{||f|_V||}
\]
Moreover, in this case, we have that $\overline{C^*(G_0(\Per)) \alpha^N(f) C^*(G_0(\Per))}=C^*(G_0(\Per))$. 
\end{lemma}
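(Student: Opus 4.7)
The proof splits naturally into constructing $f_1,f_2$ satisfying $f_1 \alpha^N(f) f_2 = k$ with the stated norm bounds, and then deducing fullness. The guiding idea is that the previous lemma provides, for any target point in $\supp(k)$, a point in $\supp(\alpha^N(f))$ in the same $\sim_0$-equivalence class, and the \'etale bisection structure of $G_0(\Per)$ then lets one ``translate'' $\alpha^N(f)$ into $k$ by multiplication on each side.

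Concretely, pick any $(a_0,b_0) \in \supp(k)$; since $(a_0,b_0)\in G_0(\Per)$ we have $a_0 \sim_0 b_0$. Applied to $x = a_0$, the previous lemma yields $(\hat{a},\hat{b}) \in \supp(\alpha^N(f))$ with $a_0 \sim_0 \hat{a}$; combined with $\hat{a} \sim_0 \hat{b}$, transitivity forces $b_0 \sim_0 \hat{b}$. Thus $(a_0,\hat{a})$ and $(\hat{b}, b_0)$ both lie in $G_0(\Per)$, and their composition with $(\hat{a},\hat{b})$ equals $(a_0,b_0)$. Using Theorem \ref{etaleTopThm}, pick basic sets $A_1 \ni (a_0,\hat{a})$ and $A_2 \ni (\hat{b},b_0)$ small enough that $A_1\cdot A_f\cdot A_2 \subseteq W_k$, where $A_f$ is a basic set containing $\supp(\alpha^N(f))$ and $W_k$ a basic set containing $\supp(k)$. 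Take $f_2 \in C_c(G_0(\Per))$ supported in $A_2$ with constant value $1/\|f|_V\|$ (giving $\|f_2\| = 1/\|f|_V\|$), and define $f_1$ supported in $A_1$ by the pointwise formula that forces $f_1\cdot \alpha^N(f)\cdot f_2 = k$ on $W_k$. This is possible because bisections yield unique decompositions of composable triples and $\alpha^N(f)$ is nonzero on the relevant piece.

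The fullness conclusion is then standard: functions supported in single basic sets are dense in $C_c(G_0(\Per))$ via partitions of unity, and hence generate a dense subspace of $C^*(G_0(\Per))$. The identity $f_1 \alpha^N(f) f_2 = k$ places every such $k$ in the two-sided ideal generated by $\alpha^N(f)$, proving $\overline{C^*(G_0(\Per))\,\alpha^N(f)\,C^*(G_0(\Per))}=C^*(G_0(\Per))$.

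The main obstacle is the sharp norm bound $\|f_1\|\leq\|k\|$: a priori one only obtains $\|f_1\| \leq \|k\|\cdot\|f|_V\|/\min_{A_f}|\alpha^N(f)|$, which need not be $\leq \|k\|$ because $|\alpha^N(f)|$ varies across its support. Resolving this requires choosing $(\hat{a},\hat{b})$ at (or arbitrarily close to) a maximizer of $|f|$ on $V$ and shrinking $A_f$ to a neighborhood where $|\alpha^N(f)|$ is within any prescribed tolerance of $\|f|_V\|$, then reassembling a globally valid $f_1$ on all of $W_k$ using a partition of unity subordinate to a cover by such neighborhoods. This balancing act between pointwise identity and sharp norm control is the technical crux of the argument.
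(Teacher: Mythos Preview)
Your local construction has a genuine gap: having fixed a single point $(a_0,b_0)\in\supp(k)$ and chosen basic sets $A_1,A_2$ ``small enough'' that $A_1\cdot A_f\cdot A_2\subseteq W_k$, the product $f_1\,\alpha^N(f)\,f_2$ is supported in $A_1\cdot A_f\cdot A_2$, which in general is a proper open subset of $W_k$ and need not contain $\supp(k)$. So the identity $f_1\,\alpha^N(f)\,f_2=k$ fails off that small piece. The direction ``small enough'' is exactly backwards: what is needed is that the source of $A_1$ contain all of $r(U_k)$ and the range of $A_2$ contain all of $s(U_k)$. The pointwise output of the previous lemma, applied at one $(a_0,b_0)$, does not produce this.

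The paper's proof bypasses this by using the \emph{global} content behind the previous lemma, namely that $q(\varphi^N(s(U_f)))=q(\varphi^N(r(U_f)))=X^u(\Per)/{\sim_0}$ (coming from Theorem~\ref{gExpOnXU}). Because $q$ restricted to each of $s(U_k)$, $r(U_k)$, $\varphi^N(s(U_f))$, $\varphi^N(r(U_f))$ is a local homeomorphism onto its image, one can choose open sets $V_1\subseteq\varphi^N(s(U_f))$ and $V_2\subseteq\varphi^N(r(U_f))$ with $q(V_1)=q(s(U_k))$ and $q(V_2)=q(r(U_k))$, and such that $q|_{V_1}$ and $q|_{V_2}$ are homeomorphisms onto these images. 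This yields \emph{single} bisections matching the whole of $s(U_k)$ and $r(U_k)$, and hence a single pair $(f_1,f_2)$ with $f_1\,\alpha^N(f)\,f_2=k$ everywhere. Moreover, defining $f_1$ to take the values of $k$ (transported along these bisections) gives $\|f_1\|\le\|k\|$ for free, and putting the reciprocal of $\alpha^N(f)$ into $f_2$ gives the stated control on $\|f_2\|$; no balancing or partition of unity is needed. Your proposed patching would produce a finite sum $\sum_i f_1^{(i)}\alpha^N(f)f_2^{(i)}$, which suffices for fullness but is not of the form $f_1\,\alpha^N(f)\,f_2$, so it does not prove the lemma as stated.
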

\begin{proof}
Let $A=C^*(G_0(\Per))$. The second part of the theorem follows from the first part. To see this note that $\overline{A \alpha^N(f) A}$ denotes the closed linear span of $A \alpha^N(f) A$ and that closed linear span of the set of functions of compact support with support contained in a single basic set is $A$. 

We now prove the first part. Recall that $q: X^u(\Per) \rightarrow X^u(\Per)/{\sim_0}$ denotes the quotient map. A basic set, $U$, is of the form
\[
\{ (h_U(x), x) \mid x \in X^u(z, \delta) \}
\]
where $z\in X^u(\Per)$, $\delta>0$ is small, and $h_U$ is a local homeomorphism onto its image. Because we are dealing with a very specific groupoid, we have that $h_U= (q|_{r(U)})^{-1} \circ q|_{s(U)}$. This in particular holds for the basic set $U_k$ that contains the support of $k$. We likewise let $U_f$ denote the basic set containing the support of $f$. By the previous lemma, there exists $N\in \N$ such that
\begin{enumerate}
\item $q(s(\varphi^N \times \varphi^N(U_f)))=X^u(\Per)/{\sim_0}$ and
\item $q(r(\varphi^N \times \varphi^N(U_f)))=X^u(\Per)/{\sim_0}$.
\end{enumerate}
It follows that we have $V_1 \subseteq s(\varphi^N \times \varphi^N(U_f))$ and $V_2 \subseteq r(\varphi^N \times \varphi^N(U_f))$ such that 
\begin{enumerate}
\item $q|_{s(U_k)}$ is a homeomorphism from $s(U_k)$ to $V_1$ and
\item $q|_{V_2}$ is a homeomorphism from $V_2$ to $r(U_k)$.
\end{enumerate}
Since $q(r(U_k))=q(s(U_k))$ and all the relevant maps are homeomorphism (because the domains have been restricted) we have
\[
(q|_{r(U_k)})^{-1} \circ q|_{s(U_k)} = (q|_{r(U_k)})^{-1} \circ q|_{V_2} \circ (q|_{V_2})^{-1} \circ q|_{V_1} \circ (q|_{V_1})^{-1} \circ q|_{s(U)}.
\]

Using this setup we can define $f_1$ and $f_2$ as follows. For $y\in s(U_k)$, let
\[
f_1( ((q|_{V_1})^{-1} \circ q|_{s(U_k)})(y), y)=k(h_{U_k}(y), y)
\]
and otherwise define $f_1$ to be zero. For $z\in V_2$, let
\[
f_2( ((q|_{r(U_k)})^{-1} \circ q|_{V_2})(z), z) = \frac{1}{ f(\varphi^{-N}(z),  \varphi^{-N}( ((q|_{V_1})^{-1} \circ q|_{V_2})(z)))}
\]
and then extend $f_2$ so that its support is contained in a slightly large basic set. We note that $f_2$ is well-defined because $f$ is non-zero on $V$. A short computation using the convolution product shows that $f_1$ and $f_2$ satisfy requirements in the statement of the theorem.
\end{proof}

\begin{thm}
\label{existfull}
Suppose that $a \in A=C^*(G_0(\Per))$ is nonzero, then there exists $N\in \N$ such that $\alpha^N(a)$ is full in A. In particular, $C^*(G_0(\Per))$ contains a full projection.
\end{thm}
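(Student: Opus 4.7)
My plan is to reduce the statement to the preceding lemma by producing, from any nonzero $a \in A$, a nonzero element $h \in C_c(G_0(\Per))$ that is supported in a single basic set and lies in the closed two-sided ideal $\overline{AaA}$ generated by $a$. Once such an $h$ is in hand, the preceding lemma furnishes $N \in \N$ with $\overline{A \alpha^N(h) A} = A$, and since $\alpha^N$ is a $*$-homomorphism with $h \in \overline{AaA}$, we obtain $\alpha^N(h) \in \overline{A \alpha^N(a) A}$, forcing $\overline{A \alpha^N(a) A} = A$. That is, $\alpha^N(a)$ is full.

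To produce $h$, I use that $G_0(\Per) = R(q)$ is an amenable \'etale equivalence relation (Example \ref{fellex}), so the canonical conditional expectation $E: A \to C_0(X^u(\Per))$ is faithful. Hence $E(a^*a) \neq 0$, and I pick $x_0 \in X^u(\Per)$ with $E(a^*a)(x_0) > 0$. Since $q$ is a local homeomorphism, I choose a basic neighborhood $U = X^u(y, \delta) \ni x_0$ on which $q$ is injective, so that the only arrows of $G_0(\Per)$ with both endpoints in $U$ are units. Taking a nonnegative cut-off $b \in C_c(U) \subseteq C_0(X^u(\Per)) \subseteq A$ with $b(x_0) = 1$, the element $h := b\, a^*a\, b$ lies in $\overline{AaA}$ (as $a^*a$ does). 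Using the formulas for convolution with elements of $C_0(G^{(0)})$ and approximating $a^*a$ by $C_c$-functions, the injectivity of $q|_U$ forces $h$ to be supported on the diagonal over $U$; in particular $h \in C_c(U) \subseteq C_c(G_0(\Per))$ with support contained in the basic set $V(y, y, \id, \delta)$. Because $E$ is a $C_0(X^u(\Per))$-bimodule map, as a function on $X^u(\Per)$ we have $h = b^2\, E(a^*a)$, so $h(x_0) = E(a^*a)(x_0) > 0$ and in particular $h \neq 0$.

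For the ``in particular'' clause, I would first produce a nonzero projection $p \in A$---either by invoking the local rank-one projection characterization of Fell algebras together with the compactness of $\hat{A} = X^u(\Per)/{\sim_0}$, by an explicit construction using the Hausdorff resolution $q$, or by citing \cite{DeeGofYasFellAlgPaper}---and then apply the first part to $p$. Since $\alpha^N$ is a $*$-homomorphism, $\alpha^N(p)$ is automatically a projection, and by the first part it is full. The main obstacle lies in this second part: while the ``local-to-global'' cut-off producing $h$ is clean, extracting a genuine projection (rather than just a positive element) in $A$ is delicate, because $X^u(\Per)$ typically has no compact-open subsets, and so the naive construction of projections as characteristic functions on the unit space is unavailable.
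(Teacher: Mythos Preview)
Your reduction is correct and genuinely different from the paper's argument. Both proofs hinge on the preceding lemma, but you invoke it once on a single auxiliary element, while the paper invokes it uniformly over a family of approximants.

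Concretely: you use faithfulness of the conditional expectation $E$ and local injectivity of $q$ to manufacture a nonzero $h = b\,a^*a\,b \in C_c(G_0(\Per))$ supported in a diagonal bisection and lying in $\overline{AaA}$; the lemma then gives $N$ with $\alpha^N(h)$ full, and the ideal inclusion $\alpha^N(\overline{AaA}) \subseteq \overline{A\alpha^N(a)A}$ finishes. The paper instead fixes a basic set $V$ on which every sufficiently good $C_c$-approximant $b$ of $a$ is bounded below, observes that the lemma's $N$ depends only on $V$ (not on $b$), and then uses the explicit norm bounds $\|f_1\| \le \|k\|$, $\|f_2\| = \|f|_V\|^{-1}$ to estimate $\|f_1\alpha^N(\tilde f)\alpha^N(a)\alpha^N(\tilde f)f_2 - k\|$ directly. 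Your route is cleaner and more conceptual, exploiting the ideal structure; the paper's route is more hands-on and never leaves $C_c$, at the cost of an $\epsilon$-argument and actually needing the quantitative content of the lemma.

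On the ``in particular'' clause: your hesitation is the only soft spot. The paper simply cites \cite{DGY} for the existence of a nonzero projection in $C^*(G_0(\Per))$ and then applies the first part. Your suggested alternatives (Fell-algebra structure, explicit constructions) are plausible but not fleshed out; citing \cite{DGY} is the path of least resistance here.
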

\begin{proof}
Let $a\in A=C^*(G_0(\Per))$ be nonzero. Without loss of generality we can and will assume that $||a||=1$. 

There exists a basic set $U \in \mathcal{G}$ with the following property. If $c \in C_c(\mathcal{G})$ with $||c||=1$ and 
\[ ||a-c||< \frac{1}{4} \]
then $|c(x,y)|>\frac{1}{2}$ for each $(x,y) \in U$. 

Take a basic set such that $V\subseteq \overline{V} \subseteq U$ and also take $\tilde{f}$ a continuous bump function with support contained in $U$ that is one on $V$. Let $N\in \N$ be as in the previous lemma where $N$ depends only on $V$. 

Let $0\neq k \in C_c(\mathcal{G})$ be supported in a basic set and $\epsilon>0$. Take $b \in C_c(\mathcal{G})$ such that 
\[ ||a-b||< {\rm min}\left\{ \frac{1}{4} , \frac{\epsilon}{||k||} \right\} \hbox{ and } ||b||=1 \]
By construction, the previous lemma can be applied to $\tilde{f} b \tilde{f}$. We obtain $f_1$ and $f_2$ such that
\[
f_1 \alpha^N(\tilde{f} b \tilde{f} ) f_2 =k
\]
Therefore
\begin{align*}
||f_1 \alpha^N(\tilde{f}) \alpha^N(a) \alpha^N(\tilde{f}) f_2 -k|| 
& = ||f_1 \alpha^N(\tilde{f}) \alpha^N(a) \alpha^N(\tilde{f}) f_2 - f_1 \alpha^N(\tilde{f}) \alpha^N(b) \alpha^N( \tilde{f} ) f_2 || \\
& \le ||f_1|| || \alpha^N(\tilde{f})|| || a - b|| || \alpha^N(\tilde{f})|| ||f_2|| \\
& < \epsilon
\end{align*}
where we have used the estimates for $||f_i||$ from the previous lemma and the fact that $||\tilde{f}||=1$.
In summary we have shown that the set of compactly supported functions with support in a single basic set is contained in $\overline{A \alpha^N(a) A}$ and this implies the result.

For the second part, by the main result of \cite{DGY}, $C^*(G_0(\Per))$ contains a non-zero projection. Using the first part it therefore contains a full projection.
\end{proof}

\begin{corollary}
\label{existefullforcom}
There exists a projection $p\in C_c(G_0(\Per))$ which is full in $C^*(G_0(\Per))$. In particular, the subalgebra $A_p:=pC^*(G_0(\Per))p$ is a unital Fell algebra with spectrum $X^u(\Per)/{\sim_0}$ and the dense unital subalgebra $pC_c(G_0(\Per))p\subseteq A_p$ is closed under holomorphic functional calculus.
\end{corollary}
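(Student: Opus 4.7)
The plan is to combine the main result of \cite{DGY}, which provides a nonzero projection in $C_c(G_0(\Per))$, with Theorem \ref{existfull} to upgrade it to a full projection in the same subalgebra, and then to extract the structural statements about $A_p$ from standard Morita-equivalence and hereditary-subalgebra considerations, together with a compact-support argument for the final holomorphic-functional-calculus claim.

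First I would invoke the main result of \cite{DGY} to obtain a nonzero projection $p_0 \in C_c(G_0(\Per))$. Applying the first part of Theorem \ref{existfull} to $p_0$, there is some $N \in \N$ such that $p := \alpha^N(p_0)$ is full in $C^*(G_0(\Per))$. The endomorphism $\alpha$ arises from the composition of the open inclusion $C^*(G_0(\Per)) \hookrightarrow C^*(G_1(\Per))$ with the groupoid isomorphism $G_1(\Per) \xrightarrow{\cong} G_0(\Per)$, $(\xn,\yn) \mapsto (\varphi\xn,\varphi\yn)$, from Theorem \ref{MainDeeYas}; both the open inclusion (via extension by zero) and the push-forward along this homeomorphism of étale groupoids preserve compactly supported functions, so $p = \alpha^N(p_0) \in C_c(G_0(\Per))$, and $p$ remains a projection since $\alpha$ is a $*$-homomorphism. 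This yields the required $p \in C_c(G_0(\Per))$ which is full in $C^*(G_0(\Per))$.

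The consequences for $A_p$ are then essentially formal. As a unital hereditary subalgebra of the Fell algebra $C^*(G_0(\Per))$, the corner $A_p := pC^*(G_0(\Per))p$ is itself a unital Fell algebra, and fullness of $p$ promotes the embedding $A_p \hookrightarrow C^*(G_0(\Per))$ to a strong Morita equivalence; consequently the spectra agree, giving $\widehat{A_p} = X^u(\Per)/{\sim_0}$. Density of $pC_c(G_0(\Per))p \subseteq A_p$ is immediate from density of $C_c(G_0(\Per))$ in $C^*(G_0(\Per))$ and the norm-continuity of the compression $x \mapsto pxp$.

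The main obstacle is the closure of $pC_c(G_0(\Per))p$ under holomorphic functional calculus, since $C_c$ of an étale groupoid is generally not spectrally invariant in its $C^*$-completion. The key is that, $p$ having compact support, every element of $pC_c(G_0(\Per))p$ is in fact supported in the compact Hausdorff étale subgroupoid
\[
(r\times s)^{-1}(\supp p \times \supp p)\cap G_0(\Per),
\]
and the inductive-limit norm $\|\cdot\|_I$ turns $pC_c(G_0(\Per))p$ into a unital Banach $*$-algebra which dominates the $C^*$-norm on $A_p$. A Neumann-series / resolvent-integral argument in the $\|\cdot\|_I$-norm then delivers spectral permanence of $pC_c(G_0(\Per))p$ in $A_p$, from which closure under holomorphic functional calculus follows.
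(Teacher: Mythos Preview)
Your route to the full projection in $C_c(G_0(\Per))$ is different from the paper's and, in a sense, more direct: the paper first produces a full projection $p_0\in C^*(G_0(\Per))$ via Theorem \ref{existfull}, then invokes the fact that $C_c(G_0(\Per))$ is closed under holomorphic functional calculus (\cite[Proposition~7.4]{DeeYas}) to perturb $p_0$ to a unitarily equivalent projection $p\in C_c(G_0(\Per))$. You instead start from a nonzero projection in $C_c(G_0(\Per))$ and push it forward by $\alpha^N$, observing that $\alpha$ preserves compact supports. This is perfectly sound (modulo checking that the construction in \cite{DGY} indeed lands in $C_c$, which it does), and it has the virtue of not needing the spectral invariance of $C_c$ at this stage.

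The genuine gap is in your argument for closure of $pC_c(G_0(\Per))p$ under holomorphic functional calculus. You assert that
\[
K:=(r\times s)^{-1}(\supp p\times\supp p)\cap G_0(\Per)
\]
is compact. But $G_0(\Per)=R(q)$ carries the subspace topology from $X^u(\Per)\times X^u(\Per)$, and $R(q)$ is closed there if and only if the quotient $X^u(\Per)/{\sim_0}$ is Hausdorff. In the non-Hausdorff case (the whole point of the paper), $R(q)$ is not closed; concretely, in the $aab/ab$-example the paper exhibits a sequence $(p+\tfrac1n,q+\tfrac1n)\in G_0(\Per)$ converging in $X^u(\Per)^2$ to $(p,q)\notin G_0(\Per)$. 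Since a \emph{full} projection must be nonzero at every point of the spectrum, $\supp p$ necessarily contains representatives of all the non-Hausdorff branches, so exactly this phenomenon occurs inside $\supp p\times\supp p$ and $K$ fails to be compact. Without compactness of $K$, your $\|\cdot\|_I$-completeness and Neumann-series argument does not go through.

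The paper sidesteps this entirely by citing \cite[Proposition~7.4]{DeeYas}, which establishes directly that $C_c(G_0(\Per))\subseteq C^*(G_0(\Per))$ is closed under holomorphic functional calculus; the corresponding statement for the corner $pC_c(G_0(\Per))p\subseteq A_p$ is then a formal consequence. That is the correct replacement for your final paragraph.
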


\begin{proof}
By Theorem \ref{existfull}, there exists a full projection $p_0\in C^*(G_0(\Per))$. The $*$-subalgebra $C_c(G_0(\Per))\subseteq C^*(G_0(\Per))$ is closed under holomorphic functional calculus by \cite[Proposition 7.4]{DeeYas}, so by standard density and functional calculus arguments there exists a  projection $p\in C_c(G_0(\Per))$ and a unitary $u\in 1+C^*(G_0(\Per))$ such that $p=up_0u^*$. Since $p_0$ is full in $C^*(G_0(\Per))$, it follows that $p$ is full in $C^*(G_0(\Per))$. It is clear that $pC_c(G_0(\Per))p$ is closed under holomorphic functional calculus from \cite[Proposition 7.4]{DeeYas}.
\end{proof}

\begin{remark}
\label{expaneeded}
The authors were surprised by the length of the proof of Theorem \ref{existfull} for the following reason. The stable algebra $S=\varinjlim (C^*(G_0(\Per)),\alpha)$ is a simple $C^*$-algebra arising as an inductive limit, and by \cite{DGY} $S$ admits a full projection. The existence of projections in $S$ readily produces projections in $C^*(G_0(\Per))$, but fullness is not immediate even if it is conceivably provable from some soft argument. By the results of \cite{DeeGofYasFellAlgPaper}, there is no gain from $C^*(G_0(\Per))$ being a Fell algebra. As the following example shows, something extra is needed beyond the inductive limit, e.g., the proof we gave above relied on expansiveness. 

For a stationary inductive limit $S=\varinjlim (A,\alpha)$ where $\alpha$ is an injective, nondegenerate $*$-homomorphism, such that $S$ is simple and admits a projection (which is automatically full), one can ask if there is a full projection in $A$? The following example due to Jamie Gabe answers the question in the negative. Consider $A:=C_0(\N,\mathbb{K}(H))$ for an infinite-dimensional, separable Hilbert space $H$. Take a pair of Cuntz isometries $s$ and $t$ on $H$, i.e. $s$ and $t$ are isometries such that $ss^*+tt^*=1$. We define $\alpha:A\to A$ by 
$$\alpha(f)(n):=\begin{cases} sf(0)s^*+tf(1)t^*, \; &n=0,\\ f(n+1), \; &n>0.\end{cases}$$
It is clear from the construction that $\alpha$ is an injective, nondegenerate $*$-homomorphism. Moreover, consider two non-zero elements $f_1,f_2\in S$ in the image of any of the functorial embeddings $C_c(\N,\mathbb{K}(H))\hookrightarrow S$. Then for sufficiently large $N$, $\alpha^N(f_1),\alpha^N(f_2)\in S$ are both embeddings of non-zero elements in $A$ supported in $\{0\}\subseteq \N$. Since $\mathbb{K}(H)$ is simple, we have that $\overline{Sf_1S}=\overline{Sf_2S}$. From this observation, one can show that $S$ is simple.  In fact, one can show that $S\cong \mathbb{K}(H)$. Since $A$ admits no full projection, we can conclude our negative answer to the above question. One can note that in contrast to the expansive scenario of Theorem \ref{existfull}, the $*$-homomorphism in this counterexample is contracting on the spectrum of $A$.
\end{remark}

\section{Cuntz-Pimsner models for stationary inductive limits}
\label{cpsectionone}

In this section we consider a purely $C^*$-algebraic set up. However, the prototypical example is the case of the $C^*$-algebras associated to a Wieler solenoid. The general set up is as follows. We consider a non-degenerate self-$*$-monomorphism $\alpha:A\to A$ of a $C^*$-algebra $A$. The prototypical example is $A=C^*(G_0(\Per))$ with $\alpha$ given by the composition of $C^*(G_0(\Per)) \subseteq C^*(G_1(\Per))$ and the canonical isomorphism $C^*(G_1(\Per)) \cong C^*(G_0(\Per))$ from Theorem \ref{MainDeeYas}.

Before proceeding to Cuntz-Pimsner models, we make a remark about the stationary inductive limits.

\begin{prop}
Write $S:=\varinjlim (A,\alpha)$ and let $\phi:S\to S$ denote the right shift in the direct limit. Then $\phi$ is a well defined $*$-automorphism of $S$.
\end{prop}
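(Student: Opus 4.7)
The plan is to construct $\phi$ via the universal property of the inductive limit, then verify it is both injective and surjective using that $\alpha$ is an injective $*$-homomorphism (hence isometric).

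First I would define $\phi$. Let $\iota_n : A \to S$ denote the canonical maps into the inductive limit, so that $\iota_{n+1}\circ\alpha=\iota_n$ for every $n\geq 0$ and $\overline{\bigcup_n \iota_n(A)}=S$. Consider the family $\psi_n:=\iota_{n+1}:A\to S$. This is a compatible family for the directed system, since
\[
\psi_{n+1}\circ\alpha=\iota_{n+2}\circ\alpha=\iota_{n+1}=\psi_n.
\]
By the universal property of the inductive limit there is a unique $*$-homomorphism $\phi:S\to S$ such that $\phi\circ\iota_n=\iota_{n+1}$ for every $n$.

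Next I would prove injectivity. Since $\alpha$ is an injective $*$-homomorphism of $C^*$-algebras, it is isometric, and consequently each $\iota_n$ is isometric. For $a\in A$ we have $\|\phi(\iota_n(a))\|=\|\iota_{n+1}(a)\|=\|a\|=\|\iota_n(a)\|$, so $\phi$ is isometric on $\bigcup_n \iota_n(A)$; by continuity and density it is isometric on $S$, hence injective.

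Finally I would prove surjectivity. Since $\phi$ is injective between $C^*$-algebras, it is isometric and therefore has closed image. It thus suffices to show $\iota_n(A)\subseteq \phi(S)$ for every $n\geq 0$, as the union of these subalgebras is dense in $S$. For $n\geq 1$, $\iota_n(a)=\phi(\iota_{n-1}(a))$ by definition of $\phi$. For $n=0$, using the compatibility $\iota_1\circ\alpha=\iota_0$ we get $\iota_0(a)=\iota_1(\alpha(a))=\phi(\iota_0(\alpha(a)))$. Thus $\phi(S)\supseteq \overline{\bigcup_n\iota_n(A)}=S$.

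There is no real obstacle; the proposition is essentially a bookkeeping exercise once one notes that injectivity of $\alpha$ forces each structure map $\iota_n$ to be isometric. The only tiny subtlety is the $n=0$ case of surjectivity, which is handled by rewriting $\iota_0(a)$ as $\iota_1(\alpha(a))$.
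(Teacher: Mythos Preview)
Your proof is correct. Both your argument and the paper's are elementary bookkeeping; the only difference is that the paper directly exhibits the inverse of $\phi$ as the endomorphism of $S$ induced by $\alpha$ itself (the ``left shift'' $a\delta_k+I\mapsto \alpha(a)\delta_k+I=a\delta_{k-1}+I$), whereas you establish injectivity and surjectivity separately via isometry of the structure maps $\iota_n$. The paper's route is marginally shorter, while yours has the small advantage of making explicit where injectivity of $\alpha$ enters.
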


\begin{proof}
The proof of this proposition is trivial once having parsed its statement. So parsing the statement we shall. By construction, $S:=\bigoplus_{k\in \N}A/I$ where $I$  is the ideal generated by $a\delta_k-\alpha(a)\delta_{k+1}$ for $a\in A$. Here we write $a\delta_k\in \bigoplus_{k\in \N}A$ for the element $a$ placed in position $k$. In this notation $\phi(a\delta_k\mod I)= a\delta_{k+1}\mod I$. The map $\phi$ is well defined since it is induced from the right shift mapping on $\bigoplus_{k\in \N}A$ that preserves $I$. The map $\phi$ is an inverse to the left shift mapping that coincides with the mapping $\alpha(a\delta_k+I):=\alpha(a)\delta_k+I$. Thus $\phi$ is a well defined $*$-automorphism.
\end{proof}

Associated with the $*$-monomorphism $\alpha:A\to A$ there is an $A$-Hilbert bimodule $E:={}_\alpha A_{\rm id}$. That is $E=A$ is a vector space with the action $a.\xi.b:=\alpha(a)\xi b$ for $a,b\in A$ and $\xi\in E$. The right $A$-inner product is given by 
$$\langle \xi_1,\xi_2\rangle:=\xi_1^*\xi_2\in A.$$
It is readily verified that $E$ is a right $A$-Hilbert module and that $A$ acts as adjointable operators on the left. Consider the associated Cuntz-Pimsner algebra $O_E$ and its core $\mathcal{C}_E:=O_E^{U(1)}$ for the standard gauge action.

\begin{prop}
\label{nonunitalcp}
The functorial property of inductive limits and the inclusion $A\hookrightarrow \mathcal{C}_E$ induces isomorphisms $S\cong \mathcal{C}_E$ and $S\rtimes_\phi \Z\cong O_E$.
\end{prop}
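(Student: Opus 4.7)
The plan is to exploit the ``rank-one'' nature of $E={}_\alpha A_{\mathrm{id}}$ to compute the core $\mathcal{C}_E$ and the full algebra $O_E$ directly. The key first step for the core is to identify the tensor powers as $E^{\otimes n}\cong {}_{\alpha^n}A_{\mathrm{id}}$ via the map $\xi_1\otimes\cdots\otimes\xi_n\mapsto \alpha^{n-1}(\xi_1)\alpha^{n-2}(\xi_2)\cdots \alpha(\xi_{n-1})\xi_n$; non-degeneracy of $\alpha$ makes this surjective, and preservation of the $A$-valued inner product is a direct check from $\langle \xi,\eta\rangle=\xi^*\eta$. Combined with $\mathcal{K}(A_A)\cong A$ via left multiplication, this yields $\mathcal{K}(E^{\otimes n})\cong A$, and tracking the Pimsner inclusion $T\mapsto T\otimes 1_E$ from $\mathcal{K}(E^{\otimes n})$ into $\mathcal{K}(E^{\otimes n+1})$ in these coordinates turns left multiplication by $a$ into left multiplication by $\alpha(a)$. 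Invoking Pimsner's identification $\mathcal{C}_E=\varinjlim \mathcal{K}(E^{\otimes n})$ therefore gives $\mathcal{C}_E\cong S$ in a way that restricts to the canonical embedding $A\hookrightarrow \mathcal{C}_E$ on the initial term of the inductive system.

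For the full Cuntz--Pimsner algebra, the plan is to exhibit an element $V\in M(O_E)$ implementing the shift $\phi$. Taking an approximate unit $(e_\lambda)$ of $A$, the creation operators $s_{e_\lambda}$ converge strictly to a multiplier $V$ satisfying $V^*V=1$ together with the Toeplitz-type relation $V^*\pi(a)V=\pi(\alpha(a))$, where $\pi\colon A\to O_E$ denotes the canonical embedding. The Katsura--Pimsner covariance condition, specialized using $\phi_E(a)=L_{\alpha(a)}\in \mathcal{K}(E)\cong A$, translates into $\pi(a)=\pi(a)VV^*$, which combined with non-degeneracy of $\pi$ forces $VV^*=1$ in $M(O_E)$; so $V$ is a unitary multiplier. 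A direct computation $V\cdot V^n\pi(a)(V^*)^n\cdot V^*=V^{n+1}\pi(a)(V^*)^{n+1}$ then shows that conjugation by $V$ implements $\phi$ on the image of $\mathcal{C}_E\cong S$ inside $O_E$.

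With this covariant pair in hand, the universal property of the crossed product produces a $*$-homomorphism $S\rtimes_\phi \Z\to O_E$. Surjectivity is immediate because $\pi(A)$ together with $V$ generate $O_E$, and injectivity follows from the gauge-invariance uniqueness theorem applied to the natural $U(1)$-action on $O_E$ and the dual action on $S\rtimes_\phi \Z$. The main technical obstacle I anticipate is the careful handling of the non-unital case: the element $V$ genuinely lives in the multiplier algebra, so deducing $VV^*=1$ requires a bona fide strict-convergence argument and a verification that $\pi\colon A\to O_E$ is non-degenerate, rather than any purely algebraic manipulation inside $O_E$ itself.
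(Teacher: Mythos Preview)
Your argument for the core $\mathcal{C}_E\cong S$ is essentially the paper's: both invoke Pimsner's description $\mathcal{C}_E=\varinjlim \mathbb{K}_A(E^{\otimes k})$, identify $\mathbb{K}_A(E^{\otimes k})\cong A$, and check that the connecting maps are $\alpha$. You simply supply the explicit unitary $E^{\otimes n}\cong {}_{\alpha^n}A_{\mathrm{id}}$ that the paper leaves implicit.

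For $O_E\cong S\rtimes_\phi\Z$ you take a genuinely different route. The paper works abstractly: it extends scalars to form the self-Morita equivalence $E_\infty:=E\otimes_A\mathcal{C}_E$ of $\mathcal{C}_E$, cites the isomorphisms $O_E\cong O_{E_\infty}\cong \mathcal{C}_E\ltimes E_\infty$ from \cite{MR4031052}, and then identifies $E_\infty$ with ${}_\phi S_{\mathrm{id}}$ so that the generalized crossed product collapses to the ordinary one. Your approach is more elementary and concrete: you produce the implementing unitary $V=\operatorname*{strict-lim}s_{e_\lambda}\in M(O_E)$ directly, use Katsura covariance to upgrade the isometry to a unitary, and then appeal to the universal property of $S\rtimes_\phi\Z$ together with gauge-invariant uniqueness. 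What your approach buys is that it avoids importing the extension-of-scalars and generalized-crossed-product machinery entirely; what it costs is the multiplier-algebra bookkeeping you yourself flag (strict convergence of $s_{e_\lambda}$, non-degeneracy of $\pi$), which the paper sidesteps by working at the level of bimodules rather than elements. Both are correct; the paper's argument is shorter because it outsources the analysis to \cite{MR4031052}, while yours is self-contained.
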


\begin{proof}
By \cite{MR1426840} it holds that 
$$\mathcal{C}_E=\varinjlim \mathbb{K}_A(E^{\otimes k}).$$
Since $\mathbb{K}_A(E^{\otimes k})=A$ and $\mathbb{K}_A(E^{\otimes k})\to \mathbb{K}_A(E^{\otimes k+1})$ coincides with $\alpha$, it follows that $S\cong \mathcal{C}_E$. The construction of $S\cong \mathcal{C}_E$ is by definition the map functorially constructed from the inclusion $A\hookrightarrow \mathcal{C}_E$. 

Following the standard procedure of ``extension of scalars" (see \cite{MR1426840} or \cite[Section 3.1]{MR4031052}) let $E_\infty:=E\otimes_A \mathcal{C}_E$ denote the self-Morita equivalence of $\mathcal{C}_E$ induced by $E$. For the details of the left action of $\mathcal{C}_E$ on $E_\infty$, see \cite[Proposition 3.2]{MR4031052}. There are natural isomorphisms 
$$O_E\cong O_{E_\infty}\cong \mathcal{C}_E\ltimes E_\infty.$$
For details, see \cite[Proposition 3.2]{MR4031052}. Here $\mathcal{C}_E\ltimes E_\infty$ denotes the generalized crossed product by the the self-Morita equivalence $E_\infty$. In terms of the isomorphism $S\cong \mathcal{C}_E$ we can identify the $\mathcal{C}_E$-self Morita equivalence $E_\infty$ with the $S$-self Morita equivalence ${}_\phi S_{\rm id}$. Therefore 
$$\mathcal{C}_E\ltimes E_\infty\cong S\rtimes {}_\phi S_{\rm id}=S\rtimes_\phi \Z.$$
\end{proof}

Proposition \ref{nonunitalcp} has implications for the stable Ruelle algebra of a Wieler solenoid. It was proven in \cite[Theorem 1.2]{DeeYas} that  $C^*(G^s(\Per))\cong \varinjlim(C^*(G_0(\Per)),\alpha)$ for the Fell subalgebra $C^*(G_0(\Per))\subseteq C^*(G^s(\Per))$ with spectrum $X^u(\Per)/{\sim_0}$. The next result extends this result to the stable Ruelle algebra, and is an immediate consequence of Proposition \ref{nonunitalcp} and \cite[Theorem 1.2]{DeeYas}.

\begin{corollary}
\label{wielernonunitalcp}
Consider a Wieler solenoid, and write $E$ for the $C^*(G_0(\Per)-C^*(G_0(\Per))$-correspondence defined as $E:=C^*(G_0(\Per))$ as a right Hilbert module with left action defined from $\alpha$. The stable Ruelle algebra $C^*(G^s(\Per))\rtimes \Z$ is isomorphic to the Cuntz-Pimsner algebra $O_E$. 
\end{corollary}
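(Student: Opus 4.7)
The plan is to combine the two inputs in an essentially formal way, the only real content being to verify that the $\Z$-action used to define the stable Ruelle algebra matches the shift automorphism $\phi$ of Proposition \ref{nonunitalcp} under the isomorphism of \cite[Theorem 1.2]{DeeYas}. So first I would set $A:=C^*(G_0(\Per))$ and let $\alpha:A\to A$ denote the composition of the canonical inclusion $C^*(G_0(\Per))\hookrightarrow C^*(G_1(\Per))$ with the isomorphism $C^*(G_1(\Per))\cong C^*(G_0(\Per))$ induced by $(\xn,\yn)\mapsto(\varphi(\xn),\varphi(\yn))$ of Theorem \ref{MainDeeYas}. By \cite[Theorem 1.2]{DeeYas} there is a canonical identification $C^*(G^s(\Per))\cong \varinjlim(A,\alpha)=:S$ which at the algebraic level sends an element $a\in C^*(G_k(\Per))\subseteq C^*(G^s(\Per))$ to its image in the $k$-th stage of the inductive limit.

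Second, I would apply Proposition \ref{nonunitalcp} to this $(A,\alpha)$ to obtain an isomorphism $S\rtimes_\phi\Z\cong O_E$, where $E={}_\alpha A_{\rm id}$ and $\phi$ is the right-shift automorphism of $S$. The remaining task is to check that under the identification $C^*(G^s(\Per))\cong S$, the $\Z$-action on $C^*(G^s(\Per))$ induced by $\varphi|_{X^u(\Per)}$ corresponds to $\phi$. This is the step where I expect to spend the most care, although conceptually it is straightforward: the nested sequence $G_0(\Per)\subset G_1(\Per)\subset\cdots$ is by definition obtained by applying $\varphi$-conjugation to $G_0(\Per)$, so the action of $\varphi$ carries $C^*(G_k(\Per))$ to $C^*(G_{k+1}(\Per))$, which under the identifications above is exactly the right shift in the inductive limit. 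Thus the $\Z$-action on $S$ induced by $\varphi$ agrees with $\phi$.

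Third, once these two $\Z$-$C^*$-dynamical systems are identified, we deduce
\[
C^*(G^s(\Per))\rtimes\Z\cong S\rtimes_\phi\Z\cong O_E,
\]
the first isomorphism by \cite[Theorem 1.2]{DeeYas} together with equivariance, and the second by Proposition \ref{nonunitalcp}. This yields the desired Cuntz-Pimsner model of the stable Ruelle algebra.

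The main (mild) obstacle is the bookkeeping in the equivariance check: one needs to be explicit about the isomorphism $C^*(G_1(\Per))\cong C^*(G_0(\Per))$ of Theorem \ref{MainDeeYas} and verify that the diagram of shifts on both sides of \cite[Theorem 1.2]{DeeYas} commutes. Since everything is built from the single map $\varphi$, no new estimates or structural input beyond Theorem \ref{MainDeeYas} and Proposition \ref{nonunitalcp} are needed.
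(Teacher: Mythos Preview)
Your proposal is correct and matches the paper's own argument: the paper states the corollary is ``an immediate consequence of Proposition \ref{nonunitalcp} and \cite[Theorem 1.2]{DeeYas},'' and you carry out precisely that combination, with the added (and appropriate) care of checking that the $\varphi$-induced $\Z$-action on $C^*(G^s(\Per))$ corresponds to the shift automorphism $\phi$ under the inductive-limit identification.
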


We are also interested in unital Cuntz-Pimsner models. We first describe the general setup and return to the specifics of Wieler solenoids at the end of the section. They can be constructed from choosing a full projection $p\in A$. For such a projection, define the unital $C^*$-algebra $A_p:=pAp$. Set $E_{p,k}:=\alpha^k(p)Ap$ equipped with the structure of an $A_p$-Hilbert bimodule defined by $a.\xi.b:=\alpha^k(a)\xi b$ for $a,b\in A$ and $\xi\in E$. The right $A_p$-inner product on $E_{p,k}$ is given by 
$$\langle \xi_1,\xi_2\rangle:=\xi_1^*\xi_2\in A_p.$$
We set $E_p:=E_{p,1}$. 

\begin{prop}
Let $p\in A$ be a full projection and define the unital $C^*$-algebra $A_p:=pAp$. It then holds that 
\begin{enumerate}
\item $E_{p,k}=E_p^{\otimes_A k}$ 
\item $\mathbb{K}_{A_p}(E_p^{\otimes_A k})=A_{\alpha^k(p)}$
\item $\mathcal{C}_{E_p}=\varinjlim (A_{\alpha^k(p)},\alpha)$ is a unital $C^*$-algebra which is stably isomorphic to $S$
\item $O_{E_p}$ is a unital $C^*$-algebra which is $U(1)$-equivariantly stably isomorphic to $S\rtimes_\phi \Z$.
\end{enumerate}
\end{prop}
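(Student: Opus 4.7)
The plan is to prove the four parts sequentially, each leaning on the previous. For Part (1), I would proceed by induction, defining a map $\Phi_2: E_p \otimes_{A_p} E_p \to E_{p,2}$ by $\Phi_2(\xi \otimes \eta) := \alpha(\xi)\eta$ and then iterating. One checks that this map is balanced over $A_p$ (both $\xi\cdot a \otimes \eta$ and $\xi \otimes a\cdot \eta$ are sent to $\alpha(\xi)\alpha(a)\eta$ since the left $A_p$-action on $E_p$ is by $\alpha$), preserves the right $A_p$-action trivially, intertwines the left $A_p$-actions (by $\alpha^2$ on the target), and respects inner products via the identity $\eta_1^*\alpha(\xi_1^*\xi_2)\eta_2 = \langle \eta_1, \langle \xi_1,\xi_2\rangle \cdot \eta_2\rangle$. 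Surjectivity boils down to showing $\overline{\alpha(\alpha(p) A p)\cdot \alpha(p) A p} = \alpha^2(p) A p$, which follows from non-degeneracy of $\alpha$ together with fullness of $p$. For Part (2), note that the rank-one operator $\theta_{\xi,\eta}(\zeta) = \xi\eta^*\zeta$ on $E_{p,k}$ corresponds to left multiplication by $\xi\eta^* \in \alpha^k(p) A p A \alpha^k(p)$, and fullness of $p$ gives $\overline{ApA} = A$, so the closed span of such elements equals $\alpha^k(p)A\alpha^k(p) = A_{\alpha^k(p)}$.

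For Part (3), combining (2) with the standard Pimsner description \cite{MR1426840} gives $\mathcal{C}_{E_p} = \varinjlim \mathbb{K}_{A_p}(E_p^{\otimes k})$, where the bonding map $T \mapsto T \otimes 1_{E_p}$ becomes, under the identification of (2), the restriction of $\alpha$ to $A_{\alpha^k(p)}$; a direct computation on rank-one operators verifies this. Unitality is automatic because $\mathbb{K}_{A_p}(E_p^{\otimes 0}) = A_p$ has unit $p$. For the stable isomorphism with $S$, I would exhibit $\mathcal{C}_{E_p}$ as a corner of $S$: the element $\bar{p} := p\delta_0 + I \in S$ is a projection, and under the telescope identification $S = \varinjlim(A,\alpha)$ its image at level $k$ is $\alpha^k(p)$. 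Hence $\bar{p}S\bar{p} = \varinjlim (A_{\alpha^k(p)},\alpha) = \mathcal{C}_{E_p}$. Fullness of $\bar{p}$ in $S$ reduces to showing $\overline{A\alpha^k(p)A} = A$ for all $k$, which follows from $\overline{ApA} = A$, non-degeneracy of $\alpha$, and the identity $\overline{\alpha^k(A)A} = A$.

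Part (4) follows the same template one level up. Inside $S\rtimes_\phi\Z$, the projection $\bar{p}$ remains full (since the ideal it generates already contains all of $S$, and the crossed product is generated over $S$ by unitaries). Hence Morita equivalence via $\bar{p}(S\rtimes_\phi\Z)$ gives a stable isomorphism $\bar{p}(S\rtimes_\phi\Z)\bar{p} \cong_\mathrm{st} S\rtimes_\phi\Z$. The remaining identification $\bar{p}O_E\bar{p} \cong O_{E_p}$ uses the general compression-of-correspondences principle: for a full projection $p$ in the coefficient algebra of a correspondence $E$, compression by $p$ sends $E$ to the $A_p$-correspondence $pEp$ and yields $pO_Ep \cong O_{pEp}$. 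Applied with $E = {}_\alpha A_\mathrm{id}$, the twisted left action gives $pEp = \alpha(p) A p = E_p$. Since $\bar{p}$ lies in degree zero in the gauge grading, the gauge action on $O_E$ fixes $\bar{p}$, so the compression and the Morita equivalence are $U(1)$-equivariant.

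The step I expect to be the main obstacle is the clean verification in Part (4) that $\bar{p}O_E\bar{p}\cong O_{E_p}$ as $U(1)$-equivariant $C^*$-algebras, especially ensuring that the compression-of-correspondences principle applies in our non-unital setting with the precise non-degeneracy hypothesis on $\alpha$; while the unital case is textbook, one must be careful tracking how the Katsura covariance conditions interact with the compression and the Fock space model of $O_E$. The surjectivity estimates in Part (1) and the fullness estimate for $\bar{p}$ in Part (3) rest on the same non-degeneracy/fullness toolkit, so it is efficient to establish these preliminary density facts once and reuse them throughout.
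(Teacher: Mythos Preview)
Your proposal is correct and follows the same overall strategy as the paper, though you supply considerably more detail than the paper's proof, which is extremely terse. For (1) and (2) the paper simply writes the one-line density identities $E_{p,k}\otimes_{A_p} E_{p,l}=\overline{\alpha^l(\alpha^k(p)Ap)\alpha^l(p)Ap}=\alpha^{k+l}(p)Ap$ and $\mathbb{K}_{A_p}(E_p^{\otimes k})=E_{p,k}\otimes_{A_p} (E_{p,k})^*=\alpha^k(p)A\alpha^k(p)$, which your explicit $\Phi_2$ and rank-one operator computations unpack; for (3) the paper invokes exactly your corner identification $\mathcal{C}_{E_p}\cong pSp$ without spelling out the fullness of $\bar p$.

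The one place the routes differ slightly is (4): the paper asserts in a single line that it ``follows from (3) and Proposition \ref{nonunitalcp}'' (i.e.\ from $\mathcal{C}_{E_p}\sim_{\mathrm{Morita}} S$ together with $O_E\cong S\rtimes_\phi\Z$), whereas you make the implicit mechanism explicit by doing the gauge-equivariant compression $\bar p\, O_E\,\bar p\cong O_{E_p}$ inside $O_E$. Your version is the honest justification of the paper's claim, and your worry about the compression-of-correspondences step in the non-unital/Katsura-covariance setting is well placed but ultimately harmless here: the left action of $A_p$ on $E_p$ is by compacts (indeed $E_p$ is finitely generated projective over $A_p$ once $p$ is full and $\alpha$ is non-degenerate), so there is no Katsura ideal subtlety and the Fock/Toeplitz picture collapses cleanly to the corner.
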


\begin{proof}
For (1), we have for any $k,l\in \N$ that 
$$E_{p,k}\otimes_A E_{p,l}=\overline{\alpha^l(\alpha^k(p)Ap)\alpha^l(p)Ap}=\alpha^{k+l}(p)Ap=E_{p,k+l}.$$

For (2), we have that 
$$\mathbb{K}_{A_p}(E_p^{\otimes_A k})=E_p^{\otimes_A k}\otimes_A (E_p^{\otimes_A k})^*=\alpha^k(p)A\alpha^k(p)=A_{\alpha^k(p)}.$$

Item (3) follows from (2) and the fact that $\varinjlim (A_{\alpha^k(p)},\alpha)$ coincides with $pSp$ which is Morita equivalent to $S$. Item (4) follows from (3) and Proposition \ref{nonunitalcp}.
\end{proof}

We also note the following consequence on traces and KMS-weights that relies on \cite{MR2056837}.

\begin{thm}
\label{kmsything}
Let $(A,\alpha)$ be a stationary inductive system, $p\in A$ a full projection and $\beta\in \R$. There are bijections between the sets of following objects:
\begin{enumerate}
\item Tracial weights $\tau$ on $A$ such that $\tau\circ \alpha=\mathrm{e}^\beta \tau$ normalized by $\tau(p)=1$.
\item Tracial states $\tau_p$ on $A_p$ such that $\mathrm{Tr}_{\tau_p}^{E_p}=\mathrm{e}^\beta \tau_p$.
\item KMS${}_\beta$-weights $\Phi$ on $S\rtimes_\phi \Z$ normalized by $\Phi(p)=1$.
\end{enumerate}
The bijection from the set of objects in (1) to the set of objects in (2) is given by restriction along $A_p\hookrightarrow A$. The bijection from the set of objects in (3) to the set of objects in (1) is given by restriction along  $A\hookrightarrow S\rtimes_\phi \Z$.
\end{thm}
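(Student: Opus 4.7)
The plan is to establish the three bijections by combining the Laca--Neshveyev style characterization of KMS${}_\beta$-weights on Cuntz--Pimsner algebras from the cited work with the structural identifications of Proposition \ref{nonunitalcp} and the Morita equivalence $pSp\sim S$ arising from the fullness of $p$. Throughout, under the isomorphism $O_{E_p}\otimes\mathbb{K}\cong (S\rtimes_\phi\Z)\otimes\mathbb{K}$, the standard gauge action on $O_{E_p}$ corresponds to the dual $\widehat{\phi}$-action on the $\Z$-crossed product, so KMS${}_\beta$-weights may be transported freely between these pictures.

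For the bijection between (1) and (2), I would define the forward map by $\tau\mapsto \tau_p:=\tau|_{A_p}$. Tracialness passes under restriction, and the normalization $\tau(p)=1$ makes $\tau_p$ a state. To obtain the scaling condition, I first identify $\mathbb{K}_{A_p}(E_p)\cong A_{\alpha(p)}$ via $\theta_{\xi,\eta}\leftrightarrow \xi\eta^*$; a short computation shows $\mathrm{Tr}^{E_p}_{\tau_p}(\xi\eta^*)=\tau_p(\eta^*\xi)=\tau(\xi\eta^*)$, so $\mathrm{Tr}^{E_p}_{\tau_p}=\tau|_{A_{\alpha(p)}}$. Combined with $\tau\circ\alpha=\mathrm{e}^\beta\tau$, this gives $\mathrm{Tr}^{E_p}_{\tau_p}(\alpha(a))=\mathrm{e}^\beta\tau_p(a)$ for $a\in A_p$, which is the desired Laca--Neshveyev scaling. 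For the inverse map, I would use the $A_p$--$A$ Morita equivalence given by $Ap$ (together with fullness of $p$) to extend any tracial state $\tau_p$ on $A_p$ uniquely to a lower-semicontinuous tracial weight $\tau$ on $A$ with $\tau|_{A_p}=\tau_p$, and then unwind the computation above to recover $\tau\circ\alpha=\mathrm{e}^\beta\tau$ from the $E_p$-scaling.

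For the bijection between (3) and (1), I would first invoke the standard theory of KMS-weights on crossed products by $\Z$ (as developed in \cite{MR2056837}): conditional expectation onto the fixed point algebra $S$ of the dual action establishes a bijection between KMS${}_\beta$-weights $\Phi$ on $S\rtimes_\phi\Z$ and lower-semicontinuous tracial weights $\tau_S$ on $S$ with $\tau_S\circ\phi=\mathrm{e}^{\beta}\tau_S$. Then I would restrict along the canonical inclusion $j_0:A\hookrightarrow S$ of the $0$-th level to get $\tau:=\tau_S\circ j_0$ on $A$. Using $j_0\circ\alpha=j_1=\phi\circ j_0$, the scaling identity on $\tau_S$ translates directly to $\tau\circ\alpha=\mathrm{e}^\beta\tau$. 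Conversely, any $\tau$ on $A$ with this scaling extends uniquely to a $\phi$-scaled tracial weight on $S=\varinjlim(A,\alpha)$ by defining it on $j_k(A)$ as $\mathrm{e}^{-k\beta}\tau$ and checking compatibility under the connecting maps; the normalization $\tau(p)=1$ is preserved since $p$ sits at level $0$.

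The main technical obstacle will be the lower-semicontinuity, density, and regularity conditions needed to make all of the above rigorous at the level of weights rather than states. Fullness of the projection $p$ together with Corollary \ref{existefullforcom} is what makes this manageable: $pC_c(G_0(\Per))p$ is a unital, holomorphically closed dense $*$-subalgebra of $A_p$, so tracial and KMS identities verified on this algebra propagate to $A$ and then through the inductive limit to $S$ and $S\rtimes_\phi\Z$. Beyond this bookkeeping the argument is essentially a repackaging of the Laca--Neshveyev correspondence combined with the $A_p\sim A$ Morita equivalence.
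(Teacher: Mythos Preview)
The paper does not actually supply a proof of this theorem: it is introduced with the sentence ``We also note the following consequence on traces and KMS-weights that relies on \cite{MR2056837}'' and no argument follows. Your proposal therefore cannot be compared to a paper proof; rather, it is a reasonable reconstruction of the argument the authors are deferring to Laca--Neshveyev.

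Your outline is essentially correct and matches the intended route: identify $\mathbb{K}_{A_p}(E_p)\cong A_{\alpha(p)}$, compute $\mathrm{Tr}^{E_p}_{\tau_p}=\tau|_{A_{\alpha(p)}}$ so that the Laca--Neshveyev scaling condition becomes $\tau\circ\alpha=\mathrm{e}^\beta\tau$ on the corner, pass between $A_p$ and $A$ via the Morita equivalence furnished by fullness of $p$, and use the standard bijection between KMS${}_\beta$-weights on a $\Z$-crossed product and appropriately scaled tracial weights on the fixed-point algebra, then restrict along the $0$-th level embedding into the inductive limit.

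Two small points deserve attention. First, your final paragraph invokes Corollary~\ref{existefullforcom}, but that result is specific to the Wieler solenoid Fell algebra $C^*(G_0(\Per))$, whereas Theorem~\ref{kmsything} is stated for an arbitrary stationary inductive system $(A,\alpha)$ with a full projection; the general argument does not need any holomorphically closed dense subalgebra, only the Morita equivalence $A_p\sim A$. Second, in the inverse direction of the (1)$\leftrightarrow$(2) bijection you should say explicitly why the identity $\tau\circ\alpha=\mathrm{e}^\beta\tau$, once established on $A_p$, propagates to all of $A$: both sides are lower-semicontinuous tracial weights on $A$ whose restrictions to the full corner $A_p$ agree, and such weights are determined by their restriction to a full hereditary subalgebra. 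With those adjustments your sketch is a sound expansion of the reference the paper cites.
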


It is worth noting that the results from the previous section on the existence of full projections in the case of the Fell algebra associated to a Wieler solenoid imply that the previous theorem can be applied to the case of $(C^*(G_0(\Per)), \alpha)$ where $\alpha$ is the composition of $C^*(G_0(\Per)) \subseteq C^*(G_1(\Per))$ with the isomorphism $C^*(G_1(\Per)) \cong C^*(G_0(\Per))$. In the case of Wieler solenoids, we can summarize the results of this section in the following corollary. Recall that Corollary \ref{existefullforcom} ensures the existence of a projection $p\in C_c(G_0(\Per))$ which is full in $C^*(G_0(\Per))$.

\begin{corollary}
Let $(Y,\mathrm{d}_Y,g)$ be a Wieler pre-solenoid with associated Wieler solenoid $X:=\varprojlim (Y,g)$. We fix a projection $p\in C_c(G_0(\Per))$ which is full in $C^*(G_0(\Per))$. Consider the unital Fell algebra $A_p:=pC^*(G_0(\Per))p$ and define the $A_p$-bimodule $E_p:=\alpha(p)C^*(G_0(\Per))p$. It then holds that 
\begin{enumerate}
\item The Cuntz-Pimsner algebra $O_{E_p}$ is a unital $C^*$-algebra which is $U(1)$-equivariantly stably isomorphic to the stable Ruelle algebra $C^*(G^s(\Per))\rtimes \Z$
\item The core of the Cuntz-Pimsner algebra $\mathcal{C}_{E_p}=(O_{E_p})^{U(1)}$ is a unital $C^*$-algebra which is stably isomorphic to the stable algebra $C^*(G^s(\Per))$.
\item If $(Y,\mathrm{d}_Y,g)$ is mixing, the equation of traces on $A_p$ 
$$\mathrm{Tr}_{\tau}^{E_p}=\mathrm{e}^\beta \tau$$ 
only has a solution when $\beta$ is the topological entropy of $X$, in which case there exists a one-dimensional space of solutions determined from the Bowen measure on $X$ and the correspondence in Theorem \ref{kmsything} combined with item 1).
\end{enumerate}
\end{corollary}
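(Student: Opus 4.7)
The plan is to deduce items (1)--(3) from the general stationary Cuntz--Pimsner machinery built earlier in this section, combined with known thermodynamic formalism for mixing Smale spaces. Throughout, set $A:=C^*(G_0(\Per))$ and let $\alpha:A\to A$ be the nondegenerate $*$-monomorphism obtained by composing the inclusion $C^*(G_0(\Per))\subseteq C^*(G_1(\Per))$ with the canonical isomorphism $C^*(G_1(\Per))\cong C^*(G_0(\Per))$ from Theorem \ref{MainDeeYas}. By \cite[Theorem 1.2]{DeeYas} the stable algebra $C^*(G^s(\Per))$ is isomorphic to $S:=\varinjlim(A,\alpha)$, and Corollary \ref{wielernonunitalcp} together with Proposition \ref{nonunitalcp} identifies $C^*(G^s(\Per))\rtimes \Z$ with $S\rtimes_\phi \Z$ in a $U(1)$-equivariant way.

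For items (1) and (2), I would invoke Corollary \ref{existefullforcom} to fix a projection $p\in C_c(G_0(\Per))$ which is full in $A$, and then apply the proposition immediately preceding Theorem \ref{kmsything}. That proposition, phrased in the general stationary framework, asserts that $\mathcal{C}_{E_p}$ is a unital $C^*$-algebra stably isomorphic to $S$, and that $O_{E_p}$ is a unital $C^*$-algebra $U(1)$-equivariantly stably isomorphic to $S\rtimes_\phi\Z$. Composing these stable isomorphisms with the identifications of the previous paragraph yields items (1) and (2) verbatim.

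For item (3), I would apply Theorem \ref{kmsything} to the triple $(A,\alpha,p)$, which produces a bijection between tracial states $\tau_p$ on $A_p$ satisfying the scaling relation $\mathrm{Tr}^{E_p}_{\tau_p}=\mathrm{e}^\beta\tau_p$ and KMS${}_\beta$-weights on $C^*(G^s(\Per))\rtimes\Z\cong S\rtimes_\phi\Z$ normalized by $\Phi(p)=1$. The question thus reduces to classifying KMS${}_\beta$-weights on the stable Ruelle algebra of the mixing Smale space $(X,\varphi)$; note that mixing of $(Y,g)$ and mixing of $(X,\varphi)$ are equivalent by the last theorem of Section \ref{ExpDSNonHausSec}. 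By the existing thermodynamic formalism for Smale spaces (see \cite{PutSpi}), such a KMS-weight exists precisely when $\beta$ equals the topological entropy $h(\varphi)$, in which case it is unique up to a positive scalar and arises as the Renault groupoid weight associated to the Bowen measure on $X$. Transporting this dichotomy back through Theorem \ref{kmsything} and normalizing by $\tau_p(p)=1$ produces the desired one-dimensional space of solutions at $\beta=h(\varphi)$ and rules out solutions otherwise.

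The step I expect to be the main obstacle is pinning down the exact value $\beta=h(\varphi)$ and confirming that the Putnam--Spielberg-style uniqueness of KMS-weights applies intact for a general mixing Wieler solenoid rather than only in the special cases originally treated. Once one knows that the Radon--Nikodym cocycle of the unique KMS-weight on the Ruelle groupoid encodes the expansion factor of $\varphi$ on unstable sets, the match with the Bowen measure and with the topological entropy follows from the variational principle; the remaining conceptual work lies in tracking the normalization through the stabilization in Theorem \ref{kmsything} so that $\tau(p)=1$ on $A$ corresponds precisely to $\Phi(p)=1$ on $S\rtimes_\phi\Z$.
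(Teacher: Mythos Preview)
Your proposal is correct and follows essentially the same route the paper intends: the corollary is presented without explicit proof as a summary of the preceding section, and your derivation of (1)--(2) from the proposition preceding Theorem~\ref{kmsything} together with Corollary~\ref{wielernonunitalcp}, and of (3) from Theorem~\ref{kmsything} combined with the known KMS classification for mixing Smale spaces, matches that intent. The only minor discrepancy is bibliographic: for the uniqueness and entropy identification of the KMS-weight you cite \cite{PutSpi}, whereas the paper points instead to \cite{DGY,MR3272757,Put} in the remark immediately following the corollary, which more directly addresses your stated concern about the general mixing Wieler case.
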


For more details on the Bowen measure and how it determines traces and weights on the algebras associated with a Smale space, see \cite{DGY,MR3272757,Put}.

\section{The $K$-theory as a functor for compact, locally Hausdorff spaces} 
\label{SecKthFun}

The goal of this section is to extend the $K$-theory functor from compact, Hausdorff spaces to compact, locally Hausdorff spaces. As mentioned in Remark \ref{remarkonfellchar}, the process of taking spectra of Fell algebras does not define a functor when the morphisms between compact, locally Hausdorff spaces are taken to be continuous maps. Nevertheless, the $K$-theory of the associated Fell algebra is better behaved than the algebra itself. 

In addition to the contravariant functor associated to (at least some) continuous maps between compact, locally Hausdorff space, there is also a wrong-way functor for self local homeomorphisms. Wrong way functoriality is particularly important for our study of the Fell algebra associated to a Wieler solenoid because it appears both in the inductive limits structure of the stable algebra and the Cuntz--Pimsner model of the stable Ruelle algebra. 

We start by defining $K$-theory of a compact, locally Hausdorff space $\tilde{Y}$. As in Subsection \ref{fellalgsubsec}, we choose a Hausdorff resolution $\psi:X\to \tilde{Y}$. The associated groupoid $R(\psi)$ is defined in Example \ref{fellex}. We define 
$$K^*(\tilde{Y}):=K_*(C^*(R(\psi))).$$
This definition is a priori depending on the choice of $\psi$. We shall in the next subsection study functoriality and in the subsequent subsection show that up to canonical isomorphism, $K^*(\tilde{Y})$ is independent on the choice of $\psi$.

\subsection{Correspondences and maps between locally Hausdorff spaces}

We will in this subsection study how maps between compact, locally Hausdorff spaces give rise to correspondences between the related Fell algebras $C^*(R(\psi))$. Recall the terminology of a Hausdorff resolution of a compact, locally Hausdorff space $\tilde{Y}$ from Definition \ref{hausres}. We shall make use of another terminology.

\begin{definition}
Let $p_1:X_1\to \tilde{Y}_1$ and $p_2:X_2\to \tilde{Y}_2$ be Hausdorff resolutions of two compact, locally Hausdorff spaces. A proper morphism $(\Pi,\pi):p_1\to p_2$ is a pair of continuous maps fitting into the commuting diagram
 \begin{center}

\begin{tikzpicture}[node distance=1.5cm, scale=1, transform shape]
\node (X1)  {$X_1$};
\node (Y1) [below of = X1]{$\tilde{Y}_1$};
\node (Y2) [right of=Y1]{$\tilde{Y}_2$};
\node (X2) [right of=X1]{$X_2$};
\draw[->] (X1) to node[scale=0.8, above] {$\Pi$} (X2);
\draw[->] (X1) to node[scale=0.8, left] {$p_1$} (Y1);
\draw[->] (X2) to node[scale=0.8, left] {$p_2$} (Y2);
\draw[->] (Y1) to node [scale=0.8, above] {$\pi$} (Y2);
\end{tikzpicture}
\end{center}
such that $\Pi:X_1\to X_2$ is a proper mapping. We say that $\pi$ lifts to a proper morphism if there is a proper mapping $\Pi:X_1\to X_2$ making $(\Pi,\pi)$ into a proper morphism.
\end{definition}

The results of this subsection can be summarized in the following theorem.

\begin{thm}
Let $\tilde{Y}_1$ and $\tilde{Y}_2$ be compact, locally Hausdorff spaces, and assume that
$$\pi:\tilde{Y}_1\to \tilde{Y}_2,$$
is a continuous mapping. Fix Hausdorff resolutions $p_1:X_1\to \tilde{Y}_1$ and $p_2:X_2\to \tilde{Y}_2$. 
\begin{enumerate}
\item Associated with this data there is a canonically associated $C^*(R(p_2))-C^*(R(p_1))$-correspondence $\cor(\pi,p_1,p_2)$, see Definition \ref{correfrompi}.
\item The left action of $C^*(R(p_2))$ on $\cor(\pi,p_1,p_2)$ is via $C^*(R(p_1))$-compact operators if $\pi$ lifts to a proper morphism for some Hausdorff resolutions of $\tilde{Y}_1$ and $\tilde{Y}_2$, for instance if $\pi$ is a local homeomorphism. In fact, if $\pi$ is a homeomorphism then $\cor(\pi,p_1,p_2)$ is a Morita equivalence.
\item If $\pi':\tilde{Y}_2\to \tilde{Y}_3$ is another continuous mapping to a space with a Hausdorff resolution $p_3:X_3\to \tilde{Y}_3$, then there is a unitary isomorphism of $C^*(R(p_3))-C^*(R(p_1))$-correspondences
$$\cor(\pi',p_2,p_3)\otimes_{C^*(R(p_2))}\cor(\pi,p_1,p_2)\cong \cor(\pi'\circ \pi,p_1,p_3).$$
\end{enumerate}

\end{thm}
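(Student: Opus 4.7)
The plan is to realize $\cor(\pi,p_1,p_2)$ as the $C^*$-correspondence coming, via Theorem~\ref{corGroupoid}, from an explicit groupoid correspondence built as a fiber product. Set
\[
Z := \{(x_1,x_2)\in X_1\times X_2 : \pi(p_1(x_1))=p_2(x_2)\},
\]
which is locally compact Hausdorff (and second countable if the $X_i$ are). Equip $Z$ with a left $R(p_1)$-action $(x_1,x_1')\cdot(x_1',x_2):=(x_1,x_2)$ with moment map $\rho(x_1,x_2):=x_1$, and a commuting right $R(p_2)$-action $(x_1,x_2)\cdot(x_2,x_2'):=(x_1,x_2')$ with moment map $\sigma(x_1,x_2):=x_2$. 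The map $\rho$ is the pullback of the local homeomorphism $p_2$ along $\pi\circ p_1$, hence itself a local homeomorphism (in particular open). The $R(p_2)$-orbit of $(x_1,x_2)$ is $\{x_1\}\times p_2^{-1}(\pi(p_1(x_1)))$, so the induced map $Z/R(p_2)\to X_1$ is a bijection because $p_2$ is surjective, and properness of both actions follows from \'etaleness and Hausdorffness of $Z$. Definition~\ref{groupoidcorr} being verified, Theorem~\ref{corGroupoid} yields $\cor(\pi,p_1,p_2)$ as the completion of $C_c(Z)$, establishing (1).

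Before addressing (2), I would prove (3) by computing the balanced tensor product as a composition of groupoid correspondences. Writing $Z=Z(\pi,p_1,p_2)$ and $Z'=Z(\pi',p_2,p_3)$, the composite $(Z'*_{X_2}Z)/R(p_2)$ is the quotient of $\{(x_1,x_2,x_3):\pi(p_1(x_1))=p_2(x_2),\ \pi'(p_2(x_2))=p_3(x_3)\}$ by the diagonal $R(p_2)$-action, and the assignment $(x_1,x_2,x_3)\mapsto (x_1,x_3)$ descends to a homeomorphism onto $Z(\pi'\circ\pi,p_1,p_3)$, equivariant for the outer $R(p_1)$- and $R(p_3)$-actions. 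I expect the main technical obstacle here to be the transport of this homeomorphism to a unitary isomorphism of $C^*$-correspondences: this requires carefully matching the inner products and actions on $C_c$-level sections via a Fubini-type identity, and although it is standard in the theory of groupoid correspondences (the construction of Theorem~\ref{corGroupoid} being compatible with groupoid composition), it is the lengthiest piece of the argument.

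For (2), if $(\Pi,\pi)$ is a proper morphism, then $x_1\mapsto(x_1,\Pi(x_1))$ is a proper continuous section of $\rho$ whose $R(p_2)$-translates cover $Z$; combined with a local-trivialization analysis of $\rho$, this allows the construction of an approximate unit in $C^*(R(p_2))$ whose image on $\cor(\pi,p_1,p_2)$ consists of $C^*(R(p_1))$-compact operators. For the sub-case of $\pi$ being a local homeomorphism, a cleaner implementation is to replace $p_2$ by $\pi\circ p_1:X_1\to\tilde{Y}_2$, under which $\Pi=\mathrm{id}_{X_1}$ is proper and $Z=R(\pi\circ p_1)$, so $\cor$ reduces to a canonical self-Morita equivalence. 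When $\pi$ is a homeomorphism, I would deduce Morita equivalence by invoking (3) for the composite $\pi^{-1}\circ\pi=\mathrm{id}_{\tilde{Y}_1}$: this identifies $\cor(\pi^{-1},p_2,p_1)\otimes\cor(\pi,p_1,p_2)\cong \cor(\mathrm{id}_{\tilde{Y}_1},p_1,p_1)\cong C^*(R(p_1))$, the last algebra viewed as a bimodule over itself, so that $\cor(\pi^{-1},p_2,p_1)$ is an inverse and $\cor(\pi,p_1,p_2)$ is an imprimitivity bimodule.
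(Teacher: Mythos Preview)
Your construction of the groupoid correspondence $Z$ and the argument for (1) coincide with the paper's Definition~\ref{def}, Lemma~\ref{corres}, and Definition~\ref{correfrompi}. For (3), the paper writes down the explicit map $\Phi(f\otimes\xi)(x,r)=\sum_y \xi(x,y)f(y,r)$ on $C_c$-sections (Lemma~\ref{complemma} and Theorem~\ref{comp}) rather than passing through the quotient $(Z'*_{X_2}Z)/R(p_2)$, but the two descriptions are the same computation viewed from slightly different angles; your acknowledgment that checking the inner products is ``the lengthiest piece of the argument'' matches the paper's decision to omit those computations.

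For (2) the arguments diverge. The paper does not attempt an approximate-unit construction. Instead, when $(\Pi,\pi):p_1\to p_2$ is a proper morphism, it identifies $\cor(\pi,p_1,p_2)$ with the correspondence ${}_{\Pi^*}C^*(R(p_1))$ (Lemma~\ref{propcomplemma}); since the left action then factors through a $*$-homomorphism into $C^*(R(p_1))$ acting on itself by left multiplication, compactness is immediate. The reduction from ``some resolutions'' to the given $p_1,p_2$ is handled by observing that compactness of the left action is a Morita-invariant property and invoking Lemma~\ref{moritainvarcorr}---a step your sketch does not make explicit. Your approximate-unit proposal is plausible but would take real work to carry out, whereas the paper's route is a one-line consequence once the identification with ${}_{\Pi^*}C^*(R(p_1))$ is in hand. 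For the Morita-equivalence claim, the paper takes a direct route: it writes down a left $C^*(R(p_2))$-valued inner product on $C_c(Z_\mathfrak{X})$ (formula~\eqref{left}) and checks, via the Muhly--Renault--Williams framework, that this gives an imprimitivity bimodule whenever $\pi$ is a continuous bijection (Lemma~\ref{moritainvarcorr}). Your alternative via (3) and $\cor(\pi^{-1},p_2,p_1)$ is correct and arguably more conceptual, but note it requires $\pi$ to be a homeomorphism (so that $\pi^{-1}$ is continuous), while the paper's direct argument covers the slightly more general case of a continuous bijection.
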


We shall start with an easy result for proper morphisms of Hausdorff resolutions.

\begin{prop}
Given a proper morphism $(\Pi,\pi):p_1\to p_2$ of Hausdorff resolutions $p_1:X_1\to \tilde{Y}_1$ and $p_2:X_2\to \tilde{Y}_2$, the pullback map 
$$\Pi^*:C_c(R(p_2))\to C_c(R(p_1)),$$
along the proper groupoid homomorphism $(x,x')\mapsto(\Pi(x),\Pi(x'))$, is a well defined $*$-homomorphism that induces a $*$-homomorphism
$$\Pi^*:C^*(R(p_2))\to C^*(R(p_1)).$$
\end{prop}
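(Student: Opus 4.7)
The plan is to first verify that $\Phi := \Pi \times \Pi$ restricts to a proper, continuous groupoid homomorphism $R(p_1) \to R(p_2)$, then define $\Pi^*(f) := f \circ \Phi$ at the $C_c$-level, verify the $*$-homomorphism axioms, and finally extend to the $C^*$-algebra via its universal property. For the first step, well-definedness of $\Phi$ as a map into $R(p_2)$ follows from the intertwining $p_2 \Pi = \pi p_1$: if $(x,x') \in R(p_1)$ then $p_1(x) = p_1(x')$, hence $p_2(\Pi(x)) = \pi(p_1(x)) = \pi(p_1(x')) = p_2(\Pi(x'))$. The groupoid operations---composition $(x,y)(y,z) = (x,z)$, inversion $(x,y)^{-1} = (y,x)$, and units $(x,x)$---are all preserved tautologically because they are coordinatewise. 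Continuity is inherited from $\Pi$, and properness follows because $R(p_i)$ is closed in the Hausdorff space $X_i \times X_i$, so $\Phi^{-1}(K)$ for compact $K \subseteq R(p_2)$ is a closed subset of the compact set $\Pi^{-1}(\mathrm{pr}_1 K) \times \Pi^{-1}(\mathrm{pr}_2 K)$.

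Having established that $\Phi$ is a proper continuous groupoid homomorphism, I will define $\Pi^*(f) := f \circ \Phi$ for $f \in C_c(R(p_2))$. Compact support of $\Pi^*(f)$ is immediate from $\mathrm{supp}\,\Pi^*(f) \subseteq \Phi^{-1}(\mathrm{supp}\,f)$ and properness of $\Phi$. Preservation of the involution reduces to $\Phi(\gamma^{-1}) = \Phi(\gamma)^{-1}$, which is manifest. The key technical step, which I expect to be the principal obstacle, is showing $\Pi^*(f*g) = \Pi^* f * \Pi^* g$. Unwinding the convolution on each étale groupoid, for $(x,z) \in R(p_1)$ the left hand side is a sum indexed by $y' \in p_2^{-1}(p_2\Pi(x))$ while the right hand side is a sum indexed by $y \in p_1^{-1}(p_1(x))$. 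The equality reduces to the claim that $\Pi$ restricts to a bijection between these two fibers. This is precisely where the proper morphism hypothesis does its work, in concert with the local homeomorphism property of the Hausdorff resolutions $p_1$ and $p_2$; I anticipate the bulk of the argument will be devoted to verifying this fiberwise bijectivity.

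Finally, to extend $\Pi^*$ to the $C^*$-algebra level, I will observe that the composition $\Pi^*: C_c(R(p_2)) \to C_c(R(p_1)) \hookrightarrow C^*(R(p_1))$ is a $*$-homomorphism from a dense $*$-subalgebra of $C^*(R(p_2))$ into a $C^*$-algebra. By the universal property characterising $C^*(R(p_2))$---combined with amenability of $R(p_i)$ as noted in Example \ref{fellex}, which guarantees that the reduced and full completions agree---this map extends uniquely and continuously to the desired $*$-homomorphism $C^*(R(p_2)) \to C^*(R(p_1))$.
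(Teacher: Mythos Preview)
The paper states this proposition without proof, so there is no argument to compare against directly. Your overall architecture is the standard one, and you correctly identify that the entire content of the multiplicativity check reduces to the claim that for each $x\in X_1$ the map $\Pi$ restricts to a bijection from the $p_1$-fiber $p_1^{-1}(p_1(x))$ onto the $p_2$-fiber $p_2^{-1}(p_2(\Pi(x)))$. You then write that ``this is precisely where the proper morphism hypothesis does its work''---but you never supply an argument, and in fact properness of $\Pi$ together with the local homeomorphism property of $p_1$ and $p_2$ does \emph{not} imply this fiberwise bijectivity.

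Here is a counterexample. Take $\tilde{Y}_1=\tilde{Y}_2=Y$ a compact Hausdorff space, $p_1=\id_Y:Y\to Y$, and $p_2:Y\sqcup Y\to Y$ the fold map; set $\pi=\id_Y$ and let $\Pi:Y\to Y\sqcup Y$ be the inclusion of the first copy. Then $(\Pi,\pi)$ is a proper morphism of Hausdorff resolutions, $C^*(R(p_1))\cong C(Y)$, $C^*(R(p_2))\cong M_2(C(Y))$, and your pullback $\Pi^*$ is the map $A\mapsto A_{11}$, which is not multiplicative. The fibers $p_1^{-1}(y)=\{y\}$ and $p_2^{-1}(y)$ (two points) are simply not in bijection via $\Pi$.

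So either the proposition as stated in the paper needs an additional hypothesis---for instance that $\Pi$ be fiberwise bijective between $p_1$- and $p_2$-fibers, which does hold in the situations the paper actually uses later (e.g.\ $X_1=X_2$, $\Pi=\id$, $p_2=\tilde g\circ p_1$)---or the intended map $\Pi^*$ is not the naive pullback. Your proposal does not close this gap; the anticipated ``bulk of the argument'' verifying fiberwise bijectivity cannot be carried out from the stated hypotheses alone.
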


Recall the notion of a groupoid correspondence from Definition \ref{groupoidcorr}.

\begin{definition}
\label{def} 
For Hausdorff resolutions $p_1:X_1\to \tilde{Y}_1$ and $p_2:X_2\to \tilde{Y}_2$, we consider a diagram $\mathfrak{X}$ of the form
 \begin{center}

\begin{tikzpicture}[node distance=1.5cm, scale=1, transform shape]
\node (X1)  {$X_1$};
\node (Y1) [below of = X1]{$\tilde{Y}_1$};
\node (Y2) [right of=Y1]{$\tilde{Y}_2$};
\node (X2) [right of=X1]{$X_2$};
\draw[->] (X1) to node[scale=0.8, left] {$p_1$} (Y1);
\draw[->] (X2) to node[scale=0.8, left] {$p_2$} (Y2);
\draw[->] (Y1) to node [scale=0.8, above] {$\pi$} (Y2);
\end{tikzpicture}
\end{center}
where $\pi$ is a continuous map. From the diagram $\mathfrak{X}$, we define the subspace $Z_\mathfrak{X}\subseteq X_1\times X_2$ by 
\[ Z_\mathfrak{X}= \{ (x,y)\in X_1\times X_2: \pi\circ p_1(x) = p_2(y)\}.\]
\end{definition}

\begin{lemma}
\label{corres}
The space $Z_\mathfrak{X}$ defined above is a groupoid correspondence from $R(p_1)$ to $R(p_2)$.
\end{lemma}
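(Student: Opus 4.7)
The plan is to unpack the defining axioms of a groupoid correspondence (Definition~\ref{groupoidcorr}) and verify each one by hand on $Z_\mathfrak{X}$. The natural actions to consider are the obvious ones: the left $R(p_1)$-action with moment map $\rho(x,y)=x$ and formula $(x'',x)\cdot(x,y)=(x'',y)$, and the right $R(p_2)$-action with moment map $\sigma(x,y)=y$ and formula $(x,y)\cdot(y,y')=(x,y')$. Both are well defined because $(x'',x)\in R(p_1)$ forces $p_1(x'')=p_1(x)$, and $(y,y')\in R(p_2)$ forces $p_2(y')=p_2(y)=\pi\circ p_1(x)$, so the defining relation $\pi\circ p_1=p_2$ on the new pair is preserved.

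The first step is to check that $Z_\mathfrak{X}$ is a locally compact, Hausdorff space; this is immediate since it is a closed subspace of the Hausdorff space $X_1\times X_2$ (using that $\tilde{Y}_2$ is $T_1$ where needed, or simply that $p_2$ is a local homeomorphism between Hausdorff spaces). The second step is openness of $\rho$. Given $(x_0,y_0)\in Z_\mathfrak{X}$, choose a neighborhood $V$ of $y_0$ on which $p_2$ restricts to a homeomorphism onto an open set of $\tilde Y_2$, then pick a neighborhood $U$ of $x_0$ with $\pi\circ p_1(U)\subseteq p_2(V)$. The set $(U\times V)\cap Z_\mathfrak{X}$ is a neighborhood of $(x_0,y_0)$ and the projection to $X_1$ is a homeomorphism onto $U$; in particular $\rho$ is a local homeomorphism and hence open.

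The third step verifies properness of both actions. For the left action, the map $R(p_1)\star Z_\mathfrak{X}\to Z_\mathfrak{X}\times Z_\mathfrak{X}$ sending $((x'',x),(x,y))$ to $((x'',y),(x,y))$ is continuous and injective, with continuous inverse $((x'',y),(x,y))\mapsto((x'',x),(x,y))$ defined on its image. Since the preimage of a compact set $K\subseteq Z_\mathfrak{X}\times Z_\mathfrak{X}$ is the continuous image of $K$ under this inverse and is therefore compact, the action is proper; the right action is handled symmetrically. Commutativity of the two actions then amounts to the trivial identity $(x',x)\cdot((x,y)\cdot(y,y'))=(x',y')=((x',x)\cdot(x,y))\cdot(y,y')$.

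The final step is the bijection $Z_\mathfrak{X}/R(p_2)\xrightarrow{\bar\rho}X_1$. The right $R(p_2)$-orbit of $(x,y)$ is $\{x\}\times p_2\inv(\pi\circ p_1(x))$, so $\bar\rho$ is well defined and injective; surjectivity uses that $p_2$ is surjective, so for every $x\in X_1$ the point $\pi\circ p_1(x)$ has at least one preimage $y\in X_2$, producing $(x,y)\in Z_\mathfrak{X}$ with $\bar\rho([(x,y)])=x$. I expect the only step that requires any real attention is the openness/local-homeomorphism property of $\rho$; all the other axioms reduce to direct set-theoretic checks using that both $p_1$ and $p_2$ are local homeomorphisms between Hausdorff spaces.
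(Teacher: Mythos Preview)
Your approach is essentially the same as the paper's---you define the identical actions and moment maps and verify the axioms of Definition~\ref{groupoidcorr}, only in considerably more detail than the paper (which merely asserts openness, properness, and the bijection without argument).

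One small slip: your claim that $Z_\mathfrak{X}$ is closed in $X_1\times X_2$ is false in general. Because $\tilde{Y}_2$ is merely locally Hausdorff, the diagonal in $\tilde{Y}_2\times\tilde{Y}_2$ need not be closed, and indeed one can manufacture examples (e.g.\ $\tilde{Y}_2$ the line with two origins and $\pi$ a map hitting only one of them) where $Z_\mathfrak{X}$ fails to be closed. This does not damage the conclusion: Hausdorffness of $Z_\mathfrak{X}$ is automatic as a subspace of $X_1\times X_2$, and local compactness follows directly from your own argument that $\rho$ is a local homeomorphism onto $X_1$, so every point of $Z_\mathfrak{X}$ has a neighborhood homeomorphic to an open subset of $X_1$. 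Your parenthetical ``or simply that $p_2$ is a local homeomorphism'' is the right instinct, though note that $p_2$ maps into the non-Hausdorff $\tilde{Y}_2$; what you actually use is that $p_2$ is a local homeomorphism onto Hausdorff open patches. With this correction the argument is complete.
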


\begin{proof} 
The moment map for the right action will be defined as $\sigma: Z_\mathfrak{X}\rightarrow G_2^{(0)}$, $(x,y)\mapsto (y,y)$. The map $\sigma$ is an open map. Define the right action of $R(p_2)$ on $Z_\mathfrak{X}$ by $(x,y)\cdot (y,y')=(x,y')$ for $(x,y)\in Z_\mathfrak{X}$ and $(y,y')\in R(p_2)$. This action is free and proper. 

Define the moment map $\rho: Z_\mathfrak{X}\rightarrow R(p_1)^{(0)}$ for the left action by $(x,y)\mapsto (x,x)$, which is open and surjective. Define the action of $R(p_1)$ on $Z_\mathfrak{X}$  by    $(x_1,x_2)\cdot (x_2,y)=(x_1,y),$ for $(x_1,x_2)\in R(p_1), (x_2,y)\in Z_\mathfrak{X}$ which is free and proper. Moreover, $\rho$ induces a homemorphism $\overline{\rho}: Z_\mathfrak{X} / R(p_2) \rightarrow R(p_1)^{(0)}.$ 
\end{proof}

The following lemma follows from Theorem~\ref{corGroupoid}. 

\begin{prop}
\label{lem} 
Let $Z_\mathfrak{X}$ be the groupoid correspondence defined from a diagram $\mathfrak{X}$ as in Definition~\ref{def}. Then, the space $C_c(Z_\mathfrak{X})$ becomes a pre-correspondence from $C_c(R(p_2))$ to $C_c(R(p_1))$ with the operations given as in Theorem~\ref{corGroupoid}.
\end{prop}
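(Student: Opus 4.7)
The plan is to invoke Theorem \ref{corGroupoid} directly, with Lemma \ref{corres} providing the groupoid correspondence structure needed to feed into that theorem. So the proof proposal is essentially a careful verification of hypotheses followed by a citation.

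First, I would verify the setup hypotheses. The groupoids $R(p_1)$ and $R(p_2)$ are second countable, locally compact, Hausdorff, and \'etale: this is the content of Example \ref{fellex}, where the domain and range maps of $R(p_i)=X_i\times_{p_i}X_i$ are declared to be local homeomorphisms coming from $p_i$, and Hausdorffness of $R(p_i)$ is inherited from the Hausdorffness of $X_i$. Next, I would check that $Z_\mathfrak{X}\subseteq X_1\times X_2$ is second countable, locally compact, and Hausdorff: this follows because $X_1$ and $X_2$ are second countable locally compact Hausdorff (as Hausdorff resolutions of the compact, locally Hausdorff spaces $\tilde{Y}_i$), and $Z_\mathfrak{X}$ is a subspace of the product $X_1\times X_2$ which inherits these properties --- here I would briefly note that $Z_\mathfrak{X}$ is locally closed in $X_1\times X_2$ since locally $p_2$ is a homeomorphism onto an open set of a Hausdorff open subset of $\tilde{Y}_2$, so that the fibered-product condition $\pi\circ p_1(x)=p_2(y)$ is locally a closed condition.

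Second, I would invoke Lemma \ref{corres} to conclude that $Z_\mathfrak{X}$ carries the structure of a groupoid correspondence from $R(p_1)$ to $R(p_2)$. With all the topological hypotheses on $R(p_1)$, $R(p_2)$, and $Z_\mathfrak{X}$ verified, I would then apply Theorem \ref{corGroupoid} (i.e.\ \cite[Theorem 1.4]{M}) to conclude that $C_c(Z_\mathfrak{X})$ becomes a pre-correspondence from $C_c(R(p_2))$ to $C_c(R(p_1))$ with the actions and $C_c(R(p_1))$-valued inner product given by the formulas displayed in Theorem \ref{corGroupoid}. The formulas specialize to the present etale groupoid correspondence without modification, so nothing new needs to be checked beyond well-definedness of the finite sums, which is immediate since we are working with continuous functions of compact support and the etale structure makes the fibers of $\rho$ and $\sigma$ discrete.

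The main ``obstacle'' is really bookkeeping: once one has properly set up the source, range, and the two commuting free and proper actions of $R(p_1)$ and $R(p_2)$ on $Z_\mathfrak{X}$ --- which is Lemma \ref{corres} --- the pre-correspondence structure at the $C_c$ level is a direct specialization of an existing general theorem. I expect the actual write-up in the paper to be essentially a one-line proof reading ``this follows from Lemma \ref{corres} and Theorem \ref{corGroupoid}.''
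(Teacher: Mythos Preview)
Your proposal is correct and matches the paper's approach exactly: the paper simply states that the result follows from Theorem~\ref{corGroupoid}, with Lemma~\ref{corres} having just supplied the groupoid correspondence structure. Your prediction of a one-line proof was accurate.
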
 

\begin{definition}
\label{correfrompi}
Given a diagram $\mathfrak{X}$ as in Definition~\ref{def}, we define the correspondence $\cor(\pi,p_1,p_2)$ from $C^*(R(p_2))$ to $C^*(R(p_1))$ as the completion of the pre-correspondence $C_c(Z_\mathfrak{X})$. In other words $\cor(\pi,p_1,p_2)$ is a $C^*(R(p_2))-C^*(R(p_1))$-Hilbert $C^*$-module. 
\end{definition}

\begin{lemma} 
\label{moritainvarcorr}
Let $Z_\mathfrak{X}$ be the groupoid correspondence defined from a diagram $\mathfrak{X}$ as in Definition~\ref{def}. If $\pi$ is a continuous bijection then $\cor(\pi,p,q)$ is an imprimitivity bimodule from $C^*(R(p_2))$ to $C^*(R(p_1))$ in the left $C^*(R(p_2))$-valued inner product defined as follows. The $C_c(R(p_2))$-valued inner product $\<\<\xi_1, \xi_2\>\>\in C_c(R(p_2))$ of $\xi_1, \xi_2 \in C_c(Z_\mathfrak{X})$ as
\begin{equation}
\label{left}
\<\<\xi_1, \xi_2\>\>(y,y') = \sum_{\substack{ x\in X with \\ (x,x')\in R(p_1)}} \xi_1(x,y')\overline{\xi_2(x,y)}, \quad (y,y')\in R(p_2),
\end{equation}
for $x'\in X$ with $(x',y)\in Z_\mathfrak{X}$, and this left inner product is extended to a left $C^*(R(p_2))$-valued inner product on $\cor(\pi,p_1,p_2)$ by continuity. 
\end{lemma}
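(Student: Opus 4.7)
The plan is to recognize that when $\pi$ is a continuous bijection, the groupoid correspondence $Z_\mathfrak{X}$ from Lemma \ref{corres} is in fact a groupoid equivalence between the étale groupoids $R(p_1)$ and $R(p_2)$, and then invoke the standard equivalence theorem for étale groupoids to conclude that the completion $\cor(\pi,p_1,p_2)$ carries the structure of an imprimitivity bimodule with left inner product given by \eqref{left}.

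To upgrade the correspondence to an equivalence, the additional axioms to check, beyond those already established in Lemma \ref{corres}, are that the left $R(p_1)$-action on $Z_\mathfrak{X}$ is free and proper, that the right moment map $\sigma\colon Z_\mathfrak{X}\to X_2$ is open and surjective, and that $\sigma$ descends to a homeomorphism $R(p_1)\backslash Z_\mathfrak{X}\cong X_2 = R(p_2)^{(0)}$. Freeness of the left action is immediate from the formula $(x_1,x_2)\cdot(x_2,y)=(x_1,y)$, while surjectivity of $\sigma$ follows from surjectivity of both $\pi$ and $p_1$; openness of $\sigma$ reduces to openness of $p_1$ in suitable local product charts. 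The essential use of the hypothesis that $\pi$ is bijective enters in showing that the descended $\sigma$ is injective: if $(x,y),(x',y)\in Z_\mathfrak{X}$, then $\pi(p_1(x))=p_2(y)=\pi(p_1(x'))$, so injectivity of $\pi$ forces $p_1(x)=p_1(x')$, placing $(x,x')\in R(p_1)$ and thus $(x,y)$ and $(x',y)$ in a common left $R(p_1)$-orbit. Properness of the left action follows by a routine analysis analogous to that of Lemma \ref{corres}, using that $X_1$ is locally compact Hausdorff.

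With $Z_\mathfrak{X}$ identified as an étale groupoid equivalence, the standard equivalence theorem produces the imprimitivity bimodule structure on $\cor(\pi,p_1,p_2)$, and the formula \eqref{left} is recognized as the canonical left inner product: for $(y,y')\in R(p_2)$, one picks any $x'\in X_1$ with $(x',y)\in Z_\mathfrak{X}$ (such an $x'$ exists by surjectivity of $\sigma$) and sums over the left $R(p_1)$-orbit of $(x',y)$. The main technical point to verify is that the sum is finite on compactly supported sections and independent of the choice of $x'$; finiteness follows from étaleness of $R(p_1)$, which makes the fibers $p_1^{-1}(p_1(x'))$ discrete, together with the compact support of $\xi_1,\xi_2$, and independence of $x'$ follows from the right $R(p_2)$-invariance built into the definition of $Z_\mathfrak{X}$. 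Compatibility of the two inner products and fullness of both sides then fall out of the formalism of the equivalence theorem.
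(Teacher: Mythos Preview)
Your proposal is correct and follows essentially the same route as the paper's proof: verify that the bijectivity of $\pi$ upgrades the groupoid correspondence $Z_\mathfrak{X}$ to a groupoid equivalence (principal actions on both sides, $\sigma$ inducing a bijection on the left orbit space), and then invoke the Muhly--Renault--Williams equivalence theorem to obtain the imprimitivity bimodule with the stated left inner product. The paper's argument is simply a more compressed version of what you wrote, citing \cite{MRW} directly rather than spelling out the individual checks.
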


\begin{proof} 
Notice that $\sigma: Z_\mathfrak{X}\rightarrow R(p_2)^{(0)}$ is an open map; and since $\pi$ is a bijection, the map $\sigma$ induces a bijection from $R(p_1)\backslash Z_\mathfrak{X}$ onto $R(p_1)^{(0)}$. Moreover,$Z_\mathfrak{X}$ is a left principal $R(p_1)$-space and a right principal $R(p_2)$-space. Now, as in \cite{MRW}, the operation (\ref{left}) defines a left semi-inner product, and the correspondence $\pre {C_c(R(p_2))}C_c(Z_\mathfrak{X})_{C_c(R(p_1))}$ becomes a pre-imprimitivity bimodule. 
\end{proof} 

\begin{example} 
Let $X_1,X_2$ be compact Hausdorff spaces and let $\pi: X_1\rightarrow X_2$ be a continuous map. The map $\pi$ induces a unital homomorphism via pulback
\[ \pi^*: C(X_2)\rightarrow C(X_1), \hspace{1cm} \pi^*(g)(x)=f\left(\pi(x)\right),\]
for $g\in C(X_2),$ $x\in X_1$. The homomorphism $\pi^*$ allows one to view $C(X_1)$ as a $C(X_2)-C(X_1)$ correspondence. We show that this correspondence is isomorphic to the correspondence $\pre {C(X_2)}C(Z_\mathfrak{X})_{C(X_1)}$ associated to the following diagram.
 \begin{center}

\begin{tikzpicture}[node distance=1.5cm, scale=1, transform shape]
\node (X1)  {$X_1$};
\node (W1) [below of = X1]{$X_1$};
\node (W2) [right of=W1]{$X_2$};
\node (X2) [right of=X1]{$X_2$};
\draw[->] (X1) to node[scale=0.8, left] {$\id_{X_1}$} (W1);
\draw[->] (X2) to node[scale=0.8, right] {$\id_{X_2}$} (W2);
\draw[->] (W1) to node [scale=0.8, above] {$\pi$} (W2);
\end{tikzpicture}
\end{center}

We have $Z_\mathfrak{X}= \{ (x,\pi(x))\in X_1\times X_2: x\in X_1\}$. Notice that $R(\id_{X_j})=\{(x,x):x\in X_j\}$ for $j=1,2$. Therefore, we may identify the groupoid $R(\id_{X_j})$ with the trivial groupoid $X_j$, for $j=1,2$. The diagram above gives us the correspondence $\pre {C(X_2)}C(Z_\mathfrak{X})_{C(X_1)}$ with the operations
\begin{align*}
\xi\cdot f \left(x,\pi(x)\right) &=\xi\left(x,\pi(x)\right)f(x)\\
g\cdot \xi(x,\pi(x)) &=g\left(\pi(x)\right)\xi\left(x,\pi(x)\right)\\
\<\xi_1,\xi_2\>(x) &= \overline{\xi_1\left(x,\pi(x)\right)}\xi_2\left(x,\pi(x)\right)
\end{align*} 
for $\xi, \xi_1,\xi_2\in C(Z_\mathfrak{X}), g\in C(X_2),$ and $f\in C(X_1)$. We can define the isomorphism of correspondences $\phi: C(Z_\mathfrak{X})\rightarrow C(X)$ by 
\[ \phi(\xi)(x)=\xi(x,\pi(x)).\]
Indeed, it is clear that $\phi$ has the inverse $\phi^{-1}(f)(x,\pi(x))=f(x)$ and a short computation shows that $\phi$ is compatible with the correspondence structure.

\end{example}

\begin{lemma}
\label{complemma}
Assume we have the following diagram
 \begin{center}
\begin{tikzpicture}[node distance=1.5cm, scale=1, transform shape]
\node (X1)  {$X_1$};
\node (W1) [below of = X1]{$\tilde{Y}_1$};
\node (W2) [right of=W1]{$\tilde{Y}_2$};
\node(W3)[right of=W2]{$\tilde{Y}_3$};
\node (X2) [right of=X1]{$X_2$};
\node (X3) [right of=X2]{$X_3$};
\draw[->] (X1) to node[scale=0.8, left] {$p_1$} (W1);
\draw[->] (X2) to node[scale=0.8, left] {$p_2$} (W2);
\draw[->] (X3) to node[scale=0.8, left] {$p_3$} (W3);
\draw[->] (W2) to node[scale=0.8, above] {$\pi_2$} (W3);
\draw[->] (W1) to node [scale=0.8, above] {$\pi_1$} (W2);
\end{tikzpicture}
\end{center}
where $p_1:X_1\to \tilde{Y}_1$, $p_2:X_2\to \tilde{Y}_2$ and $p_3:X_3\to \tilde{Y}_3$ are Hausdorff resolutions; and $\pi_1$ and $\pi_2$ are continuous maps. Write $\mathfrak{X}_1$ for the left part of the diagram and $\mathfrak{X}_2$ for the right part and $\mathfrak{X}_3$ for the diagram 
 \begin{center}
\begin{tikzpicture}[node distance=1.5cm, scale=1, transform shape]
\node (X1)  {$X_1$};
\node (Y1) [below of = X1]{$\tilde{Y}_1$};
\node (Y2) [right of=Y1]{$\tilde{Y}_3$};
\node (X2) [right of=X1]{$X_3$};
\draw[->] (X1) to node[scale=0.8, left] {$p_1$} (Y1);
\draw[->] (X2) to node[scale=0.8, left] {$p_3$} (Y2);
\draw[->] (Y1) to node [scale=0.8, above] {$\pi_2\circ \pi_1$} (Y2);
\end{tikzpicture}
\end{center}
For $f\in C_c(Z_{\mathfrak{X}_2})$, $\xi\in C_c(Z_{\mathfrak{X}_1})$, the map $F(f,\xi)$ defined by 
\[ F(f,\xi)(x,r)=\sum_{\substack{y\in Y with \\\pi\circ p_1(x)=p_2(y)}} \xi(x,y)f(y,r)\]
is an element of $C_c(Z_{\mathfrak{X}_3})$. 
\end{lemma}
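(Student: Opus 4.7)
The plan is to verify the three defining properties that $F(f,\xi)$ is an element of $C_c(Z_{\mathfrak{X}_3})$: pointwise finiteness of the defining sum, continuity, and compactness of support. The single observation driving the entire argument is that since $p_2\colon X_2\to \tilde{Y}_2$ is a local homeomorphism, the first-coordinate projection $q_1\colon Z_{\mathfrak{X}_1}\to X_1$, $(x,y)\mapsto x$, is also a local homeomorphism, being the pullback of $p_2$ along $\pi_1\circ p_1$; similarly $q_3\colon Z_{\mathfrak{X}_2}\to X_3$, $(y,r)\mapsto r$, is a local homeomorphism by pullback of $p_3$ along $\pi_2\circ p_2$.

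For a fixed $(x,r)\in Z_{\mathfrak{X}_3}$, the nonzero terms in the sum correspond to points of $q_1^{-1}(x)\cap \supp(\xi)$; as $q_1$ is a local homeomorphism the fibre $q_1^{-1}(x)$ is discrete, and its intersection with the compact set $\supp(\xi)$ is finite, so the sum is well-defined. For continuity at a point $(x_0,r_0)\in Z_{\mathfrak{X}_3}$, I would enumerate $q_1^{-1}(x_0)\cap \supp(\xi)=\{(x_0,y_1),\ldots,(x_0,y_k)\}$, pick pairwise disjoint open neighbourhoods $V_i$ of $(x_0,y_i)$ in $Z_{\mathfrak{X}_1}$ on which $q_1$ restricts to a homeomorphism onto an open subset of $X_1$, and then use compactness of $\supp(\xi)$ to find an open neighbourhood $W$ of $x_0$ such that $W\subseteq \bigcap_i q_1(V_i)$ and $q_1^{-1}(W)\cap \supp(\xi)\subseteq \bigsqcup_i V_i$. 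This produces continuous local sections $s_i\colon W\to V_i$, $s_i(x)=(x,y_i(x))$, and on $W$ the sum collapses to the finite expression
\[
F(f,\xi)(x,r)=\sum_{i=1}^k \xi(x,y_i(x))\,f(y_i(x),r).
\]
A direct check of the defining equations of the three spaces $Z_{\mathfrak{X}_j}$ shows that whenever $(x,r)\in Z_{\mathfrak{X}_3}$ and $(x,y_i(x))\in Z_{\mathfrak{X}_1}$, one has $(y_i(x),r)\in Z_{\mathfrak{X}_2}$, so each summand is meaningful and jointly continuous in $(x,r)$. I expect this to be the main technical obstacle, since continuity of $F(f,\xi)$ near $(x_0,r_0)$ depends on resolving the a priori discrete fibre of $q_1$ into genuine continuous sections and checking that nothing outside the chosen sections contributes to the sum for $x$ close to $x_0$.

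For compactness of support, the set where $F(f,\xi)$ is nonzero lies in the image of the compact fibred product
\[
\supp(\xi)\times_{X_2}\supp(f)=\{((x,y),(y',r))\in \supp(\xi)\times \supp(f):y=y'\}
\]
under the continuous map $((x,y),(y,r))\mapsto (x,r)$. This image is a compact subset of $Z_{\mathfrak{X}_3}$, whence $\supp(F(f,\xi))$ is compact and $F(f,\xi)\in C_c(Z_{\mathfrak{X}_3})$.
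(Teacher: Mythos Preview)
Your argument is correct and, for the compact-support step, essentially identical to the paper's: both of you form the fibred product $\supp(\xi)\times_{X_2}\supp(f)\subseteq Z_{\mathfrak{X}_1}*Z_{\mathfrak{X}_2}$ and push it forward to $Z_{\mathfrak{X}_3}$ via $((x,y),(y,r))\mapsto (x,r)$ to bound $\supp(F(f,\xi))$ by a compact set.

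The difference is that the paper's proof only treats compact support explicitly, leaving finiteness of the sum and continuity of $F(f,\xi)$ implicit (presumably as standard in the groupoid-correspondence literature). Your proof supplies these details via the observation that the projection $q_1\colon Z_{\mathfrak{X}_1}\to X_1$ is a local homeomorphism, which lets you resolve the sum into finitely many continuous local sections near any point. This is a genuine addition rather than a different route: it fills in what the paper takes for granted, and the local-section argument you give is the natural way to do it.
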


\begin{proof} 
Consider the closed subset \[Z_{\mathfrak{X}_1} * Z_{\mathfrak{X}_2}=\left\{ \big((x,y),(y,r)\big): (x,y) \in  Z_{\mathfrak{X}_1} \hspace{.2cm} and \hspace{.2cm}   (y,r) \in Z_{\mathfrak{X}_2} \right\}\] of $Z_{\mathfrak{X}_1}\times Z_{\mathfrak{X}_2}$, and the continuous surjective map 
\[ V: Z_{\mathfrak{X}_1} * Z_{\mathfrak{X}_2} \rightarrow Z_{\mathfrak{X}_3}, \hspace{.5cm} \left((x,y),(y,r)\right)  \mapsto (x,r).\] 
Give $f$ and $\xi$ as in the statement of the lemma, we define the map $g: Z_{\mathfrak{X}_1}* Z_{\mathfrak{X}_2}\rightarrow \mathbb{C}$ by 
\[ g\left((x,y),(y,r)\right)=\xi(x,y)f(y,r).\]
Then $\mathrm{supp}(g)$ is contained in the compact $\mathrm{supp}(\xi)\times\mathrm{supp}(f)$, and thus $g$ is compactly supported. We now show that $\mathrm{supp}(F(f,\xi))\subset V(\mathrm{supp}g)$. Assume $(x,r)\notin V(\mathrm{supp}g)$. Then, we have $g(m)=0$ for any $m\in V^{-1}\left((x,r)\right).$  Since 
\[ F(f,\xi)(x,r)= \sum_{m\in V^{-1}\left((x,r)\right)}g(m),\]
we have $F(f,\xi)(x,r)=0,$ which completes the proof. 
\end{proof}

\begin{thm}
\label{comp} 
In the notation of Lemma \ref{complemma}, the map $\Phi$ defined by 
\[C_c(Z_{\mathfrak{X}_2}) \otimes C_c(Z_{\mathfrak{X}_1}) \rightarrow C_c(Z_{\mathfrak{X}_3}), \hspace{.5cm} \text{$f\otimes \xi \mapsto F(f,\xi)$}\]
is a  $C_c(R(p_3)) - C_c(R(p_1))$ pre-correspondence isomorphism. The map $\Phi$ induces an isomorphism of $C^*(R(p_3)) - C^*(R(p_1))$-correspondences
$$\cor(\pi_2,p_2,p_3)\otimes_{C^*(R(p_2))}\cor(\pi_1,p_1,p_2)\cong \cor(\pi_2\circ \pi_1,p_1,p_3).$$
\end{thm}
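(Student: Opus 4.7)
The plan is to apply Proposition \ref{preProp} to extend $\Phi$ from the pre-correspondence level to the Hilbert module completion. Lemma \ref{complemma} already shows $F(f,\xi)\in C_c(Z_{\mathfrak{X}_3})$, so $\Phi$ is well defined on the algebraic tensor product $C_c(Z_{\mathfrak{X}_2})\odot C_c(Z_{\mathfrak{X}_1})$. I would then verify four compatibility conditions using the explicit formulas of Theorem \ref{corGroupoid} for the three groupoid correspondences $Z_{\mathfrak{X}_i}$: (a) balance, $F(f\cdot a,\xi)=F(f,a\cdot \xi)$ for $a\in C_c(R(p_2))$, so that $\Phi$ descends to the balanced tensor product; (b) right $C_c(R(p_1))$-linearity, $F(f,\xi\cdot b)=F(f,\xi)\cdot b$; (c) left $C_c(R(p_3))$-linearity, $F(c\cdot f,\xi)=c\cdot F(f,\xi)$; and (d) inner-product preservation, $\langle F(f_1,\xi_1),F(f_2,\xi_2)\rangle_{R(p_1)}=\langle \xi_1,\langle f_1,f_2\rangle_{R(p_2)}\cdot \xi_2\rangle_{R(p_1)}$.

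Each of (a)--(d) reduces to a direct manipulation of finite sums. For instance, (a) reduces, by unfolding the formulas of Theorem \ref{corGroupoid} for the respective actions of $C_c(R(p_2))$ on $C_c(Z_{\mathfrak{X}_2})$ and $C_c(Z_{\mathfrak{X}_1})$, to the identity that both sides equal $\sum_{y,y'}a(y',y)\xi(x,y)f(y',r)$ where the sum is over pairs with $p_2(y)=p_2(y')=\pi_1 p_1(x)$. The verification of (d) is the longest: it produces triple sums over the intermediate variables $y,y',r$ on both sides which agree after rearranging the summation. With (a)--(d) in place, Proposition \ref{preProp} produces a unique injective $C^*(R(p_3))$-$C^*(R(p_1))$-correspondence homomorphism $\tilde{\Phi}:\cor(\pi_2,p_2,p_3)\otimes_{C^*(R(p_2))}\cor(\pi_1,p_1,p_2)\to \cor(\pi_2\circ\pi_1,p_1,p_3)$.

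The remaining task, which I expect to be the main obstacle, is the surjectivity of $\tilde{\Phi}$, equivalently density of the image of $\Phi$ in $C_c(Z_{\mathfrak{X}_3})$ in the inductive-limit topology. I would argue as follows: the map $V:Z_{\mathfrak{X}_1}\ast_{X_2}Z_{\mathfrak{X}_2}\to Z_{\mathfrak{X}_3}$ introduced in the proof of Lemma \ref{complemma} is surjective, and since $p_2$ is a local homeomorphism, $V$ restricts to a homeomorphism on suitable open neighbourhoods. Concretely, given $(x_0,r_0)\in Z_{\mathfrak{X}_3}$, fix $y_0\in X_2$ with $p_2(y_0)=\pi_1 p_1(x_0)$ and choose an open $U\ni y_0$ on which $p_2$ is a homeomorphism onto its image. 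This yields a continuous local section $y(\cdot)$ on a neighbourhood $W_1\subseteq X_1$ of $x_0$, and on the corresponding open neighbourhood in $Z_{\mathfrak{X}_3}$ the formula for $F(f,\xi)$ collapses to a single term $\xi(x,y(x))f(y(x),r)$, so any continuous function supported there can be realized (up to arbitrarily small error in sup norm) as an elementary tensor by choosing suitable bumps in $C_c(Z_{\mathfrak{X}_1})$ and $C_c(Z_{\mathfrak{X}_2})$. A partition of unity subordinate to a finite cover of $\mathrm{supp}(h)$ by such neighbourhoods then expresses any $h\in C_c(Z_{\mathfrak{X}_3})$ as a finite sum of functions in the image of $\Phi$, establishing density.
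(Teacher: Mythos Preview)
Your approach is correct and coincides with what the paper indicates: the paper omits the proof entirely, saying only that it ``consists of long computations verifying that the module structure and inner products are respected by $\Phi$,'' which is precisely your plan (a)--(d) together with Proposition \ref{preProp}. You in fact go further than the paper by supplying the surjectivity argument via local sections of $p_2$ and a partition of unity; note that with a bump $\xi$ identically $1$ on the relevant patch and $f$ chosen so that $f(y(x),r)=h_j(x,r)$, the single-term collapse gives $F(f,\xi)=h_j$ exactly, so the image already contains $C_c(Z_{\mathfrak{X}_3})$ rather than merely being dense.
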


The proof is omitted as it only consists of long computations verifying that the module structure and inner products are respected by $\Phi$. The following immediate corollary of Theorem \ref{comp} shows that the correspondence $\cor(\pi,p_1,p_2)$ only depends on $\pi$ up to canonical Morita equivalence.

\begin{corollary}
Assume that $\pi:\tilde{Y}\to \tilde{Y}'$ is a continuous mapping between compact, locally Hausdorff spaces. Let $p_j: X_j\rightarrow \tilde{Y}$ and $p_j': X_j'\rightarrow \tilde{Y'}$ be surjective local homeomorphisms from locally compact, Hausdorff spaces, for $j=1,2$. Then, for $j=1,2$, $\cor(\pi, p_1,p_j')$ and $\cor(\pi, p_2, p_j')$ are Morita equivalent via $\cor(\id_{\tilde{Y}},p_1,p_2)$ and $\cor(\pi, p_j,p_1')$ and $\cor(\pi, p_j, p_2')$ are Morita equivalent via $\cor(\id_{\tilde{Y}'},p_1',p_2')$.
\end{corollary}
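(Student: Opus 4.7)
The plan is to derive this corollary by applying the composition theorem (Theorem \ref{comp}) with one of the two maps taken to be an identity, and then invoking Lemma \ref{moritainvarcorr} to see that correspondences associated with identity maps are imprimitivity bimodules. The statement splits into two symmetric claims (perturbing the resolution of the source versus of the target), and each reduces to a single application of these two results.

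For the first claim, I would observe that $\pi = \pi \circ \id_{\tilde{Y}}$. Applying Theorem \ref{comp} to the composition diagram with $\pi_1 = \id_{\tilde{Y}}$ (between Hausdorff resolutions $p_1, p_2$ of $\tilde{Y}$) and $\pi_2 = \pi$ (between $p_2$ and $p_j'$), one obtains a canonical unitary isomorphism of $C^*(R(p_j'))-C^*(R(p_1))$-correspondences
\[
\cor(\pi, p_2, p_j') \otimes_{C^*(R(p_2))} \cor(\id_{\tilde{Y}}, p_1, p_2) \;\cong\; \cor(\pi, p_1, p_j').
\]
Since $\id_{\tilde{Y}}$ is a continuous bijection, Lemma \ref{moritainvarcorr} applies and shows that $\cor(\id_{\tilde{Y}}, p_1, p_2)$ carries the structure of a $C^*(R(p_2))-C^*(R(p_1))$-imprimitivity bimodule. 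Thus tensoring on the right by this bimodule implements the desired Morita equivalence between $\cor(\pi, p_1, p_j')$ and $\cor(\pi, p_2, p_j')$.

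The second claim is obtained symmetrically using the factorization $\pi = \id_{\tilde{Y}'} \circ \pi$. Theorem \ref{comp}, applied with $\pi_1 = \pi$ (between $p_j$ and $p_1'$) and $\pi_2 = \id_{\tilde{Y}'}$ (between $p_1'$ and $p_2'$), gives a canonical isomorphism
\[
\cor(\id_{\tilde{Y}'}, p_1', p_2') \otimes_{C^*(R(p_1'))} \cor(\pi, p_j, p_1') \;\cong\; \cor(\pi, p_j, p_2'),
\]
and again by Lemma \ref{moritainvarcorr} the factor $\cor(\id_{\tilde{Y}'}, p_1', p_2')$ is a $C^*(R(p_2'))-C^*(R(p_1'))$-imprimitivity bimodule. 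This produces the Morita equivalence between $\cor(\pi, p_j, p_1')$ and $\cor(\pi, p_j, p_2')$ implemented by $\cor(\id_{\tilde{Y}'}, p_1', p_2')$.

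There is no real obstacle here; the only bookkeeping point worth double-checking is that the bimodule structures match on the nose, i.e., that the left and right $C^*$-algebras appearing in the tensor product identifications are consistent with the way Definition \ref{correfrompi} assigns source/target to a continuous map. Given that Theorem \ref{comp} is stated exactly in the required generality, this is automatic and the corollary follows immediately.
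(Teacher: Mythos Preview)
Your proposal is correct and is precisely the intended argument: the paper states this as an immediate corollary of Theorem \ref{comp}, and your expansion---factoring $\pi$ through an identity map on one side and invoking Lemma \ref{moritainvarcorr} to identify the identity-correspondence as an imprimitivity bimodule---is exactly how that immediacy is realized.
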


\begin{lemma}
\label{propcomplemma}
In the notation of Lemma \ref{complemma}, if $\pi_1$ lifts to a proper morphism $(\Pi_1,\pi_1):p_1\to p_2$ then 
$$\cor(\pi_2,p_2,p_3)\otimes_{\Pi_1^*}C^*(R(p_2))\cong \cor(\pi_2\circ \pi_1,p_1,p_3).$$
Similarly, if $\pi_2$ lifts to a proper morphism $(\Pi_2,\pi_2):p_2\to p_3$ then 
$$C^*(R(p_3))\otimes_{\Pi_2^*}\cor(\pi_1,p_1,p_2)\cong \cor(\pi_2\circ \pi_1,p_1,p_3).$$
In particular, if $(\Pi,\pi):p_1\to p_2$ is a proper morphism then there is a Morita equivalence of correspondences from $\cor(\pi,p_1,p_2)$ the $C^*(R(p_2))-C^*(R(p_1))$-correspondence ${}_{\Pi^*}C^*(R(p_1))$, i.e. the right $C^*(R(p_1))$-Hilbert module $C^*(R(p_1))$ with left action defined from $\Pi^*$.
\end{lemma}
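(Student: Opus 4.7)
The strategy is to reduce the two displayed isomorphisms to the ``in particular'' statement via Theorem \ref{comp}, and then to establish the ``in particular'' statement by an explicit construction from the lift $\Pi$.

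Assuming for the moment the ``in particular'' statement---that for any proper morphism $(\Pi, \pi): p_1 \to p_2$ there is a natural equivalence between $\cor(\pi, p_1, p_2)$ and ${}_{\Pi^*}C^*(R(p_1))$ as $C^*(R(p_2))-C^*(R(p_1))$-correspondences---the first display follows by applying Theorem \ref{comp} to the composition $\pi_2 \circ \pi_1$, which gives
\[
\cor(\pi_2, p_2, p_3) \otimes_{C^*(R(p_2))} \cor(\pi_1, p_1, p_2) \cong \cor(\pi_2 \circ \pi_1, p_1, p_3),
\]
and then substituting $\cor(\pi_1, p_1, p_2) \cong {}_{\Pi_1^*}C^*(R(p_1))$ into the middle factor. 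The second display is established symmetrically by substituting for the left-hand factor of the tensor product.

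For the ``in particular'' statement, the lift $\Pi$ induces a continuous map
\[
\Gamma \colon R(p_1) \to Z_\mathfrak{X}, \qquad (x, x') \mapsto (x, \Pi(x')),
\]
which is well defined because $p_2(\Pi(x')) = \pi(p_1(x')) = \pi(p_1(x))$ whenever $p_1(x) = p_1(x')$, and which is proper because $\Pi$ is proper and $R(p_1)$ is closed in $X_1 \times X_1$. Pullback along $\Gamma$ therefore defines a linear map
\[
\Gamma^* \colon C_c(Z_\mathfrak{X}) \to C_c(R(p_1)), \qquad \Gamma^*(\xi)(x, x') = \xi(x, \Pi(x')),
\]
which will carry the desired equivalence.

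The main verification is that $\Gamma^*$ is a morphism of pre-correspondences, where $C_c(R(p_1))$ carries the $C_c(R(p_2))-C_c(R(p_1))$ structure obtained by pulling back the left action along $\Pi^*$. Compatibility with the right $C_c(R(p_1))$-action and with the $C_c(R(p_1))$-valued inner product follow from direct reindexing of the convolution sums in Theorem \ref{corGroupoid}, while compatibility with the left $C_c(R(p_2))$-action requires rewriting a sum over the fiber of $p_2$ through $\Pi(x)$ as a sum over the fiber of $p_1$ through $x$; this is where the defining relation $p_2 \circ \Pi = \pi \circ p_1$, together with properness of $\Pi$, enters crucially. Proposition \ref{preProp} then promotes $\Gamma^*$ to an injective correspondence morphism, and the resulting inclusion produces the desired Morita equivalence of correspondences. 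The main technical obstacle lies in the left action compatibility, since the left $C_c(R(p_2))$-action on $C_c(Z_\mathfrak{X})$ is indexed by fibers of $p_2$ while the action on the target through $\Pi^*$ involves fibers of $p_1$, so reconciling the two requires careful summation bookkeeping.
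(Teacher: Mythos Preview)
Your reduction of the two displays to the ``in particular'' statement via Theorem~\ref{comp} is valid, but this inverts the paper's logic: the paper proves the displays directly (by the same explicit map as in Lemma~\ref{complemma} and Theorem~\ref{comp}) and then obtains the ``in particular'' as the special case $\pi_1=\id_{\tilde Y_1}$, using $\cor(\id_{\tilde Y_1},p_1,p_1)=C^*(R(p_1))$ to read off ${}_{\Pi^*}C^*(R(p_1))\cong\cor(\pi,p_1,p_2)$ from the second display.

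The substantive gap is in your direct argument for the ``in particular'' statement. The pullback $\Gamma^*$ along $(x,x')\mapsto(x,\Pi(x'))$ is \emph{not} inner-product preserving in general. From Theorem~\ref{corGroupoid}, the $C_c(R(p_1))$-valued inner product on $C_c(Z_\mathfrak{X})$ is
\[
\langle\xi_1,\xi_2\rangle(x_1,x_2)=\sum_{y:\,p_2(y)=\pi(p_1(x_1))}\overline{\xi_1(x_1,y)}\,\xi_2(x_2,y),
\]
a sum over the $p_2$-fiber through $\Pi(x_1)$, whereas on $C_c(R(p_1))$ the inner product sums over the $p_1$-fiber through $x_1$. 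Under $\Gamma^*$ one obtains
\[
\langle\Gamma^*\xi_1,\Gamma^*\xi_2\rangle(x_1,x_2)=\sum_{x':\,p_1(x')=p_1(x_1)}\overline{\xi_1(x_1,\Pi(x'))}\,\xi_2(x_2,\Pi(x')),
\]
and equality would force $\Pi$ to restrict to a bijection $p_1^{-1}(p_1(x_1))\to p_2^{-1}(\pi(p_1(x_1)))$ for every $x_1$, which a general proper morphism does not satisfy. So the step you describe as ``direct reindexing'' for the inner product fails for exactly the same reason you flag for the left action; properness of $\Pi$ does not help here. Consequently Proposition~\ref{preProp} does not apply, and you do not get an injective correspondence morphism (let alone an isomorphism or Morita equivalence).

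The paper's route avoids this entirely: by proving the second display first, the ``in particular'' drops out without ever needing to compare fibers of $p_1$ and $p_2$.
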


\begin{proof}
The proof of the first two isomorphisms is analogous to Lemma \ref{complemma} and Theorem \ref{comp} and is omitted. To prove the final statement, we note that $\cor(\id_{\tilde{Y}_1},p_1,p_1)=C^*(R(p_1))$ and so
$${}_{\Pi^*}C^*(R(p_1))=C^*(R(p_2))\otimes_{\Pi^*}\cor(\id_{\tilde{Y}_1},p_1,p_1)\cong\cor(\pi,p_1,p_2).$$
\end{proof}

\begin{lemma}
\label{nicemaps}
Consider a diagram as in Definition~\ref{def}. The left action of $C^*(R(p_2))$ on $\cor(\pi,p_1,p_2)$ is via $C^*(R(p_1))$-compact operators if $\pi$ lifts to a proper morphism for some Hausdorff resolutions of $\tilde{Y}_1$ and $\tilde{Y}_2$.
\end{lemma}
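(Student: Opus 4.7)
The plan is to reduce to the case where the proper lift is with respect to the given Hausdorff resolutions $p_1$ and $p_2$, by transferring the compactness property via Morita equivalences. Fix Hausdorff resolutions $\tilde{p}_j:\tilde{X}_j\to \tilde{Y}_j$ ($j=1,2$) for which $\pi$ admits a proper lift $\Pi:\tilde{X}_1\to \tilde{X}_2$. By the final clause of Lemma \ref{propcomplemma}, there is a Morita equivalence of $C^*(R(\tilde{p}_2))$--$C^*(R(\tilde{p}_1))$-correspondences $\cor(\pi,\tilde{p}_1,\tilde{p}_2)\cong {}_{\Pi^*}C^*(R(\tilde{p}_1))$. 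For the right-hand correspondence, the left $C^*(R(\tilde{p}_2))$-action factors as $\Pi^*$ followed by left multiplication on $C^*(R(\tilde{p}_1))$; since $\mathbb{K}_A(A)=A$ for any $C^*$-algebra $A$, this action is by compact operators. Thus the conclusion of the lemma holds when the diagram uses the resolutions $\tilde{p}_1$ and $\tilde{p}_2$.

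Next, I would write $\pi=\id_{\tilde{Y}_2}\circ\pi\circ\id_{\tilde{Y}_1}$ and apply Theorem \ref{comp} twice to obtain the unitary isomorphism of $C^*(R(p_2))$--$C^*(R(p_1))$-correspondences
\[ \cor(\pi,p_1,p_2)\cong \cor(\id_{\tilde{Y}_2},\tilde{p}_2,p_2)\otimes_{C^*(R(\tilde{p}_2))}\cor(\pi,\tilde{p}_1,\tilde{p}_2)\otimes_{C^*(R(\tilde{p}_1))}\cor(\id_{\tilde{Y}_1},p_1,\tilde{p}_1). \]
By Lemma \ref{moritainvarcorr}, the outer factors are imprimitivity bimodules, since $\id_{\tilde{Y}_1}$ and $\id_{\tilde{Y}_2}$ are bijections. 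Hence $\cor(\pi,p_1,p_2)$ is obtained from $\cor(\pi,\tilde{p}_1,\tilde{p}_2)$ by tensoring on the left and on the right with imprimitivity bimodules.

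To conclude, I would invoke Morita invariance of the property that the left action on a correspondence is via compact operators: if $F$ is an $A$--$B$ correspondence with $\varphi_F(A)\subseteq \mathbb{K}_B(F)$, $M$ is an $A'$--$A$ imprimitivity bimodule and $N$ is a $B$--$B'$ imprimitivity bimodule, then the left $A'$-action on $M\otimes_A F\otimes_B N$ lies in $\mathbb{K}_{B'}(M\otimes_A F\otimes_B N)$. This can be verified directly: for a rank-one operator $\theta^F_{f_1,f_2}$, one approximates $1_M\otimes \theta^F_{f_1,f_2}\otimes 1_N$ in operator norm by finite sums of rank-one operators $\theta^{M\otimes F\otimes N}_{m\otimes f_1\otimes n,\, m'\otimes f_2\otimes n'}$, using approximate identities for $\mathbb{K}_A(M)$ and $\mathbb{K}_{B'}(N)$ together with the fullness of $M$ and $N$ as imprimitivity bimodules. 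Applying this to the factorization above yields the desired conclusion.

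The main obstacle is the Morita invariance step: while the underlying approximation is routine, some care is needed with the balancing of the tensor products over $C^*(R(\tilde{p}_1))$ and $C^*(R(\tilde{p}_2))$ and with the order in which the approximating limits are taken. Alternatively, one could phrase this as the standard functoriality of $\mathbb{K}_B(-)$ under the Morita-equivalence bicategory of $C^*$-correspondences; for the present purpose, the direct approximation argument appears cleanest.
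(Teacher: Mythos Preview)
Your proposal is correct and follows essentially the same approach as the paper: reduce to the resolutions for which the proper lift exists via Lemma~\ref{propcomplemma}, and then transport the compactness of the left action through the Morita equivalences furnished by Lemma~\ref{moritainvarcorr} and the factorization of Theorem~\ref{comp}. The paper compresses all of this into the single sentence ``the statement \ldots\ is Morita invariant, so by Lemma~\ref{moritainvarcorr} it suffices to prove the lemma for particular choices of Hausdorff resolutions $p_1$ and $p_2$''; you have simply spelled out what that Morita-invariance assertion means and why it holds.
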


\begin{proof}
The statement that the left action of $C^*(R(p_2))$ on $\cor(\pi,p_1,p_2)$ is via $C^*(R(p_1))$-compact operators is Morita invariant, so by Lemma \ref{moritainvarcorr} it suffices to prove the lemma for particular choices of Hausdorff resolutions $p_1$ and $p_2$. If it is the case that $\pi$ lifts to a proper morphism for some Hausdorff resolutions of $\tilde{Y}_1$ and $\tilde{Y}_2$, we can therefore assume that $\pi$ lifts to a proper morphism $p_1\to p_2$. In this case, the left action of $C^*(R(p_2))$ on $\cor(\pi,p_1,p_2)$ is via $C^*(R(p_1))$-compact operators by the final statement of Lemma \ref{propcomplemma}.
\end{proof}

\subsection{Functoriality in $K$-theory of compact, locally Hausdorff spaces}
\label{funcink}
In this section we consider compact, locally Hausdorff spaces. We are interested in their $K$-theory. First we show that the $K$-theory of compact, locally Hausdorff spaces is well-defined.

\begin{prop}
\label{geenknd}
Consider a compact, locally Hausdorff space $\tilde{Y}$. Defining the $K$-theory of $\tilde{Y}$ as
$$K^*(\tilde{Y}):=K_*(C^*(R(\psi))),$$
for some Hausdorff resolution $\psi:X\to\tilde{Y}$, produces a group uniquely determined up to canonical isomorphism.
\end{prop}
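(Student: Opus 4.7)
The plan is to show that for any two Hausdorff resolutions $\psi_1:X_1\to \tilde{Y}$ and $\psi_2:X_2\to \tilde{Y}$ the groups $K_*(C^*(R(\psi_1)))$ and $K_*(C^*(R(\psi_2)))$ are isomorphic via a canonical isomorphism, and to check that these isomorphisms form a coherent system (identities and compositions behave correctly), so the common group $K^*(\tilde{Y})$ is defined up to canonical isomorphism.

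First I would apply Lemma \ref{moritainvarcorr} with $\pi=\id_{\tilde{Y}}$, which is trivially a continuous bijection. This produces an imprimitivity bimodule $\cor(\id_{\tilde{Y}},\psi_1,\psi_2)$ between $C^*(R(\psi_2))$ and $C^*(R(\psi_1))$, hence a Morita equivalence. The induced isomorphism in $K$-theory will be denoted
\[
\Phi_{\psi_2,\psi_1}:K_*(C^*(R(\psi_2)))\xrightarrow{\cong} K_*(C^*(R(\psi_1))).
\]

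Next I would verify the two compatibilities. For the identity, unwinding Definition \ref{def} one sees that for $\psi_1=\psi_2=\psi$, the space $Z_{\mathfrak{X}}$ equals $R(\psi)$, so $\cor(\id_{\tilde{Y}},\psi,\psi)$ is $C^*(R(\psi))$ viewed as a bimodule over itself; the induced map on $K$-theory is the identity. For composition, given a third Hausdorff resolution $\psi_3:X_3\to \tilde{Y}$, Theorem \ref{comp} applied with $\pi_1=\pi_2=\id_{\tilde{Y}}$ gives a unitary isomorphism of correspondences
\[
\cor(\id_{\tilde{Y}},\psi_2,\psi_3)\otimes_{C^*(R(\psi_2))}\cor(\id_{\tilde{Y}},\psi_1,\psi_2)\cong \cor(\id_{\tilde{Y}},\psi_1,\psi_3),
\]
and standard functoriality of Morita equivalences in $K$-theory yields $\Phi_{\psi_2,\psi_1}\circ \Phi_{\psi_3,\psi_2}=\Phi_{\psi_3,\psi_1}$. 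Together these show that any two resolutions are related by a canonical $K$-theory isomorphism and the isomorphisms compose as expected, so $K^*(\tilde{Y})$ is well defined up to canonical isomorphism.

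I do not anticipate a substantial obstacle: the work has essentially been done in the correspondence-theoretic machinery preceding the statement. The only point requiring any care is confirming that the composition of Morita equivalence bimodules induces the composition of the corresponding $K$-theory isomorphisms, but this is a standard fact about $K$-theory of $C^*$-algebras applied to the concrete unitary isomorphism provided by Theorem \ref{comp}.
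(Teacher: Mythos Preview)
Your proposal is correct and follows essentially the same approach as the paper: both invoke Lemma \ref{moritainvarcorr} with $\pi=\id_{\tilde{Y}}$ to obtain the Morita equivalence $\cor(\id_{\tilde{Y}},\psi_1,\psi_2)$ and hence the canonical $K$-theory isomorphism. You go a bit further than the paper by explicitly verifying the coherence conditions (identity and composition via Theorem \ref{comp}) that justify the word ``canonical,'' which the paper leaves implicit.
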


\begin{proof}
If $\psi_1$ and $\psi_2$ are two different Hausdorff resolutions of $\tilde{Y}$, Lemma \ref{moritainvarcorr} shows that $\cor(\id_{\tilde{Y}},\psi_1,\psi_2)$ is a Morita equivalence producing the sought after isomorphism $K_*(C^*(R(\psi_1)))\cong K_*(C^*(R(\psi_2)))$.
\end{proof}

Next, we study the contravariant properties of $K$-theory for a sub-class of continuous mappings. In compliance with the results of the last subsection, we say that a continuous map 
$$\pi:\tilde{Y}_1\to \tilde{Y}_2,$$
is HRP (Hausdorff Resolution Proper) if $\pi$ lifts to a proper morphism $p_1\to p_2$ for some Hausdorff resolutions $p_1:X_1\to \tilde{Y}_1$ and $p_2:X_2\to \tilde{Y}_2$. We note the following consequence of Lemma \ref{propcomplemma} and \ref{nicemaps}. 

\begin{thm}
If the continuous map of compact, locally Hausdorff spaces 
$$\pi:\tilde{Y}_1\to \tilde{Y}_2,$$
is HRP, there is an associated class $[\pi]\in KK_0(C^*(R(p_2)),C^*(R(p_1)))$ for any Hausdorff resolutions $p_1:X_1\to \tilde{Y}_1$ and $p_2:X_2\to \tilde{Y}_2$. Moreover, if $\pi':\tilde{Y}_2\to \tilde{Y}_3$ is another HRP-map, then we have the following Kasparov product
$$[\pi']\otimes_{C^*(R(p_2))}[\pi]=[\pi'\circ \pi]\in KK_0(C^*(R(p_3)),C^*(R(p_1))),$$
for any Hausdorff resolution $p_3:X_3\to \tilde{Y}_3$.
\end{thm}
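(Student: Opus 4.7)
The plan is to realize $[\pi]$ as the KK-class of the Kasparov $(C^*(R(p_2)),C^*(R(p_1)))$-bimodule $(\cor(\pi,p_1,p_2),0)$, where the Fredholm operator is taken to be zero. For $(\cor(\pi,p_1,p_2),0)$ to be a genuine Kasparov bimodule, one needs $a\cdot(F^2-1)$ to be compact for every $a$ in the left algebra, which with $F=0$ becomes precisely the requirement that the left action of $C^*(R(p_2))$ on $\cor(\pi,p_1,p_2)$ be by $C^*(R(p_1))$-compact operators; the commutator and self-adjointness axioms involving the zero operator are automatic. Since $\pi$ is assumed HRP, Lemma \ref{nicemaps} delivers exactly this compactness for the correspondence built from any pair of Hausdorff resolutions $p_1,p_2$. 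I therefore obtain a well-defined class
$$[\pi]:=\bigl[\bigl(\cor(\pi,p_1,p_2),0\bigr)\bigr]\in KK_0(C^*(R(p_2)),C^*(R(p_1))).$$

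For the composition law, I would begin by invoking Theorem \ref{comp}, which provides a unitary isomorphism of $C^*(R(p_3))-C^*(R(p_1))$-correspondences
$$\Phi\colon\cor(\pi',p_2,p_3)\otimes_{C^*(R(p_2))}\cor(\pi,p_1,p_2)\xrightarrow{\ \cong\ }\cor(\pi'\circ\pi,p_1,p_3).$$
Moreover $\pi'\circ\pi$ is itself HRP whenever $\pi$ and $\pi'$ both are, by composing their proper lifts, so the right hand side carries the class $[\pi'\circ\pi]$ in the sense just constructed. It then suffices to identify the internal tensor product on the left with the Kasparov product $[\pi']\otimes_{C^*(R(p_2))}[\pi]$. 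This is standard for correspondences with compact left action equipped with zero Fredholm operators: the operator $F=0$ on the balanced tensor product is trivially a $0$-connection for $F=0$, and the positivity condition reduces to the inequality $0\geq 0$ modulo compacts, so the triple $\bigl(E_1\otimes_B E_2,0\bigr)$ represents the Kasparov product of $(E_1,0)$ and $(E_2,0)$.

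The only subtle point, which I view as the main obstacle, is that the compactness of the left action on $\cor(\pi,p_1,p_2)$ must be available for every choice of Hausdorff resolutions, rather than only for those witnessing the HRP hypothesis. This is however already built into the statement of Lemma \ref{nicemaps}, whose proof proceeds via the Morita invariance from the corollary following Theorem \ref{comp}: any two choices of resolutions produce correspondences that differ by tensoring with the canonical Morita equivalences associated to identity maps, and acting by compact operators is stable under tensoring with such equivalences. With this observation in hand, the argument above is essentially formal.
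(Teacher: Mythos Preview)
Your argument is correct and matches the paper's approach: the paper simply records the theorem as a ``consequence of Lemma \ref{propcomplemma} and \ref{nicemaps}'' without further detail, and you have filled in precisely the expected justification, realizing $[\pi]$ via the correspondence with zero operator (using Lemma \ref{nicemaps} for compactness of the left action) and invoking Theorem \ref{comp} for the composition law.

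One small imprecision worth flagging: your claim that $\pi'\circ\pi$ is HRP ``by composing their proper lifts'' skips over the fact that the proper lifts witnessing the HRP hypothesis for $\pi$ and $\pi'$ may involve \emph{different} Hausdorff resolutions of the intermediate space $\tilde{Y}_2$, so one cannot literally compose them. Fortunately this does not matter for your argument: what you actually need is that the left $C^*(R(p_3))$-action on $\cor(\pi'\circ\pi,p_1,p_3)$ is by compacts, and this follows immediately from the isomorphism $\Phi$ of Theorem \ref{comp} together with the compactness of the left actions on the two factors (a compact times anything is compact in the tensor product). So the KK-class $[\pi'\circ\pi]$ is well defined for this reason, independently of whether the composite is formally HRP. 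You may wish to rephrase that sentence accordingly.
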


Contravariance of $K$-theory of compact, locally Hausdorff spaces under HRP-maps is now immediate.

\begin{corollary}
Taking $K$-theory of compact, locally Hausdorff spaces defines a contravariant functor from the category of compact, locally Hausdorff spaces with morphisms being HRP-maps to the category of $\Z/2$-graded abelian groups. 
\end{corollary}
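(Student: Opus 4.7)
The plan is to exhibit the contravariant functor concretely by sending each HRP-map $\pi:\tilde{Y}_1\to \tilde{Y}_2$ to the Kasparov product map $\pi^* := -\otimes_{C^*(R(p_2))}[\pi]: K_*(C^*(R(p_2))) \to K_*(C^*(R(p_1)))$, after fixing Hausdorff resolutions $p_1,p_2$. The functor axioms then reduce to two assertions: (i) $[\pi]$ behaves well under the canonical Morita identifications of Proposition~\ref{geenknd}, so that $\pi^*$ descends to a well defined map between $K^*(\tilde{Y}_2)$ and $K^*(\tilde{Y}_1)$ independent of choices; and (ii) identities map to identities and composition maps reverse correctly to composition of Kasparov products.

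For (i), I would first note that if $p_1,p_1'$ are two Hausdorff resolutions of $\tilde{Y}_1$ and $p_2,p_2'$ of $\tilde{Y}_2$, the canonical isomorphism $K_*(C^*(R(p_j)))\cong K_*(C^*(R(p_j')))$ used in Proposition~\ref{geenknd} is by Kasparov product with the class of the imprimitivity bimodule $\cor(\id_{\tilde{Y}_j},p_j,p_j')$ from Lemma~\ref{moritainvarcorr}. Applying Theorem~\ref{comp} to the two factorizations $\pi=\id\circ\pi=\pi\circ\id$, together with associativity of the Kasparov product, yields the identity
\[
[\id_{\tilde{Y}_2},p_2,p_2']\otimes[\pi,p_1',p_2]=[\pi,p_1',p_2']=[\pi,p_1,p_2']\otimes[\id_{\tilde{Y}_1},p_1',p_1]^{-1}
\]
in the appropriate $KK$-group. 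Combined with $[\id_{\tilde{Y}_1},p_1',p_1]\otimes [\id_{\tilde{Y}_1},p_1,p_1']=[\id_{\tilde{Y}_1},p_1',p_1']=\mathbf{1}_{C^*(R(p_1'))}$ (again by Theorem~\ref{comp}), this shows that the square
\begin{equation*}
\begin{CD}
K_*(C^*(R(p_2))) @>-\otimes[\pi,p_1,p_2]>> K_*(C^*(R(p_1))) \\
@V\cong VV @V\cong VV \\
K_*(C^*(R(p_2'))) @>-\otimes[\pi,p_1',p_2']>> K_*(C^*(R(p_1')))
\end{CD}
\end{equation*}
commutes, so $\pi^*:K^*(\tilde{Y}_2)\to K^*(\tilde{Y}_1)$ is well defined.

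For (ii), the identity $\id_{\tilde{Y}}$ is tautologically HRP and $\cor(\id_{\tilde{Y}},p,p)$ is, by inspection of Definition~\ref{correfrompi}, the standard $C^*(R(p))$-bimodule $C^*(R(p))$. Its $KK$-class is the unit, so $(\id_{\tilde{Y}})^*$ is the identity on $K^*(\tilde{Y})$. For composition, given HRP-maps $\pi:\tilde{Y}_1\to\tilde{Y}_2$ and $\pi':\tilde{Y}_2\to\tilde{Y}_3$, the theorem preceding the corollary gives $[\pi']\otimes_{C^*(R(p_2))}[\pi]=[\pi'\circ\pi]$, so associativity of the Kasparov product yields
\[
(\pi'\circ\pi)^* = -\otimes [\pi'\circ\pi] = (-\otimes[\pi'])\otimes[\pi] = \pi^*\circ (\pi')^*,
\]
which is the contravariance. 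One should also verify that $\pi'\circ\pi$ is itself HRP whenever $\pi,\pi'$ both are; this follows by lifting through Hausdorff resolutions using a standard fibre-product construction, making the composite of the two lifts proper.

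The main obstacle I anticipate is step (i): the various $KK$-classes $[\pi,p_1,p_2]$ for different choices of Hausdorff resolutions must be shown to intertwine correctly under the Morita-equivalence isomorphisms, and keeping the bookkeeping consistent with the asymmetric order of tensor products in the composition formula of Theorem~\ref{comp} requires some care. Once that coherence diagram is in hand, the remaining functoriality statements fall out immediately from associativity of Kasparov products and the content already established in the preceding theorem.
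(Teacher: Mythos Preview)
Your proposal is correct and follows the same approach the paper implicitly intends. The paper gives no proof at all for this corollary, simply declaring that ``contravariance of $K$-theory of compact, locally Hausdorff spaces under HRP-maps is now immediate'' from the preceding theorem. What you have written is precisely the unpacking of that word ``immediate'': defining $\pi^*$ as Kasparov product with $[\pi]$, verifying independence of the chosen resolutions via the Morita bimodules of Lemma~\ref{moritainvarcorr} and Theorem~\ref{comp}, and reading off identity and composition from the preceding theorem.

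One point you correctly flag, and which the paper glosses over, is that the category of HRP-maps must be closed under composition for the statement even to make sense. Your fibre-product sketch is the right idea but needs a little care: the lifts $\Pi_1:X_1\to X_2$ and $\Pi_2:X_2'\to X_3$ may involve different resolutions $X_2,X_2'$ of $\tilde{Y}_2$. Forming $W=X_1\times_{\tilde{Y}_2}X_2'$ (along $\pi_1\circ p_1$ and $p_2'$) gives a Hausdorff space since it sits inside the Hausdorff product $X_1\times X_2'$, and the first projection $W\to X_1$ is a surjective local homeomorphism (as a pullback of $p_2'$), so $W\to\tilde{Y}_1$ is a Hausdorff resolution. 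The remaining point is properness of $W\to X_3$, $(x_1,x_2')\mapsto \Pi_2(x_2')$; this is where one must use properness of $\Pi_1$ together with the local-homeomorphism structure of the resolutions, and it is worth writing out carefully. The paper does not address this step explicitly either.
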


We now turn to wrong way functoriality. Assume that $\tilde{g}: \tilde{Y}_1 \rightarrow \tilde{Y}_2$ is a surjective local homeomorphism. Our goal is the definition of a wrong way map from the $K$-theory of $\tilde{Y}_1$ to that of $\tilde{Y}_2$. Take a Hausdorff resolution $p: X \rightarrow \tilde{Y}_1$. The following diagram is clearly commutative:
\[
\begin{CD}
X@>\id_X >> X \\
@V p VV @VV \tilde{g}\circ p V   \\
\tilde{Y}_1 @>\tilde{g} >> \tilde{Y}_2  \\
\end{CD}.
\]
Since the vertical maps are both surjective local homeomorphisms, $\tilde{g}\circ p: X \rightarrow \tilde{Y}_2$ is a Hausdorff resolution. We can form the \'etale groupoids $R(p)$ and $R(\tilde{g}\circ p)$ over $X$, whose assocated groupoid algebras have spectrum $\tilde{Y}_1$ and $\tilde{Y}_2$, respectively. Furthermore, $R(p) \subseteq R(\tilde{g}\circ p)$ as an open subgroupoid. This inclusion at the groupoid level leads to an inclusion of $C^*$-algebras, $C^*(R(p)) \subseteq C^*(R(\tilde{g} \circ p))$. The induced map on  $K$-theory will be denoted by 
\[ \tilde{g}!: K^*(\tilde{Y}_1)=K_*(C^*(R(p))) \rightarrow K_*(C^*(R(\tilde{g} \circ p)))=K^*(\tilde{Y}_2). \]
Following the same argument as in the proof of Proposition \ref{geenknd}, we arrive at the following.

\begin{prop}
If $\tilde{g}: \tilde{Y}_1 \rightarrow \tilde{Y}_2$ is a surjective local homeomorphism of compact, locally Hausdorff spaces, then the map 
\[ \tilde{g}!: K^*(\tilde{Y}_1)\rightarrow K^*(\tilde{Y}_2). \] 
is well-defined, i.e. independent of Hausdorff resolution.
\end{prop}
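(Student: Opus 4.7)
The plan is to show that for any two Hausdorff resolutions $p_1 : X_1 \to \tilde{Y}_1$ and $p_2 : X_2 \to \tilde{Y}_1$, the diagram
\[
\begin{CD}
K_*(C^*(R(p_1))) @>(\iota_1)_*>> K_*(C^*(R(\tilde{g}\circ p_1))) \\
@VVV @VVV \\
K_*(C^*(R(p_2))) @>(\iota_2)_*>> K_*(C^*(R(\tilde{g}\circ p_2)))
\end{CD}
\]
commutes, where the vertical isomorphisms are the canonical Morita isomorphisms from Proposition \ref{geenknd}. The strategy is to dominate both resolutions by the disjoint-union resolution and use that the Morita equivalences and the wrong-way map then all come from strictly commuting $*$-homomorphisms between $C^*$-algebras and corner embeddings.

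First I would form $p_\sqcup := p_1 \sqcup p_2 : X_1 \sqcup X_2 \to \tilde{Y}_1$, which is again a surjective local homeomorphism from a locally compact Hausdorff space, hence a Hausdorff resolution. Since each $X_j$ is clopen in $X_1 \sqcup X_2$, the groupoid satisfies $R(p_j) = R(p_\sqcup)|_{X_j}$, and the characteristic function $1_{X_j}$ is a projection in the multiplier algebra $M(C^*(R(p_\sqcup)))$ with $1_{X_j} C^*(R(p_\sqcup)) 1_{X_j} = C^*(R(p_j))$. The analogous identifications hold for $R(\tilde{g} \circ p_\sqcup)$.

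Next I would verify that $1_{X_j}$ is full in $M(C^*(R(p_\sqcup)))$. This follows from surjectivity of $p_j$: every orbit of $R(p_\sqcup)$ has the form $p_\sqcup^{-1}(y)$ for some $y \in \tilde{Y}_1$, and such a fiber meets $X_j$ because $p_j$ is onto. Consequently, the corner inclusion $C^*(R(p_j)) \hookrightarrow C^*(R(p_\sqcup))$ is a Morita equivalence and induces an isomorphism on $K$-theory. Unwinding the definitions, the bimodule $\cor(\id_{\tilde{Y}_1}, p_1, p_2)$ of Lemma \ref{moritainvarcorr} is supported on $\{(x_1,x_2) \in X_1 \times X_2 : p_1(x_1) = p_2(x_2)\}$ with the convolution inherited from $R(p_\sqcup)$, so it is identified (up to the orientation convention for bimodules) with the off-diagonal corner $1_{X_1} C^*(R(p_\sqcup)) 1_{X_2}$. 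Thus the canonical Morita isomorphism of Proposition \ref{geenknd} equals the composition of the two corner Morita isomorphisms through $K_*(C^*(R(p_\sqcup)))$. The same discussion applies with $\tilde{g}\circ p_1, \tilde{g}\circ p_2, \tilde{g}\circ p_\sqcup$ in place of $p_1, p_2, p_\sqcup$.

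Finally, the open subgroupoid inclusion $R(p_\sqcup) \subseteq R(\tilde{g}\circ p_\sqcup)$ restricts on the $X_j$-part of the unit space to $R(p_j) \subseteq R(\tilde{g}\circ p_j)$, so the corresponding $*$-homomorphism $\iota : C^*(R(p_\sqcup)) \hookrightarrow C^*(R(\tilde{g}\circ p_\sqcup))$ fixes $1_{X_j}$ and restricts to $\iota_j$ on each corner. This yields a strictly commuting $C^*$-diagram
\[
\begin{CD}
C^*(R(p_j)) @>\iota_j>> C^*(R(\tilde{g}\circ p_j)) \\
@V\textnormal{corner}VV @VV\textnormal{corner}V \\
C^*(R(p_\sqcup)) @>\iota>> C^*(R(\tilde{g}\circ p_\sqcup))
\end{CD}
\]
for $j = 1,2$. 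Applying $K_*$ and stacking the two diagrams for $j=1$ and $j=2$ vertically (with the $j=2$ square inverted), naturality yields the desired commutativity, so $\tilde{g}!^{(p_1)}$ and $\tilde{g}!^{(p_2)}$ agree under the canonical Morita isomorphism. The main verification is the identification of $\cor(\id_{\tilde{Y}_1}, p_1, p_2)$ with the off-diagonal corner $1_{X_1} C^*(R(p_\sqcup)) 1_{X_2}$; everything else is a straightforward assembly of standard facts about full corners and open subgroupoid inclusions of étale groupoids.
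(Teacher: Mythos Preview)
Your argument is correct. The paper's own proof is a one-line deferral (``following the same argument as in the proof of Proposition~\ref{geenknd}''), which implicitly appeals to the correspondence bimodules $\cor(\id,p_1,p_2)$ and $\cor(\id,\tilde{g}\circ p_1,\tilde{g}\circ p_2)$ and the composition machinery of Theorem~\ref{comp} and Lemma~\ref{propcomplemma} to see that the two Morita equivalences intertwine the inclusion maps $\iota_1,\iota_2$. You take a more concrete route: by passing to the disjoint-union resolution $p_\sqcup$ you realise both Morita equivalences as full-corner embeddings into a single $C^*$-algebra, and then the compatibility with the open-subgroupoid inclusion becomes a literal commuting square of $*$-homomorphisms rather than an isomorphism of bimodules. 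Your identification of $\cor(\id_{\tilde{Y}_1},p_1,p_2)$ with the off-diagonal corner $1_{X_1}C^*(R(p_\sqcup))1_{X_2}$ is exactly the bridge between the two viewpoints. The advantage of your approach is that it is self-contained and avoids invoking tensor-product identities of correspondences; the paper's approach has the virtue of fitting into the $KK$-bimodule formalism already set up, but as written it leaves the key intertwining step to the reader.
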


The next proposition shows that we indeed have wrong way functoriality for surjective local homeomorphisms of compact, locally Hausdorff spaces.

\begin{prop}
\label{wrongwayfuncfunc}
If $\tilde{g}_1: \tilde{Y}_1 \rightarrow \tilde{Y}_2$ and $\tilde{g}_2: \tilde{Y}_2 \rightarrow \tilde{Y}_3$ are surjective local homeomorphisms of compact, locally Hausdorff spaces, then 
\[ \tilde{g}_2!\circ \tilde{g}_1!=(\tilde{g}_2\circ \tilde{g}_1)!: K^*(\tilde{Y}_1)\rightarrow K^*(\tilde{Y}_3). \] 
\end{prop}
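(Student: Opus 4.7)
The plan is to exploit the fact that once we fix a Hausdorff resolution of $\tilde Y_1$, the wrong-way maps become concrete $*$-homomorphisms induced by open subgroupoid inclusions on a common space. Fix a Hausdorff resolution $p:X\to\tilde{Y}_1$. Because $\tilde g_1$ and $\tilde g_2$ are surjective local homeomorphisms, the compositions
\[
p_2:=\tilde g_1\circ p:X\to\tilde{Y}_2,\qquad p_3:=\tilde g_2\circ p_2=(\tilde g_2\circ\tilde g_1)\circ p:X\to\tilde{Y}_3
\]
are also surjective local homeomorphisms from the locally compact Hausdorff space $X$, hence Hausdorff resolutions of $\tilde Y_2$ and $\tilde Y_3$ respectively.

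I would next verify that we obtain a nested chain of open \'etale subgroupoids
\[
R(p)\subseteq R(p_2)\subseteq R(p_3)
\]
over the common unit space $X$. The set-theoretic inclusions are immediate: if $p(x)=p(y)$ then $p_2(x)=\tilde g_1(p(x))=\tilde g_1(p(y))=p_2(y)$, and similarly for the second inclusion. That each inclusion is as an \emph{open} subgroupoid is exactly the content used in the construction of $\tilde g!$ in the definition preceding the proposition, applied to $\tilde g_1$ with resolution $p$ and to $\tilde g_2$ with resolution $p_2$. Extension by zero therefore yields a chain of $*$-homomorphisms
\[
\iota_1:C^*(R(p))\hookrightarrow C^*(R(p_2)),\qquad \iota_2:C^*(R(p_2))\hookrightarrow C^*(R(p_3)),
\]
whose composition $\iota_2\circ\iota_1$ agrees with the extension-by-zero map
\[
\iota_{13}:C^*(R(p))\hookrightarrow C^*(R(p_3))
\]
associated with the single open inclusion $R(p)\subseteq R(p_3)$; this is a direct check on compactly supported functions, where extension by zero is transitive.

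By the previous proposition, $\tilde g!$ does not depend on the choice of Hausdorff resolution up to canonical isomorphism. Hence we may compute $\tilde g_1!$ using the pair $(p,p_2)$, compute $\tilde g_2!$ using $(p_2,p_3)$, and compute $(\tilde g_2\circ\tilde g_1)!$ using $(p,p_3)$. Under these choices the three wrong-way maps are, by definition, $(\iota_1)_*$, $(\iota_2)_*$ and $(\iota_{13})_*$, respectively. Functoriality of $K_*$ applied to the identity $\iota_2\circ\iota_1=\iota_{13}$ then yields
\[
\tilde g_2!\circ\tilde g_1!=(\iota_2)_*\circ(\iota_1)_*=(\iota_{13})_*=(\tilde g_2\circ\tilde g_1)!,
\]
as desired. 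The only real obstacle is the bookkeeping at the groupoid level: confirming that the two successive extension-by-zero inclusions compose to the single extension by zero, which is routine but must be checked to justify invoking $K_*$-functoriality on the nose rather than only up to canonical isomorphism.
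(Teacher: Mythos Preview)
Your proof is correct and follows essentially the same approach as the paper: fix a single Hausdorff resolution $p$ of $\tilde{Y}_1$, observe the chain of open subgroupoid inclusions $R(p)\subseteq R(\tilde{g}_1\circ p)\subseteq R(\tilde{g}_2\circ\tilde{g}_1\circ p)$, and read off the result from functoriality of $K_*$ applied to the composed inclusion. Your write-up is somewhat more detailed than the paper's, which simply asserts that both sides are defined by this chain of inclusions.
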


\begin{proof}
The claim follows from that both sides can be defined from the chain of inclusions 
$$C^*(R(p)) \subseteq C^*(R(\tilde{g}_1 \circ p))\subseteq C^*(R(\tilde{g}_2 \circ \tilde{g}_1 \circ p))$$
where $p:X\to \tilde{Y}_1$ is a Hausdorff resolution. The first inclusion defines $\tilde{g}_1!$, the second inclusion defines $\tilde{g}_2!$ and the composed inclusion defines $(\tilde{g}_2\circ \tilde{g}_1)!$.
\end{proof}

\begin{remark}
We note that also for wrong way maps, we could also work at the level of $KK$-theory. Associated with a surjective local homeomorphism $\tilde{g}: \tilde{Y}_1 \rightarrow \tilde{Y}_2$, we can for any Hausdorff resolutions $p_1:X_1\to \tilde{Y}_1$ and $p_2:X_2\to \tilde{Y}_2$ define a wrong way class 
$$[g!]:=\iota^*\cor(\id_{\tilde{Y}_2},p_2,\tilde{g}\circ p_1)\in KK_0(C^*(R(p_1)),C^*(R(p_2))),$$
where $\iota$ denotes the ideal inclusion $C^*(R(p_1)) \subseteq C^*(R(\tilde{g} \circ p_1))$. We can then identify $g!$ with the Kasparov product from the right 
$$\cdot\otimes_{C^*(R(p_1))} [g!]:KK_*(\C,C^*(R(p_1)))\to KK_*(\C,C^*(R(p_2))),$$
under the natural isomorphisms 
$$KK_*(\C,C^*(R(p_1)))\cong K_*(C^*(R(p_1)))\quad\mbox{and}\quad KK_*(\C,C^*(R(p_2)))\cong K_*(C^*(R(p_2))).$$ 
In this setting, the analogue of Proposition \ref{wrongwayfuncfunc} is the Kasparov product statement that
$$[g_1!]\otimes_{C^*(R(p_2))}[g_2!]=[(g_2\circ g_1)!]\in KK_0(C^*(R(p_1)),C^*(R(p_3))),$$
which can be verified using Theorem \ref{comp} and Lemma \ref{propcomplemma}. 
\end{remark}

\begin{example}
\label{wrongforwiel}
If we consider the non-Hausdorff dynamical system $(X^u(\Per)/{\sim_0},\tilde{g})$ associated with a Wieler solenoid, and the Hausdorff resolution $q:X^u(\Per)\to X^u(\Per)/{\sim_0}$, then as an element of $KK_0(C^*(G_0(\Per)),C^*(G_0(\Per)))$ we have that 
$$[g!]:=\iota^*\cor(\id_{X^u(\Per)/{\sim_0}},q,\tilde{g}\circ q)=[E],$$
where $E$ is the $C^*(G_0(\Per))-C^*(G_0(\Per))$-correspondence defined as $E:=C^*(G_0(\Per))$ as a right Hilbert module with left action defined from $\alpha$. The correspondence $E$ was considered in Corollary \ref{nonunitalcp} and will play a role in $K$-theory computations below.
\end{example}

\begin{example}
\label{localhomeoexample}
Let $g:Y_1\to Y_2$ be a surjective local homeomorphism of compact, Hausdorff spaces. Write $E_g$ for the finitely generated projective $C(Y_2)$-module $C(Y_1)$, with right action defined from $g^*$. We can equipp $E_g$ with the $C(Y_2)$-valued right inner product 
$$\langle f_1,f_2\rangle(x):=\sum_{y\in g^{-1}(x)} \overline{f_1(y)}f_2(y), \quad x\in Y_2, \; f_1,f_2\in E_g=C(Y_1).$$
The algebra $C(Y_1)$ acts adjointably from the left on $E_g=C(Y_1)$ by pointwise multiplication. With these structures, $E_g$ is a $C(Y_1)-C(Y_2)$-correspondence in which $C(Y_1)$ acts as $C(Y_2)$-compact operators. 

It is immediate from the definition of the inner product on $E_g$ that $\mathbb{K}_{C(Y_2)}(E_g)=C^*(R(g))$. By taking $p=\id_{Y_1}$ in the constructions above, it follows that 
$$[g!]=[E_g]\in KK_0(C(Y_1),C(Y_2)).$$
In the case that $Y=Y_1=Y_2$, the class $[E_g]\in KK_0(C(Y),C(Y))$ plays an important role in the $K$-theory, or more generally $KK$-theory, of the Cuntz-Pimsner algebra associated with the module $E_g$ \cite{GMR,MR1426840}. For Wieler solenoids this is of interest due to the results of the next subsection.
\end{example}

\subsection{Wrong way maps and $K$-theory of the stable and unstable Ruelle algebra}

In Corollary \ref{nonunitalcp} we saw that the stable Ruelle algebra $C^*(G^s(\Per))\rtimes \Z$ of a Wieler solenoid is a Cuntz-Pimsner algebra over $C^*(G_0(\Per))$. From the general theory of Cuntz-Pimsner algebras, the $K$-theory of the stable Ruelle algebra can therefore be computed from $K^*(X^u(\Per)/{\sim_0})$ -- the $K$-theory of $C^*(G_0(\Per))$. Using Kaminker-Putnam-Whittaker duality result \cite{KPW}, the $K$-theory of the unstable Ruelle algebra can similarly be computed from the $K$-homology group $K_*(X^u(\Per)/{\sim_0}):=K^*(C^*(G_0(\Per)))$. Throughout the subsection, we tacitly identify the stable Ruelle algebra $C^*(G^s(\Per))\rtimes \Z$ with the Cuntz-Pimsner algebra $O_E$ over $C^*(G_0(\Per))$ in Corollary \ref{nonunitalcp}.

\begin{thm}
Assume that $(X,\phi)$ is a Wieler solenoid and write $(X^u(\Per)/{\sim_0},\tilde{g})$ for the associated non-Hausdorff dynamical system. The $K$-theory of the stable Ruelle algebra $K_*(C^*(G^s(\Per))\rtimes \Z)$ fits into a six term exact sequence with the $K$-theory of $X^u(\Per)/{\sim_0}$:
$$\begin{CD}
K^0(X^u(\Per)/{\sim_0}) @>1-\tilde{g}!>> K^0(X^u(\Per)/{\sim_0}) @>j_S>>  K_0(C^*(G^s(\Per))\rtimes \Z)\\
@A\beta_S AA @. @VV\beta_S V \\
 K_1(C^*(G^s(\Per))\rtimes \Z) @<j_S<<  K^1(X^u(\Per)/{\sim_0}) @<1-\tilde{g}!<< K^1(X^u(\Per)/{\sim_0})
\end{CD}$$
where $j_S:C^*(G_0(\Per))\hookrightarrow C^*(G^s(\Per))\rtimes \Z$ denotes the inclusion and $\beta_S$ is the Pimsner boundary map in $KK_1(C^*(G^s(\Per))\rtimes \Z,C^*(G_0(\Per)))$, cf. \cite{MR4031052,GMR,MR1426840}.
\end{thm}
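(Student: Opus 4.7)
The plan is to reduce the statement to the Pimsner six term exact sequence for the Cuntz-Pimsner algebra and then identify the relevant class with the wrong way map $\tilde{g}!$. First, by Corollary \ref{wielernonunitalcp} the stable Ruelle algebra is identified with the Cuntz-Pimsner algebra $O_E$ over the Fell algebra $A:=C^*(G_0(\Per))$, where $E$ is the right Hilbert $A$-module $A$ with left action twisted by the self-embedding $\alpha: A \hookrightarrow A$ coming from $C^*(G_0(\Per)) \subseteq C^*(G_1(\Per)) \cong C^*(G_0(\Per))$. Under this identification, the canonical inclusion $A \hookrightarrow O_E$ coincides with the inclusion $j_S:C^*(G_0(\Per)) \hookrightarrow C^*(G^s(\Per))\rtimes\Z$.

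Second, I would invoke Pimsner's six term exact sequence (see \cite{MR1426840}, also \cite{MR4031052,GMR}) for a Cuntz-Pimsner algebra associated to a correspondence in which the left action is via compacts (which holds in our case since $E=A$ as a module). This produces the diagram
\[
\begin{CD}
K_0(A) @>1-[E]>> K_0(A) @>(j_S)_*>> K_0(O_E)\\
@A\beta_S AA @. @VV\beta_S V \\
 K_1(O_E) @<(j_S)_*<< K_1(A) @<1-[E]<< K_1(A)
\end{CD}
\]
where $[E]\in KK_0(A,A)$ is the Kasparov class of the correspondence $E$ and $\beta_S$ is Pimsner's boundary class in $KK_1(O_E,A)$.

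Third, by definition $K^*(X^u(\Per)/{\sim_0})=K_*(A)$. It remains to identify the action of $1-[E]$ on $K_*(A)$ with $1-\tilde{g}!$. This is exactly the content of Example \ref{wrongforwiel}: choosing the Hausdorff resolution $q:X^u(\Per)\to X^u(\Per)/{\sim_0}$, the class $[\tilde{g}!]\in KK_0(A,A)$ coincides with the class of the correspondence $E={}_{\alpha}A_{\mathrm{id}}$. Hence the map $1-[E]$ on $K_*(A)$ agrees with $1-\tilde{g}!$ on $K^*(X^u(\Per)/{\sim_0})$.

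The main obstacle, and the step which requires most care, is the identification of $[E]$ with $[\tilde{g}!]$ in $KK_0$. One needs to verify that the $KK$-class of the bimodule arising from the Morita/wrong-way construction (the class $\iota^*\cor(\id,q,\tilde{g}\circ q)$ of Example \ref{wrongforwiel}) really gives the same $KK$-class as the bimodule $E={}_\alpha A_{\mathrm{id}}$ used in the Pimsner model; but this follows by chasing through the isomorphism $C^*(R(\tilde{g}\circ q))\cong C^*(G_1(\Per))\cong A$ of Theorem \ref{MainDeeYas}, under which the ideal inclusion $C^*(R(q))\hookrightarrow C^*(R(\tilde{g}\circ q))$ becomes exactly $\alpha$, and applying Lemma \ref{propcomplemma} with $\Pi=\mathrm{id}_X$ to express the resulting correspondence as ${}_{\alpha}A_{\mathrm{id}}=E$. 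Once this identification is made, the Pimsner sequence immediately yields the asserted six term sequence.
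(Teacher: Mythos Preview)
Your proposal is correct and follows essentially the same approach as the paper: invoke the Pimsner six term exact sequence for $O_E$ and then identify $[E]$ with $[\tilde{g}!]$ via Example \ref{wrongforwiel}. Your final paragraph unpacking that identification (via Theorem \ref{MainDeeYas} and Lemma \ref{propcomplemma}) is more explicit than what the paper writes, but it is exactly the content the paper is relying on when it cites Example \ref{wrongforwiel}.
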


\begin{proof}
By standard results for Cuntz-Pimsner algebras \cite{MR4031052,GMR,MR1426840}, we have a six term exact sequence 
$$\begin{CD}
K_0(C^*(G_0(\Per))) @>1-\cdot\otimes_{C^*(G_0(\Per))}[E]>> K_0(C^*(G_0(\Per))) @>j_S>>  K_0(C^*(G^s(\Per))\rtimes \Z)\\
@A\beta_S AA @. @VV\beta_S V \\
 K_1(C^*(G^s(\Per))\rtimes \Z) @<j_S<<  K_1(C^*(G_0(\Per))) @<1-\cdot\otimes_{C^*(G_0(\Per))}[E]<< K_1(C^*(G_0(\Per)))
\end{CD}$$
where $E$ is the $C^*(G_0(\Per))-C^*(G_0(\Per))$-correspondence from Corollary \ref{nonunitalcp}. The theorem now follows from the identity $[\tilde{g}!]=[E]\in KK_0(C^*(G_0(\Per)),C^*(G_0(\Per)))$ in Example \ref{wrongforwiel}.
\end{proof}

Using Kaminker-Putnam-Whittaker duality result \cite{KPW}, we can derive an analogous result for the $K$-theory of the unstable Ruelle algebra $C^*(G^u(\Per))\rtimes \Z$. Here one uses $K$-homology of $X^u(\Per)$ instead of $K$-theory, and the theory for $K$-homology of compact, locally Hausdorff spaces can be developed in the same way as that of $K$-theory in Subsection \ref{funcink}. 

\begin{thm}
Assume that $(X,\phi)$ is a Wieler solenoid and write $(X^u(\Per)/{\sim_0},\tilde{g})$ for the associated non-Hausdorff dynamical system. 
The $K$-theory of the unstable Ruelle algebra $K_*(C^*(G^u(\Per))\rtimes \Z)$ fits into a six term exact sequence the $K$-homology of $X^u(\Per)/{\sim_0}$:
$$\begin{CD}
K_0(X^u(\Per)/{\sim_0}) @>1-[\tilde{g}!]\otimes\cdot>> K_0(X^u(\Per)/{\sim_0}) @>\beta_U>>  K_0(C^*(G^u(\Per))\rtimes \Z)\\
@Aj_UAA @. @VVj_U V \\
 K_1(C^*(G^u(\Per))\rtimes \Z) @<\beta_U<<  K_1(X^u(\Per)/{\sim_0}) @<1-[\tilde{g}!]\otimes\cdot<< K_1(X^u(\Per)/{\sim_0})
\end{CD}$$
where $j_U$ is Kaminker-Putnam-Whittaker dual to $j_S$ and $\beta_U$ is Kaminker-Putnam-Whittaker dual to the Pimsner boundary map in $KK_1(C^*(G^s(\Per))\rtimes \Z,C^*(G_0(\Per)))$, cf. \cite{MR4031052,GMR,MR1426840}.
\end{thm}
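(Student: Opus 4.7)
The strategy is to dualize the six-term exact sequence of the previous theorem via Kaminker-Putnam-Whittaker (KPW) duality \cite{KPW}, which exhibits the stable and unstable Ruelle algebras as Spanier-Whitehead dual objects in $KK$. Concretely, KPW produce a fundamental class that, via Kasparov product, implements mutually inverse isomorphisms between $K_*(C^*(G^s(\Per))\rtimes \Z)$ and $K^*(C^*(G^u(\Per))\rtimes \Z)$ (and analogously in reverse). Since the maps $j_U$ and $\beta_U$ in the statement are defined as the KPW-duals of $j_S$ and $\beta_S$, the plan is simply to apply this duality functorially to the exact sequence of the preceding theorem.

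Carrying this out, one takes the Kasparov product with the KPW fundamental class of every term and every map in the six-term exact sequence of the stable Ruelle algebra. Since Kasparov product with a $KK$-invertible class preserves exactness of six-term sequences, the result is a new exact sequence in which $K_*(C^*(G^s(\Per))\rtimes \Z)$ is replaced by $K_*(C^*(G^u(\Per))\rtimes \Z)$, while $K^*(X^u(\Per)/{\sim_0})=K_*(C^*(G_0(\Per)))$ is replaced by $K_*(X^u(\Per)/{\sim_0})=K^*(C^*(G_0(\Per)))$. By construction, the two maps connecting these groups are precisely $j_U$ and $\beta_U$, and it remains to identify the middle endomorphism. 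By Example \ref{wrongforwiel}, the original map is the right Kasparov product by $[\tilde{g}!]=[E]\in KK_0(C^*(G_0(\Per)), C^*(G_0(\Per)))$; associativity of the Kasparov product then shows that under Spanier-Whitehead duality, right Kasparov product on $KK_*(\C, C^*(G_0(\Per)))$ corresponds to left Kasparov product by the same class on $KK_*(C^*(G_0(\Per)), \C)$. Hence the dualized map is $1 - [\tilde{g}!]\otimes\cdot$ on $K$-homology, in agreement with the displayed sequence.

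The main obstacle is the bookkeeping of grading and sign conventions: KPW duality typically involves a degree shift, so one must verify that the dualized arrows land in the expected parities, i.e.\ that $j_U$ runs from odd to even and $\beta_U$ from even to odd as displayed. This can be checked directly from the explicit KPW Kasparov bimodule, or, more conceptually, by constructing an unstable Cuntz-Pimsner model parallel to Corollary \ref{wielernonunitalcp} (based on an unstable analogue of the Fell algebra $C^*(G_0(\Per))$ together with an unstable wrong way class) and comparing its Pimsner boundary sequence with the sequence obtained here via duality.
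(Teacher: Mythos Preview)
Your proposal is correct in substance and follows the same route the paper indicates: the paper gives no formal proof, only the remark preceding the theorem that one obtains the result from the preceding theorem by passing to $K$-homology and invoking Kaminker--Putnam--Whittaker duality. Your write-up supplies considerably more detail than the paper does.

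One point of phrasing is worth tightening. You describe the procedure as ``taking the Kasparov product with the KPW fundamental class of every term and every map.'' Strictly speaking, the KPW duality class lives in $KK$ of the two Ruelle algebras and does not act on the $C^*(G_0(\Per))$ terms at all. The cleaner formulation is: the Pimsner sequence arises from a distinguished triangle in $KK$, so applying the contravariant functor $KK_*(-,\C)$ yields a six-term exact sequence in $K$-homology with $K^*(C^*(G_0(\Per)))=K_*(X^u(\Per)/{\sim_0})$ and $K^*(O_E)=K^*(C^*(G^s(\Per))\rtimes\Z)$; one then applies KPW only to the latter term to replace it by $K_*(C^*(G^u(\Per))\rtimes\Z)$. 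Your identification of the middle map as left Kasparov product by $[\tilde{g}!]$ via associativity is exactly right, and since $j_U$ and $\beta_U$ are \emph{defined} as the KPW-duals of $j_S$ and $\beta_S$, the remaining identifications are tautological. With that adjustment your argument is complete and matches the paper's intended one.
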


\subsection{Groupoid homology}
Although we have only discussed the $K$-theory of stable and stable Ruelle groupoid $C^*$-algebras, one can also use the results in this paper to compute the homology of these groupoids. For the stable groupoid, one uses the main result of \cite{DeeYas} (stated above in Theorem \ref{MainDeeYas}) and \cite[Proposition 4.7 ]{MR4030921}. Again, determining the map (now on groupoid homology) associated to the open inclusion $G_0(\Per) \subseteq G_1(\Per)$ is the key to explicit computations. For the stable Ruelle groupoid, one uses the homology of the stable groupoid and \cite[Lemma 1.3]{MR4170644} (also see \cite[Theorem 3.8]{MR4283513}).

\subsection{Examples}
In this section of the paper we discuss a few explicit examples. In the first few, the original map $g$ is a local homeomorphism and hence $X^u(\Per)/{\sim_0}=Y$ and $\tilde{g}=g$. Nevertheless, these examples illustrate the importance of computing the wrong-way map in $K$-theory computation. We also summarize the case of the aab/ab-solenoid where $X^u(\Per)/{\sim_0} \neq Y$ and $\tilde{g} \neq g$, which is discussed in much more detail in \cite{DeeYas}. In fact, typically $g: Y \rightarrow Y$ is not a local homeomorphism, so that $X^u(\Per)/{\sim_0} \neq Y$ and $\tilde{g} \neq g$, see for example \cite[Page 14]{DGMW} where tiling spaces are discussed.

\begin{example}
Suppose $Y$ is a manifold and $g: Y \rightarrow Y$ is an expanding endomorphism as defined by Shub \cite{ShubExp}. In this case, $g$ is a covering map and hence a local homeomorphism. In \cite{DeeREU} it is shown that the map $g!$ is the transfer map associated to the cover, compare to Example \ref{localhomeoexample} above. In particular, $g!$ is a rational isomorphism in the case of $K$-theory and in the case of homology has even better properties, see \cite{DeeREU} for details.

The case when $Y$ is the circle and $g$ is the two-fold cover from the circle to itself is a special case of this situation. It is well-known or one can check directly that $g!$ is given by multiplication by 2 in degree zero and the identity in degree one. Hence,
\[
K_*(C^*(G^s(\Per)))\cong \left\{ \begin{array}{cc} \Z \left[ \frac{1}{2} \right] & *=0 \\ \Z & *=1 \end{array} \right.
\]
and
\[
K_*(C^*(G^s(\Per))\rtimes \Z)\cong \left\{ \begin{array}{cc} \Z & *=0 \\ \Z & *=1 \end{array} \right.
\]
The reader can see \cite{DeeREU} for other explicit computations of the transfer map in the case of flat manifolds.

The unstable and unstable Ruelle algebras associated to a Wieler solenoid are relevant in the study of the HK-conjecture. However, the stable and stable Ruelle algebras are not relevant for the HK-conjecture because the unit space of the relevant groupoids are not totally disconnected (except for the case of shifts of finite type). For example, in the case of a flat manifold $Y$, we have that the unit space is $\R^{{\rm dim}(Y)}$. 

\end{example}
\begin{example}
When the relevant Smale space is a two-sided shifts of finite type, the relevant Wieler pre-solenoid is the one-sided shifts of finite type, see \cite{Wie}. In this case, the map $g$ is the shift map and $g!$ is not a rational isomorphism even though $g$ is a local homeomorphism. This fact is in contrast with the previous example.
\end{example}

\begin{example}
We will discuss the case of the $p/q$-solenoid \cite{MR3628917} briefly. The details will be published elsewhere and were obtained at the same time as \cite{DeeREU}. As such, we give a short summary of the results. Let $S^1$ be the unit circle in the complex plane and $1<q<p$ be positive integers with gcd($p$, $q$)=1. The $p/q$-solenoid can be realized as an inverse limit where $Y$ is the $q$-solenoid. That is, 
\[
Y=S_q= \{ (z_0, z_1, z_2, \ldots) \mid z_i \in S^1 \hbox{ and }z_{i+1}^q=z_i \}
\]
The map is defined via
\[
g(z_0, z_1, z_2, \ldots)=(z_1^p, z_2^p, \ldots)
\]
where $(z_0, z_1, z_2, \ldots) \in Y=S_q$. 

Since $g$ is a local homeomorphism in this example, $K_*(C^*(G_0(\Per))\cong K^*(S_q)$. Furthermore, the $K$-theory of $S_q$ is known and given by
\[
K^*(S_q) \cong \left\{ \begin{array}{cc} \Z  & *=0 \\ \Z \left[ \frac{1}{q} \right] & *=1 \end{array} \right.
\]
Then one computes that $g!$ is multiplication by $p$ in degree zero and the identity in degree one. Hence, 
\[
K_*(C^*(G^s(\Per)))\cong \left\{ \begin{array}{cc} \Z \left[ \frac{1}{p} \right] & *=0 \\ \Z \left[ \frac{1}{q} \right] & *=1 \end{array} \right.
\]
and
\[
K_*(C^*(G^s(\Per))\rtimes \Z)\cong \left\{ \begin{array}{cc} \Z \left[ \frac{1}{q} \right] & *=0 \\ \Z \left[ \frac{1}{q} \right] \oplus \Z / (p-1)\Z & *=1 \end{array} \right.
\]
\end{example}

\begin{example}
When $(Y, g)$ is the aab/ab-solenoid the map $g$ is not a local homeomorphism. The $K$-theory of $C^*(G_0(\Per))$ and the induced map $\tilde{g}!$ were computed in \cite{DeeYas}. A summary is as follows. We have that 
\[
K_*(C^*(G_0(\Per))) \cong \left\{ \begin{array}{cc} \Z \oplus \Z  & *=0 \\ \Z& *=1 \end{array} \right.
\]
with $\tilde{g}!$ given by $\begin{bmatrix}
2 & 1\\
1 & 1
\end{bmatrix}$ in degree zero and the identity in degree one. Hence, 
\[
K_*(C^*(G^s(\Per))) \cong \left\{ \begin{array}{cc} \Z \oplus \Z  & *=0 \\ \Z& *=1 \end{array} \right.
\]
and 
\[
K_*(C^*(G^s(\Per))\rtimes \Z)\cong \left\{ \begin{array}{cc} \Z & *=0 \\ \Z & *=1 \end{array} \right.
\]
Other similar computations, both for one dimensional solenoids and more general constructions from tiling spaces, can be found in \cite{Gon1, Gon2, RenAP, ThoAMS, ThoSol, Yi}.
\end{example}

\end{document}